\numberwithin{equation}{section}
\newtheorem{theorem}{Theorem}[section]
\newtheorem{lemma}{Lemma}[section]
\newtheorem{corollary}{Corollary}[section]
\newtheorem{remark}{Remark}[section]
\newtheorem{remarks}{Remark}[section]
\newtheorem{definition}{Definition}[section]
\newcommand{\be}{\begin{equation}}
\newcommand{\ee}{\end{equation}}
\newcommand{\om}{\omega}
\newcommand{\e}{\varepsilon}
\newcommand{\R}{\mathbb R}
\newcommand{\C}{\mathbb C}
\newcommand{\mG}{\mathcal{G}}
\newcommand{\Z}{\mathbb Z}
\newcommand{\N}{\mathbb N}
\newcommand{\T}{\mathbb T}
\newcommand{\s }{\sigma }
\newcommand{\ii }{{\rm i} }
\renewcommand{\d }{\delta }
\newcommand{\g }{\gamma}
\newcommand{\vphi}{\varphi }
\renewcommand{\t }{\tau }
\renewcommand{\o }{\omega }
\newcommand{\inv}{^{-1}}
\newcommand{\p}{\pi}
\newcommand{\Lipg}{{\rm{Lip}(\g)}}
\newcommand{\lip}{{\rm lip}}
\newcommand{\mR}{\mathcal{R}}
\begin{document}

\title{\textbf{Quasi-periodic solutions of forced Kirchoff equation}} 

\date{}
\author{ Riccardo Montalto \footnote{Supported in part by the Swiss National Science Foundation.}}

\maketitle

\noindent
{\bf Abstract.}
In this paper we prove the existence and the stability of small-amplitude quasi-periodic solutions with Sobolev regularity, for the 1-dimensional forced Kirchoff equation with periodic boundary conditions. This is the first KAM result for a quasi-linear wave-type equation. The main difficulties are: $(i)$ the presence of the highest order derivative in the nonlinearity which does not allow to apply the classical KAM scheme, $(ii)$ the presence of double resonances, due to the double multiplicity of the eigenvalues of $- \partial_{xx}$. The proof is based on a Nash-Moser scheme in Sobolev class. The main point concerns the invertibility of the linearized operator at any approximate solution and the proof of tame estimates for its inverse in high Sobolev norm. To this aim, 
we conjugate the linearized operator to a $2 \times 2$, time independent, block-diagonal operator. This is achieved by using {\it changes of variables} induced by diffeomorphisms of the torus, {\it pseudo-differential} operators and a KAM {\it reducibility} scheme in Sobolev class.
\\

\noindent
{\em Keywords:} Kirchoff equation, quasi-linear PDEs, quasi-periodic solutions, Infinite dimensional dynamical systems, KAM for PDEs, Nash-Moser theory.

\noindent
{\em MSC 2010:} 37K55, 	35L72.

\smallskip

\noindent

\smallskip

\noindent

\tableofcontents

\section{Introduction and main results}
We consider the Kirchoff equation in 1-dimension with periodic boundary conditions 
\begin{equation}\label{main equation}
\partial_{tt} v - \Big( 1 + \int_{\T} |\partial_x v|^2\,dx  \Big) \partial_{xx} v = \delta f(\omega t, x) \,, \quad x \in \T\,,
\end{equation}
where $\T := \R/(2 \pi \Z)$ is the 1-dimensional torus,  $\delta> 0 $ is a small parameter, $f \in {\mathcal C}^q(\T^\nu \times \T, \R)$ and $\omega \in \Omega \subseteq \R^\nu$, with $\Omega$ bounded.
Our aim is to prove the existence and the linear stability of small-amplitude quasi-periodic solutions with Sobolev regularity, for $\delta$ small enough and for $\omega$ in a suitable {\it Cantor like set} of parameters with asymptotically full Lebesgue measure.

\noindent
The Kirchoff equation has been introduced for the first time in 1876 by Kirchoff, in dimension 1, without forcing term and with Dirichlet boundary conditions, namely 
\begin{equation}\label{Kirchoff-Dirichlet}
\partial_{tt} v - \Big(1 + \int_0^\pi |\partial_x v|^2 \, d x \Big) \partial_{xx} v = 0\,, \qquad v(t, 0) = v(t , \pi) = 0\,,
\end{equation}
to describe the transversal free vibrations of a clamped string in which the dependence of the tension on the deformation cannot be neglected. It is a quasi-linear PDE, namely the nonlinear part of the equation contains as many derivatives as the linear differential operator. The Cauchy problem for the Kirchoff equation (also in higher dimension) has been extensively studied, starting from the pioneering paper of Bernstein \cite{Bernstein}. Both local and global existence results have been established for initial data in Sobolev and analytic class, see \cite{arosio-spagnolo}, \cite{arosio-panizzi}, \cite{dancona-spagnolo}, \cite{dickey}, \cite{lions}, \cite{manfrin}, \cite{pozohaev}. 

\noindent
Concernig the existence of periodic solutions, Kirchoff himself observed that the equation \eqref{Kirchoff-Dirichlet} admits a sequence of {\it normal modes}, namely solutions of the form $v(t, x) = v_j(t) \sin(j x)$ where $v_j(t)$ is $2 \pi$-periodic. Under the presence of the forcing term $f(t, x)$ the {\it normal modes} do not persist, since, expanding $v(t, x) = \sum_{j} v_j(t) \sin(j x)$, $f(t, x) = \sum_{j} f_j(t) \sin(j x)$, all the components $v_j(t)$ are coupled in the integral term $\int_\T |\partial_x v|^2\, d x$ and the equation \eqref{Kirchoff-Dirichlet} is equivalent to the infinitely many coupled ODEs
$$
v_j''(t) + j^2 v_j(t) \Big(1 + \sum_{k} k^2 |v_k(t)|^2 \Big) = f_j(t)\,,\qquad j = 1,2, \ldots\,.
$$
The existence of periodic solutions for the Kirchoff equation, also in higher dimension, have been proved by Baldi in \cite{Baldi Kirchhoff}, both for Dirichlet boundary conditions ($v = 0$ on $\partial \Omega$) and for periodic boundary conditions ($\Omega = \T^d$). This result is proven via Nash-Moser method and thanks to the special structure of the nonlinearity (it is diagonal in space), the linearized operator at any approximate solution can be inverted by Neumann series. This approach does not imply the linear stability of the solutions and it does not work in the quasi-periodic case, since the {\it small divisors} problem is more difficult. 

\noindent
In general, the presence of derivatives in the nonlinearity makes uncertain the existence of
global (even not periodic or quasi-periodic) solutions, see for example the non-existence results
in \cite{KM}, \cite{Lax} for the equation $v_{tt} - a(v_x) v_{xx} = 0$, $a > 0$, $a(v) = v^p$, $p \geq 1$, near zero.

\noindent
 Concerning the existence of periodic solutions, 
the first bifurcation result is due to Rabinowitz \cite{Rabinowitz-tesi-1967},  for fully nonlinear forced wave equations
with a small dissipation term 
$$
v_{tt} - v_{xx} + \alpha v_t + \e f(t,x, v, v_t, v_x, v_{tt}, v_{tx}, v_{xx}) = 0\,, \quad x \in \T\,, \qquad \alpha \neq0\,
$$
with frequency $\omega = 1$ ($2 \pi$-periodic solutions). Then Craig \cite{C} proved the existence of small-amplitude periodic solutions, for a large set of frequencies $\omega$, for the autonomous {\it pseudo differential} equation 
$$
\partial_{tt} v - \partial_{xx} v = a(x) v + b(x, |D|^\beta v)\,, \qquad \beta <1
$$
and Bourgain \cite{B2} obtained the same result for the equation $\partial_{tt} v - \partial_{xx} v + m v + (\partial_t v)^2 = 0$. The above results are based on a Newton-Nash-Moser scheme and a Lyapunov-Schmidt decomposition. 

\noindent
For the water waves equations, which are fully nonlinear PDEs, we mention 
the pioneering work of Iooss-Plotnikov-Toland \cite{Ioo-Plo-Tol}   
about  existence of time periodic standing waves, 
and of Iooss-Plotnikov 
\cite{IP09}, \cite{IP11} for 3-dimensional travelling water waves. 
The key idea is to 
use diffeomorphisms of the torus $\T^2$ and pseudo-differential operators, in order to conjugate
the linearized operator 
to one with constant coefficients plus a sufficiently smoothing remainder. 
This is enough to invert the whole linearized operator (at any approximate solution) by Neumann series.  
Very recently Baldi \cite{Baldi-Benj-Ono} has further developed the techniques of \cite{Ioo-Plo-Tol},
proving the existence of periodic solutions for fully nonlinear autonomous, reversible Benjamin-Ono equations. We mention also the recent paper of Alazard and Baldi \cite{Alazard-Baldi} concerning the existence of periodic standing-wave solutions of the water waves equations with surface tension. 

\noindent
These methods do not work for proving the existence of quasi-periodic solutions and they do not imply the linear stability.

\noindent
Existence of quasi-periodic solutions of PDEs (that we shall call in a broad sense KAM theory) with 
unbounded  perturbations (the nonlinearity contains derivatives) has been developed 
by Kuksin \cite{K2} for KdV and then Kappeler-P\"oschel \cite{KaP}. The key idea is to work with a variable coefficients normal form along the KAM scheme. The homological equations, arising at each step of the iterative scheme, are solved thanks to the so called Kuksin lemma, see Chapter 5 in \cite{KaP}. This approach has been improved by Liu-Yuan \cite{LY0}, \cite{LY} who proved a stronger version of the Kuksin Lemma and applied it to derivative NLS and Benjamin-Ono equations. These methods apply to dispersive PDEs  like KdV, derivative NLS but not to derivative wave equation (DNLW) which contains first order derivatives in the nonlinearity. KAM theory for DNLW equation has been recently developed by Berti-Biasco-Procesi in \cite{BBiP1} for Hamiltonian and in \cite{BBiP2} for reversible equations. The key ingredient is to provide a sufficiently accurate asymptotic expansion of the perturbed eigenvalues which allows to impose the {\it Second order Melnikov} conditions. This is achieved by introducing the notion of quasi-T\"oplitz vector field which has been developed by Procesi-Xu \cite{PX} and it is inspired to the T\"oplitz-Lipschitz property developed by Eliasson-Kuksin in \cite{EK1}, \cite{EK}. Existence of quasi-periodic solutions can be also proved by imposing only {\it first order Melnikov conditions}. This method has been developed, for PDEs in higher space dimension, by Bourgain in \cite{Bo1}, \cite{B3}, \cite{B5} for analytic NLS and NLW, extending the result of Craig-Wayne \cite{CW} for semilinear 1-dimensional wave equation. This approach is based on the so-called {\it multiscale analysis} of the linearized equations and it has been recently improved by Berti-Bolle \cite{BBo10}, \cite{BB12} for NLW, NLS with differentiable nonlinearity and by Berti-Corsi-Procesi \cite{BCP} on compact Lie-groups. It is especially convenient in the case of high multiplicity of the eigenvalues, since the second order Melnikov conditions are violated. As a consequence of having imposed only the first order Melnikov conditions, this method does not provide any information about the linear stability of the quasi-periodic solutions, since the linearized equations have variable coefficients. Indeed there are very few results concerning the existence and linear stability of quasi-periodic solutions in the case of multiple eigenvalues. We mention Chierchia-You \cite{ChierchiaYou}, for analytic 1-dimensional NLW equation with periodic boundary conditions (double eigenvalues) and in higher space dimension Eliasson-Kuksin \cite{EK} for analytic NLS. 

\noindent
 All the aforementioned KAM results concern {\it semi-linear} PDEs, namely PDEs in which the order of nonlinear part of the vector field is strictly smaller than the order of the linear part. For quasi-linear (either fully nonlinear) PDEs, 
the first KAM results have been recently proved by Baldi-Berti-Montalto  in \cite{BBM-Airy}, 
\cite{BBM-auto}, \cite{BBM-mKdV} for perturbations of 
Airy, KdV and mKdV equations, by Feola-Procesi \cite{Feola-Procesi} for fully nonlinear reversible Schr\"odinger equation and by Feola \cite{Feola} for quasi-linear Hamiltonian Schr\"odinger equation. For the water waves equations with surface tension, the existence of quasi-periodic standing wave solutions has been recently proved by Berti-Montalto in \cite{Berti-Montalto}.

\medskip

\noindent
The key analysis of the present paper concerns the linearized operator obtained at any step of the Nash-Moser scheme. The main purpose is to reduce the linearized operator to a $2 \times 2$ time independent block diagonal operator. This cannot be achieved by implementing directly a KAM {\it reducibility} scheme, since the constant coefficients part of the linearized operator has the same order as the non-constant part, implying that the {\it loss of derivatives} accumulates quadratically along the iterative scheme. In order to overcome this problem, we perform some transformations which reduce the order of the derivatives in the perturbation but not its size. We use {\it quasi-periodic reparametrization of time} and {\it pseudo differential operators} to reduce the linearized operator to a diagonal operator plus a one smoothing remainder, see \eqref{pappalardo}. At this point we perform a  KAM reducibility scheme that reduces quadratically the size of the remainder at each step of the iteration. Note that, because of the double multiplicity of the eigenvalues $|j|^2$, $j \in \Z$ of the operator $- \partial_{xx}$, the linearized operator cannot be completely diagonalized. This problem is overcome by working with a $2 \times 2$ block diagonal {\it normal form} along the iteration, which is obtained by pairing the space Fourier modes $j$ and $- j$. This strategy has been also developed by Feola \cite{Feola} for quasi-linear Hamiltonian NLS. We will explain more precisely our procedure in Section \ref{idea dell dim}.

\medskip

\noindent
We now state precisely the main results of this paper. Rescaling the variable $v \mapsto \delta^{\frac13} v$ and writing the equation \eqref{main equation} as a first order system, we get the PDE
\begin{equation}\label{Kirchoff first order}
\begin{cases}
\partial_t v = p \\
\partial_t p = \Big( 1 + \e \int_{\T} |\partial_x v|^2\,dx  \Big) \partial_{xx} v + \e  f(\omega t, x)\,,
\end{cases} \qquad \e:= \delta^{\frac23}
\end{equation}
which is a Hamiltonian equation of the form 
$$
\begin{cases}
\partial_t v = \nabla_p H(\omega t, v, p) \\
\partial_t p = - \nabla_v H(\omega t, v, p)
\end{cases}
$$
whose Hamiltonian is 
\begin{align}
& H(\omega t, v, p) \nonumber\\
&  := \frac12 \int_{\T} \big( p^2 + |\partial_x v|^2 \big)\, dx  + \e \Big( \frac12  \int_{\T} |\partial_x v|^2 \, dx \Big)^2 - \e \int_{\T} f(\omega t, x) v\, dx\,. \label{Hamiltoniana Kirchoff}
\end{align}
 We look for quasi-periodic solutions $(v(\omega t, x), p(\omega t, x))$, $v , p : \T^\nu \times \T \to \R$ of the equation \eqref{Kirchoff first order}. This is equivalent to find zeros $(v(\vphi, x), p(\vphi, x))$ of the nonlinear operator 
\begin{equation}\label{operatore non lineare}
{F}( \e, \omega, v,p):= \begin{pmatrix}
 \omega \cdot \partial_\vphi v - p \\
 \omega \cdot \partial_\vphi p - \Big( 1 + \e \int_{\T} |\partial_x v|^2\,dx  \Big) \partial_{xx} v - \e  f 
\end{pmatrix} 
\end{equation}

\noindent
in the Sobolev space $H^s(\T^{\nu + 1}, \R^2) = H^s(\T^{\nu + 1}, \R) \times H^s(\T^{\nu + 1}, \R)$
where 
\begin{align}
 & H^s(\T^{\nu + 1}, \R)   \label{H s} \\
 &  := \Big\{ v(\vphi, x) = \sum_{\begin{subarray}{c}\ell \in \Z^\nu \\
  j \in \Z
 \end{subarray}} \widehat v_{ j}(\ell) e^{\ii (\ell \cdot \vphi + j  x)}:  \| v\|_s^2 :=  \sum_{\begin{subarray}{c}\ell \in \Z^\nu \\
  j \in \Z
 \end{subarray}} \langle \ell, j \rangle^{2 s} |\widehat v_{j}(\ell)|^2 < + \infty \Big\}\,, \nonumber
\end{align}
$\langle \ell, j \rangle := {\rm max}\{1, |\ell|, |j| \}$, $|\ell| := {\rm max}_{i = 1, \ldots, \nu} |\ell_i|$. From now on we fix $s_0 := [(\nu + 1)/2] + 1$, where for any real number $x \in \R$, we denote by $[x]$ its integer part, so that for any $s \geq s_0$ the Sobolev space $H^s(\T^{\nu + 1})$ is compactly embedded in the continuous functions ${\mathcal C}^0(\T^{\nu + 1})$. 

\noindent
We assume that the forcing term $f \in {\mathcal C}^q(\T^\nu \times \T, \R)$ has zero average, namely 
\begin{equation}\label{condizione media f}
\int_{\T^{\nu + 1}} f(\vphi, x)\, d \vphi\, d x = 0\,.
\end{equation}

\noindent
 Now, we are ready to state the main Theorems of this paper. 
\begin{theorem}\label{main theorem kirchoff}
There exist $q := q(\nu) > 0$, $s := s(\nu) > 0$ such that: for any $f \in {\mathcal C}^q(\T^\nu \times \T, \R)$ satisfying the condition \eqref{condizione media f}, there exists $\e_0 = \e_0(f, \nu) > 0$ small enough such that for all $\e \in (0, \e_0)$,  there exists a Cantor set ${\mathcal C}_\e \subseteq \Omega$ of asimptotically full Lebesgue measure i.e. 
$$
|{\mathcal C}_\e| \to |\Omega | \qquad \text{as} \qquad  \e \to 0\,,
$$
such that for any $\omega \in {\mathcal C}_\e$ there exist $v(\e, \omega), p(\e, \omega) \in H^{ s}(\T^{\nu + 1}, \R)$, satisfying 
\begin{equation}\label{condizione media nulla soluzione vphi x}
\int_{\T^{\nu + 1}} v(\vphi, x)\, d \vphi\, d x = \int_{\T^{\nu + 1}} p(\vphi, x)\, d \vphi \, d x = 0\,,
\end{equation}
  such that ${ F}(\e, \omega, v(\e, \omega), p(\e, \omega)) = 0\,,$ where the nonlinear operator $F$ is defined in \eqref{operatore non lineare} and 
\begin{equation}\label{pappoletta}
\|  v(\e, \omega)\|_{\bar s}\,, \| p(\e, \omega) \|_{s} \to 0 \qquad \text{as} \qquad \e \to 0\,.
\end{equation}
\end{theorem}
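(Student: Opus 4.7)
The proof is via a Nash--Moser iteration applied to the nonlinear operator $F$ in \eqref{operatore non lineare}, starting from the trivial approximate zero $(v,p) = (0,0)$, for which $F(\e, \om, 0, 0)$ has size $O(\e)$. The heart of the argument is a tame estimate, in high Sobolev norm, for the inverse of the linearized operator $\mL_\om := D_{(v,p)} F(\e, \om, v, p)$ at any approximate solution $(v, p)$, valid for $\om$ in a set $\mathcal C_\e \subseteq \Omega$ whose complement has measure vanishing with $\e$. Granted such an inverse, the standard Nash--Moser scheme converges super-exponentially in a scale of Sobolev spaces and produces the solution claimed in \eqref{pappoletta}; the zero-average conditions \eqref{condizione media nulla soluzione vphi x} are preserved along the iteration thanks to \eqref{condizione media f} and the invariance of the zero-mean subspace under $F$.

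The main obstacle is the inversion of $\mL_\om$. Its action on a variation $(\hat v, \hat p)$ takes the form
\begin{equation*}
\mL_\om \begin{pmatrix} \hat v \\ \hat p \end{pmatrix} = \begin{pmatrix} \om\cdot\pa_\vphi \hat v - \hat p \\ \om\cdot\pa_\vphi \hat p - a(\vphi)\, \pa_{xx}\hat v - \mR[\hat v] \end{pmatrix},
\end{equation*}
where $a(\vphi) = 1 + \e \int_\T |\pa_x v|^2\, dx$ depends on $\vphi$ but not on $x$---a crucial structural feature of the Kirchhoff nonlinearity---and $\mR$ is a rank-one correction coming from the variation of the integral. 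Eliminating $\hat p$ yields a scalar wave-type equation $(\om\cdot\pa_\vphi)^2 \hat v - a(\vphi)\pa_{xx}\hat v - \mR \hat v = g$, whose principal symbol still has $\vphi$-dependent coefficients of the same order as the constant part. A direct KAM reducibility scheme is ruled out: the loss of derivatives at each step would accumulate quadratically and the iteration would diverge.

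To circumvent this I would first apply a quasi-periodic reparametrization of time, a diffeomorphism $\vphi \mapsto \vphi + \om\, \beta(\vphi)$ of $\T^\nu$ chosen so that the pull-back renormalizes $a$ to a constant $\muff > 0$; this is possible precisely because $a$ is $x$-independent. The leading part then becomes $(\om\cdot\pa_\vphi)^2 - \muff\, \pa_{xx}$ with constant coefficients. Successive pseudo-differential conjugations (multiplications by functions of $(\vphi, x)$ and Fourier multipliers in $x$) would then normalize the lower-order terms to a diagonal Fourier multiplier plus a one-smoothing remainder of bounded size. Because the eigenvalues $|j|^2$ of $-\pa_{xx}$ have multiplicity two for $j \neq 0$ under periodic boundary conditions, complete diagonalization is impossible; instead one pairs Fourier modes $j$ and $-j$ and runs a KAM reducibility scheme converging to a $2\times 2$ block-diagonal normal form, in the spirit of \cite{Feola}. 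First- and second-order Melnikov conditions on the perturbed block eigenvalues are imposed along the iteration and define $\mathcal C_\e$; the measure estimate $|\Omega \setminus \mathcal C_\e| \to 0$ follows from non-degenerate dependence of the normal-form frequencies on $\om$ together with the smallness of $\e$. The decisive step is the reduction of order before KAM begins; without it the scheme fails, and the $x$-independence of $a(\vphi)$ is exactly what makes it available.
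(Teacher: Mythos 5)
Your proposal captures the core analytic strategy of the paper correctly: Nash--Moser from the trivial approximation, the $x$-independence of $a(\vphi)$ as the structural key, a quasi-periodic reparametrization of time to make the leading symbol constant, pseudo-differential conjugations to reach a $1$-smoothing remainder, and a $2\times 2$ block-diagonal KAM reducibility scheme (pairing $j$ and $-j$) with Melnikov conditions defining the Cantor set. This is indeed the architecture of Sections 4--7.

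However, there is a genuine gap in how you get from the iteration to the statement of the theorem itself. The paper does not run Nash--Moser on the full operator $F$; it first projects via $\pi_0$, $\pi_0^\bot$ and decouples $F(\e,\omega,v,p)=0$ into two completely independent systems, \eqref{sistema su media nulla} on the zero-$x$-average functions and \eqref{sistema sul modo 0} on the zero Fourier mode in $x$. The latter is a constant-coefficient ODE in $\vphi$ which is solved explicitly in Lemma \ref{lemma equazione sulle medie nulle} for diophantine $\omega$, and it is precisely here that the hypothesis \eqref{condizione media f} enters, as a solvability (zero-average) condition for $\omega\cdot\partial_\vphi p_0 = \e f_0$. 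The whole Nash--Moser/reducibility machinery (Theorem \ref{main theorem kirchoff 1}) is confined to the zero-$x$-average spaces $H^s_0(\T^{\nu+1})$, and the claimed solution in Theorem \ref{main theorem kirchoff} is the sum $(v_0,p_0)+\mathbf{u}_\infty$ of the two pieces. Your one-line remark that ``the zero-average conditions are preserved along the iteration thanks to \eqref{condizione media f} and the invariance of the zero-mean subspace under $F$'' blurs these two roles: on the zero-$x$-mode the linearized operator degenerates to $(\omega\cdot\partial_\vphi\,\hat v_0 - \hat p_0,\ \omega\cdot\partial_\vphi\,\hat p_0)$, which sees none of the nonlinearity and must be handled by an entirely separate (and elementary) argument. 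Without making this split explicit you cannot state clean tame estimates for the inverse, nor does the role of \eqref{condizione media f} come out.

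A second, smaller, divergence: you propose eliminating $\hat p$ to obtain the second-order scalar equation $(\omega\cdot\partial_\vphi)^2\hat v - a(\vphi)\partial_{xx}\hat v - \mathcal{R}\hat v = g$ and then reparametrizing time. The paper instead keeps the first-order system, applies a symplectic symmetrization $\mathcal{S}$ bringing it to $\pm a_1(\vphi)|D|$ off-diagonal, passes to complex coordinates, and only then reparametrizes. This is not cosmetic: the whole reducibility scheme rests on the Hamiltonian structure being carried through every conjugation (the remainders stay of the form \eqref{cal R Hamiltoniano}, hence the final $2\times2$ blocks are self-adjoint, which is what makes the spectral asymptotics \eqref{asintotica intro} and the measure estimate go). Reducing to a scalar second-order operator would require a different bookkeeping of this structure, and the reparametrization interacting with $(\omega\cdot\partial_\vphi)^2$ produces first-order terms that you do not mention.
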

\begin{remark}
The condition \eqref{condizione media f} on the forcing term $f$ is an essential requirement to get the above existence result. Indeed, if \eqref{condizione media f} does not hold and if $(v, p)$ solves $F(\e, \omega, v, p) = 0$, integrating with respect to $(\vphi, x)$, we get immediately a contraddiction.
\end{remark}
 
 \noindent
We now discuss the precise meaning of linear stability. The linearized PDE on a quasi-periodic function $(v(\omega t, x), p(\omega t, x))$, associated to the equation \eqref{Kirchoff first order}, has the form 
\begin{equation}\label{equazione linearizzata}
\begin{cases}
\partial_t \widehat v = \widehat p \\
\partial_t \widehat p = a(\omega t) \partial_{xx}\widehat v + {\mathcal R}(\omega t)[\widehat v]
\end{cases} 
\end{equation}
where 
\begin{align}
\qquad a(\omega t)  & := 1 + \e \int_{\T} | v_x(\omega t, x)|^2\, d x\,, \label{coefficienti equazione linearizzata1}\\ 
{\mathcal R}(\omega t)[\widehat v] & := - 2 \e v_{xx}(\omega t, x) \int_\T  v_{xx}(\omega t, x) \widehat v\, d x\,. \label{coefficienti equazione linearizzata2}
\end{align}
In order to state precisely the next Theorem, let us introduce, for any $s \geq 0$, the Sobolev spaces
$$
H^s(\T_x, \R) := \big\{ u(x) = \sum_{j \in \Z} u_j e^{\ii j x} : \| u \|_{H^s_x}^2 := \sum_{j \in \Z} \langle j \rangle^{2 s} |u_j|^2 < + \infty \big\}\,,\quad 
$$
$$
H_0^s(\T_x, \R) := \big\{ u \in H^s(\T_x, \R) : \int_\T u(x)\, d x = 0  \big\}\,,
$$
where $\langle j \rangle := {\rm max}\{1, |j|\}$.
\begin{theorem}\label{theorem linear stability} {{\bf (Linear stability)}} 
There exist $ \bar \mu >  0 $, depending on $ \nu $, such that for all $S > s_0 + \overline \mu$, there exists $\e_0 = \e_0(S, \nu) > 0$ such that: for all $\e \in (0, \e_0)$, for all ${\bf v} = (v, p) \in H^{S}(\T^{\nu + 1}, \R^2) $ with $\| {\bf v} \|_{ s_0  + \bar \mu} \leq 1$, 
there exists a Cantor like set $  \Omega_\infty ({\bf v})  \subset \Omega $ 
such that,  for all $ \omega \in \Omega_\infty ({\bf v}) $, for all $s_0 \leq s \leq S - \overline \mu$ the following holds: 
for any initial datum $(\widehat v^{(0)}, \widehat p^{(0)}) \in H^s(\T_x, \R) \times H^{s - 1}_0(\T_x, \R)$ the solution $ t \in \R \mapsto (\widehat v(t, \cdot), \widehat p(t, \cdot))\in H^s(\T_x, \R) \times H^{s - 1}_0(\T_x, \R)$ of the equation \eqref{equazione linearizzata}, with initial datum $\widehat v(0, \cdot) = \widehat v^{(0)}$, $\widehat p(0, \cdot) = \widehat p^{(0)}$ is stable, namely 
$$
\sup_{t \in \R} \Big(\| v(t, \cdot) \|_{H^s_x} + \| p(t, \cdot)\|_{H^{s - 1}_x} \Big) \leq C(s)\big( \| \widehat v^{(0)}\|_{H^s_x} + \| \widehat p^{(0)}\|_{H^{s - 1}_x} \big)\,.
$$
\end{theorem}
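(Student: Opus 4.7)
The plan is to deduce linear stability from the block-diagonal reduction of the linearized operator that drives the whole paper. I would first reinterpret the system \eqref{equazione linearizzata} as the Hamiltonian linear flow $\partial_t \widehat{\bf v} = J\, A(\omega t) \widehat{\bf v}$ associated to a quadratic, quasi-periodic Hamiltonian in $(\widehat v, \widehat p)$, with $A(\omega t)$ a self-adjoint operator built from $a(\omega t)$ and $\mathcal{R}(\omega t)$ as in \eqref{coefficienti equazione linearizzata1}-\eqref{coefficienti equazione linearizzata2}. Because $\mathcal{R}(\omega t)$ is a rank-one smoothing remainder and $a(\omega t) - 1 = O(\varepsilon)$, this is exactly the type of quasi-periodic operator treated in the main reduction scheme outlined in Section \ref{idea dell dim}: change of time variable to freeze $a(\omega t)$ to a constant, a finite chain of pseudo-differential conjugations to push the remainder to a one-smoothing operator of the form \eqref{pappalardo}, followed by the KAM reducibility step that pairs the Fourier modes $j$ and $-j$.

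Concretely, I would invoke the conjugation constructed for the Nash-Moser linearized operator, evaluated at ${\bf v} = (v,p)$ rather than at an approximate solution, to produce on a Cantor-like set $\Omega_\infty({\bf v})\subset\Omega$ a bounded, invertible, tame change of variables $\Phi(\omega t)$ (composition of a time reparametrisation, multiplication operators, Fourier multipliers and a KAM transformation) such that $\widehat{\bf v} = \Phi(\omega t) {\bf w}$ turns \eqref{equazione linearizzata} into
\begin{equation*}
\partial_\tau {\bf w} = \mathcal{D}_\infty {\bf w},
\qquad
\mathcal{D}_\infty = \mathrm{diag}_{j\in\Z}\, M_j^\infty,
\end{equation*}
where each $M_j^\infty$ is a time-independent $2\times 2$ matrix (the $j,-j$ block forced by the double multiplicity of the eigenvalues of $-\partial_{xx}$). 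Reversibility/Hamiltonian symmetry, preserved by all the transformations, forces $M_j^\infty$ to have purely imaginary spectrum, so $\|e^{\tau \mathcal{D}_\infty}\|_{H^s_x\to H^s_x}$ is bounded uniformly in $\tau\in\R$ for every $s\ge s_0$.

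The main estimate is then transport of norms through $\Phi$. The reducibility theorem provides, for $s_0\le s\le S-\bar\mu$, tame bounds of the type $\|\Phi^{\pm1}(\omega t){\bf w}\|_{H^s_x} \lesssim_s \|{\bf w}\|_{H^s_x} + \|{\bf v}\|_{s+\bar\mu}\|{\bf w}\|_{H^{s_0}_x}$, uniformly in $t$, together with the analogous bound shifted by one derivative because $\mathcal{R}$ is one-smoothing and the KAM normal form is block-diagonal of the same order as $-\partial_{xx}$ (so $\widehat p$ loses exactly one derivative with respect to $\widehat v$, justifying the choice of spaces $H^s\times H^{s-1}_0$). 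Combining $\widehat{\bf v}(t,\cdot) = \Phi(\omega t)e^{t\mathcal{D}_\infty}\Phi(0)^{-1}\widehat{\bf v}^{(0)}$ with these estimates and the uniform spectral bound on $e^{t\mathcal{D}_\infty}$ yields
\begin{equation*}
\sup_{t\in\R}\bigl(\|\widehat v(t,\cdot)\|_{H^s_x}+\|\widehat p(t,\cdot)\|_{H^{s-1}_x}\bigr)
\le C(s)\bigl(\|\widehat v^{(0)}\|_{H^s_x}+\|\widehat p^{(0)}\|_{H^{s-1}_x}\bigr),
\end{equation*}
after using $\|{\bf v}\|_{s_0+\bar\mu}\le 1$ to absorb the low-norm term.

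The main obstacles are two. First, verifying that the whole reduction procedure can be applied to the linearized operator at a \emph{given}, not necessarily solving, pair $(v,p)$ with small norm: this is precisely what the Nash-Moser inversion arguments of the paper establish, up to fixing the Cantor set $\Omega_\infty({\bf v})$ imposing the first and second order Melnikov conditions needed by the KAM step. Second, tracking the loss of one derivative: one must check that the change of variables and the block-diagonal normal form respect the asymmetric scale $H^s_x\times H^{s-1}_x$ (equivalently, conjugate the system by $\mathrm{diag}(1,\langle D\rangle^{-1})$ at the outset); the zero-mean condition on $\widehat p^{(0)}$ and on $f$ (hence on the solution, by \eqref{condizione media nulla soluzione vphi x}) is what allows us to restrict the analysis to $H^{s-1}_0$ and to invert $\partial_{xx}$ safely on the corresponding subspaces.
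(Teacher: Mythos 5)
Your proposal follows essentially the same route as the paper: split off the zero spatial Fourier mode (where the zero-average assumption on $\widehat p^{(0)}$ gives trivial stability), then conjugate the zero-average part of \eqref{equazione linearizzata} via the chain of transformations from Sections \ref{riduzione linearizzato}--\ref{sec:redu} (symmetrization/complex variables, quasi-periodic time reparametrization, descent, KAM block-diagonalization) down to the constant-coefficient $2\times 2$ block-diagonal system \eqref{equazione lineare ridotta}, conserve Sobolev norms there, and transport the estimate back through the tame, $\vphi$-dependent transformations. The cast of characters (${\mathcal S}$, ${\mathcal B}$, ${\mathcal A}$, ${\mathcal V}$, $\Phi_\infty$, the Cantor set $\Omega_\infty$, the derivative count that justifies the asymmetric scale $H^s_x\times H^{s-1}_x$) matches the paper's Section \ref{proof linear stability}.

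The one step you should tighten is the spectral argument at the core. As written, you infer uniform boundedness of $e^{\tau\mathcal D_\infty}$ from ``purely imaginary spectrum of the $2\times 2$ blocks $M_j^\infty$''; that implication is false in general (a non-diagonalizable $2\times 2$ matrix with a double imaginary eigenvalue generates an unbounded group). The correct mechanism, and the one the paper uses, is the stronger consequence of the Hamiltonian structure: the block-diagonal operator $\mathcal D_\infty^{(1)} = \mathrm{diag}_{j\in\N}\,{\bf D}_j^\infty$ has \emph{self-adjoint} $2\times 2$ blocks (see \eqref{cal S E alpha}, \eqref{definizione bf D infinito}, \eqref{autovcon}), so $-\ii\mathcal D_\infty^{(1)}$ generates an isometric group on each ${\bf E}_j$, and since $[\mathcal D_\infty^{(1)},|D|^s]=0$ this isometry propagates to every $H^s_x$ norm, giving $\partial_t\|h(t,\cdot)\|_{H^s_x}^2=0$. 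Replacing ``purely imaginary spectrum'' by ``self-adjointness of the block-diagonal part plus commutation with $|D|^s$'' closes the gap and matches the paper's argument.
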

\begin{remark}
Note that the linear stability can be proved only for initial data $\widehat p^{(0)}$ with zero-average in $x$. Indeed, the equation \eqref{equazione linearizzata} projected on the zero Fourier mode is the ODE 
$$
\begin{cases}
\dot v_0 (t)= p_0(t) \\
\dot p_0(t) = 0 
\end{cases}
$$ 
whose solutions are 
$$
p_0(t) = p^{(0)} \,, \qquad v_0(t) = v^{(0)} + p^{(0)} t\,,\qquad v^{(0)}, p^{(0)} \in \R\,, \qquad \forall t \in \R\,.
$$
Hence, if $p^{(0)} \neq 0$, $|v_0(t)| \to + \infty$ as $t \to \pm \infty$ and we do not have the stability. 
\end{remark}

\subsection{Ideas of the proof}\label{idea dell dim}
In this section we explain in detail the main ideas of the proof. Because of the special structure of the nonlinear operator $F$ defined in \eqref{operatore non lineare}, it is convenient to perform the decomposition \eqref{proiettore media}, \eqref{proiettore media vettoriale}, in order to split the equation $F(\e, \omega, v, p) = 0$ into the equations \eqref{sistema su media nulla}, \eqref{sistema sul modo 0}. The equation \eqref{sistema sul modo 0} arises by projecting the nonlinear operator $F$ on the zero Fourier mode in $x$ and it is a constant coefficients PDE which can be easily solved by imposing a diophantine condition on the frequency vector $\omega$ (see Lemma \ref{lemma equazione sulle medie nulle}). Hence, we are reduced to find zeros of the nonlinear operator ${\mathcal F}$ defined in \eqref{operatore su media nulla} which is obtained by restricting $F$ to the space of the functions with zero average in $x$. Theorems \ref{main theorem kirchoff}, \ref{theorem linear stability} then follow by Theorem \ref{main theorem kirchoff 1}, which is based on  a Nash-Moser iteration on the nonlinear map ${\mathcal F}$ on the scale of Sobolev spaces $H^s_0(\T^{\nu + 1}, \R^2)$, see \eqref{Sobolev media nulla}. The main issue concerns the invertibility of the linearized operator ${\mathcal L} = \partial_{(u, \psi)} {\mathcal F}(u, \psi)$ in \eqref{operatore linearizzato} at any approximate solution and the proof of tame estimates for its inverse (see Theorem \ref{partial invertibility of the linearized operator}). This information is obtained by conjugating ${\mathcal L}$ to a $2 \times 2$ time-independent block diagonal operator. Such a conjugacy procedure is the content of Sections \ref{riduzione linearizzato}, \ref{sec:redu}. 

\noindent
{\bf Regularization of the linearized operator.} The goal of Section \ref{riduzione linearizzato} is to reduce the linearized operator ${\mathcal L}$ in \eqref{operatore linearizzato} to the operator ${\mathcal L}_4$ in \eqref{cal L4} which has the form 
\begin{equation}\label{pappalardo}
 {\bf h} = (h, \overline h) \mapsto \omega \cdot \partial_\vphi {\bf h} + \ii m T |D| {\bf h} + {\mathcal R}_4 {\bf h}\,,
\end{equation}
where $m \in \R$ is close to $1$, $T := \begin{pmatrix}
1 & 0 \\
0 & - 1
\end{pmatrix}$, $|D| = \sqrt{- \partial_{xx}}$ and ${\mathcal R}_4$ is a Hamiltonian (see Section \ref{sezione formalismo hamiltoniano}) and 1-smoothing operator. More precisely the operator ${\mathcal R}_4$ satisfies $|{\mathcal R}_4 |D||_s < +\infty$, see Lemma \ref{stime prima riducibilita}, where the {\it block-decay norm} $|\cdot |_s$ is defined in \eqref{decadimento Kirchoff}. This regularization procedure is splitted in three parts. 

\medskip

\noindent
{\it 1. Symmetrization and complex variables.} In Section \ref{step 1 riduzione}, we symmetrize the highest order non-constant coefficients term $a(\vphi) \partial_{xx}$ in \eqref{operatore linearizzato}, by conjugating ${\mathcal L}$ with the transformation ${\mathcal S}$, defined in \eqref{cal S1}. The conjugated operator ${\mathcal L}_1$, defined in \eqref{cal L1}, has the form 
$$
\begin{pmatrix}
\widehat u \\
\widehat \psi
\end{pmatrix} \mapsto \begin{pmatrix}
\omega \cdot \partial_\vphi + a_0(\vphi) & - a_1(\vphi) |D| \\
a_1(\vphi) |D| + {\mathcal R}^{(1)} & \omega \cdot \partial_\vphi - a_0(\vphi)
\end{pmatrix}\begin{pmatrix}
\widehat u \\
\widehat \psi
\end{pmatrix}
$$
where $a_1, a_0$ are real valued Sobolev functions in $H^s(\T^\nu, \R)$, with $ a_1 - 1\,,\, a_0 = O(\e)$ and ${\mathcal R}^{(2)}$ is an arbitrarily regularizing operator of the form \eqref{operatore forma buona resto}. In section \ref{variabili complesse}, we introduce the complex variables $h = \frac{1}{\sqrt{2}} (\widehat u + \ii \widehat \psi)$ and the operator ${\mathcal L}_1$ transforms into ${\mathcal L}_2$ defined in \eqref{cal L2 complex coordinates} which has the form 
$$
\begin{pmatrix}
h \\
\overline h
\end{pmatrix}
\mapsto 
 \begin{pmatrix}
\big(\omega \cdot \partial_\vphi  + \ii a_1(\vphi) |D|  + \ii {\mathcal R}^{(2)} \big) h +  \big(a_0(\vphi)  + \ii {\mathcal R}^{(2)} \big) \overline h \\
\qquad \qquad \text{complex\,\,conjugate}
\end{pmatrix},
$$
with ${\mathcal R}^{(2)}  := \frac{1}{2}{\mathcal R}^{(1)}$.
\medskip

\noindent
{\it 2. Change of variables.} In Section \ref{sezione diffeo del toro}, we reduce to constant coefficients the highest order term $\ii a_1(\vphi) |D|$ in the operator ${\mathcal L}_2$. Note that it depends only on time. This is due to the special structure of the equation, since the nonlinear term is {\it diagonal} in space. To reduce to constant coefficients $\ii a_1(\vphi) |D|$, we conjugate ${\mathcal L}_2$ by means of the reparametrization of time ${\mathcal A} h (\vphi, x) := h(\vphi + \omega \alpha(\vphi, x))$ induced by the diffeomorphism of the torus $\T^\nu$, $\vphi \mapsto \vphi + \omega \alpha(\vphi)$. Since $\omega$ is diophantine, choosing $\alpha(\vphi)$ as in \eqref{definition alpha}, the transformed operator ${\mathcal L}_3$ defined in \eqref{cal L3} is 
$$
 \begin{pmatrix}
 h \\
 \overline h
 \end{pmatrix} \mapsto \begin{pmatrix}
\big( \omega \cdot \partial_\vartheta + \ii m |D| + \ii {\mathcal R}^{(3)} \big) h + \big( b_0 + \ii {\mathcal R}^{(3)} \big) \overline h \\
\qquad \qquad \text{complex\,\,conjugate}
\end{pmatrix}
$$
where $m \in \R$ is a constant $m \simeq 1$, $b_0 = O(\e)$ is a real valued Sobolev function in $H^s(\T^\nu, \R)$ and ${\mathcal R}^{(4)}$ is a one-smoothing operator still satisfying the estimates \eqref{stime cal R3}. Actually ${\mathcal R}^{(4)}$ is arbitrarily smoothing, since it has the form \eqref{operatore forma buona resto}, but we only need that it is one-smoothing. 

\medskip

\noindent
{\it 3. Descent method.} In Section \ref{descent method}, we perfom one step of descent method, in order to remove the zero-th order term from the operator ${\mathcal L}_3$. Since the operator ${\mathcal R}^{(3)}$ is already one-smoothing, we need just to remove the multiplication operator $\overline h \mapsto b_0(\vphi) \overline h$. For this purpose we transform ${\mathcal L}_3$ by means of the symplectic transformations ${\mathcal V} = {\rm exp}(\ii V(\vphi) |D|^{- 1})$, $V(\vphi) = \begin{pmatrix}
0 & v(\vphi) \\
- v(\vphi) & 0
\end{pmatrix}$ where $v : \T^\nu \to \R$ is a real valued Sobolev function. Choosing $v$ as in \eqref{definizione w0(vphi)}, the transformed operator ${\mathcal L}_4$ in \eqref{cal L4} is the sum of a diagonal operator and a 1-smoothing operator ${\mathcal R}_4$, such that ${\mathcal R}_4 |D|$ has finite block-decay norm.

\medskip

\noindent
{\bf $2 \times 2$-block diagonal reducibility scheme.} Once \eqref{pappalardo} has been obtained, we perform a quadratic KAM reducibility scheme which conjugates the operator ${\mathcal L}_4$ to the $2 \times 2$ block diagonal operator ${\mathcal L}_\infty$ (see Theorems \ref{thm:abstract linear reducibility}, \ref{teoremadiriducibilita}). The reason for which we cannot completely diagonalize the operator ${\mathcal L}_4$ is the following: since we deal with periodic boundary conditions, the eigenvalues of the operator $m |D|$ are double, therefore the second order Melnikov conditions for the differences $m |j| - m |\pm j|$ are violated. This implies that after the first step of the KAM iteration, the correction to the diagonal part $\ii m |D|$ is an operator of the form $\ii \widehat D$, 
$
\widehat D= {\rm diag}_{j \in \N} \widehat{\bf D}_j\,, \quad 
$
where $\widehat{\bf D}_j$ is a linear self-adjoint operator ${\rm span}\{ e^{\ii j x}, e^{- \ii j x}\} \to {\rm span}\{ e^{\ii j x}, e^{- \ii j x}\}$ which we identify with the $2 \times 2$ self-adjoint matrix of its Fourier coefficients with respect to the basis $\{e^{\ii j x}, e^{- \ii j x} \}$. 
The self-adjointness of the $2 \times 2$ blocks is provided by the Hamiltonian structure. In order to deal with these $2 \times 2$ block diagonal operators, it is convenient to introduce a $2 \times 2$ block representation for linear operators. We develop this formalism in Section \ref{sezione rappresentazione 2 per 2}. We remark that the problem of the double multiplicity of the eigenvalues has been overcome for the first time by Chierchia-You \cite{ChierchiaYou}, for analytic semilinear Klein-Gordon equation with periodic boundary condition. We also mention that the $2 \times 2$-block diagonal reducibility scheme, adopted in Section \ref{sec:redu}, has been recently developed by Feola \cite{Feola} for quasi-linear Hamiltonian NLS equation.

\noindent
 One of the main task in the KAM reducibility scheme is to provide, along the iterative scheme, an asymptotic expansion of the perturbed $2 \times 2$ blocks of form 
\begin{equation}\label{asintotica intro}
  \begin{pmatrix} m |j|  & 0 \\
 0 &  m |j|
 \end{pmatrix}+ O(\e |j|^{- 1})\,.
\end{equation} 
This expansion allows to show that the required second order Melnikov non-resonance conditions are fullfilled for a large set of frequencies $\omega$. The asymptotic \eqref{asintotica intro} is achieved since the initial remainder ${\mathcal R}_0$ is 1-smoothing and this property is preserved along the reducibility scheme (see \eqref{Rsb} in Theorem \ref{thm:abstract linear reducibility}). This is the reason why we performed the regularization procedure of Section \ref{riduzione linearizzato} up to order $O(|D|^{- 1})$. We use the block-decay norm $| \cdot |_s$ (see \eqref{decadimento Kirchoff}) to estimate the size of the remainders along the iteration. This is convenient since the class of operators having finite block-decay norm is closed under composition (Lemma \ref{interpolazione decadimento Kirchoff}), solution of the homological equation (Lemma \ref{homologica equation}) and projections (Lemma \ref{lemma smoothing decay}). 

\medskip

\noindent
{\bf Linear stability.} A final comment concerns Theorem \ref{theorem linear stability} which is proved in Section \ref{proof linear stability}. Using the splitting \eqref{proiettore media}, \eqref{proiettore media vettoriale}, the linearized equation \eqref{equazione linearizzata} is decoupled into the two systems \eqref{papapapa}, \eqref{blablabla}. The system \eqref{papapapa} is a constant coefficients ODE which can be solved explicitly, hence it is enough to study the stability for the PDE \eqref{blablabla}, which is obtained by \eqref{equazione linearizzata}, restricting the vector field to the zero average functions in $x$. All the transformations we perform along the reduction procedure of Sections \ref{riduzione linearizzato}, \ref{sec:redu} are T\"oplitz in time operators (see Section \ref{astratto operatori Toplitz}), hence they can be regarded as time dependent quasi-periodic maps acting on the phase space (functions of $x$ only). Hence, by the procedure of Sections \ref{riduzione linearizzato}, \ref{sec:redu}, the linear equation \eqref{blablabla}, transforms into the PDE \eqref{equazione lineare ridotta}, whose vector field is a time independent $2 \times 2$ block-diagonal operator. Thanks to the Hamiltonian structure, such a vector field is skew self-adjoint, implying that all the Sobolev norms of the solutions remain constant for all time. This is enough to deduce the linear stability.  

\section{Functional setting}\label{sec:2} 

We may regard a function  $ u \in L^2 (\T^\nu \times \T, \C )$ of space-time also as a 
 $ \vphi $-dependent family of  functions $ u(\vphi, \cdot ) \in L^2 (\T_x, \C) $ that we expand in Fourier series as
\be\label{raggruppamento modi Fourier}
u(\vphi, x ) =   \sum_{j \in \Z  } u_{j} (\vphi) e^{\ii j x } =
\sum_{\begin{subarray}{c}
\ell \in \Z^\nu \\
 j \in \Z 
 \end{subarray}}  \widehat u_j(\ell)  e^{\ii (\ell \cdot \vphi + j  x)}   \, ,
\ee
where 
$$
u_j(\vphi) := \frac{1}{2 \pi} \int_{\T} u(\vphi, x) e^{- \ii j  x}\, d x\,, 
$$
$$
\widehat u_j(\ell) :=  \frac{1}{(2 \pi)^{\nu + 1}} \int_{\T^{\nu + 1}} u(\vphi, x) e^{- \ii (\ell \cdot \vphi + j  x)}\,d \vphi\, d x\,.
$$
We also consider the space of the $L^2$ real valued functions that we denote by $L^2(\T^{\nu + 1}, \R)$, $L^2(\T_x, \R)$. We define for any $s \geq 0$ the Sobolev spaces $H^s(\T^{\nu + 1}, \C)$, $H^s(\T_x, \C)$ as 
$$
H^s(\T^{\nu + 1}, \C) := \big\{ u \in L^2 (\T^\nu \times \T, \C ) : \| u \|_s^2 := \sum_{(\ell, j) \in \Z^\nu \times \Z} \langle \ell, j \rangle^{2 s} |\widehat u_j(\ell)|^2 < +\infty \big\}\,, 
$$
$$
H^s(\T_x, \C) := \big\{ u \in L^2 (\T_x, \C ) : \| u \|_{H^s_x}^2 := \sum_{j \in \Z} \langle  j \rangle^{2 s} |\widehat u_j|^2 < +\infty \big\} 
$$
where $ \langle \ell, j \rangle := {\rm max}\{1 , |\ell|, |j| \}$, $\langle j \rangle := {\rm max}\{1 , |j| \}$ and for any $\ell \in \Z^\nu$, $|\ell| := {\rm max}_{i = 1, \ldots, \nu} |\ell_i|$.
In a similar way, we define the Sobolev spaces of real values functions $H^s(\T^{\nu + 1}, \R)$, $H^s(\T_x, \R)$. When no confusion appears, we simply write $L^2(\T^{\nu + 1})$, $L^2(\T_x)$, $H^s(\T^{\nu + 1})$, $H^s(\T_x)$. For any $s \geq 0$ we also define 
\begin{equation}\label{Sobolev media nulla}
H^s_0(\T^{\nu + 1}) := \big\{ u \in H^s(\T^{\nu + 1}) : \int_\T u(\vphi, x)\, dx = 0 \big\}\,, 
\end{equation}
\begin{equation}\label{Sobolev media nulla2}
H^s_0(\T_x) := \big\{ u \in H^s(\T_x) : \int_\T u( x)\, dx = 0 \big\}\,.
\end{equation}
and $L^2_0(\T^{\nu + 1}) = H^0_0(\T^{\nu + 1})$, $L^2_0(\T_x) = H^0_0(\T_x)$. We define the spaces $H^s_0(\T^{\nu + 1}, \C^2) := H^s_0(\T^{\nu + 1}, \C) \times H^s_0(\T^{\nu + 1}, \C)$ and $H^s_0(\T_x, \C^2) := H^s_0(\T_x, \C) \times H^s_0(\T_x, \C) $ equipped, respectively, by the norms $\|(h, v)\|_s := {\rm max}\{\| h \|_s \,, \| v\|_s \}$ and $\|(h, v)\|_{H^s_x} := {\rm max}\{\| h \|_{H^s_x} \,, \| v\|_{H^s_x} \}$. Similarly we define $H^s_0(\T^{\nu + 1}, \R^2) := H^s_0(\T^{\nu + 1}, \R) \times H^s_0(\T^{\nu + 1}, \R)$ and $H^s_0(\T_x, \R^2) := H^s_0(\T_x, \R) \times H^s_0(\T_x, \R)$ and the norms are defined as in the complex case.  

\noindent
 For a function $f : \Omega_o \to E$, $\omega \mapsto f(\omega)$, where $(E, \| \cdot \|_E)$ is a Banach space and 
$ \Omega_o $ is a subset of $\R^\nu$, we define the sup-norm and the lipschitz semi-norm as 
\be \label{def norma sup lip}
\| f \|^{\sup}_{E, \Omega_o} 
:= \sup_{ \omega \in \Omega_o } \| f(\omega) \|_E \,,\quad \| f \|_{E, \Omega_o}^{\lip} := \sup_{\begin{subarray}{c}
\omega_1, \omega_2 \in \Omega_o \\
\omega_1 \neq \omega_2
\end{subarray}} \frac{\| f(\omega_1) - f(\omega_2)\|_E}{|\omega_1 - \omega_2|}
\ee
and, for $ \g > 0 $, we define the weighted Lipschitz-norm
\be \label{def norma Lipg}
\| f \|^{\Lipg}_{E, \Omega_o}  
:= \| f \|^{\sup}_{E, \Omega_o} + \g \|  f \|^{\lip}_{E, \Omega_o}  \, . 
\ee
To shorten the above notations we simply omit to write $\Omega_o$, namely $\| f \|^{\sup}_{E} = \| f \|^{\sup}_{E, \Omega_o}$, $\| f \|_{E}^{\lip} = \| f \|_{E, \Omega_o}^{\lip}$, $\| f \|^{\Lipg}_{E} = \| f \|^{\Lipg}_{E, \Omega_o}$
If $f : \Omega_o \to \C$, we simply denote $\| f \|_{\C}^{\Lipg}$ by $|f|^\Lipg$
and if $ E = H^s(\T^{\nu + 1}) $ we simply denote $ \| f \|^{\Lipg}_{H^s} := \| f \|^{\Lipg}_s $. Given two Banach spaces $E, F$, we denote by ${\mathcal L}(E, F)$ the space of the bounded linear operators $E \to F$. If $E = F$, we simply write ${\mathcal L}(E)$. 

\medskip

\noindent
{\it Notation:} The notation $a \leq_s b$ means that there exists a constant $C(s) > 0$ depending on $s$ such that $a \leq C(s) b$. The constant $C(s)$ may depend also on the data of the problem, namely the number of frequencies $\nu$, the diophantine exponent $\tau > 0$ appearing in the non-resonance conditions, the forcing term $f$. If the constant $C$ does not depend on $s$ or if $s = s_0 = [(\nu + 1)/2] + 1$, we simply write $a \lessdot b$.

\medskip

\noindent
We recall the classical estimates for the operator $(\omega \cdot \partial_\vphi)^{- 1}$ defined as 
\begin{equation}\label{om d vphi inverso}
(\omega \cdot \partial_\vphi)^{- 1}[1] = 0\,, \quad (\omega \cdot \partial_\vphi)^{- 1}[e^{\ii \ell \cdot \vphi}] = \frac{1}{\ii (\omega \cdot \ell)} e^{\ii \ell \cdot \vphi}\,, \qquad \forall \ell \neq 0\,,
\end{equation}
for $\omega \in \Omega_{\gamma, \tau}$, where for $\gamma, \tau > 0$, 
\begin{equation}\label{diofantei Kn}
\Omega_{\gamma, \tau} := \Big\{ \omega \in \Omega : |\omega \cdot \ell| \geq \frac{\gamma}{| \ell |^\tau}\,, \quad \forall \ell \in \Z^\nu \setminus \{ 0 \}\,, \Big\}\,. 
\end{equation}
 If $h(\cdot ; \omega) \in H^{s + 2 \tau + 1}(\T^{\nu + 1})$, with $\omega \in \Omega_{\gamma, \tau}$, we have 
\begin{equation}\label{stima om d vphi inverso}
\| (\omega \cdot \partial_\vphi)^{- 1} h \|_s \leq \gamma^{- 1} \| h \|_{s + \tau}\,, \qquad \| (\omega \cdot \partial_\vphi)^{- 1} h \|_s^\Lipg \leq \gamma^{- 1} \| h \|_{s + 2 \tau + 1}^\Lipg\,.
\end{equation}
Denote by $\N$, the set of the strictly positive integer numbers $\N := \{1,2,3, \ldots \}$ and we set $\N_0 = \{ 0 \} \cup \N$.
 Given a function $h \in L^2_0(\T^{\nu + 1})$, we can write 
\begin{equation}\label{raggruppamento modi Fourier}
h(\vphi, x) = \sum_{\begin{subarray}{c}
\ell \in \Z^\nu \\
 j \in \Z \setminus \{ 0 \}
 \end{subarray}} \widehat h_{ j}(\ell) e^{\ii (\ell \cdot \vphi + j  x)} = \sum_{\begin{subarray}{c}\ell \in \Z^\nu \\
  j \in \N
  \end{subarray}} \widehat{\bf h}_{ j}(\ell, x) e^{\ii \ell \cdot \vphi} \,,
\end{equation}
where 
\begin{equation}\label{bf h ell alpha}
\widehat{\bf h}_{ j}(\ell, x) :=  \widehat h_{ j}(\ell) e^{\ii  j  x} + \widehat h_{- j}(\ell) e^{- \ii j x}\,, \qquad \forall j \in \N\,.
\end{equation}
It is straightforward to see that if $h \in H^s_0(\T^{\nu + 1})$, one has 
\begin{align}
\| h \|_s^2 = &  \sum_{\begin{subarray}{c}
\ell \in \Z^\nu \\
  j \in \N
  \end{subarray}} \langle \ell, j \rangle^{2 s} \| \widehat{\bf h}_{ j}(\ell) \|_{L^2}^2\,. \label{altro modo norma s}
\end{align}

\noindent
 We now recall the following classical interpolation result.
\begin{lemma}\label{interpolazione C1 gamma}
Let $u, v \in H^s(\T^{\nu + 1})$ with $s \geq s_0$. Then, there exists an increasing function $s \mapsto C(s)$ such that 
$$
\| u v \|_s \leq C(s) \| u \|_s \| v \|_{s_0}+ C(s_0)\| u \|_{s_0} \| v \|_s\,.
$$
If $u(\cdot; \omega)$, $v(\cdot; \omega)$, $\omega \in \Omega_o \subseteq \R^\nu$ are $\omega$-dependent families of functions in $H^{s}(\T^{\nu + 1})$, with $s \geq s_0$ then the same estimate holds replacing $\| \cdot \|_s$ by 

$\| \cdot \|_s^\Lipg$.
\end{lemma}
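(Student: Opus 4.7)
The plan is to prove the product estimate by a standard Fourier convolution argument and then upgrade to the weighted Lipschitz version by a telescoping identity. Throughout, write the Fourier expansions of $u$ and $v$ as in \eqref{H s} and denote the convolution coefficients of the product by
$$
\widehat{uv}_{j}(\ell) = \sum_{(\ell_1, j_1) \in \Z^{\nu+1}} \widehat u_{j_1}(\ell_1)\,\widehat v_{j-j_1}(\ell - \ell_1).
$$

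The first step is to exploit the elementary inequality
$$
\langle \ell, j \rangle^{s} \leq C(s)\bigl( \langle \ell_1, j_1 \rangle^{s} + \langle \ell - \ell_1, j - j_1 \rangle^{s} \bigr),
$$
valid for all $s \geq 0$, which splits the dyadic weight onto the two factors. Inserting this bound into the $\|\cdot\|_s$ definition and applying the Cauchy--Schwarz inequality on the convolution sum, one reduces the estimate to controlling sums of the form $\sum_{(\ell,j)} \langle \ell,j \rangle^{-2 s_0}$, which converge precisely because $s_0 = [(\nu+1)/2] + 1 > (\nu + 1)/2$. This yields the two symmetric terms $C(s)\|u\|_s\|v\|_{s_0}$ and $C(s_0)\|u\|_{s_0}\|v\|_s$ (the constant $C(s_0)$ absorbs the convergent tail sum). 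This is a completely routine argument; the only thing to watch is that one keeps the factor with the high $s$-index on the correct side so that the asymmetric form stated in the lemma emerges, and not the symmetric version $C(s)(\|u\|_s\|v\|_{s_0} + \|u\|_{s_0}\|v\|_s)$.

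For the Lipschitz-in-$\omega$ version, I would use the identity
$$
u(\omega_1) v(\omega_1) - u(\omega_2) v(\omega_2) = \bigl(u(\omega_1) - u(\omega_2)\bigr) v(\omega_1) + u(\omega_2) \bigl(v(\omega_1) - v(\omega_2)\bigr),
$$
divide by $|\omega_1 - \omega_2|$, take the supremum over $\omega_1 \neq \omega_2$ in $\Omega_o$, and apply the sup-version of the product estimate to each summand. Adding $\g$ times the resulting Lipschitz bound to the sup bound (already controlled by the first part of the proof applied pointwise in $\omega$), and using $\|u\|_{E}^{\sup} \leq \|u\|_{E}^{\Lipg}$ together with $\g\|u\|_{E}^{\lip} \leq \|u\|_{E}^{\Lipg}$, the weighted norms recombine to give the same inequality with $\|\cdot\|_s$ replaced by $\|\cdot\|_s^{\Lipg}$ on both sides.

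There is no genuine obstacle here: the core content is the Peetre inequality together with $s_0 > (\nu+1)/2$, and the Lipschitz upgrade is an entirely formal consequence of the product rule for difference quotients. The only mildly delicate point is keeping track of constants so that the single factor $C(s_0)$ multiplies the symmetric term, which matters later for tame estimates in the Nash--Moser scheme.
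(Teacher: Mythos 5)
The paper does not actually prove this lemma: it is introduced with ``We now recall the following classical interpolation result'' and stated without proof, so there is no in-paper argument to compare against. Your outline follows the standard Fourier--convolution route, and your Lipschitz upgrade via the telescoping identity $u(\omega_1)v(\omega_1)-u(\omega_2)v(\omega_2)=(u(\omega_1)-u(\omega_2))v(\omega_1)+u(\omega_2)(v(\omega_1)-v(\omega_2))$ combined with the sup-version of the estimate is exactly the formal upgrade one expects and is correct.

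There is, however, one genuine gap in your sup-norm argument, and it concerns precisely the point you flag as delicate. The Peetre-type inequality
$$
\langle \ell, j \rangle^{s} \leq C(s)\bigl( \langle \ell_1, j_1 \rangle^{s} + \langle \ell - \ell_1, j - j_1 \rangle^{s} \bigr)
$$
places the $s$-dependent factor $C(s)$ \emph{in front of both} weights. After you Cauchy--Schwarz each of the two resulting pieces against the $s_0$-weight, each piece acquires the convergent tail sum $\bigl(\sum_{(\ell,j)}\langle \ell,j\rangle^{-2 s_0}\bigr)^{1/2}$; but the $C(s)$ from Peetre multiplies both pieces, so what you actually obtain is
$$
\| u v \|_s \leq C(s)\,\| u \|_s \| v \|_{s_0} + C(s)\,\| u \|_{s_0} \| v \|_s\,,
$$
with $C(s)$ --- not $C(s_0)$ --- on the second term. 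Your parenthetical ``the constant $C(s_0)$ absorbs the convergent tail sum'' only explains where the $s_0$-dependent factor from the tail sum lives; it does not explain how the $s$-dependent Peetre constant drops out of the second term. To get the genuinely asymmetric bound with a constant depending only on $s_0$ in front of $\| u \|_{s_0}\| v\|_s$, one needs a finer decomposition: on the cone $\langle \ell_1, j_1\rangle \leq \langle \ell - \ell_1, j - j_1\rangle$, use a mean-value (or first-order Taylor) expansion such as
$$
\langle \ell,j\rangle^s \leq \langle \ell - \ell_1, j - j_1\rangle^s + s\,\langle \ell,j\rangle^{s-1}\langle \ell_1, j_1\rangle
$$
so that the leading piece carries \emph{no} $s$-dependent prefactor and the correction can be handled recursively or absorbed into the first term. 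This refinement, rather than the raw Peetre inequality, is what produces the statement as printed. That said, the weaker symmetric bound with $C(s)$ on both terms is all the paper ever uses (in particular in \eqref{interpolazione iterata} one still ends up with a factor $K(s)^k$), so the gap is one of precision rather than of substance.
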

Iterating the above inequality one gets that, for some constant $K(s)$, for any $n \geq 0$, 
\begin{equation}\label{interpolazione iterata}
\| u ^k\|_s \leq K(s)^k \| u\|_{s_0}^{k - 1} \| u \|_s\,
\end{equation}
and if $u(\cdot; \omega) \in H^s$, $s \geq s_0$ is a family of Sobolev functions, the same inequality holds repacing $\| \cdot \|_s$ by $\| \cdot \|_s^\Lipg$.

\noindent
We also recall the classical Lemmas on the composition operators. Since the variables $ (\vphi, x)$ have the same role, we present it for a  generic Sobolev space  $ H^s (\T^n ) $. For any $s \geq 0$ integer, for any domain $A \subseteq \R^n$ we denote by ${\mathcal C}^s(A)$ the space of the s-times continuously differentiable functions equipped by the usual $\| \cdot \|_{{\mathcal C}^s}$ norm. We consider the composition operator
$$
u(y) \mapsto {\mathtt f}(u)(y) := f(y, u(y))\,. 
$$ 
The following Lemma holds:

\begin{lemma}\label{Moser norme pesate} {\bf (Composition operator)}
Let $ f \in {\mathcal C}^{s + 1}(\T^n \times \R, \R )$, with  $   s \geq s_0 :=   [n/2] + 1 $. If $u \in H^s(\T^n)$, with $\| u \|_{s_0} \leq 1$, then $\| \mathtt f(u)\|_s \leq C(s, \| f \|_{{\mathcal C}^s})(1 + \| u \|_s)$.  
If   $u(\cdot, \omega) \in H^s(\T^n)$, $\omega \in \Omega_o \subseteq \R^\nu$  is a family of Sobolev functions
satisfying $\| u \|_{s_0}^{\Lipg} \leq 1$, then,  
$  \| {\mathtt f}(u) \|_s^{\Lipg} \leq C(s,  \| f\|_{{\mathcal C}^{s + 1}} ) ( 1 + \| u \|_{s}^{\Lipg}) $. 
\end{lemma}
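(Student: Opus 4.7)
The statement is a classical Moser-type tame composition estimate, adapted to the weighted Lipschitz norm \eqref{def norma Lipg}. I would split the proof into two parts: first the sup estimate for fixed $\omega$, then the weighted Lipschitz estimate, which is reduced to the first one.

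\emph{Sup estimate.} I prove $\|\mathtt{f}(u)\|_s \leq C(s,\|f\|_{\mathcal{C}^s})(1+\|u\|_s)$ by induction on the integer $s \geq s_0$. For $s=s_0$, the Sobolev embedding $H^{s_0}(\T^n)\hookrightarrow \mathcal{C}^0(\T^n)$ together with $\|u\|_{s_0}\leq 1$ controls $\|u\|_{L^\infty}$; applying the Fa\`a di Bruno formula to expand $\partial_y^\alpha(f(y,u(y)))$, for $|\alpha|\leq s_0$, as sums of products $(\partial_y^\beta\partial_u^k f)(y,u(y))\prod_{i=1}^k \partial_y^{\gamma_i} u(y)$, each such product is estimated via Lemma \ref{interpolazione C1 gamma} combined with Gagliardo--Nirenberg-type interpolation of $\|\partial_y^{\gamma_i} u\|_{L^{p_i}}$ between $L^\infty$ and $H^{s_0}$, where the smallness $\|u\|_{s_0}\leq 1$ absorbs every factor $\|u\|_{s_0}^{k-1}$ into the constant. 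For the step $s\to s+1$, I write
\[
\partial_y \big(\mathtt{f}(u)\big)(y) = (\partial_y f)(y,u(y)) + (\partial_u f)(y,u(y))\,\partial_y u(y),
\]
bound the first summand by the inductive hypothesis applied to $\partial_y f \in \mathcal{C}^s$, and estimate the second summand by the tame product estimate of Lemma \ref{interpolazione C1 gamma}, using the inductive hypothesis for $(\partial_u f)(y,u(y))$ (whence the loss to $\mathcal{C}^{s+1}$ in the final Lipschitz statement), together with the interpolation \eqref{interpolazione iterata} to keep the dependence linear in $\|u\|_{s+1}$.

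\emph{Weighted Lipschitz estimate.} For distinct $\omega_1,\omega_2\in\Omega_o$ I set $u_i:=u(\cdot,\omega_i)$, $u_\theta:=(1-\theta) u_2 + \theta u_1$, and use the fundamental theorem of calculus
\[
\mathtt{f}(u_1)(y) - \mathtt{f}(u_2)(y) = \Big(\int_0^1 (\partial_u f)(y,u_\theta(y))\,d\theta\Big)\big(u_1(y)-u_2(y)\big).
\]
Taking the $H^s$-norm and applying Lemma \ref{interpolazione C1 gamma}, this is bounded by $\sup_{\theta}\|(\partial_u f)(\cdot,u_\theta)\|_s\,\|u_1-u_2\|_{s_0}$ plus $\sup_{\theta}\|(\partial_u f)(\cdot,u_\theta)\|_{s_0}\,\|u_1-u_2\|_s$. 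The hypothesis $\|u\|_{s_0}^{\Lipg}\leq 1$ forces $\|u_\theta\|_{s_0}\leq 1$ uniformly in $\theta$ and in $\omega_i$, so the sup estimate (applied to $\partial_u f \in \mathcal{C}^s$) gives a bound $C(s,\|f\|_{\mathcal{C}^{s+1}})(1+\|u\|_s^{\sup})$ for the first factor. Combined with $\|u_1-u_2\|_s \leq |\omega_1-\omega_2|\,\|u\|_s^{\lip}$ and the analogous bound in $H^{s_0}$, dividing by $|\omega_1-\omega_2|$ and taking the supremum over $\omega_1\neq\omega_2$ produces $\gamma\|\mathtt{f}(u)\|_s^{\lip}\leq C(s,\|f\|_{\mathcal{C}^{s+1}})(1+\|u\|_s^{\Lipg})$; adding the sup bound already obtained yields the weighted Lipschitz estimate.

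\emph{Main obstacle.} The key delicacy is tameness: to produce $(1+\|u\|_s^{\Lipg})$ rather than a quadratic expression like $(\|u\|_s^{\Lipg})^2$, all occurrences of $\|u\|_{s_0}^{\Lipg}$ arising from the Fa\`a di Bruno expansions, products, and interpolations must be absorbed via the smallness hypothesis. This forces an asymmetric application of Lemma \ref{interpolazione C1 gamma} at every step of the induction and in the Lipschitz splitting, always pairing the high-order factor with the low-order $H^{s_0}$-factor; distributing derivatives symmetrically would destroy the tame linear dependence on $\|u\|_s^{\Lipg}$.
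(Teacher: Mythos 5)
The paper does not prove this lemma---it is introduced as one of ``the classical Lemmas on the composition operators''---so there is nothing internal to compare against. Your outline is the standard and correct one: the sup estimate via Fa\`a di Bruno plus Moser-type interpolation at the base $s = s_0$, then a one-derivative inductive step; and the weighted Lipschitz bound via the fundamental theorem of calculus in the $u$-variable together with the tame product estimate of Lemma~\ref{interpolazione C1 gamma}, with the hypothesis $\|u\|_{s_0}^{\Lipg}\le 1$ used twice to keep the final bound linear in $\|u\|_s^{\Lipg}$.

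Two details should be tightened. First, \eqref{interpolazione iterata} is not the right tool in the inductive step: it controls $\|u^k\|_s$, not the cross term $(1+\|u\|_s)\|u\|_{s_0+1}$ that appears from the high-norm summand of the product estimate applied to $(\partial_u f)(\cdot,u)\,\partial_y u$ (since $\|\partial_y u\|_{s_0}\le\|u\|_{s_0+1}$). What is actually needed there is the log-convexity $\|u\|_{\lambda s_1+(1-\lambda)s_2}\le\|u\|_{s_1}^{\lambda}\,\|u\|_{s_2}^{1-\lambda}$ of the Fourier-weighted Sobolev norms, which combined with $\|u\|_{s_0}\le 1$ gives $\|u\|_s\,\|u\|_{s_0+1}\le\|u\|_{s_0}\,\|u\|_{s+1}\le\|u\|_{s+1}$ and $\|u\|_{s_0+1}\le 1+\|u\|_{s+1}$, closing the induction. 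Second, your induction covers only integer $s\ge s_0$, while the lemma allows real $s$; one either interpolates the estimates between consecutive integers or replaces the Fa\`a di Bruno/Moser argument by its Littlewood--Paley version. Neither point is a gap in the strategy, only in the references and scope.
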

Now we state the tame properties of the composition operator $ u(y) \mapsto  u(y+p(y)) $
induced by a diffeomorphism of the torus $ \T^n $. The Lemma below, can be proved as Lemma 2.20 in \cite{Berti-Montalto}.
\begin{lemma} {\bf (Change of variable)}  \label{lemma:utile} 
Let $p:= p( \cdot; \omega ):\R^n \to \R^n$, $\omega \in \Omega_o \subset \R^\nu $ be a family of $2\p$-periodic functions satisfying   
\begin{equation}\label{mille condizioni p}
  \| p \|_{{\mathcal C}^{s_0 + 1}} \leq 1/2\,,\quad  \| p \|_{s_0}^{\Lipg} \leq 1
\end{equation}
where $s_0 := [n/2] + 1$. Let $ g(y) := y + p(y) $,
$ y \in \T^n $. 
Then the composition operator 
$$
A : u(y) \mapsto (u\circ g)(y) = u(y+p(y))
$$ 
satisfies for all $s \geq s_0$, the tame estimates
\begin{equation}\label{stima cambio di variabile dentro la dim}
\| A u\|_{s_0} \leq_{s_0} \| u \|_{s_0}\,, \qquad \| A u\|_s \leq_s  C(s)\| u \|_s + C(s_0)\| p \|_s \| u \|_{s_0 + 1}\,.
\end{equation}
Moreover, for any family of Sobolev functions $u(\cdot; \omega)$
\begin{align}
& \| A u \|_{s_0}^{\Lipg} \leq_{s_0}  \| u \|_{s_0 + 1}^{\Lipg}\,, \label{stima tame cambio di variabile pietro s0}\\
\label{stima tame cambio di variabile pietro}
    & \| Au \|_s^{\Lipg} \leq_{s} \| u \|_{s + 1}^{\Lipg} + \| p \|_{s}^{\Lipg} \| u \|_{s_0 + 2}^{\Lipg}\,, \quad \forall  s > s_0  \,.
\end{align}
The map $ g $ is invertible with inverse  $ g^{- 1}(z) = z + q(z) $ and 
there exists a constant $\delta := \delta(s_0) \in (0,1) $ such that, if  $ \| p \|_{2 s_0 + 2}^{\Lipg} \leq \d$, then  
\begin{equation}\label{stime-lipschitz-q}
\| q \|_s \leq_s \| p \|_s\,,\qquad \| q \|_{s}^{\Lipg}  \leq_{s}  \| p \|_{s + 1}^{\Lipg} \,. 
\end{equation}
Furthermore, the composition operator $A^{- 1} u(z) := u(z + q(z))$ satisfies the estimate  
\begin{equation}\label{tame-cambio-di-variabile-inverso}
\| A^{- 1} u\|_s \leq_s \| u \|_{s} + \| p \|_{s} \| u \|_{s_0 + 1}\,, \quad \forall s \geq s_0\,
\end{equation}
and for any family of Sobolev functions $u(\cdot; \omega)$
\begin{equation}\label{tame-lipschitz-cambio-di-variabile}
\| A^{- 1} u\|_s^{\Lipg} \leq_s \| u \|_{s + 1}^{\Lipg} + \| p \|_{s + 1}^{\Lipg} \| u \|_{s_0 + 2}^{\Lipg}\,, \quad \forall s \geq s_0\,.
\end{equation}
\end{lemma}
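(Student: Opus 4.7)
The proof follows the standard strategy for tame estimates of composition operators with near-identity diffeomorphisms; the analogue for a single parameter is Lemma 2.20 of \cite{Berti-Montalto}, and the main additional task here is to track the Lipschitz dependence on $\omega$ throughout.

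First I would establish the sup estimate \eqref{stima cambio di variabile dentro la dim}. Since $\| p\|_{{\mathcal C}^{s_0+1}}\le 1/2$, the map $g = \mathrm{id}+p$ is a $\mathcal{C}^1$-diffeomorphism of $\T^n$ whose Jacobian lies in $[1/2,3/2]$, so a change of variable $z=g(y)$ in the integral defining $\|Au\|_{L^2}^2$ yields $\|Au\|_{L^2}\lesssim \|u\|_{L^2}$. Differentiating $u\circ g$ up to order $s$ via Fa\`a di Bruno expresses $\partial^\alpha(u\circ g)$ as a sum of products of factors of the form $\partial^\beta p$ and $(\partial^\gamma u)\circ g$ with $|\beta|,|\gamma|\le s$. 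Applying Lemma \ref{interpolazione C1 gamma} to these products, using the $L^2$ bound above and the smallness $\|p\|_{{\mathcal C}^{s_0+1}}\le 1/2$, then produces $\|Au\|_s\lesssim_s \|u\|_s + \|p\|_s\|u\|_{s_0+1}$, which is \eqref{stima cambio di variabile dentro la dim}; the low-regularity version is the special case $s=s_0$ together with $\|p\|_{s_0}\le 1$.

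Second, the invertibility of $g$ and the estimates \eqref{stime-lipschitz-q} on $q$ come from a contraction argument: $g^{-1}(z)=z+q(z)$ is the unique solution of the fixed-point equation $q(z)=-p(z+q(z))$, and the $\mathcal{C}^{s_0+1}$-smallness of $p$ gives existence together with $\|q\|_{{\mathcal C}^{s_0+1}}\le 1$. Substituting $q = -p\circ(\mathrm{id}+q)$ into the tame composition bound of step one (with $\mathrm{id}+q$ playing the role of $g$) and absorbing the resulting $\|q\|_s$ on the right-hand side by the smallness of $\|p\|_{s_0+1}$ yields $\|q\|_s\lesssim_s \|p\|_s$; the tame bound \eqref{tame-cambio-di-variabile-inverso} for $A^{-1}$ is then step one applied with $\mathrm{id}+q$ in place of $g$.

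Third, for the weighted Lipschitz bounds \eqref{stima tame cambio di variabile pietro s0}--\eqref{stima tame cambio di variabile pietro} I would take two parameters $\omega_1,\omega_2\in\Omega_o$ and write, for a fixed function $u$,
\begin{equation*}
(A_{\omega_1}-A_{\omega_2})u(y) = \int_0^1 (\nabla u)\bigl(y + p_{\omega_2}(y)+\tau (p_{\omega_1}-p_{\omega_2})(y)\bigr)\cdot (p_{\omega_1}-p_{\omega_2})(y)\, d\tau,
\end{equation*}
then divide by $|\omega_1-\omega_2|$, apply the sup tame estimate from step one to $\nabla u$ composed with the intermediate diffeomorphism $\mathrm{id}+p_{\omega_2}+\tau(p_{\omega_1}-p_{\omega_2})$ (uniformly in $\tau\in[0,1]$), and combine with Lemma \ref{interpolazione C1 gamma}. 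For a family $u=u(\,\cdot\,;\omega)$ one splits
\[
A_{\omega_1}u_{\omega_1}-A_{\omega_2}u_{\omega_2} = (A_{\omega_1}-A_{\omega_2})u_{\omega_1}+A_{\omega_2}(u_{\omega_1}-u_{\omega_2})
\]
and handles the second piece by step one. The loss of exactly one derivative (the $+1$ in \eqref{stima tame cambio di variabile pietro s0}--\eqref{stima tame cambio di variabile pietro} versus no loss in \eqref{stima cambio di variabile dentro la dim}) is the direct manifestation of the $\nabla u$ above. Finally \eqref{tame-lipschitz-cambio-di-variabile} follows by applying the same scheme to $q$ in place of $p$, using the Lipschitz-in-$\omega$ version of the fixed-point equation.

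\emph{Main obstacle.} The delicate point is calibrating the smallness hypothesis $\|p\|_{2s_0+2}^{\Lipg}\le \delta$: it must be strong enough to (i) keep all intermediate maps $\mathrm{id}+p_{\omega_2}+\tau(p_{\omega_1}-p_{\omega_2})$ uniformly inside the regime in which the sup tame estimates apply, and (ii) absorb, via a Neumann-type argument, the nonlinear contributions produced when differentiating the fixed-point equation for $q$ jointly in $y$ and $\omega$. The exponent $2s_0+2$ reflects the need for $\mathcal C^{s_0+1}$ control on $p$ together with one extra derivative used in the Lipschitz-in-$\omega$ estimate for $q$, plus the derivative loss inherent in the composition formula.
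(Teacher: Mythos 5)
The paper gives no internal proof of this lemma; it simply states that the result ``can be proved as Lemma 2.20 in \cite{Berti-Montalto}''. Your sketch should therefore be weighed against the standard strategy, and it does match it in all its parts: Fa\`a di Bruno plus interpolation for the tame bounds, a fixed-point equation $q=-p\circ(\mathrm{id}+q)$ for the inverse, and a mean-value decomposition for the Lipschitz-in-$\omega$ estimates.

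There is, however, one concrete slip. The no-loss bound $\|Au\|_{s_0}\leq_{s_0}\|u\|_{s_0}$ does \emph{not} follow by specializing the tame estimate $\|Au\|_s\leq_s\|u\|_s+\|p\|_s\|u\|_{s_0+1}$ to $s=s_0$: even with $\|p\|_{s_0}\leq 1$ the right-hand side becomes $\|u\|_{s_0}+\|u\|_{s_0+1}$, which still loses a derivative. The $s_0$ estimate is instead the base case of the interpolation and must be proved directly. For $|\alpha|\leq s_0$, Fa\`a di Bruno expresses $\partial^\alpha(u\circ g)$ as a finite sum of $(\partial^\gamma u)\circ g$ (with $|\gamma|\leq|\alpha|$) multiplied by products $\prod_i\partial^{\beta_i}p$ with $\sum_i|\beta_i|\leq|\alpha|$ and each $|\beta_i|\geq 1$; the hypothesis $\|p\|_{\mathcal{C}^{s_0+1}}\leq 1/2$ bounds every such $p$-product pointwise, so the change of variable $z=g(y)$ already yields $\|Au\|_{s_0}\leq_{s_0}\|u\|_{s_0}$ with no loss. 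That base bound is then what feeds the interpolation machinery for general $s>s_0$. Once it is in place, the remainder of your outline --- including the Lipschitz-in-$\omega$ bound $\|Au\|_{s_0}^{\Lipg}\leq_{s_0}\|u\|_{s_0+1}^{\Lipg}$, where the one-derivative loss genuinely comes from the $\nabla u$ in your difference quotient, and the inverse estimates via $\mathrm{id}+q$ --- goes through as you describe.
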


\subsection{T\"oplitz in time linear operators}\label{astratto operatori Toplitz}
Let $ {\mathcal R} : \T^\nu \mapsto {\mathcal L}( L^2_0(\T_x))  $, 
$ \vphi \mapsto {\mathcal R}(\vphi) $,   be a $ \vphi $-dependent family of linear 
operators acting on $ L^2_0 (\T_x) $. We regard $ {\mathcal R} $  also 
 as an operator (that for simplicity we denote by ${\mathcal R} $ as well)
 which acts on functions $ u \in L^2_0(\T^\nu \times \T) $ of space-time, i.e. 
 we consider the operator 
$ {\mathcal R} \in {\mathcal L}(L^2_0(\T^\nu \times \T ) )$  
defined by
$$
( {\mathcal R} u) (\varphi , x) := ({\mathcal R}(\varphi) u(\varphi, \cdot ))(x) \, .  
$$
The action of this operator on a function $ u \in L^2_0(\T^{\nu + 1}) $ 
is given by  
\begin{align}
{\mathcal R} u (\vphi, x) & = \sum_{j , j' \in \Z \setminus \{ 0 \}} {\mathcal R}_j^{j'}(\vphi)  u_{j'}(\vphi) e^{\ii j   x} \nonumber\\
& =  \sum_{\begin{subarray}{c}
 \ell , \ell' \in \Z^\nu \\
  j , j' \in \Z \setminus \{ 0 \}
  \end{subarray}}  {\mathcal R}_j^{j'}(\ell - \ell') \widehat u_{ j'}(\ell') e^{\ii (\ell \cdot \vphi + j  x)} \, \label{action toplitz operator}
\end{align}
where the space Fourier coefficients ${\mathcal R}_j^{j'}(\vphi)$ and the space-time Fourier coefficients ${\mathcal R}_j^{j'}(\ell)$ of the operator ${\mathcal R}$ are defined as 
\begin{equation}\label{coefficienti spazio operatore}
{\mathcal R}_j^{j'}(\vphi) := \frac{1}{2 \pi} \int_{\T} {\mathcal R}(\vphi)[e^{\ii j'  x}] e^{- \ii j  x}\, d x\,, \quad  \vphi \in \T^\nu\,,\,j, j' \in \Z \setminus \{ 0 \}\,,
\end{equation}
\begin{equation}\label{coefficienti spazio tempo operatore}
{\mathcal R}_j^{j'}(\ell) := \frac{1}{(2 \pi)^{\nu }}  \int_{\T^\nu} {\mathcal R}_j^{j'}(\vphi) e^{- \ii \ell \cdot \vphi }\, d \vphi \,,\quad  \ell \in \Z^\nu\,,\, j, j' \in \Z \setminus \{ 0 \}\,.
\end{equation}
We shall identify the operator $ {\mathcal R} = {\mathcal R}(\vphi) $ with the infinite-dimensional matrices of its Fourier coefficiens
\begin{equation}\label{matrice operatore Toplitz}
\Big( {\mathcal R}_j^{j'}(\vphi)\Big)_{j, j' \in \Z \setminus \{ 0 \}} \,, \quad \Big( {\mathcal R}_j^{j'}(\ell - \ell')\Big)_{\begin{subarray}{c}
\ell , \ell' \in \Z^\nu  \\
 j, j' \in \Z \setminus \{ 0 \} 
\end{subarray}}\,
\end{equation}
 and we refer to such operators as  T\"oplitz in time operators. 
 
 \noindent
 If the map $\vphi \in \T^\nu \mapsto {\mathcal R}(\vphi) \in {\mathcal L}(L^2_0(\T_x))$ is differentiable, given $\omega \in \R^\nu$, we can define the operator $\omega \cdot \partial_\vphi {\mathcal R}$ as 
 \begin{equation}\label{definizione D omega cal R}
 \omega \cdot \partial_\vphi {\mathcal R} = \big( \omega \cdot \partial_\vphi {\mathcal R}_j^{j'}(\vphi)\big)_{j, j' \in \Z \setminus \{ 0 \}} = \Big( \ii \,\omega \cdot (\ell - \ell'){\mathcal R}_j^{j'}(\ell - \ell')\Big)_{\begin{subarray}{c}
\ell , \ell' \in \Z^\nu  \\
 j, j' \in \Z \setminus \{ 0 \} \,
\end{subarray}}\,.
 \end{equation}
 We also define the {\it commutator} between two T\"oplitz in time operators ${\mathcal R} = {\mathcal R}(\vphi)$ and ${\mathcal B}= {\mathcal B}(\vphi)$ as $[{\mathcal R}(\vphi), {\mathcal B}(\vphi)] := {\mathcal R}(\vphi) {\mathcal B}(\vphi) - {\mathcal B}(\vphi){\mathcal R}(\vphi)$, $\vphi \in \T^\nu$. 
 
 \noindent
 Given a T\"oplitz in time operator ${\mathcal R}$, we define the conjugated operator $\overline{\mathcal R}$ by 
 \begin{equation}\label{definizione operatore coniugato}
 \overline{\mathcal R} u := \overline{{\mathcal R} \bar u}\,.
 \end{equation}
 One gets easily that the operator $\overline{\mathcal R}$ has the matrix representation 
 \begin{equation}\label{operatore coniugato matrice}
\Big(\overline{{\mathcal R}_{- j}^{- j'}(\vphi)} \Big)_{j, j' \in \Z \setminus \{ 0 \}}\,, \qquad \vphi \in \T^\nu\,.
 \end{equation}
 An operator ${\mathcal R}$ is said to be real if it maps real-valued functions on real valued functions and it is easy to see that ${\mathcal R}$ is real if and only if ${\mathcal R} = \overline{\mathcal R}$.
 
 \noindent
 We define also the transpose operator ${\mathcal R}^T = {\mathcal R}(\vphi)^T$ by the relation 
 \begin{equation}\label{definizione operatore trasposto}
 \langle {\mathcal R}(\vphi)[u]\,,\, v \rangle_{L^2_x} = \langle u\,,\, {\mathcal R}(\vphi)^T[v] \rangle_{L^2_x}\,, \qquad \forall u, v \in L_0^2(\T_x)\,, \quad \forall \vphi \in \T^\nu
 \end{equation}
 where 
 \begin{equation}\label{prodotto scalare reale L2}
 \langle u, v \rangle_{L^2_x} := \int_{\T} u(x) v(x)\,,d x \,, \qquad \forall u, v \in L^2(\T_x)\,.
 \end{equation}
 Note that the operator ${\mathcal R}^T$ has the matrix representation 
 \begin{equation}\label{matrice RT}
 ({\mathcal R}^T)_j^{j'}(\vphi) = {\mathcal R}_{- j'}^{ - j}(\vphi)\,, \quad \forall j, j' \in \Z \setminus \{ 0 \}\,, \quad \forall \vphi \in \T^\nu\,.
 \end{equation}
 An operator ${\mathcal R}$ is said to be symmetric in ${\mathcal R} = {\mathcal R}^T$.
 
 \noindent
 We define also the adjoint operator ${\mathcal R}^* = {\mathcal R}(\vphi)^*$ by 
 \begin{equation}\label{operatore aggiunto cal R}
 \big( {\mathcal R}(\vphi)[u]\,,\, v \big)_{L^2_x} = \big(u\,,\, {\mathcal R}(\vphi)^* [v] \big)_{L^2_x}\,, \quad \forall u, v \in L^2_0(\T_x)\,, \quad \forall \vphi \in \T^\nu\,,
 \end{equation}
 where $ \big( \cdot\,,\, \cdot \big)_{L^2_x}$ is the scalar product on $L^2(\T)$, namely 
 \begin{equation}\label{prodotto scalare complesso}
 \big(u\,,\, v \big)_{L^2_x} := \langle u\,,\, \overline v \rangle_{L^2_x} = \int_{\T} u(x) \overline v(x)\,, d x\,, \quad \forall u, v \in L^2(\T_x)\,.
 \end{equation}
 An operator ${\mathcal R}$ is said to be self-adjoint if ${\mathcal R} = {\mathcal R}^*$.
 It is easy to see that ${\mathcal R}^* = \overline{\mathcal R}^T$ and its matrix representation is given by 
 $$
 ({\mathcal R}^*)_j^{j'}(\vphi) = \overline{{\mathcal R}_{j'}^{j}(\vphi)}\,, \quad \forall j, j' \in \Z \setminus \{ 0 \} \,, \quad \forall \vphi \in \T^\nu\,.
 $$
 In the following we also deal with smooth families of real operators $\vphi \mapsto G(\vphi) \in {\mathcal L}({ L}^2_0(\T_x, \R^2))$, of the form 
 \begin{equation}\label{operatore matriciale reale}
G(\vphi) := \begin{pmatrix}
 A(\vphi) & B(\vphi) \\
 C(\vphi) & D(\vphi)
 \end{pmatrix}\,, \qquad \vphi \in \T^\nu
 \end{equation}
 where $A(\vphi), B(\vphi) , C(\vphi), D(\vphi) \in {\mathcal L}(L^2_0(\T_x, \R))$, for all $\vphi \in \T^\nu$. Actually $G$ may be regarded as an operator in ${\mathcal L}(L^2_0(\T^{\nu + 1}, \R^2))$, according to the fact that $A, B, C, D$ are T\"oplitz in time operators. By \eqref{definizione operatore trasposto}, the transpose operator $G^T$ with respect to the bilinear form 
 \begin{equation}\label{prodotto scalare prodotto L2}
\langle (u_1, \psi_1)\,,\, (u_2, \psi_2) \rangle_{{ L}^2_x} := \langle u_1, u_2 \rangle_{L^2_x} + \langle \psi_1\,,\, \psi_2  \rangle_{L^2_x}\,, 
\end{equation} 
$ \forall (u_1, \psi_1), (u_2, \psi_2) \in {L}^2_0(\T_x, \R^2)$, is given by 
\begin{equation}\label{operatore trasposto matriciale}
G^T  = \begin{pmatrix}
A^T & C^T \\
B^T & D^T
\end{pmatrix}\,.
\end{equation}
Then it is easy to verify that $G$ is symmetric, i.e. $G = G^T$ if and only if $A = A^T$, $B = C^T$, $D = D^T$. It is also convenient to regard the real operator $G$ in the complex variables
\begin{equation}\label{prima volta variabili complesse 0}
z := \frac{u + \ii \psi}{\sqrt{2}}\,, \qquad \overline z = \frac{u - \ii \psi}{\sqrt{2}}\,,
\end{equation}
\begin{equation}\label{prima volta variabili complesse 1}
 u = \frac{z + \overline z}{\sqrt{2}}\,, \qquad \psi = \frac{z - \overline z}{\ii \sqrt{2}}\,.
\end{equation}
The transformed operator ${\mathcal R}$ has the form 
\begin{align}\label{operatore trasformato in variabili complesse}
& {\mathcal R} = \begin{pmatrix}
{ R}_1 & {R}_2 \\
\overline{ R}_2 & \overline{R}_1
\end{pmatrix}\,, \\
&  \quad R_1 := \frac{A+ D  - \ii (B - C)}{2}\,, \quad R_2 := \frac{A - D + \ii (B + C)}{2}\,. \nonumber
\end{align}
Note that the operator ${\mathcal R}$ satisfies
\begin{equation}\label{invarianza sottospazio reale operatori}
{\mathcal R} : {\bf L}^2_0(\T^{\nu + 1}) \to {\bf L}^2_0(\T^{\nu + 1})\,, \quad {\mathcal R}(\vphi) :{\bf L}^2_0(\T_x) \to {\bf L}^2_0(\T_x)\,, \quad \forall \vphi \in \T^\nu
\end{equation}
where ${\bf L}^2_0(\T^{\nu + 1})$, resp. ${\bf L}^2_0(\T_x)$ are the real subspaces of $L^2_0(\T^{\nu + 1}, \C^2)$, resp. $L^2_0(\T_x, \C^2)$ defined as
\begin{align}
& {\bf L}^2_0(\T^{\nu + 1}) :=\big\{  (z, \overline z) : z \in L^2_0(\T^{\nu + 1}, \C) \big\}\,, \label{sottospazio reale z bar z} \\
&   {\bf L}^2_0(\T_x) := \big\{ (z, \overline z) : z \in L^2_0(\T_x, \C) \big\}\,. \label{sottospazio reale z bar z1}
\end{align}
For the sequel, we also introduce for any $s \geq 0$, the real subspaces of $H^s_0(\T^{\nu + 1}, \C^2)$ and $H^s_0(\T_x, \C^2)$
\begin{align}
& {\bf H}^s_0(\T^{\nu + 1}) := H^s_0(\T^{\nu + 1}, \C^2) \cap {\bf L}^2_0(\T^{\nu + 1})\,, \label{definizione bf H s0} \\ 
& {\bf H}^s_0(\T_x) := H^s_0(\T_x, \C^2) \cap {\bf L}^2_0(\T_x)\,. \label{definizione bf H s01}
\end{align}
 \subsection{Hamiltonian formalism}\label{formalismo Hamiltoniano} 
We define the symplectic form ${\mathcal W}$ as
\begin{equation}\label{forma simplettica reale}
{\mathcal W}[ {\bf u}_1, {\bf u}_2 ] := \langle {\bf u}_1, J {\bf u}_2 \rangle_{{ L}^2_x}\,, \quad J = \begin{pmatrix}
0 & 1 \\
- 1 & 0
\end{pmatrix}\,, 
\end{equation}
for all ${\bf u_1}, {\bf u}_2 \in {L}^2_0(\T_x, \R^2 )\,.$
\begin{definition}\label{campo Hamiltoniano reale}
A $\vphi$-dependent linear vector field $X(\vphi) : {L}^2_0(\T_x, \R^2) \to { L}^2_0(\T_x, \R^2)$ is Hamiltonian, if $X(\vphi) = J G(\vphi)$, where $J$ is given in \eqref{forma simplettica reale}
and the operator $G$ is symmetric. 
 The operator 
$$
{\mathcal L} = \omega \cdot \partial_\vphi {\mathbb I}_2 - J G(\vphi) : { H}_0^1(\T^{\nu + 1}, \R^2) \to { L}^2_0(\T^{\nu + 1}, \R^2)\,, \quad {\mathbb I}_2 := \begin{pmatrix}
{\rm Id}_0 & 0 \\
0 & {\rm Id}_0
\end{pmatrix}
$$
where ${\rm Id}_0 : L^2_0(\T^{\nu + 1}) \to L^2_0(\T^{\nu + 1})$ is the identity, is called Hamiltonian operator. 
\end{definition}
\begin{definition}\label{trasformazione simplettica reale}
A $\vphi$-dependent map $\Phi(\vphi) : { L}^2_0(\T_x, \R^2) \to { L}^2_0(\T_x, \R^2) $ is symplectic if for any $\vphi \in \T^\nu$, for any ${\bf u}_1, {\bf u}_2 \in {L}^2_0(\T_x, \R^2) $, 
$$
{\mathcal W}[\Phi(\vphi){\bf u}_1\,,\, \Phi(\vphi){\bf u}_2] = {\mathcal W}[{\bf u}_1, {\bf u}_2]\,,
$$
or equivalently $\Phi(\vphi)^T J \Phi(\vphi) = J$, for all $\vphi \in \T^\nu$. 
\end{definition}
Under a symplectic transformation $\Phi = \Phi(\vphi)$, assuming that the map $\vphi \in \T^\nu \mapsto \Phi(\vphi ) \in {\mathcal L}({ L}^2_0(\T_x, \R^2))$ is differentiable, a linear Hamiltonian operator ${\mathcal L} = \omega \cdot \partial_\vphi {\mathbb I}_2 - J G(\vphi)$ transforms into the operator ${\mathcal L}_+ = \Phi^{- 1} {\mathcal L} \Phi = \omega \cdot \partial_\vphi {\mathbb I}_2 - J G_+(\vphi)$ with
\begin{equation}\label{coniugazione campo hamiltoniano reale}
 \quad G_+(\vphi) := \Phi(\vphi)^T G(\vphi) \Phi(\vphi) + \Phi(\vphi)^T J \omega \cdot \partial_\vphi \Phi(\vphi)\,.
\end{equation}
Note that for all $\vphi \in \T^\nu$, $G_+(\vphi)$ is symmetric, because $G(\vphi)$ is symmetric and $\omega \cdot \partial_\vphi [\Phi(\vphi)^T] J \Phi(\vphi) + \Phi(\vphi)^T J \omega \cdot \partial_\vphi \Phi(\vphi) = 0$ for all $\vphi \in \T^\nu$ and then ${\mathcal L}_+$ is still a Hamiltonian operator. Actually the conjugation \eqref{coniugazione campo hamiltoniano reale} can be interpreted also from a dynamical point of view. Indeed, consider the quasi-periodically forced linear Hamiltonian PDE 
\begin{equation}\label{interpretazione dinamica 1}
\partial_t {\bf h} = J G(\omega t) {\bf h}\,, \quad t \in \R\,, \quad \omega \in \R^\nu\,.
\end{equation}
Under the change of coordinates ${\bf h} = \Phi(\omega t) {\bf v}$, the above PDE is transformed into the equation
 \begin{equation}\label{interpretazione dinamica 2}
 \partial_t {\bf v} = J G_+(\omega t) {\bf v}\,
 \end{equation}
 which is still a linear Hamiltonian PDE. 
  
\subsubsection{Hamiltonian formalism in complex coordinates}\label{sezione formalismo hamiltoniano}
In this section we explain how the real Hamiltonian structure described above, reads in the complex coordinates introduced in \eqref{prima volta variabili complesse 0}, \eqref{prima volta variabili complesse 1}.
According to \eqref{operatore trasformato in variabili complesse}, under the change of coordinates \eqref{prima volta variabili complesse 0}, \eqref{prima volta variabili complesse 1}, a linear Hamiltonian vector field $J G(\vphi)$, transforms into 
\begin{equation}\label{operatore Hamiltoniano coordinate complesse}
{\mathcal R}(\vphi) = - \ii \begin{pmatrix}
{ R}_1(\vphi) &  R_2(\vphi) \\
- \overline{R_2(\vphi)} & - \overline{R_1(\vphi)}
\end{pmatrix}\,,
\end{equation}
where the operators $R_i = R_i(\vphi)$, $i = 1, 2$ are defined as
\begin{equation}\label{R1 R2 operatore Hamiltoniano complesso}
R_1:= \frac{A + D - \ii B+ \ii B^T}{2}\,, \quad R_2 := \frac{ A - D + \ii B + \ii B^T}{2}\,
\end{equation}
(recall that the operator $\overline R$ is defined in \eqref{definizione operatore coniugato}). Note that the operators $R_1(\vphi)$, $R_2(\vphi)$ are linear operators acting on complex valued $L^2$ functions $L^2_0(\T_x, \C)$, moreover since $G(\vphi)$ is symmetric, $A(\vphi) = A(\vphi)^T$, $B(\vphi) = C(\vphi)^T$, $D(\vphi) = D(\vphi)^T$, then it turns out that  
\begin{equation}\label{condizione R1 R2 campo hamiltoniano complesso}
R_1(\vphi) = R_1(\vphi)^*\,, \qquad R_2(\vphi) = R_2(\vphi)^T\,, \qquad \forall \vphi \in \T^\nu\,.
\end{equation}
Since the operator ${\mathcal R}$ in \eqref{operatore Hamiltoniano coordinate complesse} has the form \eqref{operatore trasformato in variabili complesse}, it satisfies the property \eqref{invarianza sottospazio reale operatori}. Furthermore, one has that ${\mathcal R}(\vphi)$ is the linear Hamiltonian vector field associated to the real Hamiltonian
\begin{equation}\label{generica hamiltoniana quadratica reale nel complesso esplicita}
{\mathcal H}({\bf z}) := \langle {\bf G}(\vphi)[{\bf z}]\,,\, {\bf z} \rangle_{L^2_x} \,, \quad {\bf G}(\vphi) := \begin{pmatrix}
\overline{R_2(\vphi)} & \overline{R_1(\vphi)} \\
R_1(\vphi) & R_2(\vphi)
\end{pmatrix} \,, 
\end{equation}
namely
\begin{equation}\label{generica hamiltoniana quadratica reale nel complesso}
{\mathcal H}(z,  \overline z) = \int_{\T} {R}_1(\vphi)[z]\overline z\, d x+ \frac12 \int_{\T} { R}_2(\vphi)[z]  z \, dx + \frac12 \int_\T \overline{{ R}_2(\vphi)}[ \overline z]\, \overline z\,d x\,.
\end{equation}
Indeed, ${\bf G}(\vphi)$ is symmetric, since by \eqref{condizione R1 R2 campo hamiltoniano complesso}, $\overline R_1^T = R_1^* = R_1$ and $R_1^T = \overline R_1$, then 
\begin{equation}\label{campo hamiltoniano complesso}
{\mathcal R}(\vphi)[{\bf z}] = - \ii J \nabla_{\bf z} {\mathcal H}({\bf z})= - \ii J {\bf G}(\vphi)[{\bf z}]\,, \quad {\bf z}  \in {\bf L}^2_0(\T_x)\,, 
\end{equation}
where $\nabla_{\bf z} {\mathcal H} := (\nabla_z {\mathcal H}, \nabla_{\bar z} {\mathcal H})$ with
$$
\nabla_z {\mathcal H} = \frac{1}{\sqrt{2}} (\nabla_\eta {\mathcal H} - \ii \nabla_\psi {\mathcal H})\,, \quad \nabla_{\bar z} {\mathcal H} := \overline{\nabla_z {\mathcal H}} = \frac{1}{\sqrt{2}} (\nabla_\eta {\mathcal H} + \ii \nabla_\psi {\mathcal H})
$$
(recall \eqref{prima volta variabili complesse 0}, \eqref{sottospazio reale z bar z1}). The symplectic form ${\mathcal W}$ in \eqref{forma simplettica reale}, reads in the complex coordinates \eqref{prima volta variabili complesse 0} as  
\begin{equation}\label{forma simplettica coordinate complesse}
{\bf \Gamma}[{\bf z}_1, {\bf z}_2] =  \ii \int_{\T} (z_1  \overline z_2 -   \overline z_1 z_2)\, dx  = \ii \langle {\bf z}_1 \,,\, J {\bf z}_2\rangle_{L^2_x}\,, \quad \forall {\bf z}_1, {\bf z}_2 \in { \bf L}^2_0(\T_x)\,.
\end{equation}
\begin{definition}\label{definizione mappa simplettica complessa}
Let $\Phi_i = \Phi_i(\vphi)$, $\vphi \in \T^\nu$, $i = 1, 2$ be $\vphi$-dependent families of linear operators $L^2_0(\T_x, \C) \to L^2_0(\T_x, \C)$. We say that the map 
$$
\Phi(\vphi) = \begin{pmatrix}
\Phi_1(\vphi) & \Phi_2(\vphi) \\
\overline{\Phi_2(\vphi)} & \overline{\Phi_1(\vphi)}
\end{pmatrix}\,, \qquad \vphi \in \T^\nu
$$ 
is symplectic if 
$$
{\bf \Gamma}[\Phi(\vphi)[{\bf z}_1], \Phi(\vphi)[{\bf z}_2]] = {\bf \Gamma}[{\bf z}_1, {\bf z}_2]\,, \quad \forall {\bf z}_1, {\bf z}_2 \in { \bf L}^2_0(\T_x)\,, \quad \forall \vphi \in \T^\nu\,
$$
or equivalently $\Phi(\vphi)^T J \Phi(\vphi) = J$, for all $\vphi \in \T^\nu$. 
\end{definition}
It is well known that if ${\mathcal R}(\vphi)$ is an operator of the form \eqref{operatore Hamiltoniano coordinate complesse}, \eqref{condizione R1 R2 campo hamiltoniano complesso}, namely by \eqref{campo hamiltoniano complesso} ${\mathcal R}(\vphi)$ is a linear Hamiltonian vector field associated to the quadratic Hamiltonian ${\mathcal H}$ in \eqref{generica hamiltoniana quadratica reale nel complesso}, the operators $ {\rm exp}(\pm{\mathcal R}(\vphi))$ are symplectic maps. 

\noindent
\begin{definition}\label{operatore Hamiltoniano coordinate complesse}
If ${\mathcal R}(\vphi)$ is a Hamiltonian vector field like in \eqref{operatore Hamiltoniano coordinate complesse}, \eqref{condizione R1 R2 campo hamiltoniano complesso}, we define the Hamiltonian operator in complex coordinates as 
$$
{\mathcal L} = \omega \cdot \partial_\vphi {\mathbb I}_2 - {\mathcal R}(\vphi) = \omega \cdot \partial_\vphi {\mathbb I}_2 + \ii J {\bf G}(\vphi)  : {\bf H}^1_0(\T^{\nu + 1}) \to {\bf L}^2_0(\T^{\nu + 1}) \,.
$$
\end{definition}

\noindent
Under the action of a smooth family of symplectic map $\Phi(\vphi)$, $\vphi \in \T^\nu$, a Hamiltonian operator ${\mathcal L}$ transforms into the Hamiltonian operator ${\mathcal L}_+ = \Phi^{- 1}{\mathcal L} \Phi = \omega \cdot \partial_\vphi {\mathbb I}_2 + \ii J {\bf G}_+(\vphi)$ where
$$
 {\bf G}_+(\vphi) := \Phi(\vphi)^T {\bf G}(\vphi) \Phi(\vphi) + \Phi(\vphi)^T J \omega \cdot \partial_\vphi \Phi(\vphi)\,, \quad \forall \vphi \in \T^\nu\,.
$$
Note that the operator ${\bf G}_+(\vphi)$ is symmetric and it has the same form as ${\bf G}(\vphi)$ in \eqref{generica hamiltoniana quadratica reale nel complesso esplicita}.
Arguing as in \eqref{interpretazione dinamica 1}, \eqref{interpretazione dinamica 2}, under the transformation ${\bf v} = \Phi(\omega t){\bf h}$, the PDE 
\begin{equation}\label{interpretazione dinamica 5}
\partial_t {\bf h} = - \ii J {\bf G}(\omega t) {\bf h}\,, \qquad \omega \in \R^\nu\,, \quad t \in \R\,,
\end{equation}
transforms into the PDE 
\begin{equation}\label{interpretazione dinamica 6}
\partial_t {\bf h} = - \ii J {\bf G}_+(\omega t) {\bf h}\,. 
\end{equation}
In the following, we will consider also quasi-periodic reparametrizations of time, namely operators of the form 
 $$
 {\mathcal A}{\bf h}(\vphi, x) = {\bf h}(\vphi + \omega \alpha(\vphi), x)\,,
 $$
 where $\alpha : \T^\nu \to \R$ is a sufficiently smooth function and such that $\| \alpha\|_{{\mathcal C}^1}$ is sufficiently small. The transformation ${\mathcal A}$ is invertible and its inverse ${\mathcal A}^{- 1}$ has the form 
 $$
 {\mathcal A}^{- 1}{\bf h}(\vartheta, x) = {\bf h}(\vartheta + \omega \tilde \alpha (\vartheta), x)\,
 $$ 
 where $\vartheta \mapsto \vartheta + \omega \tilde \alpha(\vartheta)$ is the inverse diffeomorphism of $\vphi \mapsto \vphi + \omega \alpha(\vphi)$. 
The conjugated operator is ${\mathcal A}^{- 1}{\mathcal L} {\mathcal A} = \rho {\mathcal L}_+$, where ${\mathcal L}_+ = \omega \cdot \partial_\vphi + \ii J {\bf G}_+(\vartheta)$ with
\begin{equation}\label{interpretazione dinamica 3}
 \rho(\vartheta) := {\mathcal A}^{- 1}[1 + \omega \cdot \partial_\vphi \alpha](\vartheta)\,, \quad {\bf G}_+(\vartheta) := \frac{1}{\rho(\vartheta)} {\bf G}(\vartheta + \omega \tilde \alpha(\vartheta))\,.
 \end{equation}
Note that ${\mathcal L}_+$ is still a Hamiltonian operator. From a dynamical point of view, under the reparametrization of time 
 $$
 \tau = t + \alpha(\omega t)\,, \qquad t = \tau + \tilde \alpha(\omega \tau)\,,
 $$
  setting ${\bf v} (t) :=  A(\omega t) {\bf h} := {\bf h}(t + \alpha(\omega t), x)$, the PDE \eqref{interpretazione dinamica 5} is transformed into 
  \begin{equation}\label{interpretazione dinamica 4}
  \partial_\tau {\bf v} = - \ii J {\bf G}_+(\omega \tau) {\bf v}\,. 
  \end{equation}
\subsection{$2 \times 2$ block representation of linear operators}\label{sezione rappresentazione 2 per 2}
We may regard a T\"oplitz in time operator given by \eqref{action toplitz operator} as a $2 \times 2$ block matrix 
\begin{equation}\label{notazione a blocchi}
\Big( {\bf R}_j^{j'}(\ell - \ell')\Big)_{\begin{subarray}{c}
\ell, \ell' \in \Z^\nu \\
j, j' \in \N
\end{subarray}}\,, 
\end{equation}
where for all $\ell \in \Z^\nu$, $j, j' \in \N$ the $2 \times 2$ matrix ${\bf R}_j^{j'}(\ell)$ is defined by  
\begin{equation}\label{definizione blocco operatore}
{\bf R}_j^{j'}(\ell) := \begin{pmatrix}
{\mathcal R}_j^{j'}(\ell) & {\mathcal R}_j^{- j'}(\ell) \\
{\mathcal R}_{- j}^{j'}(\ell) & {\mathcal R}_{- j}^{- j'}(\ell)
\end{pmatrix}\,.
\end{equation}
The $2 \times 2$ matrix ${\bf R}_j^{j'}(\ell)$ can be regarded as a linear operator in ${\mathcal L}({\bf E}_{j'}, {\bf E}_j)$, where for all $j \in \N$, the two dimensional space ${\bf E}_j$ is defined as 
\begin{equation}\label{bf E alpha}
{\bf E}_j := {\rm span} \{ e^{\ii j x}, e^{- \ii j x}\}\,.
\end{equation}
Note that for any $j \in \N$, the finite dimensional space ${\bf E}_j$ is the eigenspace of the operator $- \partial_{xx}$ corresponding to the eigenvalue $j^2$. 
We identify the space ${\mathcal L}({\bf E}_{j'}, {\bf E}_j)$ of the linear operators from ${\bf E}_{j'}$ onto ${\bf E}_j$ with the space of the $2 \times 2$ matrices of their Fourier coefficients, namely 
\begin{equation}\label{spazio matrici blocchi}
 {\mathcal L}({\bf E}_{j'}, {\bf E}_j) \simeq \Big\{ M = \Big(M_k^{k'} \Big)_{\begin{subarray}{c}
k = \pm j \\
k' = \pm j'
\end{subarray}} \Big\} \simeq {\rm Mat}(2 \times 2)\,.
\end{equation}
Indeed if $M \in {\mathcal L}({\bf E}_{j'}, {\bf E}_j)$, its action is given by 
\begin{equation}\label{azione blocco finito dimensionale su funzioni}
M u (x)= \sum_{\begin{subarray}{c}
k = \pm j \\
k' = \pm j'
\end{subarray}} M_k^{k'} u_{k'} e^{\ii k  x}\,, \quad \forall u \in {\bf E}_{j'}\,, \quad u(x) = u_{j'} e^{\ii j' x} + u_{- j'} e^{- \ii j' x}\,. 
\end{equation}
If $j = j'$, we use the notation ${\mathcal L}({\bf E}_j) = {\mathcal L}({\bf E}_{j'}, {\bf E}_{j})$ and we denote by ${\bf I}_j$ the identity operator on the space ${\bf E}_j$, namely 
\begin{equation}\label{operatore identita su E alpha}
{\bf I}_j : {\bf E}_j \to {\bf E}_j\,, \qquad u \mapsto u\,.
\end{equation}

\noindent
According to \eqref{raggruppamento modi Fourier}, \eqref{notazione a blocchi}, \eqref{azione blocco finito dimensionale su funzioni}, we may write the action of a T\"oplitz in time operator on a function $h(\vphi, x)$ as 
\begin{equation}\label{azione operator Toplitz a blocchi}
{\mathcal R} h (\vphi, x) = \sum_{\begin{subarray}{c}
\ell, \ell' \in \Z^\nu \\
j, j' \in \N
\end{subarray}} {\bf R}_j^{j'}(\ell - \ell') [\widehat{\bf h}_{j'}(\ell')] e^{\ii \ell \cdot \vphi}\,.
\end{equation}
 We denote by $[{\mathcal R}]$ the $2 \times 2$ block-diagonal part of the operator ${\mathcal R}$, namely
\begin{equation}\label{notazione operatore diagonale a blocchi}
[{\mathcal R}] : = {\rm diag}_{j \in \N} {\bf R}_j^{j}(0)
\end{equation}
and its action on a function $h(\vphi, x)$ is given by 
$$
[{\mathcal R}] h(\vphi, x) = \sum_{\ell \in \Z^\nu\,,\, j \in \N} {\bf R}_j^j(0)[\widehat{\bf h}_j(\ell)] e^{\ii \ell \cdot \vphi}\,.
$$
If ${\bf R}_j^{j'}(\ell) = 0$, for any $(\ell, j, j') \neq (0, j, j)$, we have ${\mathcal R} = [{\mathcal R}]$ and we refer to such operators as $2 \times 2$ block-diagonal operators. 

\noindent
For any $M \in {\mathcal L}({\bf E}_{j'}, {\bf E}_{j})$, we define
the transpose operator $M^T \in {\mathcal L}({\bf E}_j, {\bf E}_{j'})$ by  
\begin{equation}\label{definizione matrice trasposta}
(M^T)_k^{k'} := M_{- k'}^{- k} \,, \quad k = \pm j'\,,\quad k' = \pm j\,,
\end{equation}
the conjugate operator $\overline M \in {\mathcal L}({\bf E}_{j'}, {\bf E}_j)$ by 
\begin{equation}\label{definizione matrice coniugata}
({\overline M})_k^{k'} := \overline M_{- k}^{ - k'}\,, \quad k = \pm j\,, \quad k' = \pm j'\,,
\end{equation}
the adjoint operator $M^* \in {\mathcal L}({\bf E}_j, {\bf E}_{j'})$ as 
\begin{equation}\label{definizione matrice aggiunta}
M^* := \overline M^T\,.
\end{equation}

Given an operator $A \in {\mathcal L}({\bf E}_j)$, we define its trace as 
\begin{equation}\label{definizione traccia}
{\rm Tr}(A) :=  A_j^j  + A_{- j}^{- j}\,.
\end{equation}
It is easy to check that if $A, B \in {\mathcal L}({\bf E}_j)$, then 
\begin{equation}\label{proprieta traccia}
{\rm Tr}(A B) = {\rm Tr}(B A)\,.
\end{equation}
For all $j, j' \in \N$, the space ${\mathcal L}({\bf E}_{j'}, {\bf E}_j)$ defined in \eqref{spazio matrici blocchi}, is a Hilbert space equipped by the inner product given for any $X, Y \in {\mathcal L}({\bf E}_{j'}, {\bf E}_j)$ by
\begin{equation}\label{prodotto scalare traccia matrici}
\langle X, Y \rangle := {\rm Tr}(X Y^*)\,.
\end{equation}
This scalar product induces the $L^2$-norm 
\begin{equation}\label{norma L2 blocco}
\| X \| := \sqrt{{\rm Tr}(X X^*)} = \Big( \sum_{\begin{subarray}{c}
|k| = j \\
|k'| = j'
\end{subarray}} |X_k^{k'}|^2 \Big)^{\frac12}\,.
\end{equation}
Actually all the norms on the finite dimensional space ${\mathcal L}({\bf E}_{j'}, {\bf E}_j)$ are equivalent. 

\noindent
Given a linear operator ${\bf L} : {\mathcal L}({\bf E}_{j'}, {\bf E}_j) \to {\mathcal L}({\bf E}_{j'}, {\bf E}_j)$, we denote by $\| {\bf L}\|_{{\rm Op}(j, j')}$ its operatorial norm, when the space ${\mathcal L}({\bf E}_{j'}, {\bf E}_j)$ is equipped by the $L^2$-norm \eqref{norma L2 blocco}, namely
\begin{equation}\label{norma operatoriale su matrici alpha beta}
\| {\bf L}\|_{{\rm Op}(j, j')} := \sup\Big\{  \| {\bf L}(M) \| : M \in {\mathcal L}({\bf E}_{j'}, {\bf E}_j)\,, \quad \| M\| \leq 1\Big\}\,.
\end{equation} 
We denote by ${\bf I}_{j,  j'}$ the identity operator on ${\mathcal L}({\bf E}_{j'}, {\bf E}_j)$, namely 
\begin{equation}\label{operatore identita matrici alpha beta}
{\bf I}_{j,  j'} : {\mathcal L}({\bf E}_{j'}, {\bf E}_j) \to {\mathcal L}({\bf E}_{j'}, {\bf E}_j)\,, \qquad X \mapsto X\,.
\end{equation}
For any operator $A \in {\mathcal L}({\bf E}_j)$ we denote by $M_L(A) : {\mathcal L}({\bf E}_{j'}, {\bf E}_j) \to {\mathcal L}({\bf E}_{j'}, {\bf E}_j)$ the linear operator defined for any $X \in {\mathcal L}({\bf E}_{j'}, {\bf E}_j)$ as 
\begin{equation}\label{definizione moltiplicazione sinistra matrici}
M_L(A) X := A X\,.
\end{equation} 
Similarly, given an operator $B \in {\mathcal L}({\bf E}_{j'})$, we denote by $M_R(B) : {\mathcal L}({\bf E}_{j'}, {\bf E}_j) \to {\mathcal L}({\bf E}_{j'}, {\bf E}_j)$ the linear operator defined for any $X \in {\mathcal L}({\bf E}_{j'}, {\bf E}_j)$ as 
\begin{equation}\label{definizione moltiplicazione destra matrici}
M_R(B) X := X B\,.
\end{equation}
The following elementary estimates hold:
\begin{equation}\label{norma operatoriale ML MR}
\| M_L(A)\|_{{\rm Op}(j, j')} \leq \| A\|\,, \quad \| M_R(B)\|_{{\rm Op}(j, j')} \leq \| B\|\,.
\end{equation}
For any $j \in \N$, we denote by ${\mathcal S}({\bf E}_j)$, the set of the self-adjoint operators form ${\bf E}_j$ onto itself, namely
\begin{equation}\label{cal S E alpha}
{\mathcal S}({\bf E}_j) := \Big\{ A \in {\mathcal L}({\bf E}_j) : A = A^*\Big\}\,,
\end{equation}
which we identify with the set of the $2 \times 2$ self-adjoint matrices. Furthermore, for any $A \in {\mathcal L}({\bf E}_j)$ denote by ${\rm spec}(A)$ the spectrum of $A$. The following Lemma can be proved by using elementary arguments from linear algebra, hence the proof is omitted.
\begin{lemma}\label{properties operators matrices}
Let $A \in {\mathcal S}({\bf E}_j)$, $B \in {\mathcal S}({\bf E}_{j'})$, then the following holds: 

\noindent
$(i)$ The operators $M_L(A)$, $M_R(B)$ defined in \eqref{definizione moltiplicazione sinistra matrici}, \eqref{definizione moltiplicazione destra matrici} are self-adjoint operators with respect to the scalar product defined in \eqref{prodotto scalare traccia matrici}.

\noindent
$(ii)$ The spectrum of the operator $M_L(A) \pm M_R(B)$ satisfies 
$$
{\rm spec}\Big( M_L(A) \pm M_R(B) \Big) = \Big\{ \lambda \pm \mu : \lambda \in {\rm spec}(A)\,,\quad \mu \in {\rm spec}(B) \Big\}\,.
$$
\end{lemma}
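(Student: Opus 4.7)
My plan is to treat the two parts with standard finite-dimensional linear algebra, using only the Hilbert–Schmidt inner product \eqref{prodotto scalare traccia matrici} and the cyclicity of the trace \eqref{proprieta traccia}.

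For part $(i)$, I would compute directly. Given $X, Y \in {\mathcal L}({\bf E}_{j'}, {\bf E}_j)$, by the definition \eqref{definizione moltiplicazione sinistra matrici} and the cyclic property \eqref{proprieta traccia},
\begin{equation*}
\langle M_L(A) X , Y \rangle = {\rm Tr}(A X Y^*) = {\rm Tr}(X Y^* A) = {\rm Tr}(X (A^* Y)^*) = \langle X, M_L(A^*) Y \rangle,
\end{equation*}
where in the last equality I use $(A^* Y)^* = Y^* A$. Since $A = A^*$, this gives $M_L(A)^* = M_L(A)$. For $M_R(B)$, the same argument yields
\begin{equation*}
\langle M_R(B) X, Y \rangle = {\rm Tr}(X B Y^*) = {\rm Tr}(X (Y B^*)^*) = \langle X, M_R(B^*) Y \rangle = \langle X, M_R(B) Y \rangle,
\end{equation*}
using $B = B^*$.

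For part $(ii)$, the key observation is that $M_L(A)$ and $M_R(B)$ commute: for any $X$,
\begin{equation*}
M_L(A) M_R(B) X = A X B = M_R(B) M_L(A) X.
\end{equation*}
Since both are self-adjoint by $(i)$, they are simultaneously diagonalizable on the four-dimensional Hilbert space ${\mathcal L}({\bf E}_{j'}, {\bf E}_j)$. Concretely, using the spectral theorem for $A \in {\mathcal S}({\bf E}_j)$ and $B \in {\mathcal S}({\bf E}_{j'})$, pick orthonormal eigenbases $\{e_1, e_2\}$ of ${\bf E}_j$ and $\{f_1, f_2\}$ of ${\bf E}_{j'}$ with $A e_i = \lambda_i e_i$ and $B f_k = \mu_k f_k$, where $\lambda_i, \mu_k \in \R$ (reality follows from self-adjointness). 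Define the rank-one operators $X_{ik} \in {\mathcal L}({\bf E}_{j'}, {\bf E}_j)$ by
\begin{equation*}
X_{ik}(v) := \langle v, f_k \rangle_{{\bf E}_{j'}} e_i, \qquad i, k \in \{1,2\}.
\end{equation*}
Then $\{X_{ik}\}_{i,k = 1,2}$ is a basis of ${\mathcal L}({\bf E}_{j'}, {\bf E}_j)$ and a direct computation gives $M_L(A) X_{ik} = A X_{ik} = \lambda_i X_{ik}$ and $M_R(B) X_{ik} = X_{ik} B = \mu_k X_{ik}$, the latter because $B = B^*$. Hence
\begin{equation*}
\bigl( M_L(A) \pm M_R(B) \bigr) X_{ik} = (\lambda_i \pm \mu_k) X_{ik},
\end{equation*}
so the four numbers $\{\lambda_i \pm \mu_k\}_{i,k}$ exhaust the spectrum of the self-adjoint operator $M_L(A) \pm M_R(B)$ on the four-dimensional space, which is exactly the claim.

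There is no serious obstacle here; the statement is a transparent matrix-algebra fact. The only thing to be careful about is the correct placement of conjugations when verifying that $M_R(B)$ is self-adjoint, since one must not confuse the adjoint of $XB$ with $X^* B$ or $B X^*$; writing $(YB)^* = B^* Y^*$ and then applying cyclicity is the cleanest route.
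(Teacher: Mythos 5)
Your proof is correct, and the paper in fact gives no proof at all for this lemma — it states ``The following Lemma can be proved by using elementary arguments from linear algebra, hence the proof is omitted,'' so your argument fills a gap rather than duplicating one. The computation for $(i)$ via cyclicity of the trace is the standard one, and for $(ii)$ the observation that $M_L(A)$ and $M_R(B)$ commute, followed by the explicit rank-one eigenbasis $X_{ik}$ built from eigenbases of $A$ and $B$, is exactly the clean route one would expect; the only subtlety — using $B = B^*$ together with reality of $\mu_k$ to get $M_R(B) X_{ik} = \mu_k X_{ik}$ — is handled correctly.
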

We finish this Section by recalling some well known facts concerning linear self-adjoint operators on finite dimensional Hilbert spaces. Let ${\mathcal H}$ be a finite dimensional Hilbert space of dimension $n$ equipped by the inner product $( \cdot\,,\,\cdot )_{\mathcal H}$. For any self-adjoint operator $A : {\mathcal H} \to {\mathcal H}$, we order its eigenvalues as
\begin{equation}\label{spettro hilbert astratto}
{\rm spec}(A) := \big\{\lambda_1(A) \leq \lambda_2(A) \leq \ldots \leq \lambda_n(A)\big\}\,.
\end{equation}
\begin{lemma}\label{risultato astratto operatori autoaggiunti}
 Let ${\mathcal H}$ be a Hilbert space of dimension $n$. Then the following holds:

\noindent
$(i)$ Let $A_1, A_2 : {\mathcal H} \to {\mathcal H}$ be self-adjoint operators. Then their eigenvalues, ranked as in \eqref{spettro hilbert astratto}, satisfy the Lipschitz property 
$$
|\lambda_k(A_1) - \lambda_k(A_2)| \leq \| A_1 - A_2 \|_{{\mathcal L}({\mathcal H})}\,, \qquad \forall k = 1, \ldots, n\,.
$$

\noindent
$(ii)$ Let $A = \eta {\rm Id}_{\mathcal H} + B$, where $\eta \in \R$, ${\rm Id}_{\mathcal H} : {\mathcal H} \to {\mathcal H}$ is the identity and $B :{\mathcal H} \to {\mathcal H}$ is selfadjoint. Then 
$$
\lambda_k(A) = \eta + \lambda_k(B) \,, \qquad \forall k = 1, \ldots , n\,. 
$$ 

\noindent
$(iii)$ Let $A : {\mathcal H} \to {\mathcal H}$ be self-adjoint and assume that ${\rm spec}(A) \subset \R \setminus \{ 0 \}$. Then $A$ is invertible and its inverse satisfies
$$
\| A^{- 1}\|_{{\mathcal L}({\mathcal H})} = \dfrac{1}{\min_{k = 1, \ldots, n}|\lambda_k(A)|}\,.
$$
\end{lemma}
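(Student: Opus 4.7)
The plan is to treat the three items separately, each as a short application of classical spectral theory for self-adjoint operators on a finite-dimensional Hilbert space. All three statements reduce quickly once one has the min-max characterization of the eigenvalues and the spectral theorem producing an orthonormal eigenbasis; the proof is therefore more of a compilation than a genuinely difficult argument, and in practice one could simply cite a linear algebra textbook.

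For item $(i)$, the tool is the Courant-Fischer min-max principle: for any self-adjoint $A:\mathcal H\to\mathcal H$ and any $k=1,\dots,n$ the $k$-th eigenvalue (ranked as in \eqref{spettro hilbert astratto}) satisfies
\[
\lambda_k(A) \;=\; \min_{\substack{V\subseteq\mathcal H \\ \dim V=k}}\; \max_{\substack{x\in V \\ \|x\|_{\mathcal H}=1}} (Ax,x)_{\mathcal H}.
\]
For every unit vector $x$ one has $|(A_1 x,x)_{\mathcal H}-(A_2 x,x)_{\mathcal H}|=|((A_1-A_2)x,x)_{\mathcal H}|\leq \|A_1-A_2\|_{\mathcal L(\mathcal H)}$ by Cauchy-Schwarz. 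Taking the maximum over unit vectors in a fixed $k$-dimensional subspace $V$ and then the minimum over such $V$, one deduces
\[
\lambda_k(A_1) \;\leq\; \lambda_k(A_2) + \|A_1-A_2\|_{\mathcal L(\mathcal H)},
\]
and swapping the roles of $A_1,A_2$ yields the Lipschitz estimate.

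For item $(ii)$, I would invoke the spectral theorem to produce an orthonormal basis $\{e_k\}_{k=1}^{n}$ of $\mathcal H$ diagonalising $B$, with $B e_k = \lambda_k(B) e_k$ and the eigenvalues ordered as in \eqref{spettro hilbert astratto}. Since $A=\eta\,\mathrm{Id}_{\mathcal H}+B$, the same basis diagonalises $A$ with $A e_k = (\eta+\lambda_k(B)) e_k$. The shifted sequence $\{\eta+\lambda_k(B)\}_{k=1}^n$ is still non-decreasing, hence it is precisely the ordered spectrum of $A$, which proves $\lambda_k(A)=\eta+\lambda_k(B)$. For item $(iii)$, diagonalise $A$ directly: $Ae_k=\lambda_k(A)e_k$ in an orthonormal basis. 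Since $\lambda_k(A)\neq 0$ for every $k$, $A$ is bijective, with $A^{-1}e_k=\lambda_k(A)^{-1}e_k$, so $A^{-1}$ is self-adjoint and has spectrum $\{\lambda_k(A)^{-1}\}_{k=1}^n$. Recalling that for any self-adjoint operator on a finite-dimensional Hilbert space the operator norm equals the largest modulus of an eigenvalue, one gets
\[
\|A^{-1}\|_{\mathcal L(\mathcal H)} \;=\; \max_{k=1,\dots,n} |\lambda_k(A)^{-1}| \;=\; \frac{1}{\min_{k=1,\dots,n}|\lambda_k(A)|},
\]
which is the claim.

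The only genuine obstacle is item $(i)$, where one must either quote Courant-Fischer or, alternatively, run the standard perturbation argument $A_1 = A_2+(A_1-A_2)$ together with Weyl's inequality; items $(ii)$ and $(iii)$ are one-line consequences of the spectral decomposition. Since all three facts are classical and the Lemma is stated explicitly as preparatory material, I would present the proof in this compact form or simply state that it follows from standard linear algebra.
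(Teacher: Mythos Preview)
Your proposal is correct and complete. The paper does not actually prove this lemma: it is introduced with the sentence ``We finish this Section by recalling some well known facts concerning linear self-adjoint operators on finite dimensional Hilbert spaces'' and no proof is given, so your argument via Courant--Fischer and the spectral decomposition is precisely the standard justification one would supply, and indeed more than the paper itself provides.
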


\subsection{Block-decay norm for linear operators}
In this Section, we introduce the block-decay norm for linear operators.  
Given a T\"oplitz in time operator ${\mathcal R}$ as in \eqref{action toplitz operator}, recalling its $2 \times 2$ block representation \eqref{notazione a blocchi}, \eqref{definizione blocco operatore}, we define its block-decay norm as  
\begin{equation}\label{decadimento Kirchoff}
| {\mathcal R} |_s := {\rm sup}_{j' \in \N} \Big( \sum_{\ell \in \Z^\nu\,,\,j \in \N} \langle \ell, j - j' \rangle^{2 s} \| {\bf R}_j^{j'}(\ell)\|^2\Big)^{\frac12}\,,
\end{equation}
where $\| \cdot \|$ is defined in \eqref{norma L2 blocco}. 
For a family of T\"oplitz in time operators ${\mathcal R} = {\mathcal R}(\omega) \in {\mathcal L}\big(H^s_0(\T^{\nu + 1})\big)$, $\omega \in \Omega_o$, given $\gamma > 0$, we define the norm 
\begin{equation}\label{norma decadimento lipschitz}
|{\mathcal R}|_s^{\Lipg} := |{\mathcal R}|_s^{\sup} + \gamma |{\mathcal R}|_s^{\lip}\,,
\end{equation}
$$
|{\mathcal R}|_s^{\sup} := \sup_{\omega \in \Omega_o} |{\mathcal R}(\omega)|_s\,, \quad |{\mathcal R}|_s^{\lip} := \sup_{\begin{subarray}{c}
\omega_1, \omega_2 \in \Omega_o \\
\omega_1 \neq \omega_2
\end{subarray}} \frac{|{\mathcal R}(\omega_1) - {\mathcal R}(\omega_2)|_s}{|\omega_1 - \omega_2|}\,.
$$
For families of linear operators ${\mathcal R}(\omega)$, $\omega \in \Omega_o$ of the form 
\begin{equation}\label{operatore matriciale decadimento}
{\mathcal R} = \begin{pmatrix}
{\mathcal R}_1 & {\mathcal R}_2 \\
\overline{\mathcal R}_2 & \overline{\mathcal R}_1
\end{pmatrix}\,, 
\end{equation}
where ${\mathcal R}_1 , {\mathcal R}_2 \in {\mathcal L}\big(H^s_0(\T^{\nu + 1}) \big)$ are T\"oplitz in time operators of the form \eqref{action toplitz operator}, 
we define 
\begin{equation}\label{norma decadimento operatore matriciale}
|{\mathcal R}|_s := {\rm max}\{|{\mathcal R}_1|_s, |{\mathcal R}_2|_s \}\,,\,\, |{\mathcal R}|_s^\Lipg := {\rm max}\{ |{\mathcal R}_1|_s^\Lipg , |{\mathcal R}_2|_s^\Lipg \}\,.
\end{equation}
 
\noindent
In the following, we state some properties of this norm. We prove such properties for operators ${\mathcal R} \in {\mathcal L}\big(H^s_0(\T^{\nu + 1}) \big)$. If ${\mathcal R}$ is an operator of the form \eqref{operatore matriciale decadimento} then the same statements hold with the obvious modifications. To state the following lemma we need the following definition. For all $m \in \R$ we define the operator $|D|^m$ as
\begin{equation}\label{definizione |D| m}
|D|^m(1) = 0\,, \qquad |D|^m (e^{\ii j  x}) = |j|^m e^{\ii j x}\quad \forall j \neq 0\,. 
\end{equation}
\begin{lemma}\label{elementarissimo decay}
$(i)$ The norm $| \cdot |_s$ is increasing, namely $|{\mathcal R}|_s \leq |{\mathcal R}|_{s'}$, for $s \leq s'$.

\noindent
$(ii)$ $|{\mathcal R}|_s \leq |{\mathcal R} |D||_s$ and the operator $\omega \cdot \partial_\vphi {\mathcal R}$ (see \eqref{definizione D omega cal R}) satisfies $|\omega \cdot \partial_\vphi {\mathcal R}|_s \leq |{\mathcal R}|_{s + 1}$.

\noindent
$(iii)$ For any $j \in \N$, the $2 \times 2$ block ${\bf R}_j^j(0)$ (see \eqref{definizione blocco operatore}) satisfies $\sup_{j \in \N} \| {\bf R}_j^j(0)\| \lessdot |{\mathcal R}|_{0}$, where $\| \cdot \|$ is defined in \eqref{norma L2 blocco}. Moreover the operator $[{\mathcal R}]$ defined by \eqref{notazione operatore diagonale a blocchi}, satisfies $|[{\mathcal R}]|_s \leq |{\mathcal R}|_s$. 

\noindent
$(iv)$ Items $(i)$-$(iii)$hold, replacing $|\cdot |_s$ by $|\cdot|_s^\Lipg$ and $\| \cdot \|$ by $\| \cdot \|^\Lipg$. 
\end{lemma}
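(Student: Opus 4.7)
The plan is to derive all four items directly from the definition \eqref{decadimento Kirchoff} of the block-decay norm, together with the explicit formulas giving the $2\times 2$ Fourier blocks of $\mathcal{R}|D|$, $\omega\cdot\partial_\varphi\mathcal{R}$, and $[\mathcal{R}]$. Nothing deep is needed; the only ingredient beyond bookkeeping is the elementary inequality $\langle \ell, j-j'\rangle \geq 1$ for all $\ell\in\Z^\nu$ and $j,j'\in\N$, and, for the $\omega\cdot\partial_\varphi$ estimate, the boundedness of $\Omega\subseteq\R^\nu$.

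For item $(i)$, since $\langle\ell,j-j'\rangle\geq 1$, we have $\langle\ell,j-j'\rangle^{2s}\leq \langle\ell,j-j'\rangle^{2s'}$ whenever $s\leq s'$, and the claim follows by inserting this into \eqref{decadimento Kirchoff}. For item $(ii)$, from \eqref{definizione |D| m} the operator $|D|$ acts on the basis $\{e^{\pm i j' x}\}$ of $\mathbf{E}_{j'}$ as multiplication by $j'\geq 1$, so the block decomposition gives $(\mathcal{R}|D|)_j^{j'}(\ell)=j'\,\mathbf{R}_j^{j'}(\ell)$ and therefore $\|(\mathcal{R}|D|)_j^{j'}(\ell)\|^2=(j')^2\|\mathbf{R}_j^{j'}(\ell)\|^2\geq \|\mathbf{R}_j^{j'}(\ell)\|^2$, which yields $|\mathcal{R}|_s\leq |\mathcal{R}|D||_s$. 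For the commutator with $\omega\cdot\partial_\varphi$ we use \eqref{definizione D omega cal R} at the block level, giving $(\omega\cdot\partial_\varphi\mathcal{R})_j^{j'}(\ell)=i\,(\omega\cdot\ell)\,\mathbf{R}_j^{j'}(\ell)$; since $\Omega$ is bounded, $|\omega\cdot\ell|\leq C|\ell|\leq C\langle\ell,j-j'\rangle$, and plugging in gives $|\omega\cdot\partial_\varphi\mathcal{R}|_s\leq C|\mathcal{R}|_{s+1}$ as required.

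For item $(iii)$, fix $j\in\N$ and apply \eqref{decadimento Kirchoff} at $s=0$ with $j'=j$: retaining only the single term $(\ell,j'')=(0,j)$ in the sum yields $\|\mathbf{R}_j^j(0)\|^2\leq\sum_{\ell,j''}\|\mathbf{R}_{j''}^j(\ell)\|^2\leq|\mathcal{R}|_0^2$, so taking the supremum in $j$ gives $\sup_{j\in\N}\|\mathbf{R}_j^j(0)\|\lessdot|\mathcal{R}|_0$. For the block-diagonal part $[\mathcal{R}]$ defined in \eqref{notazione operatore diagonale a blocchi}, the only nonzero blocks are $[\mathcal{R}]_j^{j'}(\ell)=\mathbf{R}_{j'}^{j'}(0)\delta_{j,j'}\delta_{\ell,0}$, so that
\begin{equation*}
|[\mathcal{R}]|_s^2=\sup_{j'\in\N}\langle 0,0\rangle^{2s}\,\|\mathbf{R}_{j'}^{j'}(0)\|^2=\sup_{j'\in\N}\|\mathbf{R}_{j'}^{j'}(0)\|^2\leq |\mathcal{R}|_0^2\leq|\mathcal{R}|_s^2,
\end{equation*}
using $\langle 0,0\rangle=1$ together with item $(i)$.

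Finally, item $(iv)$ follows by repeating the same pointwise arguments separately for the $\sup$-part and for the $\mathrm{lip}$-part of the norm: the identities at the block level in items $(i)$--$(iii)$ are linear in $\mathcal{R}$, so applying them to $\mathcal{R}(\omega_1)-\mathcal{R}(\omega_2)$ and dividing by $|\omega_1-\omega_2|$ gives the Lipschitz counterparts; summing with the weight $\gamma$ as in \eqref{norma decadimento lipschitz} yields the $\mathrm{Lip}(\gamma)$-version. For operators of the matricial form \eqref{operatore matriciale decadimento}, one applies the scalar argument to each entry $\mathcal{R}_1,\mathcal{R}_2$ and takes the maximum, as in \eqref{norma decadimento operatore matriciale}. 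I do not expect any serious obstacle: the label \emph{elementarissimo} is honest, and the whole proof is a mechanical unpacking of the definitions.
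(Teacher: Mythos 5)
Your proof is in the same spirit as the paper's own treatment, which simply states that the lemma follows directly from the definitions \eqref{decadimento Kirchoff} and \eqref{norma decadimento lipschitz} and omits the argument. The computations you supply for items $(i)$--$(iii)$ are correct: the monotonicity in $s$ via $\langle \ell, j-j'\rangle \geq 1$, the block identity $(\mathcal{R}|D|)_j^{j'}(\ell) = j'\,\mathbf{R}_j^{j'}(\ell)$ with $j'\geq 1$, and the single-term extraction for $\sup_j\|\mathbf{R}_j^j(0)\|$ and for $|[\mathcal{R}]|_s$ are all sound. One cosmetic remark: for the $\omega\cdot\partial_\vphi$ bound in $(ii)$ you correctly pick up a constant $C$ (bounded by $\nu^{1/2}\sup_{\omega\in\Omega}|\omega|$) through $|\omega\cdot\ell|\leq C\langle\ell,j-j'\rangle$; the printed statement with a bare $\leq$ is slightly imprecise, and $\lessdot$ is what is really meant, so no harm done.

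There is, however, a genuine hole in your argument for item $(iv)$ as applied to the $\omega\cdot\partial_\vphi$ estimate of $(ii)$. Your justification --- ``the identities at the block level are linear in $\mathcal{R}$, so apply them to $\mathcal{R}(\omega_1)-\mathcal{R}(\omega_2)$ and divide by $|\omega_1-\omega_2|$'' --- quietly assumes that the operation whose Lipschitz norm you are estimating commutes with the difference quotient. For $(i)$, $(iii)$, and the $|D|$ part of $(ii)$ this is fine, because the relevant maps are $\omega$-independent. But $\omega\cdot\partial_\vphi$ is itself parameter-dependent: $\omega_1\cdot\partial_\vphi\mathcal{R}(\omega_1) - \omega_2\cdot\partial_\vphi\mathcal{R}(\omega_2)$ is not $\omega\cdot\partial_\vphi\bigl(\mathcal{R}(\omega_1)-\mathcal{R}(\omega_2)\bigr)$. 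One must write instead
\begin{equation*}
\omega_1\cdot\partial_\vphi\mathcal{R}(\omega_1) - \omega_2\cdot\partial_\vphi\mathcal{R}(\omega_2) = \omega_1\cdot\partial_\vphi\bigl(\mathcal{R}(\omega_1)-\mathcal{R}(\omega_2)\bigr) + (\omega_1-\omega_2)\cdot\partial_\vphi\mathcal{R}(\omega_2)\,,
\end{equation*}
and the second term contributes an extra $|\mathcal{R}|^{\sup}_{s+1}$ to $|\omega\cdot\partial_\vphi\mathcal{R}|_s^{\rm lip}$. After multiplying by $\gamma$ and using $\gamma\in(0,1)$, this is absorbed into $|\mathcal{R}|_{s+1}^{\Lipg}$, so the conclusion $|\omega\cdot\partial_\vphi\mathcal{R}|_s^{\Lipg}\lessdot |\mathcal{R}|_{s+1}^{\Lipg}$ is still true, but it requires this short Leibniz-type decomposition rather than the blanket linearity claim you invoked.
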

\begin{proof}
The proof is elementary. It follows directly by the definitions \eqref{decadimento Kirchoff}, \eqref{norma decadimento lipschitz}, hence we omit it. 
\end{proof}
\begin{lemma}\label{interpolazione decadimento Kirchoff}
Let ${\mathcal R}$, ${\mathcal B}$ be operators of the form \eqref{action toplitz operator}. Then 
\begin{equation}\label{kartoffen}
|{\mathcal R} {\mathcal B} |_s \leq_s  |{\mathcal R}|_s |{\mathcal B}|_{s_0} + |{\mathcal R}|_{s_0} |{\mathcal B}|_s \,.
\end{equation}
If ${\mathcal R} = {\mathcal R}(\omega)$, ${\mathcal B}= {\mathcal B}(\omega)$ are Lipschitz with respect to the parameter $\omega \in \Omega_o \subseteq \Omega$, then the same estimate holds replacing $|\cdot |_s$ by $| \cdot |_s^\Lipg$. 
\end{lemma}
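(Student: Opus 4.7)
The plan is a standard convolution-style estimate in the variables $(\ell, j)$, carried out block-by-block using the $2\times 2$ representation of Section \ref{sezione rappresentazione 2 per 2}. First I would write the composition at the block level: since both ${\mathcal R}$ and ${\mathcal B}$ are T\"oplitz in time, by \eqref{action toplitz operator}–\eqref{definizione blocco operatore} one has
\[
({\bf RB})_j^{j'}(\ell) \;=\; \sum_{\ell_1 \in \Z^\nu,\; j_1 \in \N} {\bf R}_j^{j_1}(\ell-\ell_1)\,{\bf B}_{j_1}^{j'}(\ell_1)\,,
\]
and since the norm $\|\cdot\|$ on $2\times 2$ matrices defined in \eqref{norma L2 blocco} is sub-multiplicative, $\|{\bf R}_j^{j_1}(\ell-\ell_1){\bf B}_{j_1}^{j'}(\ell_1)\| \leq \|{\bf R}_j^{j_1}(\ell-\ell_1)\|\,\|{\bf B}_{j_1}^{j'}(\ell_1)\|$.

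The key step is then the interpolation inequality
\[
\langle \ell, j-j'\rangle^s \;\leq\; C(s)\bigl(\langle \ell-\ell_1, j-j_1\rangle^s + \langle \ell_1, j_1-j'\rangle^s\bigr)\,,
\]
which splits the weighted block-norm of $({\bf RB})_j^{j'}(\ell)$ into two pieces $T_1$ and $T_2$, where the weight $\langle\cdot\rangle^s$ is absorbed into the ${\mathcal R}$-factor in $T_1$ and into the ${\mathcal B}$-factor in $T_2$. On each piece I would apply Cauchy–Schwarz with the summable weight $\langle\cdot\rangle^{-2s_0}$: this is where the choice $s_0=[(\nu+1)/2]+1$ is used, because it guarantees
\[
\sum_{\ell_1 \in \Z^\nu,\; j_1 \in \N} \langle \ell_1, j_1-j'\rangle^{-2s_0} \;\lessdot\; 1
\]
uniformly in $j'$ (and similarly with the roles of the indices swapped). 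After Cauchy–Schwarz and a change of summation variable $(m,n)=(\ell-\ell_1, j-j_1)$, the double sum factorizes: one factor collapses to $|{\mathcal R}|_s^2$ (resp.\ $|{\mathcal R}|_{s_0}^2$) via the very definition \eqref{decadimento Kirchoff}, and the other to $|{\mathcal B}|_{s_0}^2$ (resp.\ $|{\mathcal B}|_s^2$). Taking the supremum over $j'$ and a square root yields \eqref{kartoffen}.

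For the Lipschitz part I would use the standard product rule
\[
{\mathcal R}(\omega_1){\mathcal B}(\omega_1) - {\mathcal R}(\omega_2){\mathcal B}(\omega_2) = \bigl({\mathcal R}(\omega_1)-{\mathcal R}(\omega_2)\bigr){\mathcal B}(\omega_1) + {\mathcal R}(\omega_2)\bigl({\mathcal B}(\omega_1)-{\mathcal B}(\omega_2)\bigr),
\]
divide by $|\omega_1-\omega_2|$, and apply \eqref{kartoffen} to each of the two terms. Combining the sup and Lipschitz estimates according to the definition \eqref{norma decadimento lipschitz} gives the desired inequality for $|\cdot|_s^{\Lipg}$. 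The case of matrix-form operators \eqref{operatore matriciale decadimento} follows by applying the scalar estimate entry-by-entry and taking the max via \eqref{norma decadimento operatore matriciale}. The only place requiring genuine care is the combinatorial bookkeeping in the Cauchy–Schwarz step, in particular verifying that the change of variables $(m,n)=(\ell-\ell_1, j-j_1)$ is compatible with the constraints $j,j_1\in\N$; this is not an obstacle, since we may trivially bound by extending the summation over $n\in\Z$.
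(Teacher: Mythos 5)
Your proof follows exactly the same structure as the paper's: block-level composition, sub-multiplicativity of $\|\cdot\|$, the split $\langle \ell, j-j'\rangle^s \leq_s \langle \ell-\ell_1, j-j_1\rangle^s + \langle \ell_1, j_1-j'\rangle^s$, Cauchy--Schwarz against a $\langle\cdot\rangle^{-2s_0}$ weight, and the standard product-rule decomposition for the Lipschitz part. The only cosmetic difference is that the paper resolves the index-shift by translating only $\ell$ (leaving the $\N$-valued index untouched and reading off the definition of $|\cdot|_s$ directly), whereas you flag the shift of both indices and note that extending the $j$-summation from $\N$ to $\Z$ is harmless — both routes are fine.
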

\begin{proof}
According to the matrix representations \eqref{notazione a blocchi}, \eqref{definizione blocco operatore}, the operator ${\mathcal R}{\mathcal B}$ has the $2 \times 2$ block representation 
$$
 {\mathcal R} {\mathcal B} = \Big(  [{\bf  R}{\bf B}]_j^{j'}(\ell - \ell')\Big)_{\begin{subarray}{c}
  \ell, \ell' \in \Z^\nu \\
 j, j' \in \N
 \end{subarray}}\,, \quad [{\bf R}{\bf B}]_j^{j'}(\ell) = \sum_{\begin{subarray}{c}
 \ell' \in \Z^\nu \\
 k \in \N
 \end{subarray}} {\bf R}_j^{k}(\ell - \ell') {\bf B}_{k}^{j'}(\ell')\,.
$$
By the Cauchy Schwartz inequality, $\| {\bf R}_j^{k}(\ell - \ell') {\bf B}_{k}^{j'}(\ell') \| \leq \| {\bf R}_j^{k}(\ell - \ell') \| \| {\bf B}_{k}^{j'}(\ell')\|$, then for all $j' \in \N$, we get  
\begin{align}
\sum_{\begin{subarray}{c}
\ell \in \Z^\nu \\
j\in \N 
\end{subarray}} \langle \ell, j - j' \rangle^{2 s} \|[{\bf R}{\bf B}]_j^{j'}(\ell) \|^2 & \leq \sum_{\begin{subarray}{c}
\ell \in \Z^\nu \\
j \in \N
\end{subarray}} \Big( \sum_{\begin{subarray}{c}
\ell' \in \Z^\nu \\
k \in \N
\end{subarray}} \langle \ell, j - j' \rangle^{s} \|{\bf R}_j^{k}(\ell - \ell') \| \| {\bf B}_{k}^{j'}(\ell') \| \Big)^2\,. \label{paris 0} 
\end{align}
Using that $\langle \ell, j - j' \rangle^s \leq_s \langle \ell - \ell', j - k \rangle^s + \langle \ell', k - j' \rangle^s$, we get $\eqref{paris 0} \leq_s (A) + (B)$ where 
\begin{equation}\label{(A)}
(A) := \sum_{\begin{subarray}{c}
\ell \in \Z^\nu \\
j \in \N
\end{subarray}} \Big( \sum_{\begin{subarray}{c}
\ell' \in \Z^\nu \\
k \in \N
\end{subarray}} \langle \ell - \ell', j - k \rangle^{s} \|{\bf R}_j^{k}(\ell - \ell') \| \| {\bf B}_{k}^{j'}(\ell') \| \Big)^2\,, 
\end{equation}
\begin{equation}\label{(B)}
(B) := \sum_{\begin{subarray}{c}
\ell \in \Z^\nu \\
j \in \N
\end{subarray}} \Big( \sum_{\begin{subarray}{c}
\ell' \in \Z^\nu \\
k  \in \N
\end{subarray}} \langle \ell',  k - j'  \rangle^{s} \|{\bf R}_j^{k}(\ell - \ell') \| \| {\bf B}_{k}^{j'}(\ell') \| \Big)^2\,.
\end{equation}
By the Cauchy-Schwartz inequality, using that since $s_0 = [(\nu + 1)/2] + 1 > (\nu + 1)/2$, the series $\sum_{\ell' \in \Z^\nu, k \in \N} \langle \ell', k - j'\rangle^{- 2 s_0}  = C(s_0)$,
one has
\begin{align}
(A) & \lessdot \sum_{\begin{subarray}{c}
\ell \in \Z^\nu \\
j \in \N
\end{subarray}}  \sum_{\begin{subarray}{c}
\ell' \in \Z^\nu \\
k \in \N
\end{subarray}} \langle \ell - \ell', j - k \rangle^{ 2s} \|{\bf R}_j^{k}(\ell - \ell') \|^2  \langle \ell', k - j' \rangle^{2 s_0}\| {\bf B}_{k}^{j'}(\ell') \|^2 \nonumber\\
& \lessdot \sum_{\begin{subarray}{c}
\ell' \in \Z^\nu \\
k \in \N
\end{subarray}} \langle \ell', k - j' \rangle^{2 s_0}\| {\bf B}_{k}^{j'}(\ell') \|^2 \sum_{\begin{subarray}{c}
\ell \in \Z^\nu \\
j \in \N
\end{subarray}} \langle \ell - \ell', j - k \rangle^{ 2s} \|{\bf R}_j^{k}(\ell - \ell') \|^2  \nonumber\\
& \lessdot    \sup_{j' \in \N} \sum_{\begin{subarray}{c}
\ell' \in \Z^\nu \\
k \in \N
\end{subarray}} \langle \ell', k - j' \rangle^{2 s_0}\| {\bf B}_{k}^{j'}(\ell') \|^2 \sup_{k \in \N} \sum_{\begin{subarray}{c}
\ell \in \Z^\nu \\
j \in \N
\end{subarray}} \langle \ell - \ell', j - k \rangle^{ 2s} \|{\bf R}_j^{k}(\ell - \ell') \|^2 \nonumber\\
& \stackrel{\eqref{decadimento Kirchoff}}{\lessdot} |{\mathcal R}|_s^2 |{\mathcal B}|_{s_0}^2\,.  \nonumber
\end{align}
By similar arguments, one gets $(B) \lessdot |{\mathcal R}|_{s_0}^2 |{\mathcal B}|_s^2$ and hence the claimed estimate follows by taking the supremum over $j' \in \N$ in \eqref{paris 0}. The estimate in $| \cdot |_s^\Lipg$, follows by applying the estimate \eqref{kartoffen} to 
$$
\frac{{\mathcal R}(\omega_1) {\mathcal B}(\omega_1) - {\mathcal R}(\omega_2) {\mathcal B}(\omega_2)}{\omega_1 - \omega_2} = \frac{({\mathcal R}(\omega_1) - {\mathcal R}(\omega_2)) {\mathcal B}(\omega_1)}{\omega_1 - \omega_2} + \frac{{\mathcal R}(\omega_2) ({\mathcal B}(\omega_1) - {\mathcal B}(\omega_2))}{\omega_1 - \omega_2}
$$
and passing to the sup for $\omega_1, \omega_2 \in \Omega_o$ with $\omega_1 \neq \omega_2$. 
\end{proof}
For all $n \geq 1$, iterating the estimate of Lemma \ref{interpolazione decadimento Kirchoff} we get  
\be\label{Mnab}
| {\mathcal R}^n |_{s_0} \leq [C(s_0)]^{n-1} | {\mathcal R} |_{s_0}^n\,,\quad
| {\mathcal R}^n |_{s} \leq n C(s)^ n |{\mathcal R}|_{s_0}^{n-1} | {\mathcal R} |_{s} \, , \ \forall s \geq s_0 \, ,
\ee
for some constant $C(s) > 0$, and the same bounds also hold for the norm $| \cdot |_s^\Lipg$ if ${\mathcal R} = {\mathcal R}(\omega)$ is Lipschitz with respect to the parameter $\omega$. 
\begin{lemma}\label{stime tame operatori decadimento Kirchoff}
Let ${\mathcal R}$ satisfy $|{\mathcal R}|_s < + \infty$, with $s \geq s_0$. Then for all ${ u} \in {H}^s_0(\T^{\nu + 1})$, the following estimate holds
$$
\| {\mathcal R} {u}\|_s \leq_s |{\mathcal R}|_s \| { u} \|_{s_0} + |{\mathcal R}|_{s_0} \| {u} \|_s\,.
$$
If ${\mathcal R} = {\mathcal R}(\omega)$, $u = u(\cdot, \omega)$ are Lipschitz with respect to the parameter $\omega \in \Omega_o \subseteq \R^\nu$, then the same estimate holds replacing $|\cdot  |_s$ by $| \cdot  |_s^\Lipg$ and $\| \cdot \|_s$ by $\| \cdot \|_s^\Lipg$. 
\end{lemma}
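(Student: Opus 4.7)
The plan is to mimic the proof of Lemma \ref{interpolazione decadimento Kirchoff}, but with one of the two ``operators'' replaced by the Fourier block coefficients of the function $u$. I would start from the $2\times 2$ block representation \eqref{azione operator Toplitz a blocchi} of ${\mathcal R}u$ and the expression \eqref{altro modo norma s} for the Sobolev norm, writing
\[
\|{\mathcal R}u\|_s^2 = \sum_{\ell \in \Z^\nu,\ j \in \N} \langle \ell, j\rangle^{2s} \Big\| \sum_{\ell' \in \Z^\nu,\ j' \in \N} {\bf R}_j^{j'}(\ell - \ell')\bigl[\widehat{\bf u}_{j'}(\ell')\bigr]\Big\|_{L^2}^2.
\]
Using the operator-norm bound $\|{\bf R}_j^{j'}(\ell-\ell')[\widehat{\bf u}_{j'}(\ell')]\|_{L^2} \leq \|{\bf R}_j^{j'}(\ell-\ell')\|\,\|\widehat{\bf u}_{j'}(\ell')\|_{L^2}$, I reduce to a purely scalar estimate on the matrix coefficients.

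The second step is the standard splitting via the elementary inequality $\langle \ell, j\rangle^{s} \leq_s \langle \ell-\ell', j-j'\rangle^{s} + \langle \ell', j'\rangle^{s}$, which is valid for $s \geq 0$. This produces two analogous terms $(A)$ and $(B)$; in $(A)$ the ``high'' weight sits on the operator coefficients, in $(B)$ it sits on the Fourier coefficients of $u$. Then I would apply Cauchy--Schwarz in the inner sum, inserting and extracting the weight $\langle \ell', j'\rangle^{2s_0}$ in the $u$-factor of $(A)$ (respectively $\langle \ell-\ell', j-j'\rangle^{2s_0}$ in the ${\mathcal R}$-factor of $(B)$). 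Since $2 s_0 > \nu + 1$, the series $\sum_{\ell', j'} \langle \ell', j'\rangle^{-2s_0}$ converges, so Cauchy--Schwarz costs only a constant $C(s_0)$.

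After swapping the order of summation and bounding one factor by a supremum in the appropriate free index, I recognize the resulting double sums as the definitions \eqref{decadimento Kirchoff} of $|{\mathcal R}|_s, |{\mathcal R}|_{s_0}$ and \eqref{altro modo norma s} of $\|u\|_{s_0}^2, \|u\|_s^2$, obtaining
\[
(A) \lessdot |{\mathcal R}|_s^2\,\|u\|_{s_0}^2, \qquad (B) \lessdot |{\mathcal R}|_{s_0}^2\,\|u\|_s^2,
\]
which gives the claimed tame bound after taking square roots.

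For the weighted Lipschitz version I would apply the bound already proved to the difference quotient
\[
\frac{{\mathcal R}(\omega_1)u(\omega_1) - {\mathcal R}(\omega_2)u(\omega_2)}{\omega_1 - \omega_2} = \frac{({\mathcal R}(\omega_1)-{\mathcal R}(\omega_2))u(\omega_1)}{\omega_1-\omega_2} + \frac{{\mathcal R}(\omega_2)(u(\omega_1)-u(\omega_2))}{\omega_1-\omega_2},
\]
and take $\sup$ over $\omega_1 \neq \omega_2$, combining with the $\sup$-norm estimate to pass from $\|\cdot\|_s,\,|\cdot|_s$ to $\|\cdot\|_s^{\Lipg},\,|\cdot|_s^{\Lipg}$. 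The only mildly delicate point is the bookkeeping with the $2\times 2$ block representation (in particular keeping indexes over $\N$ rather than $\Z\setminus\{0\}$), but since the norms \eqref{decadimento Kirchoff} and \eqref{altro modo norma s} are both defined via this representation the calculation is essentially identical to Lemma \ref{interpolazione decadimento Kirchoff}, with no new obstacle.
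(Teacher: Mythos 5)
Your proposal is correct and follows exactly the route the paper intends: the paper omits this proof with the remark that it is ``similar to the one of Lemma \ref{interpolazione decadimento Kirchoff},'' and your argument is precisely that adaptation — block representation of ${\mathcal R}u$ via \eqref{azione operator Toplitz a blocchi}, the pointwise bound $\|{\bf R}_j^{j'}(\ell-\ell')[\widehat{\bf u}_{j'}(\ell')]\|_{L^2} \leq \|{\bf R}_j^{j'}(\ell-\ell')\|\,\|\widehat{\bf u}_{j'}(\ell')\|_{L^2}$, the weight splitting $\langle\ell,j\rangle^s \leq_s \langle\ell-\ell',j-j'\rangle^s + \langle\ell',j'\rangle^s$, and Cauchy--Schwarz with the $s_0$-weight. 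The Lipschitz claim via the difference-quotient decomposition is also the same device used in the paper's proof of Lemma \ref{interpolazione decadimento Kirchoff}, so there is nothing to add.
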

\begin{proof}
The proof is similar to the one of Lemma \ref{interpolazione decadimento Kirchoff}, hence it is omitted. 
\end{proof}
\begin{lemma}\label{decadimento moltiplicazione}
Let $a \in H^s(\T^{\nu})$. Then the multiplication operator ${\mathcal R} : h(\vphi, x) \mapsto a(\vphi) h(\vphi, x)$ satisfies 
$
|{\mathcal R}|_s \lessdot \| a \|_s\,.
$
If $a = a(\cdot; \omega)$ is a Lipschitz family in $H^s(\T^{\nu})$, then the same estimate holds, replacing $\| \cdot \|_s$ by $\| \cdot \|_s^\Lipg$ and $| \cdot |_s$ by $| \cdot |_s^\Lipg$. 
\end{lemma}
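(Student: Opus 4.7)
The plan is to compute the block matrix representation of $\mathcal R$ explicitly and then substitute it into the definition \eqref{decadimento Kirchoff}. Since $a = a(\vphi)$ does not depend on $x$, Fourier expanding $a(\vphi) = \sum_{\ell \in \Z^\nu} \widehat a(\ell) e^{\ii \ell \cdot \vphi}$ and using \eqref{action toplitz operator}, \eqref{coefficienti spazio tempo operatore} one sees that the space-time matrix coefficients are
\[
\mathcal R_j^{j'}(\ell) = \widehat a(\ell)\, \delta_{j,j'}, \qquad \ell \in \Z^\nu,\ j, j' \in \Z \setminus \{0\}.
\]
Therefore, according to \eqref{definizione blocco operatore}, for $j, j' \in \N$ the $2 \times 2$ block reduces to
\[
\mathbf R_j^{j'}(\ell) \;=\; \widehat a(\ell)\, \delta_{j,j'}\, \mathbf I_j,
\]
so that in the norm \eqref{norma L2 blocco} one has $\|\mathbf R_j^{j'}(\ell)\|^2 = 2\,|\widehat a(\ell)|^2\,\delta_{j,j'}$.

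Plugging this into the definition \eqref{decadimento Kirchoff} and noting that the only surviving terms have $j = j'$, for which $\langle \ell, j - j'\rangle = \langle \ell\rangle$, one obtains
\[
|\mathcal R|_s^2 \;=\; \sup_{j' \in \N}\, \sum_{\ell \in \Z^\nu} \langle \ell \rangle^{2s}\, 2\,|\widehat a(\ell)|^2 \;=\; 2 \sum_{\ell \in \Z^\nu} \langle \ell\rangle^{2s} |\widehat a(\ell)|^2 \;\lessdot\; \| a \|_s^2,
\]
which proves the first claim. There is no real obstacle: the computation is a direct unpacking of the definitions, and the key point is simply that multiplication by a function of $\vphi$ alone is diagonal in the spatial index $j$, collapsing the triple sum defining $|\mathcal R|_s$ to a single sum over $\ell$ that is exactly the squared Sobolev norm of $a$.

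For the Lipschitz estimate, given a family $a(\cdot; \omega)$ with $\omega \in \Omega_o$, I would apply the same argument to the difference $\mathcal R(\omega_1) - \mathcal R(\omega_2)$, which is multiplication by $a(\vphi;\omega_1) - a(\vphi;\omega_2)$, obtaining
\[
|\mathcal R(\omega_1) - \mathcal R(\omega_2)|_s \;\lessdot\; \| a(\cdot;\omega_1) - a(\cdot;\omega_2)\|_s.
\]
Dividing by $|\omega_1 - \omega_2|$ and passing to the supremum, then combining with the already established sup bound weighted by $\gamma$ as in \eqref{norma decadimento lipschitz}, yields $|\mathcal R|_s^{\Lipg} \lessdot \|a\|_s^{\Lipg}$, as desired.
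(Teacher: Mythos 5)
Your proof is correct and follows essentially the same route as the paper's: identify the block representation ${\bf R}_j^{j'}(\ell) = \widehat a(\ell)\,\delta_{j,j'}\,{\bf I}_j$, use $\|{\bf I}_j\| = \sqrt 2$, and observe that the decay norm collapses to the Sobolev norm of $a$. You simply spell out the substitution into \eqref{decadimento Kirchoff} in more detail than the paper, which states the block form and the norm of ${\bf I}_j$ and leaves the (immediate) calculation to the reader; the Lipschitz part is likewise handled as the paper indicates.
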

\begin{proof}
The operator ${\mathcal R}$ admits the $2 \times 2$-block representation 
$$
{\mathcal R} = \big( {\bf R}_j^j(\ell - \ell')\big)_{\begin{subarray}{c}
\ell, \ell' \in \Z^\nu \\
j \in \N
\end{subarray}}\,, \quad {\bf R}_j^{j}(\ell) := \widehat a(\ell) {\bf I}_j\,, \quad \forall \ell \in \Z^\nu\,, \quad \forall j \in \N\,
$$
(recall \eqref{operatore identita su E alpha}). Since $\| {\bf I}_j\| = \sqrt{2}$, by \eqref{decadimento Kirchoff}, one has $|{\mathcal R}|_s \stackrel{}{\lessdot}  \| a \|_s$. The estimate for $|{\mathcal R}|_s^\Lipg$ follows similarly.
\end{proof}
\begin{lemma}\label{lem:inverti}
Let $ \Phi ={\rm exp}(\Psi) $ with $\Psi := \Psi(\omega)$, depending in a Lipschitz way on the parameter $\omega \in \Omega_o \subseteq \R^\nu $, 
such that  $ | \Psi |D| |_{s_0}^\Lipg \leq 1 $, $|\Psi |D||_s^\Lipg < + \infty$, with $ s \geq s_0$. Then 
\be\label{PhINV}
\,| (\Phi^{\pm 1} - {\rm Id}) |D| |_s \leq_s | \Psi |D| |_s \, , \quad 
| (\Phi^{\pm 1} - {\rm Id}) |D| |_{s}^\Lipg \leq_s  | \Psi |D||_{s}^\Lipg \, . 
\ee
The differential $\partial_\Psi \Phi$ of the map $\Psi \mapsto \Phi^{\pm 1} = {\rm exp}(\pm \Psi)$ satisfies for any $|\Psi|_{s_0} \leq 1$ the estimate
\begin{equation}\label{derivata-inversa-Phi}
\vert \partial_\Psi \Phi^{\pm 1}[\widehat \Psi] \vert_{s} 
\leq_s 
\big(  \vert \widehat \Psi \vert_{s}  
+  |\Psi|_s  
\vert \widehat \Psi \vert_{s_0} \big) \, . 
\end{equation}
Moreover the map $\Phi_{\geq 2} = \Phi - {\rm Id} - \Psi$, satisfies
\begin{align}
& | \Phi_{\geq 2}|D| |_s \leq_s | \Psi |D| |_s |\Psi |D||_{s_0} \, , \label{PhINV2} \\
& | \Phi_{\geq 2}|D| |_{s}^\Lipg \leq_s  | \Psi |D||_{s}^\Lipg  | \Psi |D||_{s_0}^\Lipg\,, \label{PhINV22} \\
& \vert \partial_\Psi \Phi_{\geq 2}[\widehat \Psi] \vert_{s} 
\leq_s 
\big(  |\Psi|_{s_0}\vert \widehat \Psi \vert_{s}  
+  |\Psi|_s  
\vert \widehat \Psi \vert_{s_0} \big) \, . \label{derivata-inversa-Phi2}
\end{align}
\end{lemma}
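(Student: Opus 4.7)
The plan is to expand every object as a power series in $\Psi$ and control each term using the composition estimates from Lemma \ref{interpolazione decadimento Kirchoff} together with its iterated version \eqref{Mnab}, then sum. The key identities are
\[
\Phi - \mathrm{Id} = \sum_{n \geq 1} \frac{\Psi^n}{n!}, \qquad \Phi_{\geq 2} = \sum_{n \geq 2} \frac{\Psi^n}{n!}, \qquad \partial_\Psi \Phi[\widehat \Psi] = \sum_{n \geq 1} \frac{1}{n!} \sum_{k=0}^{n-1} \Psi^k \widehat\Psi \, \Psi^{n-1-k},
\]
and analogous ones for $\Phi^{-1} = \exp(-\Psi)$.

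First I would prove \eqref{PhINV}. Since the bound is stated for the operator $(\Phi - \mathrm{Id})|D|$, I factor one power of $\Psi$ to the right: $\Psi^n |D| = \Psi^{n-1} (\Psi |D|)$. Applying Lemma \ref{interpolazione decadimento Kirchoff},
\[
|\Psi^n |D||_s \leq_s |\Psi^{n-1}|_s \, |\Psi |D||_{s_0} + |\Psi^{n-1}|_{s_0} \, |\Psi |D||_s .
\]
By Lemma \ref{elementarissimo decay}(ii), $|\Psi|_\sigma \leq |\Psi |D||_\sigma$ for $\sigma \in \{s_0,s\}$, so the iterated bound \eqref{Mnab} gives $|\Psi^{n-1}|_{s_0} \leq C(s_0)^{n-2} |\Psi|D||_{s_0}^{n-1}$ and $|\Psi^{n-1}|_s \leq (n-1) C(s)^{n-1} |\Psi|D||_{s_0}^{n-2} |\Psi|D||_s$. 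Using the smallness hypothesis $|\Psi|D||_{s_0} \leq 1$, summing the resulting series $\sum_n \frac{1}{n!} n C(s)^{n-1}$ yields the finite constant, proving the $|\cdot|_s$-bound for $\Phi$; the case of $\Phi^{-1}$ is identical, and the Lipschitz version is obtained by applying the same argument to the Lipschitz-extended norm, since \eqref{kartoffen} and \eqref{Mnab} both hold in $|\cdot|_s^{\Lipg}$.

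For \eqref{PhINV2}--\eqref{PhINV22} I would observe that $\Phi_{\geq 2}|D| = \Psi \cdot G$, where $G := \sum_{n \geq 1} \Psi^n |D|/(n+1)!$. By exactly the same series argument $|G|_s \leq_s |\Psi|D||_s$ and $|G|_{s_0} \leq_s |\Psi |D||_{s_0}$, and then one more application of Lemma \ref{interpolazione decadimento Kirchoff} gives
\[
|\Phi_{\geq 2}|D||_s \leq_s |\Psi|_s |G|_{s_0} + |\Psi|_{s_0} |G|_s \leq_s |\Psi|D||_s \, |\Psi|D||_{s_0},
\]
after again using Lemma \ref{elementarissimo decay}(ii) to dominate $|\Psi|_\sigma$ by $|\Psi|D||_\sigma$; the Lipschitz estimate follows verbatim.

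Finally, for the differential bounds \eqref{derivata-inversa-Phi} and \eqref{derivata-inversa-Phi2} I would treat each term $\Psi^k \widehat\Psi \, \Psi^{n-1-k}$ separately. Two applications of \eqref{kartoffen} combined with \eqref{Mnab} give
\[
|\Psi^k \widehat\Psi \, \Psi^{n-1-k}|_s \leq_s C(s)^n \bigl( |\Psi|_{s_0}^{n-1} |\widehat\Psi|_s + (n-1) |\Psi|_{s_0}^{n-2} |\Psi|_s |\widehat\Psi|_{s_0} \bigr),
\]
and summing $\frac{1}{n!} \sum_{k=0}^{n-1}$ of these produces the required bound using $|\Psi|_{s_0} \leq 1$. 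The stronger \eqref{derivata-inversa-Phi2} follows from the same expression after factoring out an extra $\Psi$ from $\Phi_{\geq 2} - \mathrm{Id} - \Psi$ (i.e.\ noting that in $\partial_\Psi \Phi_{\geq 2}$ each term has at least one factor of $\Psi$ in the product), which yields the extra $|\Psi|_{s_0}$ on the $|\widehat\Psi|_s$ term.

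The only mildly delicate point is bookkeeping the combinatorial factor $n$ from \eqref{Mnab} and the $\sum_{k=0}^{n-1}$ in the Duhamel-type expression for the differential, but both are absorbed by $1/n!$, so the series converge for free under the smallness assumption $|\Psi|D||_{s_0}^{\Lipg} \leq 1$. There is no serious obstacle; the proof is essentially a careful accounting of the composition estimate applied termwise to the exponential series.
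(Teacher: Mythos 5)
Your proof is correct and follows essentially the same route as the paper: expand the exponential series termwise, apply the composition estimate \eqref{kartoffen} together with its iterated form \eqref{Mnab} and the domination $|\Psi|_\sigma \le |\Psi |D||_\sigma$ from Lemma \ref{elementarissimo decay}$(ii)$, then sum using $|\Psi |D||_{s_0}^\Lipg \le 1$ (resp.\ $|\Psi|_{s_0}\le 1$ for the differential). The only cosmetic difference is that for \eqref{PhINV2}--\eqref{PhINV22} you factor $\Phi_{\geq 2}|D| = \Psi\cdot G$ with $G=\sum_{m\ge 1}\Psi^m|D|/(m+1)!$ and apply \eqref{kartoffen} once more, whereas the paper bounds $\sum_{k\ge 2}\Psi^k|D|/k!$ directly ``as above,'' noting that for $k\ge 2$ the iterated estimate already delivers an extra factor $|\Psi|D||_{s_0}$ --- both variants are equally valid.
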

\begin{proof} 
Let us prove the estimate \eqref{PhINV} for $\Phi$. 
We write
$$
\Phi - {\rm Id} =  \Psi + \sum_{k \geq 2} \frac{\Psi^k}{k !}\,.
$$
For any $k \geq 2$ one has
\begin{align}
|\Psi^k |D||_s & \stackrel{Lemma\,\ref{interpolazione decadimento Kirchoff}}{\leq} C(s)\Big(|\Psi^{k - 1}|_s |\Psi |D||_{s_0} + |\Psi^{k - 1}|_{s_0} |\Psi |D||_s \Big) \nonumber\\
& \stackrel{\eqref{Mnab}}{\leq_s} (k - 1) C(s)^k \Big( |\Psi|_s |\Psi|_{s_0}^{k - 2} |\Psi |D||_{s_0} + |\Psi|_{s_0}^{k - 1} |\Psi |D||_s\Big) \nonumber\\
& \stackrel{Lemma \,\ref{elementarissimo decay}-(ii)}{\leq} 2 (k - 1) C(s)^k |\Psi |D||_s |\Psi |D||_{s_0}^{k - 1} \nonumber\\
&  \stackrel{| \Psi |D| |_{s_0} \leq 1}{\leq_s}  2 (k - 1) C(s)^k |\Psi |D||_s\,. \label{cacca primaria}
\end{align} 
Hence 
\begin{align}
|(\Phi - {\rm Id})|D||_{s}&  \stackrel{\eqref{cacca primaria}}{\leq} |\Psi |D||_s\Big(1 + 2 \sum_{k \geq 2} \frac{(k - 1) C(s)^k}{k !} \Big) \leq_s |\Psi |D||_s\,. \nonumber
\end{align}
The same inequatity holds for the inverse $\Phi^{- 1} = {\rm exp}(- \Psi)$.

\noindent
Now let us prove the estimate \eqref{derivata-inversa-Phi}.  For any $k \geq 1$, one has that 
$$
\partial_\Psi(\Psi^k)[\widehat \Psi] = \sum_{i + j = k - 1} \Psi^i \widehat \Psi \Psi^j\,.
$$
For any $i + j = k - 1$
\begin{align}
| \Psi^i \widehat \Psi \Psi^j |_{ s} & \stackrel{Lemma \,\ref{interpolazione decadimento Kirchoff}}{\leq} C(s)^2 \Big( | \Psi^i |_{ s}  | \widehat \Psi |_{s_0} |\Psi^j |_{s_0} + | \Psi^i |_{s_0}  | \widehat \Psi |_{s} | \Psi^j |_{s_0}   + | \Psi^i |_{ s_0} | \widehat \Psi |_{s_0} | \Psi^j |_{ s}  \Big) \nonumber\\
& \stackrel{\eqref{Mnab}}{\leq} 2 k C(s)^{k + 1} \Big(  | \widehat \Psi |_{ s} + | \Psi |_{ s} | \widehat \Psi |_{s_0}   \Big)\,. \label{derivata Psi n}
\end{align}
Hence 
\begin{align}
| \partial_\Psi \Phi [\widehat \Psi] |_{ s} & \leq   \sum_{k \geq 1} \dfrac{| \partial_\Psi (\Psi^k)[\widehat \Psi] |_{s}}{k!}  \stackrel{\eqref{derivata Psi n}}{\leq}  \sum_{k \geq 1} \dfrac{2 k C(s)^{k + 1}}{k!} \Big(|\widehat \Psi|_s +  | \Psi |_{s}| \widehat \Psi|_{s_0} 
 \Big) \nonumber\\
& \leq_s    | \widehat \Psi |_{ s} + | \Psi |_{ s} | \widehat \Psi |_{s_0}\,,
\end{align}
which is the estimate \eqref{derivata-inversa-Phi}. The estimates \eqref{PhINV2}-\eqref{derivata-inversa-Phi2}, can be proved arguing as above, using that $\Phi_{\geq 2} = \sum_{k \geq 2} \frac{\Psi^k}{k !}$.
\end{proof}

\bigskip

\noindent
Given $ N \in \N $, we define the smoothing operator $\Pi_N {\mathcal R}$, for any operator ${\mathcal R}$ as in \eqref{action toplitz operator}
\be\label{SM}
\big(\Pi_N {\mathcal R} \big)_j^{j'}(\ell - \ell'):= 
\begin{cases}
{\mathcal R}_j^{j'}(\ell - \ell') \qquad \, {\rm if} \  | \ell - \ell'| \leq N  \\
0 \quad \qquad \qquad {\rm otherwise,} 
\end{cases}
\ee
or equivalently, using the block-matrix representation \eqref{notazione a blocchi}, \eqref{definizione blocco operatore} 
\be\label{SM-block-matrix}
\big(\Pi_N {\bf R} \big)_j^{j'}(\ell - \ell'):= 
\begin{cases}
{\bf R}_j^{j'}(\ell - \ell') \qquad \, {\rm if} \  | \ell - \ell'| \leq N  \\
0 \quad \qquad \qquad {\rm otherwise,} 
\end{cases}
\ee

\begin{lemma}\label{lemma smoothing decay}
The operator $ \Pi_N^\bot := {\rm Id} - \Pi_N $ satisfies 
\be\label{smoothingN}
| \Pi_N^\bot {\mathcal R} |_{s} \leq N^{- \mathtt b} |  {\mathcal R} |_{s+\mathtt b} \, , \quad 
| \Pi_N^\bot {\mathcal R} |_{s}^\Lipg \leq N^{- \mathtt b} |  {\mathcal R} |_{s+\mathtt b}^\Lipg \, , 
\quad \mathtt b \geq 0,
\ee
where in the second inequality ${\mathcal R}$ is Lipschitz with respect to the parameter $\omega \in \Omega_o \subseteq \R^\nu$. 
\end{lemma}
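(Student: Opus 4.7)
The plan is to apply the definitions directly; no real analytic content is needed beyond a pointwise weight comparison. First I would unwind the block-matrix representation \eqref{SM-block-matrix}: for each fixed $j' \in \N$, the operator $\Pi_N^\bot \mathcal R$ has blocks
\[
(\Pi_N^\bot \mathbf R)_j^{j'}(\ell) = \begin{cases} \mathbf R_j^{j'}(\ell) & \text{if } |\ell| > N, \\ 0 & \text{if } |\ell| \le N, \end{cases}
\]
so by the definition \eqref{decadimento Kirchoff} of the block-decay norm,
\[
|\Pi_N^\bot \mathcal R|_s^2 = \sup_{j' \in \N} \sum_{\substack{\ell \in \Z^\nu,\ |\ell| > N \\ j \in \N}} \langle \ell, j-j' \rangle^{2s} \, \| \mathbf R_j^{j'}(\ell)\|^2.
\]

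Second, on the range of summation $|\ell| > N$ one has $\langle \ell, j - j'\rangle \ge |\ell| > N$, so for any $\mathtt b \ge 0$,
\[
N^{2\mathtt b} \langle \ell, j - j'\rangle^{2s} \le \langle \ell, j - j'\rangle^{2(s+\mathtt b)}.
\]
Inserting this weight inequality in the sum and dropping the restriction $|\ell| > N$ (replacing it with the full sum over $\ell \in \Z^\nu$) yields
\[
|\Pi_N^\bot \mathcal R|_s^2 \le N^{-2\mathtt b}\, \sup_{j' \in \N} \sum_{\substack{\ell \in \Z^\nu \\ j \in \N}} \langle \ell, j-j' \rangle^{2(s+\mathtt b)} \, \| \mathbf R_j^{j'}(\ell)\|^2 = N^{-2 \mathtt b} |\mathcal R|_{s+\mathtt b}^2,
\]
which is the first bound in \eqref{smoothingN}, after taking square roots.

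For the Lipschitz bound, the operator $\Pi_N^\bot$ is linear and independent of $\omega$, so for any $\omega_1 \ne \omega_2$ in $\Omega_o$ the difference $\Pi_N^\bot \mathcal R(\omega_1) - \Pi_N^\bot \mathcal R(\omega_2) = \Pi_N^\bot \bigl(\mathcal R(\omega_1) - \mathcal R(\omega_2)\bigr)$ is again a Fourier truncation of the same type. Applying the bound just proved to the sup part and to the incremental quotient $\bigl(\mathcal R(\omega_1) - \mathcal R(\omega_2)\bigr)/|\omega_1-\omega_2|$, and then recombining via the definition \eqref{norma decadimento lipschitz} of $|\cdot|_s^{\Lipg}$, gives the second inequality. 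There is no real obstacle here: the statement is essentially a tautology once the weight inequality $N^{\mathtt b} \le \langle \ell, j-j'\rangle^{\mathtt b}$ on the support of $\Pi_N^\bot$ is noted, and the only care needed is to keep track that the block-matrix truncation and the scalar-coefficient truncation \eqref{SM} produce the same object after moving to the $2\times 2$ block representation.
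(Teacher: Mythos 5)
Your proof is correct and is precisely the direct definitional argument the paper has in mind (the paper omits it, remarking only that it "follows easily by the definitions"): the key weight inequality $N^{\mathtt b} \le \langle \ell, j-j'\rangle^{\mathtt b}$ on the support $|\ell| > N$, followed by linearity of $\Pi_N^\bot$ for the Lipschitz part, is exactly the intended reasoning.
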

\begin{proof}
The proof follows easily by the definitions \eqref{decadimento Kirchoff}, \eqref{norma decadimento lipschitz} and hence it is omitted. 
\end{proof}

\begin{lemma}\label{decadimento operatori di proiezione}
Let us define the operator 
\begin{equation}\label{operatore forma buona resto}
{\mathcal R} h (\vphi, x):= q(\vphi, x) \int_{\T } g(\vphi, x) h (\vphi, x)\, dx \,, \quad q\,, g \in H^s_0(\T^{\nu + 1})\,, \quad s \geq s_0\,.
\end{equation}
Then 
\begin{equation}\label{kartoffen 2}
|{\mathcal R}|_s \leq_s \| g \|_{s_0} \| q \|_{s} + \| g \|_{s } \| q \|_{s_0}\,.
\end{equation}
Moreover if the functions $g$ and $q$ are Lipschitz with respect to the parameter $\omega \in \Omega_o \subseteq \R^\nu$, then the same estimate holds replacing $| \cdot |_s$ by $|\cdot |_s^\Lipg$ and $\| \cdot \|_s$ by $\| \cdot \|_s^\Lipg$. 
\end{lemma}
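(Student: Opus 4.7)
The plan is to compute the Fourier matrix coefficients of $\mathcal{R}$ explicitly and then bound them directly, following the same scheme used in the proof of Lemma \ref{interpolazione decadimento Kirchoff}. Writing $q(\vphi,x)=\sum_{\ell,j}\widehat q_j(\ell) e^{\ii(\ell\cdot\vphi+jx)}$ and similarly for $g$, and using that $g$ has zero average in $x$, a direct computation via \eqref{coefficienti spazio operatore}, \eqref{coefficienti spazio tempo operatore} yields
\[
{\mathcal R}_j^{j'}(\ell)\,=\,2\pi\!\sum_{\ell_1+\ell_2=\ell}\widehat q_j(\ell_1)\,\widehat g_{-j'}(\ell_2)\,,\qquad j,j'\in\Z\setminus\{0\}\,,\ \ell\in\Z^\nu\,.
\]
Hence each entry of the $2\times2$ block ${\bf R}_j^{j'}(\ell)$ is a convolution of a Fourier coefficient of $q$ (indexed $\pm j$) with a Fourier coefficient of $g$ (indexed $\mp j'$), and $\|{\bf R}_j^{j'}(\ell)\|^2$ is controlled by the sum of the squares of these four entries.

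Next I estimate the block-decay norm. Since $|\ell|\le |\ell_1|+|\ell_2|$ and $|j-j'|\le |j|+|j'|$, we have the interpolation inequality $\langle\ell,j-j'\rangle^s\lesssim_s \langle\ell_1,j\rangle^s+\langle\ell_2,j'\rangle^s$, so squaring the convolution gives two pieces $(A)$ and $(B)$ as in the proof of Lemma \ref{interpolazione decadimento Kirchoff}. Applying Cauchy--Schwarz with weight $\langle\ell_2\rangle^{-2s_0}$ (resp.\ $\langle\ell_1,j\rangle^{-2s_0}$), which is summable because $2s_0>\nu$, produces
\[
(A)\,\lessdot\,\sum_{\ell_1,j}\langle\ell_1,j\rangle^{2s}|\widehat q_j(\ell_1)|^2\sum_{\ell_2}\langle\ell_2,j'\rangle^{2s_0}|\widehat g_{-j'}(\ell_2)|^2\,\le\,\|q\|_s^2\,\|g\|_{s_0}^2\,,
\]
and symmetrically $(B)\lessdot \|q\|_{s_0}^2\|g\|_s^2$, where the second factors are bounded uniformly in $j'$ because they are single-$k$ slices of the full Sobolev norm. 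Summing the four entry contributions and taking the supremum in $j'$ in \eqref{decadimento Kirchoff} yields \eqref{kartoffen 2}.

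For the Lipschitz version one writes
\[
\frac{{\mathcal R}(\omega_1)-{\mathcal R}(\omega_2)}{\omega_1-\omega_2}\,h\,=\,\Delta q(\vphi,x)\!\int_\T g(\omega_1)h\,dx\,+\,q(\omega_2,\vphi,x)\!\int_\T \Delta g\,h\,dx\,,
\]
with $\Delta q$, $\Delta g$ the Lipschitz difference quotients, and applies the sup-norm bound already obtained to each summand, then takes the supremum over $\omega_1\neq\omega_2$. Summing the $\sup$ and Lipschitz contributions gives the estimate with $\|\cdot\|_s$ and $|\cdot|_s$ replaced by $\|\cdot\|_s^\Lipg$ and $|\cdot|_s^\Lipg$.

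No serious obstacle is expected: the argument is a bookkeeping exercise on Fourier series, completely analogous to Lemma \ref{interpolazione decadimento Kirchoff}. The only mildly delicate point is the uniformity in $j'$ of the bound on $\sum_{\ell_2}\langle\ell_2,j'\rangle^{2s}|\widehat g_{-j'}(\ell_2)|^2$, which however is trivial since this quantity is one slice of $\|g\|_s^2$ and therefore $\le \|g\|_s^2$ regardless of $j'$.
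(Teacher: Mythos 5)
Your proof is correct and follows essentially the same route as the paper: compute $\mathcal{R}_j^{j'}(\ell)$ as a $\vphi$-convolution of $\widehat q_j$ and $\widehat g_{-j'}$, split the weight via $\langle\ell,j-j'\rangle^s\lesssim_s\langle\ell_1,j\rangle^s+\langle\ell_2,j'\rangle^s$, apply Cauchy--Schwarz with a summable $\langle\cdot\rangle^{-2s_0}$ weight, and use the telescoping difference for the Lipschitz bound. Your written Lipschitz decomposition (with $q(\omega_2)$ in the second term) is in fact what the paper intends; the paper's displayed formula has a typo with $g(\omega_2)$ in place of $q(\omega_2)$.
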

\begin{proof}
A direct calculation shows that for all $\ell \in \Z^\nu$ and for all $k, k' \in \Z \setminus \{ 0 \}$
$$
{\mathcal R}_k^{k'}(\ell) = \sum_{\ell' \in \Z^\nu} \widehat q_{ k}(\ell - \ell') \widehat g_{ - k'}(\ell')\,.
$$
Using definition \eqref{norma L2 blocco} we get that for all $\ell \in \Z^\nu$, $j, j' \in \N$,   
\begin{align}
\| {\bf R}_j^{j'}(\ell)\|^2 & = \sum_{\begin{subarray}{c}
k = \pm j \\
k' = \pm j'
\end{subarray}} |{\mathcal R}_k^{k'}(\ell)|^2 \leq \Big( \sum_{\begin{subarray}{c}
k = \pm j \\
k' = \pm j'
\end{subarray}} |{\mathcal R}_k^{k'}(\ell)| \Big)^2  \nonumber\\
& \leq \Big( \sum_{\ell' \in \Z^\nu} \sum_{\begin{subarray}{c}
k = \pm j \\
k' = \pm j'
\end{subarray}} |\widehat q_{ k}(\ell - \ell')| |\widehat g_{ - k'}(\ell')| \Big)^2. \nonumber\\
& \lessdot \Big( \sum_{\ell' \in \Z^\nu} \| \widehat{\bf q}_{j}(\ell - \ell') \|_{L^2} \| \widehat{\bf g}_{ j'}(\ell')  \|_{L^2} \Big)^2 \label{norma L2 blocco proiettore}
\end{align}
where the last inequality holds, since, recalling \eqref{bf h ell alpha}, for any $k = \pm j$, $k'= \pm j'$, $|\widehat q_k(\ell - \ell')| \leq \| \widehat{\bf q}_j(\ell - \ell') \|_{L^2}$ and $|\widehat g_{- k'}(\ell')| \leq \| \widehat{\bf g}_{j'}(\ell')\|_{L^2}$. 
Now for all $j' \in \N$, 
\begin{align}
\sum_{\begin{subarray}{c}
\ell \in \Z^\nu \\
j \in \N
\end{subarray}} \langle \ell, j -  j' \rangle^{2 s} \| {\bf R}_j^{j'}(\ell) \|^2 & \stackrel{\eqref{norma L2 blocco proiettore}}{\leq} \sum_{\begin{subarray}{c}
\ell \in \Z^\nu \\
j \in \N
\end{subarray}} \Big( \sum_{\ell' \in \Z^\nu} \langle \ell, j - j' \rangle^{ s}   \| \widehat{\bf q}_{ j}(\ell - \ell')\|_{L^2} \| \widehat{\bf g}_{ j'}(\ell') \|_{L^2} \Big)^2\,. \label{londra 0}
\end{align}
Using that $\langle \ell, j - j' \rangle^{ s} \leq_s \langle \ell - \ell' , j\rangle^s + \langle \ell', j' \rangle^s $, one gets $\eqref{londra 0} \leq_s (A) +(B)$, where 
\begin{equation}\label{(A) proiettore}
(A) := \sum_{\begin{subarray}{c}
\ell \in \Z^\nu \\
j \in \N
\end{subarray}} \Big( \sum_{\ell' \in \Z^\nu} \langle \ell - \ell', j  \rangle^{ s}   \| \widehat{\bf q}_{ j}(\ell - \ell')\|_{L^2} \| \widehat{\bf g}_{ j'}(\ell') \|_{L^2} \Big)^2\,, \quad 
\end{equation}
\begin{equation}\label{(B) proiettore}
(B) := \sum_{\begin{subarray}{c}
\ell \in \Z^\nu \\
j \in \N
\end{subarray}} \Big( \sum_{\ell' \in \Z^\nu} \langle \ell', j' \rangle^{ s}   \| \widehat{\bf q}_{ j}(\ell - \ell')\|_{L^2} \| \widehat{\bf g}_{ j'}(\ell') \|_{L^2} \Big)^2\,.
\end{equation}
By the Cauchy-Schwartz inequality, using that $\sum_{\ell' \in \Z^\nu} \langle \ell' \rangle^{- 2 s_0} = C(s_0)$ (recall that $s_0 = [(\nu + 1)/2] + 1> (\nu + 1)/2$), one gets 
\begin{align}
(A) & \leq_s \sum_{\begin{subarray}{c}
\ell \in \Z^\nu \\
j \in \N
\end{subarray}}  \sum_{\ell' \in \Z^\nu} \langle \ell - \ell', j  \rangle^{ 2 s}   \| \widehat{\bf q}_{ j}(\ell - \ell')\|_{L^2}^2  \langle \ell' \rangle^{2 s_0} \| \widehat{\bf g}_{ j'}(\ell') \|_{L^2}^2 \stackrel{\eqref{altro modo norma s}}{\leq_s} \| q \|_s \| g \|_{s_0}\,. \nonumber
\end{align}
By similar arguments one can prove that $(B) \leq_s \| q \|_{s_0} \| g \|_s$ and the claimed estimate follows by taking the sup over $j' \in \N$ in \eqref{londra 0}. The Lipschitz estimates follow by applying \eqref{kartoffen 2} to  
$$
\frac{{\mathcal R}(\omega_1)- {\mathcal R}(\omega_2)}{\omega_1 - \omega_2} = \frac{q(\omega_1) - q(\omega_2)}{\omega_1 - \omega_2} \langle \cdot, g(\omega_1) \rangle_{L^2_x}  + g(\omega_2) \Big\langle\frac{g(\omega_1) - g(\omega_2)}{\omega_1 - \omega_2}\,,\, \cdot \Big\rangle_{L^2_x}
$$
and passing to the sup for $\omega_1, \omega_2 \in \Omega_o$ with $\omega_1 \neq \omega_2$. 
\end{proof}
As we already mentioned, a T\"oplitz in time operator ${\mathcal R}$ in \eqref{action toplitz operator} may be regarded as $\vphi$-dependent family acting on the space of functions depending only on the $x$-variable 
$$
{\mathcal R}(\vphi) = \big( {\mathcal R}_j^{j'}(\vphi) \big)_{j, j' \in \Z \setminus \{ 0 \}}\,,
$$
and it admits the block representation 
$$
{\mathcal R}(\vphi) = \big( {\bf R}_j^{j'}(\vphi) \big)_{j, j' \in \N}\,, \quad  {\bf R}_j^{j'}(\vphi) = \begin{pmatrix}
{\mathcal R}_j^{j'}(\vphi) & {\mathcal R}_j^{- j'}(\vphi)\\
{\mathcal R}_{- j}^{j'}(\vphi) & {\mathcal R}_{- j}^{- j'}(\vphi)
\end{pmatrix}  \quad \forall \vphi \in \T^\nu\,, \quad \forall j, j' \in \N\,.
$$
The $2 \times 2$ matrix ${\bf R}_j^{j'}(\vphi)$ may be regarded as a linear operator in ${\mathcal L}({\bf E}_{j'}, {\bf E}_j)$, given by
$$
{\bf R}_j^{j'}(\vphi)[u] = \sum_{\begin{subarray}{c}
k = \pm j \\
k' = \pm j'
\end{subarray}} {\mathcal R}_k^{k'}(\vphi) u_{k'} e^{\ii k x}\,, \qquad \forall u(x) = u_{j'} e^{\ii j' x} + u_{- j'} e^{- \ii j' x} \in {\bf E}_{j'}\,.
$$
For the operator ${\mathcal R}(\vphi)$, we denote by $|{\mathcal R}(\vphi)|_{s, x}$ the block-decay norm (only with respect to the $x$-variable)
\begin{equation}\label{decadimento Kirchoff x}
|{\mathcal R}(\vphi)|_{s, x} := \sup_{j' \in \N} \Big(\sum_{j \in \Z} \langle j - j' \rangle^{2 s} \|{\bf R}_{j}^{j'}(\vphi)\|^2 \Big)^{\frac12}\,.
\end{equation}
If ${\mathcal R}$ is an operator of the form \eqref{operatore matriciale decadimento}, we define 
\begin{equation}\label{decadimento operatore matriciale spazio}
|{\mathcal R}(\vphi)|_{s, x} :={\rm max}\{ |{\mathcal R}_1(\vphi)|_{s, x}, |{\mathcal R}_2(\vphi)|_{s, x} \}\,.
\end{equation}
The following Lemma holds: 
\begin{lemma}\label{lemma decadimento Kirchoff in x}
Let ${\mathcal R}$ be a T\"oplitz in time operator. Then the following holds: 

\noindent
$(i)$ Let $s \geq 1$. If for any $\vphi \in \T^\nu$, $|{\mathcal R}(\vphi)|_{s, x} < + \infty$, then for any ${u} \in H^s_0(\T_x)$ 
$$
\| {\mathcal R}(\vphi) u \|_{H^s_x} \leq_s |{\mathcal R}(\vphi)|_{1, x} \|  u \|_{H^s_x} +  |{\mathcal R}(\vphi)  |_{s, x} \| u \|_{H^{1}_x}\,. 
$$

\noindent
$(ii)$ $|{\mathcal R}(\vphi)|_{s, x} \lessdot |{\mathcal R}|_{s + s_0}\,.$
\end{lemma}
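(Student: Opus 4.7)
The plan is to prove both items by direct Cauchy--Schwarz arguments in the $2\times 2$ block representation, mirroring the strategy used in Lemma \ref{interpolazione decadimento Kirchoff} and Lemma \ref{decadimento operatori di proiezione}, but with the key summability coming from one-dimensional sums over $\N$ and $\Z^\nu$ respectively.

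For item $(i)$, I would start from the block decomposition $u = \sum_{j' \in \N} \widehat{\bf u}_{j'}$ with $\widehat{\bf u}_{j'} \in {\bf E}_{j'}$, and use the operatorial bound $\|{\bf R}_j^{j'}(\vphi)[\widehat{\bf u}_{j'}]\|_{L^2_x} \leq \|{\bf R}_j^{j'}(\vphi)\| \, \|\widehat{\bf u}_{j'}\|_{L^2_x}$ to bound $\| {\mathcal R}(\vphi) u \|_{H^s_x}^2$ by $\sum_{j \in \N} \langle j \rangle^{2s} \big(\sum_{j' \in \N} \|{\bf R}_j^{j'}(\vphi)\| \, \|\widehat{\bf u}_{j'}\|_{L^2_x}\big)^2$. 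Then I split $\langle j \rangle^{s} \leq_s \langle j - j' \rangle^{s} + \langle j' \rangle^{s}$, producing two sums $(A)$ and $(B)$. For $(A)$, I would apply Cauchy--Schwarz in $j'$ with weight $\langle j' \rangle^{-2}$, which is summable on $\N$; the resulting bound is controlled by $|{\mathcal R}(\vphi)|_{s,x}^{2}\, \|u\|_{H^1_x}^2$. For $(B)$, I would apply Cauchy--Schwarz in $j'$ with weight $\langle j - j' \rangle^{-2}$, uniformly summable in $j$, to get a bound by $|{\mathcal R}(\vphi)|_{1,x}^{2}\, \|u\|_{H^s_x}^2$. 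Taking square roots yields the claimed tame estimate. This is the main computation, and the use of $s \geq 1$ is essential precisely so that $\sum_{j' \in \N} \langle j' \rangle^{-2} < \infty$.

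For item $(ii)$, the key is to relate the $\vphi$-dependent block norm to the space-time block norm through Fourier inversion in $\vphi$. Since ${\bf R}_j^{j'}(\vphi) = \sum_{\ell \in \Z^\nu} {\bf R}_j^{j'}(\ell) e^{\ii \ell \cdot \vphi}$, the Cauchy--Schwarz inequality in $\ell$ yields
$$
\|{\bf R}_j^{j'}(\vphi)\|^2 \leq \Big(\sum_{\ell \in \Z^\nu} \langle \ell \rangle^{-2s_0}\Big) \Big(\sum_{\ell \in \Z^\nu} \langle \ell \rangle^{2 s_0} \|{\bf R}_j^{j'}(\ell)\|^2\Big) \lessdot \sum_{\ell \in \Z^\nu} \langle \ell \rangle^{2 s_0} \|{\bf R}_j^{j'}(\ell)\|^2,
$$
where the convergence of $\sum_\ell \langle \ell \rangle^{-2 s_0}$ uses $2 s_0 > \nu$. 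Then, using the elementary inequality $\langle j - j' \rangle^{2s} \langle \ell \rangle^{2 s_0} \leq \langle \ell, j - j' \rangle^{2(s + s_0)}$, I multiply by $\langle j - j' \rangle^{2s}$ and sum over $j \in \N$ to obtain
$$
\sum_{j \in \N} \langle j - j' \rangle^{2s} \|{\bf R}_j^{j'}(\vphi)\|^2 \lessdot \sum_{\ell \in \Z^\nu, \, j \in \N} \langle \ell, j - j' \rangle^{2(s + s_0)} \|{\bf R}_j^{j'}(\ell)\|^2,
$$
and taking supremum over $j' \in \N$ gives $|{\mathcal R}(\vphi)|_{s,x}^2 \lessdot |{\mathcal R}|_{s + s_0}^2$, as desired.

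I do not expect significant obstacles: both parts reduce to standard weighted Cauchy--Schwarz estimates, and the combinatorial inequalities for $\langle \cdot \rangle^{2s}$ splittings are already used repeatedly in the preceding lemmas. The only subtle point is keeping track of which index the supremum is taken over (always the ``column index'' $j'$) and verifying the range $s \geq 1$ in part $(i)$ is used exactly where one-dimensional summability $\sum_{j' \in \N} \langle j' \rangle^{-2} < \infty$ is invoked.
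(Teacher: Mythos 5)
Your proof is correct and follows exactly the paper's approach: for part $(ii)$ it is identical (Fourier expansion of $\vphi\mapsto{\bf R}_j^{j'}(\vphi)$, Cauchy--Schwarz in $\ell$ against the weight $\langle\ell\rangle^{-2s_0}$, then $\langle j-j'\rangle^{2s}\langle\ell\rangle^{2s_0}\le\langle\ell,j-j'\rangle^{2(s+s_0)}$ and a supremum over $j'$), while for part $(i)$ the paper defers to Lemma \ref{stime tame operatori decadimento Kirchoff}, whose omitted proof is precisely the one-variable weighted Cauchy--Schwarz and index-splitting argument you describe. One slight inaccuracy in your commentary: the convergence of $\sum_{j'\in\N}\langle j'\rangle^{-2}$ does not depend on $s$ at all; what $s\ge1$ actually provides is that $1$ (the one-dimensional analogue of $s_0$) is an admissible ``low'' index, so that $|{\mathcal R}(\vphi)|_{1,x}\le|{\mathcal R}(\vphi)|_{s,x}$ and $\|u\|_{H^1_x}\le\|u\|_{H^s_x}$, giving the estimate its genuine tame structure.
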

\begin{proof}
The proof of item $(i)$ is similar to the one of Lemma \ref{stime tame operatori decadimento Kirchoff}, hence it is omitted. Item $(ii)$ follows since, expanding ${\bf R}_{j}^{j'}(\vphi) = \sum_{\ell \in \Z^\nu} {\bf R}_{j}^{j'}(\ell) e^{\ii \ell \cdot \vphi}$, applying the Cauchy-Schwartz inequality and using that $\sum_{\ell \in \Z^\nu} \langle \ell\rangle^{- 2 s_0} = C(s_0)$, one has that for all $j' \in \N$, 
\begin{align*}
\sum_{j \in \N} \langle j - j' \rangle^{2 s} \|{\bf R}_{j}^{j'}(\vphi)\|^2 & \lessdot \sum_{\begin{subarray}{c}
\ell \in \Z^\nu \\
j \in \N
\end{subarray}} \langle j - j' \rangle^{2 s} \langle \ell \rangle^{2 s_0} \| {\bf R}_{j}^{j'}(\ell)\|^2 \lessdot |{\mathcal R}|_{s + s_0}\,,
\end{align*}
which implies the claimed estimate passing to the supremum on $j' \in \N$. 
\end{proof}

\section{A reduction on the zero mean value functions}
For any function $u \in L^2(\T)$, we define 
\begin{equation}\label{proiettore media}
\pi_0 u := \frac{1}{2 \pi} \int_{\T} u(x)\, d x\,, \qquad \pi_0^\bot := {\rm Id} - \pi_0\,
\end{equation}
and
\begin{equation}\label{proiettore media vettoriale}
\Pi_0 := \begin{pmatrix}
\pi_0 & 0 \\
0 & \pi_0
\end{pmatrix}\,, \qquad \Pi_0^\bot := \begin{pmatrix}
\pi_0^\bot & 0 \\
0 & \pi_0^\bot
\end{pmatrix}\,.
\end{equation}
Given a function $v \in H^s(\T^{\nu + 1})$, $v(\vphi, x) = \sum_{j \in \Z} v_j(\vphi) e^{\ii j   x}$, we write 
\begin{equation}\label{splitting v media nulla costanti}
v(\vphi, x) = v_0(\vphi ) + v_\bot(\vphi, x)\,, 
\end{equation}
$$
v_0(\vphi) : = \pi_0 v(\vphi, x)\,, \quad  v_\bot(\vphi, x) := \pi_0^\bot v(\vphi, x) = \sum_{j \neq 0} v_j(\vphi) e^{\ii j x}\,. 
$$
Then according to the splitting \eqref{splitting v media nulla costanti}, applying the projection $\Pi_0$, $\Pi_0^\bot$ to the nonlinear map ${ F}$ defined in \eqref{operatore non lineare} and setting $u := \pi_0^\bot v$, $\psi := \pi_0^\bot p$, the equation $F(v, p) = { F}(\e, \omega, v, p) = 0$ is decomposed in 
\begin{equation}\label{sistema su media nulla}
\begin{cases}
\omega \cdot \partial_\vphi  u -  \psi = 0 \\
\omega \cdot \partial_\vphi \psi - \Big(1 + \e \int_{\T} |\partial_x  u|^2\, d x \Big) \partial_{xx}  u - \e  f_\bot = 0\,,
\end{cases}
\end{equation}
\begin{equation}\label{sistema sul modo 0}
\begin{cases}
\omega \cdot \partial_\vphi v_0 - p_0 = 0 \\
\omega \cdot \partial_\vphi p_0 - \e f_0 = 0\,
\end{cases}
\end{equation}
(we have used that $\partial_x v = \partial_x v_\bot = \partial_x u$ in \eqref{sistema su media nulla}). The above two systems are completely decoupled, hence they can be solved separately. In the next lemma, we solve explicitly the second system \eqref{sistema sul modo 0}. We use the hyphothesis \eqref{condizione media f} on the forcing term $f(\vphi, x)$. 
\begin{lemma}\label{lemma equazione sulle medie nulle}
Let $\gamma, \tau > 0$ and $q > 2 \tau$.
Then, for all $\omega \in \Omega_{\gamma, \tau}$ (see \eqref{diofantei Kn}), there exists a solution $v_0(\cdot; \omega, \e)\,, p_0(\cdot; \omega, \e)\in H^{q - 2 \tau}(\T^\nu, \R)$ of the system \eqref{sistema sul modo 0} with $\int_{\T^\nu} p_0(\vphi)\, d \vphi = \int_{\T^\nu} v_0(\vphi)\, d \vphi = 0$ and satisfying the estimates 
\begin{equation}\label{caccoletta media}
\| v_0\|_s \lessdot  \e \gamma^{- 2} \|  f\|_{s + 2 \tau}\,, \quad \| p_0\|_s \lessdot \e \gamma^{- 1} \| f \|_{s + \tau}\,, \qquad \forall 0 \leq s \leq q - 2 \tau\,.
\end{equation}
\end{lemma}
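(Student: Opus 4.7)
The plan is to solve the system \eqref{sistema sul modo 0} explicitly by applying the inverse operator $(\omega\cdot\partial_\vphi)^{-1}$ twice, leveraging the zero-average hypothesis \eqref{condizione media f}.

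First, I observe that $f_0(\vphi) := \pi_0 f(\vphi,\cdot) = \frac{1}{2\pi}\int_\T f(\vphi,x)\,dx$ inherits zero mean in $\vphi$ from \eqref{condizione media f}, since
\[
\int_{\T^\nu} f_0(\vphi)\,d\vphi \;=\; \frac{1}{2\pi}\int_{\T^{\nu+1}} f(\vphi,x)\,d\vphi\,dx \;=\; 0.
\]
Consequently $f_0$ lies in the range of $\omega\cdot\partial_\vphi$. For $\omega\in\Omega_{\gamma,\tau}$, the formula \eqref{om d vphi inverso} defines $(\omega\cdot\partial_\vphi)^{-1} f_0$, which automatically has zero mean in $\vphi$ (the operator kills the $\ell=0$ mode and outputs no constant component).

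Next, I set
\[
p_0(\vphi) \;:=\; \e\,(\omega\cdot\partial_\vphi)^{-1} f_0(\vphi), \qquad v_0(\vphi) \;:=\; (\omega\cdot\partial_\vphi)^{-1} p_0(\vphi) \;=\; \e\,(\omega\cdot\partial_\vphi)^{-2} f_0(\vphi).
\]
The second definition makes sense because $p_0$ has zero mean in $\vphi$ by construction, so again it lies in the range of $\omega\cdot\partial_\vphi$; moreover $v_0$ has zero mean in $\vphi$. Then $(v_0,p_0)$ solves \eqref{sistema sul modo 0} by construction.

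The estimates \eqref{caccoletta media} follow directly from the small-divisor bound \eqref{stima om d vphi inverso}. Applying it once gives
\[
\|p_0\|_s \;\leq\; \e\gamma^{-1}\|f_0\|_{s+\tau} \;\lessdot\; \e\gamma^{-1}\|f\|_{s+\tau},
\]
since $\|f_0\|_{s+\tau}\lessdot \|f\|_{s+\tau}$ (the projector $\pi_0$ is continuous on $H^s$). Applying \eqref{stima om d vphi inverso} a second time yields
\[
\|v_0\|_s \;\leq\; \gamma^{-1}\|p_0\|_{s+\tau} \;\lessdot\; \e\gamma^{-2}\|f\|_{s+2\tau}.
\]
The requirement $s\leq q-2\tau$ together with $q>2\tau$ guarantees that $\|f\|_{s+2\tau}$ is finite for $f\in\mathcal{C}^q(\T^\nu\times\T,\R)$. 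There is no serious obstacle here: the only subtlety is verifying the compatibility conditions (zero mean of $f_0$ from \eqref{condizione media f}, and zero mean of $p_0$ by construction) so that both applications of $(\omega\cdot\partial_\vphi)^{-1}$ are legitimate.
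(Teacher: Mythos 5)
Your proof is correct and follows essentially the same route as the paper's: verify that $f_0$ has zero $\vphi$-average via \eqref{condizione media f}, define $p_0 := \e(\omega\cdot\partial_\vphi)^{-1}f_0$ and $v_0 := \e(\omega\cdot\partial_\vphi)^{-2}f_0$, and apply \eqref{stima om d vphi inverso}. The only difference is that you spell out a couple of routine compatibility checks (zero mean of $p_0$, continuity of $\pi_0$) that the paper leaves implicit.
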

\begin{proof}
Since 
$$
\int_{\T^\nu } f_0(\vphi)\,  d \vphi = \int_{\T^{\nu + 1}} f(\vphi, x)\, d \vphi\, d x \stackrel{\eqref{condizione media f}}{=} 0 \,,
$$
the second equation in \eqref{sistema sul modo 0} can be solved by taking $p_0 := \e (\omega \cdot \partial_\vphi)^{- 1} f_0$ where, since $\omega \in \Omega_{\gamma, \tau}$, the operator $(\omega \cdot \partial_\vphi)^{- 1}$ is well defined by \eqref{om d vphi inverso}. Then we can solve the second equation in \eqref{sistema sul modo 0} by defining $v_0 := (\omega \cdot \partial_\vphi)^{- 1} p_0 {=} \e (\omega \cdot \partial_\vphi)^{- 2} f_0\,.$ Clearly $\int_{\T^\nu} v_0(\vphi)\, d \vphi = \int_{\T^\nu} p_0(\vphi)\, d \vphi = 0$ and the claimed estimates follow by applying \eqref{stima om d vphi inverso}. 
\end{proof}
In all the rest of the paper, we will study the equation \eqref{sistema su media nulla} on the zero mean value functions in $x$. 
We will find zeros of the nonlinear operator ${\mathcal F}(\e, \omega, \cdot ) : { H}^s_0(\T^{\nu + 1}, \R^2) \to {H}^{s - 2}_0(\T^{\nu + 1}, \R^2)$ (recall \eqref{Sobolev media nulla}), defined as
\begin{equation}\label{operatore su media nulla}
 {\mathcal F}(\e, \omega, u, \psi ) := \begin{pmatrix}
\omega \cdot \partial_\vphi  u -  \psi  \\
\omega \cdot \partial_\vphi \psi - \Big(1 + \e \int_{\T} |\partial_x  u|^2\, d x \Big) \partial_{xx}  u - \e f_\bot \,,
\end{pmatrix}\,.
\end{equation}
Note that, setting ${\bf u} := (u, \psi)$, ${\mathcal F}({\bf u}) = {\mathcal F}(\e, \omega, {\bf u})$, one has 
\begin{equation}
{\mathcal F}( {\bf u}) = \omega \cdot \partial_\vphi {\bf u} - J \nabla_{{\bf u}} {\mathcal H}_\e ({\bf u})\,, \qquad J := \begin{pmatrix}
0 & 1 \\
- 1 & 0
\end{pmatrix}\,, \quad 
\end{equation}
where $J \nabla_{{\bf u}} {\mathcal H}_\e$ is the Hamiltonian vector field 
$$
J \nabla_{{\bf u}} {\mathcal H}_\e = \begin{pmatrix}
0 & 1 \\
- 1 & 0
\end{pmatrix} \begin{pmatrix}
\nabla_u {\mathcal H}_\e \\
\nabla_\psi {\mathcal H}_\e
\end{pmatrix} = \begin{pmatrix}
- \nabla_\psi {\mathcal H}_\e \\
\nabla_u {\mathcal H}_\e
\end{pmatrix}
$$
generated by the Hamiltonian 
\begin{equation}\label{hamiltoniana media nulla}
{\mathcal H}_\e (u, \psi) := \frac12 \int_{\T} \big( \psi^2 + |\partial_x u|^2 \big)\, dx  + \e \Big( \frac12  \int_{\T} |\partial_x u|^2 \, dx \Big)^2 - \e \int_{\T} f_\bot u\, dx\,,
\end{equation}
 defined on the phase space $H_0^1(\T_x, \R) \times L^2_0(\T_x, \R)$. The Hamiltonian ${\mathcal H}_\e$ is simply the restriction of the Hamiltonian $H$ in \eqref{Hamiltoniana Kirchoff} to the space of the functions with zero average in $x$. We look for the zeros of \eqref{operatore su media nulla} by means of an implicit function Theorem of Nash-Moser type. The Theorem \ref{main theorem kirchoff} will be deduced by Lemma \ref{lemma equazione sulle medie nulle} and by the following Theorem
\begin{theorem}\label{main theorem kirchoff 1}
There exist  $q := q(\nu) > 0$, $s := s(\nu) > 0$ such that: for any $f \in {\mathcal C}^q(\T^\nu \times \T, \R)$, there exists $\e_0 = \e_0(\nu, f) > 0$ small enough such that for all $\e \in (0, \e_0)$, there exists a Cantor set ${\mathcal C}_\e \subseteq \Omega$ of asimptotically full Lebesgue measure i.e. 
$$
|{\mathcal C}_\e| \to |\Omega | \qquad \text{as} \qquad  \e \to 0\,,
$$
such that for any $\omega \in {\mathcal C}_\e$ there exists ${\bf u}(\e, \omega) = (u(\e, \omega), \psi(\e, \omega)) \in {H}^s_0(\T^{\nu + 1}, \R^2)$ satisfying ${\mathcal F}(\e, \omega, {\bf u}(\e, \omega)) = 0$
where the nonlinear operator ${\mathcal F}$ is defined in \eqref{operatore su media nulla} and 
$$
\| {\bf u}(\e, \omega)\|_s \to 0 \qquad \text{as} \qquad \e \to 0\,.
$$
\end{theorem}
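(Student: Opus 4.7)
\textbf{Proof plan for Theorem \ref{main theorem kirchoff 1}.} The strategy is a Nash-Moser scheme for $\mathcal F(\e,\omega,\cdot)$ on the scale $H^s_0(\T^{\nu+1},\R^2)$, where the main analytic input is the tame invertibility of the linearized operator $\mathcal L({\bf u}):=\partial_{\bf u}\mathcal F(\e,\omega,{\bf u})$ at approximate solutions, obtained by combining the regularization of Section \ref{riduzione linearizzato} with the $2\times 2$ block-diagonal reducibility scheme of Section \ref{sec:redu}. Start from ${\bf u}_0:=0$. Since $\mathcal F(\e,\omega,0)=(0,-\e f_\bot)$, one has $\|\mathcal F(\e,\omega,0)\|_s \lessdot \e$ for $s$ up to $\approx q-\bar\mu$. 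Define an increasing scale $N_n:=N_0^{\chi^n}$, $\chi=3/2$, and the smoothing projectors $\Pi_{N_n}$ on Fourier modes $\langle \ell,j\rangle\le N_n$, with $\Pi_{N_n}^\bot={\rm Id}-\Pi_{N_n}$ satisfying the usual tame/smoothing inequalities on $H^s_0$. The Newton iterate is
\begin{equation*}
{\bf u}_{n+1}:={\bf u}_n+{\bf h}_{n+1},\qquad {\bf h}_{n+1}:=-\Pi_{N_n}\mathcal L_n^{-1}\Pi_{N_n}\mathcal F(\e,\omega,{\bf u}_n),
\end{equation*}
where $\mathcal L_n:=\mathcal L({\bf u}_n)$ is inverted on a nested subset of parameters $\mathcal C_n\subset\mathcal C_{n-1}\subset\Omega$.

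The main technical step is to show that at each $n$, provided $\|{\bf u}_n\|_{s_0+\bar\mu}^{\Lipg}\le 1$ with $\gamma=\e^a$ for a suitable $a\in(0,1)$, one can define $\mathcal C_n\subseteq\mathcal C_{n-1}$ by imposing the first and second order Melnikov non-resonance conditions arising in the reducibility scheme (Theorem \ref{thm:abstract linear reducibility}), so that the conjugation procedure of Sections \ref{riduzione linearizzato}, \ref{sec:redu} reduces $\mathcal L_n$ to a time-independent $2\times 2$ block-diagonal operator $\mathcal L_{\infty,n}=\omega\cdot\partial_\vphi{\mathbb I}_2+\ii \widehat D_n$ with eigenvalue blocks admitting the asymptotic expansion \eqref{asintotica intro}. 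Inversion of $\mathcal L_{\infty,n}$ then costs only a power of $\gamma^{-1}$ and a finite loss of derivatives $\tau_1=\tau_1(\nu,\tau)$, and undoing the conjugations (which are tame by Lemmas \ref{lemma:utile}, \ref{lem:inverti}, \ref{interpolazione decadimento Kirchoff}, \ref{stime tame operatori decadimento Kirchoff} and the Hamiltonian-symplectic structure of the transformations) yields the tame estimate
\begin{equation*}
\|\Pi_{N_n}\mathcal L_n^{-1}\Pi_{N_n}{\bf g}\|_s^{\Lipg}\le C(s)\,\gamma^{-1}N_n^{\tau_1}\bigl(\|{\bf g}\|_{s+\mu}^{\Lipg}+\|{\bf u}_n\|_{s+\mu}^{\Lipg}\|{\bf g}\|_{s_0}^{\Lipg}\bigr),
\end{equation*}
for $s\in[s_0,S-\bar\mu]$, for a fixed loss $\mu=\mu(\nu,\tau)$.

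With this inverse, the classical Nash-Moser machinery applies: using quadratic Taylor expansion $\mathcal F({\bf u}_n+{\bf h})=\mathcal F({\bf u}_n)+\mathcal L_n{\bf h}+Q_n({\bf h})$ with $Q_n$ tame-quadratic (the nonlinearity in \eqref{operatore su media nulla} is polynomial in ${\bf u}$), one shows by induction the decay bounds
\begin{equation*}
\|{\bf u}_n\|_{s_0+\mu}^{\Lipg}\le \e^{1/2},\qquad \|\mathcal F(\e,\omega,{\bf u}_n)\|_{s_0}^{\Lipg}\le N_{n-1}^{-\sigma},\qquad \|{\bf u}_n\|_{S_0}^{\Lipg}\le N_{n-1}^{\mu'},
\end{equation*}
for appropriate exponents $\sigma>\mu'>0$ and a high index $S_0$; the standard interpolation between high and low norms absorbs the losses from $\gamma^{-1}N_n^{\tau_1}$ thanks to the super-exponential gain $N_n^{-\sigma\chi}$. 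This yields convergence ${\bf u}_n\to{\bf u}_\infty$ in $H^s_0$ for some $s=s(\nu)$, with $\mathcal F(\e,\omega,{\bf u}_\infty)=0$ on $\mathcal C_\e:=\bigcap_n\mathcal C_n$ and $\|{\bf u}_\infty\|_s\to 0$ as $\e\to 0$.

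The measure estimate $|\Omega\setminus\mathcal C_\e|\to 0$ as $\e\to 0$ is obtained by the standard argument: the excluded set at step $n$ is a union over $(\ell,j,j')$ with $|\ell|\le N_n$, $j,j'\in\N$ of sublevel sets of the Lipschitz functions $\omega\mapsto \omega\cdot\ell+\lambda_j^\pm(\omega)\pm\lambda_{j'}^\pm(\omega)$, where the eigenvalues $\lambda_j^\pm$ come from diagonalising the $2\times 2$ blocks of $\widehat D_\infty$. Thanks to the asymptotic \eqref{asintotica intro}, these functions are non-degenerate transverse to $\omega\cdot\ell$ for $|\ell|+|j-j'|$ large, and summation gives a bound $O(\gamma)=O(\e^a)\to 0$. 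The main obstacle in the whole plan, as already emphasized in Section \ref{idea dell dim}, is precisely proving the tame estimate for $\mathcal L_n^{-1}$ with losses independent of $n$: this is where the double eigenvalue multiplicity forces the $2\times 2$-block KAM scheme, the preservation of the $1$-smoothing structure of the remainder \eqref{pappalardo} is essential to secure \eqref{asintotica intro}, and the block-decay norm of Lemma \ref{interpolazione decadimento Kirchoff}--\ref{lemma smoothing decay} is the natural algebra in which the KAM iteration closes.
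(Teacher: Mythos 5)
Your proposal follows essentially the same route as the paper: Nash-Moser iteration on the map $\mathcal F$ on $H^s_0$, with the tame invertibility of $\mathcal L_n$ obtained by the regularization of Section \ref{riduzione linearizzato} followed by the $2\times 2$-block KAM reducibility of Section \ref{sec:redu}, on shrinking Cantor sets defined by the first and second order Melnikov conditions, setting $\gamma=\e^a$ and concluding with the measure estimate. The only cosmetic difference is in how you state the inverse estimate: the paper proves a clean tame bound for $\mathcal L_n^{-1}$ with a fixed loss $\mu_1$ of derivatives and no $N_n^{\tau_1}$ factor (Theorem \ref{partial invertibility of the linearized operator}), and the power $N_{n+1}^{2\mu_1}$ only appears afterwards when composing with the projections $\Pi_{n+1}$ inside the Nash-Moser step; your version that carries both $N_n^{\tau_1}$ and a loss of $\mu$ derivatives is redundant but not wrong.
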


\noindent
Theorem \ref{main theorem kirchoff 1} is based on a Nash-Moser iterative scheme implemented in Section \ref{sec:NM}. 
he key ingredient in the proof---which also implies the linear stability of the quasi-periodic solutions---is the {\it reducibility} of the linear operator ${\mathcal L} = {\mathcal L}({\bf u}) = \partial_{\bf u} {\mathcal F}({\bf u}) $
obtained by linearizing \eqref{operatore su media nulla} at any approximate (or exact) solution $ {\bf u} = (u, \psi) $. This is the content of Sections \ref{riduzione linearizzato}, \ref{sec:redu}. The proof of the invertibility of ${\mathcal L}$ and the tame estimates for its inverse is provided in Section \ref{sezione invertibilita cal L}. The measure estimate of the set ${\mathcal C}_\e$ of the {\it good parameters} is provided in Section \ref{sezione stime in misura}.

\section{Regularization of the linearized operator}\label{riduzione linearizzato}
For any family $\omega \in \Omega_o({\bf u}) \mapsto {\bf u}(\cdot; \omega) := (u(\cdot; \omega), \psi(\cdot; \omega)) \in H^S_0(\T^{\nu + 1}, \R^2)$, we consider the linearized operator ${\mathcal L} = {\mathcal L}({\bf u}) = {\mathcal L}(\omega, {\bf u}(\omega)) := \partial_{\bf u} {\mathcal F}(\e, \omega, {\bf u}(\omega)) : H^s_0(\T^{\nu + 1}, \R^2) \to H^{s - 2}_0(\T^{\nu + 1}, \R^2)$ for $2 \leq s \leq S - 2$ (recall \eqref{Sobolev media nulla}).
It has the form 
\begin{equation}\label{operatore linearizzato}
{\mathcal L} [\widehat u, \widehat \psi] := \begin{pmatrix}
\omega \cdot \partial_\vphi \widehat u - \widehat \psi \\
\omega \cdot \partial_\vphi \widehat \psi - a(\vphi) \partial_{xx} \widehat u + {\mathcal R} \widehat u
\end{pmatrix}
\end{equation}
where 
\begin{equation}\label{a cal R}
a(\vphi) :=1 + \e \int_{\T} |\partial_x u(\vphi, x)|^2\,dx\,,\qquad {\mathcal R} \widehat u :=  2 \e \partial_{xx} u \int_{\T} (\partial_{xx} u)  \,\widehat u\,dx.
\end{equation}
Along this section, we will always assume the following hyphothesis, which will be verified along the Nash-Moser nonlinear iteration of Section \ref{sec:NM}.
\begin{itemize}
\item {\sc Assumption}. The function ${\bf u} := (u, \psi)$ depends in a Lipschitz way on the parameter $\omega \in \Omega_o := \Omega_o({\bf u}) \subset \Omega_{\gamma, \tau}$ with $\gamma \in (0, 1)$, $\tau > 0$ (recall \eqref{diofantei Kn}) and for some $\mu := \mu (\tau, \nu) > 0$, for some $S \geq s_0 + \mu$, the map $\omega \in \Omega_o({\bf u}) \mapsto {\bf u}(\cdot; \omega) \in { H}^S_0(\T^{\nu + 1}, \R^2)$ satisfies
\begin{equation}\label{ansatz}
 \|{\bf u} \|_{s_0 + \mu}^\Lipg \leq 1 \quad \text{and}\quad \e \gamma^{- 1} \ll 1
\end{equation}
where we recall that $s_0 := [(\nu  +  1)/2] + 1$, so that $H^{s_0}(\T^{\nu + 1})$ is compactly embedded in ${\mathcal C}^0(\T^{\nu + 1})$. We remark that in Sections \ref{riduzione linearizzato}-\ref{sec:NM}, the constant $\tau >0$ is independent from the number of frequencies $\nu$. It will be fixed as a function of $\nu$ only in Section \ref{sezione stime in misura} for the measure estimates (see \eqref{valori finali tau tau*}).
\end{itemize}
The function $a$ and the operator ${\mathcal R}$ in \eqref{a cal R} depend only on the first component $u$ of the function ${\bf u} = (u, \psi)$. We denote by $\partial_u a [h]$, $\partial_u {\mathcal R}[h]$ their derivatives with recpect to $u$ in the direction $h$. 

\noindent
Note that, since $a(\vphi)$ is a real valued function and ${\mathcal R}$ is symmetric, the operator ${\mathcal L}$ is Hamiltonian in the sense of the definition \ref{campo Hamiltoniano reale}. Let us give some estimates on $a$ and ${\mathcal R}$ defined in \eqref{a cal R}. 
\begin{lemma}\label{stime coefficienti linearizzato}
Assume \eqref{ansatz}, with $\mu = 2$. Then for any $s_0 \leq s \leq S - 2$ the following holds: 
\begin{equation}\label{estimates a}
\| a - 1 \|_s \leq_s \e \|  u \|_{s + 1}\,, \qquad  \| a - 1\|_s^\Lipg \leq_s \e \| u \|_{s + 1}^\Lipg\,,\qquad  
\end{equation}
\begin{equation}\label{stime derivata a}
\| \partial_{ u} a [{ h}]\|_s \leq_s \e  (\|  h \|_{s + 1} + \|u  \|_{s + 1} \|h \|_{s_0 + 1})\,,
\end{equation}
The operator ${\mathcal R}$ in \eqref{a cal R} has the form \eqref{operatore forma buona resto}, with $q $ and $g $ satisfying the estimates 
\begin{align}
& \| q \|_s \leq_s  \e \| u \|_{s + 2}\,,\quad \| g \|_s \leq_s   \| u \|_{s + 2}\,, \label{estimates cal R 1} \\
&  \| q \|_s^\Lipg \leq_s  \e \| u \|_{s + 2}^\Lipg\,,\quad \| g \|_s^\Lipg \leq_s   \| u \|_{s + 2}^\Lipg\,, \label{estimates cal R 11}\\
& \| \partial_u q [h] \|_s \leq_s  \e\| h \|_{s + 2 }\,,\quad \| \partial_u g [h] \|_s \leq_s  \| h \|_{s + 2 }\,. \label{estimates cal R 2}
\end{align}
\end{lemma}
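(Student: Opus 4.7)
The plan is to derive all the bounds by direct computation, exploiting the explicit polynomial form of $a$ and ${\mathcal R}$ in $u$, and then invoking the tame product estimate of Lemma \ref{interpolazione C1 gamma} together with the assumption \eqref{ansatz} that $\| u\|_{s_0+1}^{\Lipg}\leq \| {\bf u}\|_{s_0+\mu}^{\Lipg}\leq 1$.

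First I would handle $a - 1 = \e \int_\T (\partial_x u)^2\, dx$. Since this is a function of $\vphi$ alone obtained by integrating a function of $(\vphi,x)$ over $x$, and the $H^s_\vphi$-norm of $\int_\T G(\vphi,x)\,dx$ is bounded by the $H^s(\T^{\nu+1})$-norm of $G$ (it is just the $j=0$ Fourier mode times $2\pi$, so Parseval gives the inequality immediately), it suffices to estimate $\|(\partial_x u)^2\|_s$. Applying Lemma \ref{interpolazione C1 gamma} and the elementary bound $\|\partial_x u\|_s\leq \|u\|_{s+1}$ gives $\|(\partial_x u)^2\|_s\leq_s \|u\|_{s+1}\|u\|_{s_0+1}$, which together with \eqref{ansatz} yields $\|a-1\|_s\leq_s \e\|u\|_{s+1}$. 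The Lipschitz variant follows verbatim using the Lipschitz version of Lemma \ref{interpolazione C1 gamma} and the same remark that the Lipschitz norm behaves well under integration in $x$. The estimate on $\partial_u a[h]=2\e\int_\T \partial_x u\,\partial_x h\,dx$ is analogous: apply the tame estimate to the product $\partial_x u\cdot\partial_x h$, use $\|u\|_{s_0+1}\leq 1$ to absorb one factor, and pick up both terms on the right-hand side of \eqref{stime derivata a}.

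Next I would turn to ${\mathcal R}$. A direct inspection of \eqref{a cal R} shows that ${\mathcal R}\widehat u = q(\vphi,x)\int_\T g(\vphi,x)\widehat u\,dx$ with the explicit identifications
\[
q(\vphi,x):=2\e\,\partial_{xx}u(\vphi,x),\qquad g(\vphi,x):=\partial_{xx}u(\vphi,x),
\]
so that ${\mathcal R}$ is of the form \eqref{operatore forma buona resto} as required. The estimates \eqref{estimates cal R 1}, \eqref{estimates cal R 11}, and \eqref{estimates cal R 2} then reduce to the trivial bounds $\|\partial_{xx}u\|_s\leq \|u\|_{s+2}$ (respectively with the weighted Lipschitz norm on both sides). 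Note that, since the dependence of $q$ and $g$ on $u$ is linear, the derivatives $\partial_u q[h]=2\e\,\partial_{xx}h$ and $\partial_u g[h]=\partial_{xx}h$ are obtained without any additional $u$-dependent terms, which explains why the right-hand side of \eqref{estimates cal R 2} does not involve $\|u\|_{s+2}\|h\|_{s_0+2}$.

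There is essentially no obstacle here: the lemma just records the most elementary tame bounds for the explicit functionals appearing in \eqref{a cal R}. The only small point to be careful about is the passage from a bound on $G(\vphi,x)$ to a bound on $\int_\T G(\vphi,x)\,dx$ in $H^s_\vphi$, which I would spell out once at the beginning, and then apply uniformly in the rest of the argument (both for the sup norm and for the Lipschitz semi-norm in $\omega$).
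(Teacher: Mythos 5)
Your proof is correct and follows essentially the same route the paper takes (whose published proof is a one-liner citing the definitions, the interpolation Lemma~\ref{interpolazione C1 gamma}, and the assumption~\eqref{ansatz}). You simply spell out the details the paper leaves implicit: the Parseval step for the $x$-integral, the tame product bound for $(\partial_x u)^2$ and $\partial_x u\,\partial_x h$, and the explicit reading-off of $q=2\e\,\partial_{xx}u$, $g=\partial_{xx}u$ from~\eqref{a cal R}, after which \eqref{estimates cal R 1}--\eqref{estimates cal R 2} are immediate.
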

\begin{proof}
The estimates \eqref{estimates a}, \eqref{stime derivata a} follow by the definition \eqref{a cal R} and by the interpolation Lemma \ref{interpolazione C1 gamma}, using the condition \eqref{ansatz}. The estimates \eqref{estimates cal R 1}-\eqref{estimates cal R 2} follow since ${\mathcal R}$ is an operator of the form \eqref{operatore forma buona resto}, with $q :=  2 \e \partial_{xx} u$ and $g := \partial_{xx} u$.
\end{proof}

\noindent
{\it Notation.} In the following, with a slight abuse of notations, For any function $a(\vphi)$, we simply denote  by $a = a(\vphi)$, the multiplication operator $h(\vphi, x) \mapsto a(\vphi) h(\vphi, x)$, acting on the space of functions with zero average in $x$. 
\subsection{Symplectic symmetrization of the highest order}\label{step 1 riduzione}
We start by simmetryzing the highest order of the operator 
$$
{\mathcal L} = \begin{pmatrix}
\omega \cdot \partial_\vphi & - 1 \\
 - a \partial_{xx} + {\mathcal R}& \omega \cdot \partial_\vphi
\end{pmatrix}\,.
$$
Let us consider the transformation  
\begin{equation}\label{cal S1}
 {\mathcal S} = {\mathcal S}(\vphi) : = \begin{pmatrix}
\beta(\vphi) |D|^{- \frac12} & 0 \\
0 & \beta(\vphi)^{- 1} |D|^{\frac12}
\end{pmatrix}
\end{equation}
where $\beta : \T^\nu \to \R$ is a Sobolev function close to $1$ to be determined (recall also the definition \eqref{definizione |D| m}). 
The inverse of the operator ${\mathcal S}$ (acting on Sobolev spaces of zero average functions in $x$) is given by 
\begin{equation}\label{cal S1 inverso}
{\mathcal S}^{- 1}  = {\mathcal S}(\vphi)^{- 1} :=\begin{pmatrix}
\beta(\vphi)^{- 1} |D|^{ \frac12} & 0 \\
0 & \beta(\vphi) |D|^{- \frac12}
\end{pmatrix} \,.
\end{equation} 
Using that for any function $a = a(\vphi)$ depending only on time, the commmutators $[a,  |D|^m] = 0$, $[a, {\mathcal R}] = 0$ where ${\mathcal R}$ is defined in \eqref{a cal R} and since $- \partial_{xx} = |D|^2$, we have 
\begin{equation}
{\mathcal S}^{- 1} {\mathcal L} {\mathcal S} = \begin{pmatrix}
\omega \cdot \partial_\vphi + \beta^{- 1}(\omega \cdot \partial_\vphi \beta) & - \beta^{- 2} |D| \\
 a \beta^2 |D| + \beta^2 |D|^{- \frac12}{\mathcal R} |D|^{- \frac12}   & \omega \cdot \partial_\vphi + \beta \omega \cdot \partial_\vphi ( \beta^{- 1} )   
\end{pmatrix}\,.
\end{equation}
We choose $\beta(\vphi)$ so that $\beta^{- 2}(\vphi) = a(\vphi) \beta^2(\vphi)$, namely we define 
\begin{equation}\label{definition beta}
\beta(\vphi) := \frac{1}{[a(\vphi)]^{\frac14}}\,.
\end{equation}
Since $\beta \omega \cdot \partial_\vphi (\beta^{- 1}) = -  \beta^{- 1} \,\omega \cdot \partial_\vphi \beta$,
we get that 
\begin{equation}\label{cal L1}
{\mathcal L}_1 := {\mathcal S}^{- 1} {\mathcal L} {\mathcal S} = \begin{pmatrix}
\omega \cdot \partial_{\vphi} + a_0 & - a_1 |D| \\
 a_1|D| + {\mathcal R}^{(1)} &  \omega \cdot \partial_\vphi - a_0
\end{pmatrix}\,,
\end{equation}
where 
\begin{equation}\label{definitions a0 a1}
a_0 := \frac{\omega \cdot \partial_\vphi \beta}{\beta} \,, \quad a_1 := \sqrt{a}\,, \quad {\mathcal R}^{(1)} := \beta^2 |D|^{- \frac12}{\mathcal R} |D|^{- \frac12}\,.
\end{equation}
Since $\beta(\vphi)$ is a real-valued function and the operators $|D|^{\pm \frac12}$ are real operators, the operator ${\mathcal S}$ is real. A direct verification shows that it is also symplectic (see Definition \ref{trasformazione simplettica reale}). Hence the transformed operator ${\mathcal L}_1$ is still real and Hamiltonian (see Definition \ref{campo Hamiltoniano reale}).
Now we give some estimates on the coefficients of the operator ${\mathcal L}_1$. 
\begin{lemma}\label{Lemma dopo simmetrizzazione}
Assume \eqref{ansatz}, with $\mu = 2$. Then for any $s_0 \leq s \leq S - 2$ the following holds: 
 the maps ${\mathcal S}^{\pm 1} : H^{s + \frac12}_0(\T^{\nu + 1}, \R^2) \to H^{s }_0(\T^{\nu + 1}, \R^2)$ satisfy the estimates 
 \begin{equation}\label{stime cal S1 cal S2 sup}
\| {\mathcal S}^{\pm 1} {\bf h} \|_s \leq_s \| {\bf h}  \|_{s + \frac12} + \| { u} \|_{s + 1} \| {\bf h} \|_{s_0 + \frac12}\,, \quad {\bf h} \in H^{s + \frac12}(\T^{\nu + 1}, \R^2)\,.
\end{equation}
For any family ${\bf h}(\cdot; \omega) \in  H^{s + \frac12}_0(\T^{\nu + 1}, \R^2)$, $\omega \in \Omega_o$,
\begin{equation}\label{stime cal S1 cal S2}
\| {\mathcal S}^{\pm 1} {\bf h} \|_s^\Lipg \leq_s  \| {\bf h}  \|_{s + \frac12}^\Lipg + \| { u} \|_{s + 1}^\Lipg \| {\bf h} \|_{s_0 + \frac12}^\Lipg\,.
\end{equation}
The functions $a_0$, $a_1$ defined in \eqref{definitions a0 a1} satisfy the estimates 
\begin{align}
& \| a_1 - 1\|_s\,,\, \| a_0\|_s \leq_s \e  (1 + \| { u} \|_{s + 2}) \,,  \label{estimates a0 a1} \\
 & \| a_1 - 1\|_s^\Lipg\,, \, \| a_0\|_s^\Lipg \leq_s \e (1 + \| { u} \|_{s + 2}^\Lipg)\,, \label{estimates a0 a11} \\
  & \| \partial_u a_k [h]\|_s \leq_s  \e \Big( \| h \|_{s + 2} + \| {u} \|_{s + 2} \| h \|_{s_0 + 2}\Big)\,, \qquad k = 0,1\,. \label{estimates derivatives a0 a1}
\end{align}
The remainder ${\mathcal R}^{(1)}$ in \eqref{definitions a0 a1} has the form \eqref{operatore forma buona resto}, with $ q = q_1$, $g = g_1$ satisfying the estimates 
\begin{align}
& \| q_1 \|_s \leq_s  \e \| { u}  \|_{s + 2 }\,, \quad \| g_1 \|_s \leq_s \| {u} \|_{s + 2}\,, \label{estimate cal R2 1} \\
& \| q_1 \|_s^{\Lipg} \leq_s  \e \| { u}  \|_{s + 2 }^\Lipg\,, \quad \| g_1 \|_s^\Lipg \leq_s \| {u} \|_{s + 2}^\Lipg\,, \label{estimate cal R2 11} \\
& \|\partial_{u} q_1[h]\|_s \leq_s \e \Big( \| h \|_{s + 2} + \| {u} \|_{s + 2} \| h \|_{s_0 + 2}\Big)\,,\,\, \|\partial_{u} g_1[h]\|_s \leq_s  \| h \|_{s + 2}\,. \label{estimate cal R2 2}
\end{align}
\end{lemma}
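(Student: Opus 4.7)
The proof is a routine application of tame product and Moser composition estimates, anchored on the previous lemma, which gives $\|a-1\|_s^\Lipg \leq_s \e\|u\|_{s+1}^\Lipg$ and the analogous bound for $\partial_u a[h]$. Under the Assumption \eqref{ansatz}, $\|a-1\|_{s_0}^\Lipg$ is small, so $a$ lies in a fixed neighborhood of $1$ in $H^{s_0}$, and smooth functions of $a$ are well-defined. All Lipschitz-in-$\omega$ bounds will follow from the sup-norm ones applied to the $\omega$-increments, exactly as in Lemmas 2.1 and 2.2, so I only describe the sup-norm case.

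To bound $a_1 = \sqrt{a}$ and $\beta = a^{-1/4}$, I write $a_1 - 1 = f(a-1)$, $\beta - 1 = g(a-1)$ with $f(s) := \sqrt{1+s}-1$, $g(s) := (1+s)^{-1/4}-1$ smooth near $0$ and vanishing at $0$, and apply the Moser composition estimate (Lemma 2.1) together with $\|a-1\|_s \leq_s \e\|u\|_{s+1}$: this yields $\|a_1 - 1\|_s,\ \|\beta^{\pm 1} - 1\|_s \leq_s \e\|u\|_{s+1}$. Then $a_0 = \beta^{-1}\,\omega\cdot\partial_\vphi \beta$; since $\omega$ is bounded and $\partial_\vphi$ costs one derivative of $\beta$ (hence two of $u$), the tame product bound of Lemma 2.1 gives $\|a_0\|_s \leq_s \e(1 + \|u\|_{s+2})$. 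The linearized bound \eqref{estimates derivatives a0 a1} follows from the chain rule $\partial_u a_1[h] = f'(a-1)\,\partial_u a[h]$ combined with \eqref{stime derivata a} and the same tame products.

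For the symmetrization operator, the definitions \eqref{cal S1}, \eqref{cal S1 inverso} give
\[
\mathcal{S}^{\pm 1}\begin{pmatrix} h_1 \\ h_2 \end{pmatrix} = \begin{pmatrix} \beta^{\pm 1}|D|^{\mp 1/2} h_1 \\ \beta^{\mp 1}|D|^{\pm 1/2} h_2 \end{pmatrix}.
\]
Since $|D|^{\pm 1/2}$ is a Fourier multiplier that shifts Sobolev regularity by exactly $1/2$, the estimate \eqref{stime cal S1 cal S2 sup} reduces to the tame product bound $\|\beta^{\pm 1} w\|_s \leq_s \|w\|_s + \|\beta^{\pm 1}-1\|_s\|w\|_{s_0}$, followed by the already-established estimate on $\beta^{\pm 1} - 1$ (and $\|u\|_{s_0+1} \leq 1$ under \eqref{ansatz} to absorb constants).

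Finally, for $\mathcal{R}^{(1)} := \beta^2 |D|^{-1/2}\mathcal{R}|D|^{-1/2}$: the previous lemma identifies $\mathcal{R}\widehat u = q(\vphi,x)\int_\T g(\vphi,x)\widehat u\,dx$ with $q = 2\e\,\partial_{xx}u$ and $g = \partial_{xx}u$. Using that $|D|^{-1/2}$ is self-adjoint on zero-mean functions in $x$ and commutes with multiplication by purely $\vphi$-dependent functions, a direct computation gives
\[
\mathcal{R}^{(1)}\widehat u(\vphi,x) = q_1(\vphi,x)\int_\T g_1(\vphi,x)\,\widehat u(\vphi,x)\,dx,
\]
with $q_1 := 2\e\,\beta(\vphi)^2\,|D|^{-1/2}\partial_{xx}u$ and $g_1 := |D|^{-1/2}\partial_{xx}u$, so $\mathcal{R}^{(1)}$ has the form \eqref{operatore forma buona resto}. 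Since $|D|^{-1/2}\partial_{xx}$ is a Fourier multiplier of order $3/2$, one immediately gets $\|g_1\|_s \leq \|u\|_{s+3/2} \leq \|u\|_{s+2}$, and $\|q_1\|_s \leq_s \e\|u\|_{s+2}$ by the tame product estimate combined with the bound $\|\beta^2 - 1\|_s \leq_s \e\|u\|_{s+1}$. The derivative estimates \eqref{estimate cal R2 2} follow analogously, since $\partial_u q_1$, $\partial_u g_1$ are linear combinations of the same type of expressions plus corrections from $\partial_u \beta$, again controlled by Moser composition and \eqref{stime derivata a}. No step here involves more than tame products, Moser composition and elementary Fourier-multiplier bounds, so the main work is merely bookkeeping of the derivative losses ($+1$ for the multipliers, $+2$ for the remainder) and no serious obstacle appears.
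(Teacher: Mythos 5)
Your proposal is correct and follows essentially the same route as the paper's (very compressed) proof: Moser composition (Lemma \ref{Moser norme pesate}) and the interpolation/tame product estimates (Lemma \ref{interpolazione C1 gamma}) for $a_0$, $a_1$, $\beta^{\pm 1}$ and for the operator bounds on $\mathcal{S}^{\pm 1}$, and the explicit computation of $q_1 = 2\e\beta^2|D|^{-1/2}\partial_{xx}u$, $g_1 = |D|^{-1/2}\partial_{xx}u$ via the self-adjointness of $|D|^{-1/2}$ and the fact that $\beta$ depends only on $\vphi$ (hence commutes with $|D|^m$). One small presentational remark: deriving $\|a_1-1\|_s,\|\beta^{\pm1}-1\|_s\lessdot_s \e\|u\|_{s+1}$ does not follow from Lemma \ref{Moser norme pesate} alone (that lemma only yields $1+\|a-1\|_s$); you need the extra observation that $f(t)=\sqrt{1+t}-1$ and $g(t)=(1+t)^{-1/4}-1$ vanish at $0$, factor out $(a-1)$, and use the tame product to recover the $\e$-smallness — you gesture at this by choosing $f,g$ vanishing at $0$, but it is worth stating explicitly since the naive application of the Moser lemma loses the $\e$ factor.
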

\begin{proof}
The estimates \eqref{stime cal S1 cal S2 sup}-\eqref{estimates derivatives a0 a1} follow by the definitions \eqref{cal S1}, \eqref{cal S1 inverso}, \eqref{definition beta}, \eqref{definitions a0 a1}, by the estimates \eqref{estimates a} and by Lemmata \ref{interpolazione C1 gamma}, \ref{Moser norme pesate}. Let us prove the estimates \eqref{estimate cal R2 1}-\eqref{estimate cal R2 2}. 
By \eqref{a cal R}, \eqref{definitions a0 a1}, using that $|D|^{- \frac12}$ is symmetric, one has that ${\mathcal R}^{(1)} h = q_1\, \int_{\T} g_1\,h\,d x $ with
\begin{equation}\label{funzioni q2 g2 cal R2}
 q_1 := 2 \e \beta^2 ( |D|^{- \frac12}\partial_{xx} u)\,, \qquad g_1 := |D|^{- \frac12}\partial_{xx} u\,.
\end{equation}
One can estimate the function $\beta$ in \eqref{definition beta} by using Lemma \ref{Moser norme pesate} and the estimate \eqref{estimates a}. Applying the interpolation Lemma \ref{interpolazione C1 gamma}, the claimed estimates follow.  
\end{proof}
\begin{lemma}\label{stime Hs x 1}
The operators ${\mathcal S}^{\pm 1}$ defined in \eqref{cal S1}, \eqref{cal S1 inverso} can be regarded as an operator acting on the Sobolev space of the functions in $x$, namely for any $s \geq 1$, for any $\vphi \in \T^\nu$, 
\begin{align*}
& {\mathcal S}(\vphi) \in {\mathcal L}\Big(  H^{s - \frac12}_0(\T_x, \R^2), H^s_0(\T_x, \R) \times H^{s - 1}_0(\T_x, \R) \Big)\,,\\
&{\mathcal S}(\vphi)^{- 1} \in {\mathcal L}\Big(H^s_0(\T_x, \R) \times H^{s - 1}_0(\T_x, \R), H^{s - \frac12}_0(\T_x, \R^2) \Big)\,.
\end{align*}
\end{lemma}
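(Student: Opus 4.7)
\textbf{Proof plan for Lemma \ref{stime Hs x 1}.} The statement is essentially pointwise in $\vphi \in \T^\nu$: for each fixed phase the operator $\mathcal{S}(\vphi)$ is just a $2 \times 2$ block of Fourier multipliers in $x$ times a real scalar, so the claim reduces to verifying the mapping properties of $|D|^{\pm 1/2}$ on zero-average Sobolev spaces in the variable $x$ and checking that $\beta(\vphi)$ and $\beta(\vphi)^{-1}$ are well-defined real numbers. My plan is therefore to first fix $\vphi$, dispose of the scalar factor $\beta(\vphi)$, and then handle the two diagonal Fourier multipliers separately.

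Second, I would fix $\vphi \in \T^\nu$ and observe that $\beta(\vphi) = a(\vphi)^{-1/4}$ is a well-defined positive real number: indeed $a(\vphi) = 1 + \e \int_\T |\partial_x u(\vphi,x)|^2\,dx \geq 1$ so $\beta(\vphi) \in (0,1]$ and $\beta(\vphi)^{-1} \in [1,+\infty)$ is finite (by the assumption \eqref{ansatz} it is even close to $1$). Multiplication by such a scalar is a bounded automorphism of every Sobolev space $H^\sigma_0(\T_x,\R)$ for every $\sigma \in \R$, with norm equal to the scalar itself. Thus the question is reduced to the mapping properties of $|D|^{\pm 1/2}$.

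Third, I would invoke the elementary fact that, by definition \eqref{definizione |D| m}, on a zero-average function $u(x) = \sum_{j \neq 0} u_j e^{\ii j x}$ the operator $|D|^m$ acts by $(|D|^m u)_j = |j|^m u_j$ for $j \neq 0$, and since $\langle j \rangle = |j|$ for every $j \neq 0$,
\[
\||D|^m u\|_{H^s_x}^2 = \sum_{j \neq 0} |j|^{2m} \langle j \rangle^{2s} |u_j|^2 = \sum_{j \neq 0} \langle j \rangle^{2(s+m)} |u_j|^2 = \|u\|_{H^{s+m}_x}^2\,.
\]
Hence for every real $m$ and every real $s$, $|D|^m$ is an isometric isomorphism $H^{s+m}_0(\T_x,\R) \to H^s_0(\T_x,\R)$.

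Finally, I would read off the mapping properties block by block. For $\mathcal{S}(\vphi)$ the $(1,1)$-entry $\beta(\vphi)|D|^{-1/2}$ sends $H^{s-1/2}_0(\T_x,\R)$ into $H^s_0(\T_x,\R)$ by the case $m=-1/2$, while the $(2,2)$-entry $\beta(\vphi)^{-1}|D|^{1/2}$ sends $H^{s-1/2}_0(\T_x,\R)$ into $H^{s-1}_0(\T_x,\R)$ by the case $m=1/2$. This proves the first mapping property. For $\mathcal{S}(\vphi)^{-1}$ the $(1,1)$-entry $\beta(\vphi)^{-1}|D|^{1/2}$ sends $H^s_0(\T_x,\R)$ into $H^{s-1/2}_0(\T_x,\R)$ and the $(2,2)$-entry $\beta(\vphi)|D|^{-1/2}$ sends $H^{s-1}_0(\T_x,\R)$ into $H^{s-1/2}_0(\T_x,\R)$, which proves the second mapping property. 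There is really no obstacle here: the constraint $s \geq 1$ in the statement is only used to make the exponent $s-1 \geq 0$ so that $H^{s-1}_0(\T_x,\R)$ is the standard Sobolev space, while all the estimates are $\vphi$-pointwise and do not use the Lipschitz dependence of $\beta$ on $\omega$.
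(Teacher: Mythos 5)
Your proof is correct and takes essentially the same approach as the paper's: both reduce to the pointwise boundedness of the scalar $\beta(\vphi)^{\pm 1}$ together with the elementary mapping property $\| |D|^{\pm \frac12} h\|_{H^s_x} = \|h\|_{H^{s \pm \frac12}_x}$ on zero-average functions, and then read off the block structure. The only cosmetic difference is that the paper invokes Lemma \ref{Moser norme pesate} and the ansatz \eqref{ansatz} to get the uniform bound $\|\beta^{\pm 1}\|_{L^\infty(\T^\nu)} \lessdot 1$, whereas you argue pointwise in $\vphi$ — which already suffices for membership in ${\mathcal L}(\cdots)$ at fixed $\vphi$, since $a(\vphi) \ge 1$ gives $\beta(\vphi) \le 1$ and the finiteness of $a(\vphi)$ gives the finiteness of $\beta(\vphi)^{-1}$.
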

\begin{proof}
By the definition of the function $\beta(\vphi)$ in \eqref{definition beta}, using the estimate \eqref{estimates a} on $a(\vphi)$, the Lemma \ref{Moser norme pesate} and the ansatz \eqref{ansatz}, one gets $\| \beta^{\pm 1}\|_{L^\infty(\T^\nu)} \lessdot 1\,.$
Moreover $\| |D|^{\frac12} h \|_{H^s_x} \leq \| h \|_{H^{s + \frac12}_x}\,, \quad \| |D|^{- \frac12} h \|_{H^s_x} \leq \| h \|_{H^{s - \frac12}_s}\,$ and then the Lemma follows.
\end{proof}
\subsection{Complex variables}\label{variabili complesse}
Now we consider the complex variables $z := \frac{\widehat u + \ii \widehat \psi}{\sqrt{2}}$
introduced in \eqref{prima volta variabili complesse 0}, \eqref{prima volta variabili complesse 1} in order to write the operator ${\mathcal L}_1$ defined in \eqref{cal L1} in complex coordinates. More precisely, we consider the transformations 
\begin{equation}\label{matrice coordinate complesse}
{\mathcal B} := \frac{1}{\sqrt{2}} \begin{pmatrix}
1 & 1 \\
\frac{1}{\ii} & - \frac{1}{\ii}
\end{pmatrix}\, \qquad {\mathcal B}^{- 1} = \frac{1}{\sqrt{2}} \begin{pmatrix}
1 & \ii \\
1  & - {\ii}
\end{pmatrix}
\end{equation} and we get that the conjugated operator ${\mathcal L}_2 := {\mathcal B}^{- 1}{\mathcal L}_1 {\mathcal B}$ is given by
\begin{equation}\label{cal L2 complex coordinates}
{\mathcal L}_2  = \begin{pmatrix}
\omega \cdot \partial_\vphi + \ii a_1 |D| + \ii {\mathcal R}^{(2)} & a_0+ \ii {\mathcal R}^{(2)} \\
a_0 - \ii {\mathcal R}^{(2)} & \omega \cdot \partial_\vphi - \ii a_1 |D| - \ii {\mathcal R}^{(2)}
\end{pmatrix}\,,
\end{equation}
with ${\mathcal R}^{(2)}  := \frac{{\mathcal R}^{(1)}}{{2}}$. Since $a_1$ and $a_0$ are real valued functions and ${\mathcal R}^{(1)}$ (and then ${\mathcal R}^{(2)}$) is symmetric and real, the operator ${\mathcal L}_2$ is a Hamiltonian operator in complex coordinates, in the sense of the Definition \eqref{operatore Hamiltoniano coordinate complesse}. Note that the transformations ${\mathcal B}^{\pm 1}$ satisfy for all $s \geq 0$
\begin{equation}\label{proprieta coordinate complesse}
{\mathcal B } : {\bf H}^s_0(\T^{\nu + 1}) \to H^s_0(\T^{\nu + 1}, \R^2)\,, \quad {\mathcal B}^{- 1} : H^s_0(\T^{\nu + 1}, \R^2) \to {\bf H}^s_0(\T^{\nu + 1})\,, 
\end{equation}
\begin{equation}\label{proprieta coordinate complesse x}
{\mathcal B } : {\bf H}^s_0(\T_x) \to H^s_0(\T_x, \R^2)\,, \quad {\mathcal B}^{- 1} : H^s_0(\T_x, \R^2) \to {\bf H}^s_0(\T_x)
\end{equation}
where we recall that the real subspace ${\bf H}^s_0(\T^{\nu + 1})$, resp. ${\bf H}^s_0(\T_x)$ of $H^s_0(\T^{\nu + 1}, \C^2)$, resp. $H^s_0(\T_x, \C^2)$, is defined in \eqref{definizione bf H s0}.  
\subsection{ Change of variables}\label{sezione diffeo del toro}
The aim of this Section is to reduce to constant coefficients the highest order term $a_1(\vphi) |D|$ in the operator ${\mathcal L}_2$ defined in \eqref{cal L2 complex coordinates}.   In order to do this, let us consider a diffeomorphism of the torus $\T^\nu$ of the form
$$
\vphi \in \T^\nu \mapsto \vphi + \omega \alpha(\vphi) \in \T^\nu\,,\quad 
$$
where $\alpha : \T^\nu \to \R$ has to be determined. This diffeomorphism of the torus induces on the space of functions $h(\vphi, x)$ a linear operator 
\begin{equation}\label{def cal A}
({\mathcal A}h)(\vphi, x) := h(\vphi + \omega \alpha(\vphi), x)\,,
\end{equation} 
whose inverse has the form 
\begin{equation}\label{inverso cal A}
{\mathcal A}^{-1} h(\vartheta, x) := h(\vartheta + \omega \tilde \alpha(\vartheta), x)\,,
\end{equation}
where $\vartheta \to \vartheta + \omega \tilde \alpha(\vartheta)$ is the inverse diffeomorphism of $\vphi \to \vphi + \omega \alpha(\vphi)$.
One has 
$$
{\mathcal A}^{- 1} (\omega \cdot \partial_\vphi) {\mathcal A} = {\mathcal A}^{- 1}[1 + \omega \cdot \partial_\vphi \alpha] \omega \cdot \partial_\vartheta \,, \quad  {\mathcal A}^{- 1} |D| {\mathcal A} = |D|\,, \quad {\mathcal A}^{- 1} a {\mathcal A} = {\mathcal A}^{- 1}[a]
$$
where we recall that $a$ denotes the multiplication operator $h \to a h$. Recalling that $
 {\mathbb I}_2 := \begin{pmatrix}
 {\rm Id}_0 & 0 \\
 0 & {\rm I d}_0
 \end{pmatrix}\,, \quad 
$
where ${\rm Id}_0 : L^2_0 \to L^2_0$ is the identity and defining
\begin{equation}\label{definition rho}
\rho := {\mathcal A}^{- 1}[1 + \omega \cdot \partial_\vphi \alpha]\,,
\end{equation}
we get 
\begin{align*}
& {\mathcal A}^{- 1} {\mathbb I}_2 {\mathcal L}_2 {\mathcal A} {\mathbb I}_2 \\
& = \begin{pmatrix}
\rho \omega \cdot \partial_\vartheta + \ii {\mathcal A}^{- 1}[a_1] |D|  + \ii {\mathcal A}^{- 1} {\mathcal R}^{(2)} {\mathcal A} & {\mathcal A}^{- 1}[a_0] + \ii {\mathcal A}^{- 1} {\mathcal R}^{(2)} {\mathcal A} \\
{\mathcal A}^{- 1}[a_0] - \ii {\mathcal A}^{- 1} {\mathcal R}^{(2)} {\mathcal A} & \rho\omega \cdot \partial_\vartheta - \ii {\mathcal A}^{- 1}[a_1] |D| - \ii {\mathcal A}^{- 1} {\mathcal R}^{(2)} {\mathcal A}
\end{pmatrix}\,.
\end{align*}
We want to choose the function $\alpha$ so that the coefficient $\rho$ in front of $\omega \cdot \partial_{\vartheta}$ is proportional to the coefficient ${\mathcal A}^{- 1}[a_1]$ in front of the operator $|D|$. To this aim it is enough to solve the equation
\begin{equation}\label{equation for alpha}
m \big(1 + \omega \cdot \partial_\vphi \alpha(\vphi) \big) = a_1(\vphi)\, \quad m \in \R\,.
\end{equation}
Integrating on $\T^\nu$ we fix the value of $m$ as 
\begin{equation}\label{definition m}
m := \frac{1}{(2 \pi)^\nu} \int_{\T^\nu} a_1(\vphi)\, d \vphi
\end{equation}
and then, since $\omega \in \Omega_o \subseteq  \Omega_{\gamma, \tau}$, recalling the definitions \eqref{om d vphi inverso}, \eqref{diofantei Kn} we get
\begin{equation}\label{definition alpha}
\alpha(\vphi) = (\omega \cdot \partial_\vphi)^{- 1} \Big[ \frac{a_1}{m} - 1 \Big](\vphi)\,.
\end{equation}
Note that, since the function $a_1$ is real valued, $m$ is real and then $\alpha$ is a real valued function. We have ${\mathcal A}^{- 1} {\mathbb I}_2 {\mathcal L}_2 {\mathcal A} {\mathbb I}_2 = \rho {\mathcal L}_3 \,,$
with 
\begin{equation}\label{cal L3}
{\mathcal L}_3 := \begin{pmatrix}
\omega \cdot \partial_\vartheta + \ii m |D| + \ii {\mathcal R}^{(3)} & b_0 + \ii {\mathcal R}^{(3)} \\
b_0 - \ii {\mathcal R}^{(3)} & \omega \cdot \partial_\vartheta - \ii m |D| - \ii {\mathcal R}^{(3)}
\end{pmatrix}\,, 
\end{equation}
\begin{equation}\label{definitions a2 cal R3}
b_0  := \rho^{- 1} {\mathcal A}^{- 1}[a_0] \,, \qquad {\mathcal R}^{(3)} := \rho^{- 1} {\mathcal A}^{- 1} {\mathcal R}^{(2)} {\mathcal A}\,.
\end{equation}
Note that the operator ${\mathcal L}_3$ is still Hamiltonian in the sense of the definition \eqref{operatore Hamiltoniano coordinate complesse}, since $m \in \R$, $|D|$ is a symmetric real operator, $b_0$ is a real valued function and ${\mathcal R}^{(3)}$ is a real and symmetric operator, implying that $({\mathcal R}^{(3)})^* = \overline{({\mathcal R}^{(3)})^T} = {\mathcal R}^{(3)}$.  
\begin{lemma}\label{stime step 1}
There exists a constant $\sigma = \sigma(\tau, \nu) > 2$ such that if \eqref{ansatz} holds with $\mu = \sigma$, then for all $s_0 \leq s \leq S - \sigma$ the following estimates hold:
\begin{equation}\label{stime m}
|m - 1| \lessdot \e \,, \quad |m - 1|^\Lipg \lessdot \e\,,\quad |\partial_u m [h]| \lessdot \e \| h \|_{s_0 + 2}\,.
\end{equation}
The transformations ${\mathcal A}^{\pm 1} : H^s_0(\T^{\nu + 1}, \C) \to H^s_0(\T^{\nu + 1}, \C)$ satisfy 
\begin{align}
& \| {\mathcal A}^{\pm 1} h \|_s \leq_s \| h \|_{s } + \| u \|_{s + \s} \|  h\|_{s_0 + 1}\,, \label{stima cal A}\\
& \| {\mathcal A}^{\pm 1} h \|_s^\Lipg \leq_s \| h \|_{s + 1}^\Lipg + \| u \|_{s + \s}^\Lipg \|  h\|_{s_0 + 2}^\Lipg\,, \label{stima cal A1}\\
& \| \partial_u ({\mathcal A}^{\pm 1} h) g \|_s \leq_s \e \gamma^{-1} \Big( \| h \|_{s + \s} \| g \|_{s_0 + \s} + \| h \|_{s_0 + \s} \| g \|_{s + \s} \nonumber\\
& \qquad \qquad \qquad  + \| u \|_{s + \s} \|  h\|_{s_0 + \s} \| h \|_{s_0 + \s} \Big)\,. \label{stima derivata cal A}
\end{align}
The function $\rho$ defined in \eqref{definition rho} satisfies 
\begin{equation}\label{stima rho}
\| \rho^{\pm 1} - 1\|_s \leq_s \e (1 + \| u \|_{s + \sigma})\,, \qquad \| \rho^{\pm 1} - 1\|_s^\Lipg \leq_s \e (1 + \| u \|_{s + \sigma}^\Lipg)\,, 
\end{equation}
\begin{equation}\label{stima derivata rho}
\quad \| \partial_u \rho^{\pm 1}[h] \|_s \leq_s \e \big( \| h \|_{s + \sigma} + \| h \|_{s + \sigma} \| h \|_{s_0 + \sigma} \big)\,.
\end{equation}
The function $b_0$ defined in \eqref{definitions a2 cal R3} satisfies 
\begin{equation}\label{stime a2}
\| b_0 \|_s \leq_s \e(1 +  \| u  \|_{s + \s})\,,\quad \| b_0 \|_s^\Lipg \leq_s \e(1 +  \| u  \|_{s + \s}^\Lipg)\,,
\end{equation}
\begin{equation}\label{stime derivata b0}
\| \partial_u b_0 [h]\|_s \leq_s  \e \Big( \| h \|_{s + \s} + \| u \|_{s + \s} \|h \|_{s_0 + \s}\Big)\,.
\end{equation}
The remainder ${\mathcal R}^{(3)}$ defined in \eqref{definitions a2 cal R3} satisfies the estimates
\begin{equation}\label{stime cal R3}
| {\mathcal R}^{(3)} |D| |_s \leq_s \e (1 + \| u \|_{s + \s })\,,\,\, | {\mathcal R}^{(3)} |D| |_s^\Lipg \leq_s \e (1 + \| u \|_{s + \s }^\Lipg)\,,\quad 
\end{equation}
\begin{equation}\label{stime derivata cal R3}
|\partial_u {\mathcal R}^{(3)} [h] |_s \leq_s \e \Big( \| h \|_{s + \s} + \| u \|_{s + \s} \|h \|_{s_0 + \s} \Big)\,.
\end{equation}
\end{lemma}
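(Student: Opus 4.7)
\begin{pf}[Proposal]
The plan is to establish the seven bundles of estimates in the order $m \to \alpha \to {\mathcal A}^{\pm 1} \to \rho^{\pm 1} \to b_0 \to {\mathcal R}^{(3)}$, combining the concrete bounds on $a_0,a_1,{\mathcal R}^{(1)}$ from Lemma \ref{Lemma dopo simmetrizzazione} with the general tools from Section \ref{sec:2}. The loss constant $\sigma$ will be chosen large enough to absorb every derivative loss encountered below; in particular we will need $\sigma \geq 2\tau + 1 + 2 s_0 + C$ for some absolute constant.

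\textbf{Step 1 (bounds on $m$).} Since $m = (2\pi)^{-\nu}\int_{\T^\nu} a_1\,d\vphi$, we have $|m-1|\leq \|a_1-1\|_{s_0} \lessdot \e$ by \eqref{estimates a0 a1} and the ansatz \eqref{ansatz}, and the same for the Lip-norm by \eqref{estimates a0 a11}; the derivative $\partial_u m[h]$ is the mean of $\partial_u a_1[h]$, bounded by $\e\|h\|_{s_0+2}$ thanks to \eqref{estimates derivatives a0 a1}.

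\textbf{Step 2 (bounds on $\alpha$ and ${\mathcal A}^{\pm 1}$).} From $|m-1|\lessdot\e$ and \eqref{estimates a0 a1}--\eqref{estimates a0 a11} we get $\|a_1/m-1\|_s^{\Lipg} \leq_s \e(1+\|u\|_{s+2}^{\Lipg})$. Applying $(\omega\cdot\partial_\vphi)^{-1}$ via \eqref{stima om d vphi inverso} (allowed since $\Omega_o \subseteq \Omega_{\gamma,\tau}$) yields $\|\alpha\|_s^{\Lipg} \leq_s \e\gamma^{-1}(1+\|u\|_{s+2\tau+1}^{\Lipg})$. By \eqref{ansatz} with $\mu$ large, this verifies the smallness hypothesis \eqref{mille condizioni p} of Lemma \ref{lemma:utile} applied with $p(\vphi) = \omega\alpha(\vphi)$. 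The estimates \eqref{stima cal A}--\eqref{stima cal A1} then follow directly from \eqref{stima cambio di variabile dentro la dim}--\eqref{stima tame cambio di variabile pietro}. For the $u$-derivative, we differentiate $({\mathcal A} h)(\vphi,x) = h(\vphi+\omega\alpha(\vphi),x)$ to obtain $\partial_u({\mathcal A} h)[g] = \omega\cdot(\partial_\vphi h)(\vphi+\omega\alpha(\vphi),x)\,\partial_u\alpha(\vphi)[g]$, and we bound $\partial_u\alpha[g]$ by combining \eqref{stima om d vphi inverso} with $\partial_u a_1[g]$ from \eqref{estimates derivatives a0 a1} (and $\partial_u m[g]$ from Step 1); applying Lemma \ref{lemma:utile} to the outer composition produces \eqref{stima derivata cal A}. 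The same argument works for ${\mathcal A}^{-1}$ via the inverse diffeomorphism, using \eqref{stime-lipschitz-q} and \eqref{tame-cambio-di-variabile-inverso}.

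\textbf{Step 3 (bounds on $\rho,\rho^{-1}$ and $b_0$).} By \eqref{equation for alpha}, $1+\omega\cdot\partial_\vphi\alpha = a_1/m$, so $\rho = {\mathcal A}^{-1}[a_1/m]$ and $\rho-1 = {\mathcal A}^{-1}[a_1/m-1]$. The estimate \eqref{stima rho} for $\rho$ follows by combining the bound on $a_1/m-1$ with \eqref{stima cal A}--\eqref{stima cal A1}. For $\rho^{-1}$ we invoke the Moser composition Lemma \ref{Moser norme pesate} with the smooth function $y\mapsto 1/y$ near $y=1$; the derivative bound \eqref{stima derivata rho} then follows by the chain rule and Step 2. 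Finally $b_0 = \rho^{-1}\cdot{\mathcal A}^{-1}[a_0]$, and \eqref{stime a2}--\eqref{stime derivata b0} follow from the product interpolation Lemma \ref{interpolazione C1 gamma} combined with the bounds for $\rho^{-1}$, the ${\mathcal A}^{-1}$ estimate, and the estimates \eqref{estimates a0 a1}--\eqref{estimates derivatives a0 a1} for $a_0$.

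\textbf{Step 4 (the remainder ${\mathcal R}^{(3)}$).} This is the main technical point. By Lemma \ref{Lemma dopo simmetrizzazione}, ${\mathcal R}^{(2)} = \tfrac12{\mathcal R}^{(1)}$ has the finite-rank form \eqref{operatore forma buona resto} with functions $q_2,g_2$ satisfying the analogues of \eqref{estimate cal R2 1}--\eqref{estimate cal R2 2}. The key algebraic observation is that this rank-one structure is preserved under conjugation by ${\mathcal A}$: the identity ${\mathcal A} h(\vartheta+\omega\tilde\alpha(\vartheta),x)=h(\vartheta,x)$ yields
$$
{\mathcal A}^{-1}{\mathcal R}^{(2)}{\mathcal A}\,h(\vartheta,x) = ({\mathcal A}^{-1} q_2)(\vartheta,x)\int_\T ({\mathcal A}^{-1}g_2)(\vartheta,x)\,h(\vartheta,x)\,dx.
$$
Hence, using symmetry of $|D|$ on zero-mean functions,
$$
{\mathcal R}^{(3)}|D|\,h = \rho^{-1}({\mathcal A}^{-1}q_2)\int_\T (|D|{\mathcal A}^{-1}g_2)\,h\,dx,
$$
which is again of the form \eqref{operatore forma buona resto}. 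Applying Lemma \ref{decadimento operatori di proiezione} gives a block-decay bound in terms of $\|\rho^{-1}{\mathcal A}^{-1}q_2\|_s$ and $\||D|{\mathcal A}^{-1}g_2\|_s \leq \|{\mathcal A}^{-1}g_2\|_{s+1}$; inserting the bounds from Step 2, the estimate \eqref{estimate cal R2 1} on $q_2$ (which carries the small factor $\e$) and \eqref{estimate cal R2 11}, \eqref{estimate cal R2 2} yields \eqref{stime cal R3}--\eqref{stime derivata cal R3}. The main obstacle to watch is the simultaneous derivative bookkeeping in the $u$-derivative estimate: each of the four factors $\rho^{-1}$, ${\mathcal A}^{-1}$, $q_2$, $g_2$ depends on $u$, so one applies the Leibniz rule and absorbs all intermediate losses into $\sigma = \sigma(\tau,\nu)$.
\end{pf}
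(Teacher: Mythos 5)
Your proof is correct and follows essentially the same route as the paper's: bounds on $m$ from \eqref{definition m} and Lemma \ref{Lemma dopo simmetrizzazione}, bounds on ${\mathcal A}^{\pm 1}$, $\rho^{\pm 1}$, $b_0$ via Lemma \ref{lemma:utile} and the Moser/interpolation lemmas (the paper simply defers these to the cited references), and then the key explicit computation showing that ${\mathcal A}^{-1}{\mathcal R}^{(2)}{\mathcal A}$ retains the rank-one form \eqref{operatore forma buona resto} with $q_3 = \tfrac12\rho^{-1}{\mathcal A}^{-1}q_1$, $g_3 = {\mathcal A}^{-1}g_1$, after which Lemma \ref{decadimento operatori di proiezione} delivers \eqref{stime cal R3}--\eqref{stime derivata cal R3}. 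Your observation that conjugation by the time-reparametrization ${\mathcal A}$ commutes with the $\int_\T(\cdot)\,dx$ integral (because ${\mathcal A}$ acts only on $\vphi$) is exactly the point the paper exploits, and the $|D|$-symmetry step is correctly used to move the derivative onto $g_3$.
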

\begin{proof}
The estimates \eqref{stime m} follow by the formula \eqref{definition m} and using the estimates \eqref{estimates a0 a1}, \eqref{estimates derivatives a0 a1}. 
The transformation ${\mathcal A}$ has been also used in \cite{BBM-Airy}, \cite{BBM-auto}, \cite{BBM-mKdV}, \cite{Berti-Montalto}, \cite{Feola-Procesi}. The proof of the estimates \eqref{stima cal A}-\eqref{stima derivata rho} can be done by using Lemma \ref{lemma:utile} as in these papers. For a detailed proof see for instance \cite{Feola-Procesi}, Pages 25-26.

\noindent
Let us prove the estimates \eqref{stime cal R3}, \eqref{stime derivata cal R3}. One has  
\begin{align*}
{\mathcal R}^{(3)}h = \rho^{- 1}{\mathcal A}^{- 1} {\mathcal R}^{(2)} {\mathcal A} h \stackrel{\eqref{cal L2 complex coordinates}}{=} \frac{1}{2} \rho^{- 1}{\mathcal A}^{- 1} {\mathcal R}^{(1)} {\mathcal A} h\stackrel{\eqref{funzioni q2 g2 cal R2}}{=} q_3 \, \int_{\T} g_3 \, h \, d x\,, \qquad  
\end{align*}
with 
$$
q_3 := \frac{1}{2}\rho^{- 1} {\mathcal A}^{- 1}(q_1)\,, \qquad g_3 := {\mathcal A}^{- 1}(g_1)\,.
$$
Therefore, the functions $q_3$ and $g_3$, can be estimated by using \eqref{estimate cal R2 1}, \eqref{estimate cal R2 2}, \eqref{stima cal A}-\eqref{stima derivata rho} and Lemma \ref{interpolazione C1 gamma}. The estimates in \eqref{stime cal R3} then follow by applying Lemma \ref{decadimento operatori di proiezione}. The estimate for $\partial_u {\mathcal R}^{(3)}[h]$ follows by differentiating the expression of ${\mathcal R}^{(3)}$, $q_3$, $g_3$ given above and applying again Lemma \ref{decadimento operatori di proiezione}. 
\end{proof}

\subsection{Descent method}\label{descent method}
Introducing the notation 
\begin{equation}\label{operator T def}
 \quad T := \begin{pmatrix}
{\rm I d}_0 & 0 \\
0 & - {\rm Id}_0
\end{pmatrix}
\end{equation}
we can write the operator ${\mathcal L}_3$ in \eqref{cal L3} as 
\begin{equation}\label{cal L3 notazioni compatte}
{\mathcal L}_3 = \omega \cdot \partial_\vphi {\mathbb I}_2 + \ii m T |D| + B_0 + {\mathcal R}_3\,,
\end{equation}
where 
\begin{equation}\label{A0 bf R0}
 B_0(\vphi) := \begin{pmatrix}
0 & b_0(\vphi) \\
b_0 (\vphi) & 0
\end{pmatrix}\,,\quad {\mathcal R}_3 := \ii \begin{pmatrix}
{\mathcal R}^{(3)} & {\mathcal R}^{(3)} \\
- {\mathcal R}^{(3)} & - {\mathcal R}^{(3)}
\end{pmatrix}\,.
\end{equation}
Our aim is to eliminate from the operator ${\mathcal L}_4$ the terms of order $|D|^0$, namely, since ${\mathcal R}^{(3)}$ is an operator of the form \eqref{operatore forma buona resto} (then arbitrarily regularizing), we only need to remove the multiplication operator by the matrix valued function $B_0(\vphi)$. 

\noindent
For this purpose, we consider the operator 
\begin{equation}\label{cal W0}
{\mathcal V} = {\mathcal V}(\vphi) := {\rm exp}\big( \ii V(\vphi) |D|^{- 1} \big)\,, \quad V(\vphi) := \begin{pmatrix}
0 & v(\vphi) \\
- v(\vphi) & 0
\end{pmatrix}\,,
\end{equation}
where $v : \T^\nu \to \R$ is a real valued function to be determined. 
Note that ${\mathcal V}$ is symplectic, since $\ii V(\vphi) |D|^{- 1}$ is a Hamiltonian vector field.  We write 
$$
{\mathcal V} = {\mathbb I}_2 + \ii V |D|^{- 1} + {\mathcal V}_{ \geq 2}\,, \qquad {\mathcal V}_{ \geq 2} := \sum_{k \geq 2} \frac{\ii^k}{k !}V^k |D|^{- k}\,,
$$
hence 
\begin{align}
{\mathcal L}_3 {\mathcal V} & = {\mathcal V} \big( \omega \cdot \partial_\vphi {\mathbb I}_2 + \ii m T |D| \big) + [\ii m T |D|, \ii V |D|^{- 1}] + B_0 + B_0({\mathcal V} - {\mathbb I}_2)  \nonumber\\ 
& \quad + [\ii m T |D|, {\mathcal V}_{ \geq 2}]  +  \ii \omega \cdot \partial_\vphi({\mathcal V} - {\mathbb I}_2)  + {\mathcal R}_3 {\mathcal V} \,. \label{quasi-coniugio cal L3 cal W0} 
\end{align}
The term of order $|D|^0$ is given by
$$
[\ii m T |D|, \ii V(\vphi) |D|^{- 1}] + B_0(\vphi) = \begin{pmatrix}
0 & - 2 m v(\vphi) + b_0(\vphi) \\
- 2 m v(\vphi) + b_0(\vphi) & 0
\end{pmatrix} \,.
$$
In order to remove it, we choose 
\begin{equation}\label{definizione w0(vphi)}
v(\vphi) := \frac{b_0(\vphi)}{2 m}
\end{equation}
and we get 
\begin{equation}\label{cal L4}
{\mathcal L}_4 : ={\mathcal V}^{- 1} {\mathcal L}_4 {\mathcal V}  = \omega \cdot \partial_\vphi {\mathbb I}_2 + \ii m T |D|  + {\mathcal R}_4\,,
\end{equation}
\begin{equation}\label{definition bf R1}
{\mathcal R}_4 := {\mathcal V}^{- 1} \Big(B_0(\vphi)({\mathcal V} - {\mathbb I}_2)  + [\ii m T |D|, {\mathcal V}_{ \geq 2}] +  \omega \cdot \partial_\vphi ({\mathcal V} - {\mathbb I}_2) \Big) + {\mathcal V}^{- 1}{\mathcal R}_3 {\mathcal V}\,.
\end{equation}
Note that, since ${\mathcal L}_3$ is Hamiltonian and ${\mathcal V}$ is symplectic, we have that ${\mathcal L}_4$ is still a Hamiltonian operator. 
In the next lemma we provide some estimates on the transformation ${\mathcal V}$ and on the remainder ${\mathcal R}_4$. 
\begin{lemma}\label{stime prima riducibilita}
There exists $\overline \sigma = \overline \sigma(\tau, \nu) > \sigma  > 0$, where $\sigma$ is the {\it loss of derivatives} in Lemma \ref{stime step 1}, such that if \eqref{ansatz} holds with $\mu = \overline \sigma$, then for any $s_0 \leq s \leq S - \overline \sigma$, ${\mathcal V}^{\pm 1} : {\bf H}^s_0(\T^{\nu + 1}) \to {\bf H}^s_0(\T^{\nu + 1})$ (recall \eqref{definizione bf H s0}) and the following estimates hold: 
\begin{align}
& | ({\mathcal V}^{\pm 1} - {\mathbb I}_2) |D| |_{ s} \leq_s \e (1 + \|u \|_{s + \overline\sigma})\,, \label{stima cal W0} \\
& | ({\mathcal V}^{\pm 1} - {\mathbb I}_2) |D| |_{ s}^\Lipg \leq_s \e (1 + \|u \|_{s + \overline\sigma}^\Lipg)\,, \label{stima cal W01} \\
& | \partial_u {\mathcal V}^{\pm 1} h  |_{ s} \leq_s \e (\| h \|_{s + \overline\sigma} + \| u \|_{s + \overline\sigma}\| h \|_{s_0 + \overline \sigma})\,,\label{stima derivata cal V} \\
& | {\mathcal R}_4 |D| |_{ s} \leq_s \e (1 + \|u \|_{s + \overline\sigma})\,, \quad | {\mathcal R}_4 |D| |_{ s}^\Lipg \leq_s \e (1 + \|u \|_{s + \overline\sigma}^\Lipg)\,, \label{stima bf R1} \\
& \quad | \partial_u {\mathcal R}_4 [h] |_{ s} \leq_s \e (\| h \|_{s + \overline\sigma} + \| u \|_{s + \overline\sigma}\| h \|_{s_0 + \overline \sigma})\,. \label{stima derivata cal R5}
\end{align}
\end{lemma}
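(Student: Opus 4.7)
The plan is to apply Lemma \ref{lem:inverti} with $\Psi := \mathrm{i} V(\vphi)|D|^{-1}$, so that $\Phi = \mathcal{V}$ and $\Phi_{\geq 2} = \mathcal{V}_{\geq 2}$, and then to estimate each of the four summands in the decomposition \eqref{definition bf R1} of $\mathcal{R}_4$. The invariance of the real subspace ${\bf H}^s_0(\T^{\nu+1})$ under $\mathcal{V}^{\pm 1}$ is automatic, because $\mathrm{i} V|D|^{-1}$ has the Hamiltonian matrix structure required by \eqref{invarianza sottospazio reale operatori}, so its exponential is a symplectic map preserving that subspace.

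The key preliminary observation is that, since $V$ depends only on $\vphi$, the operator $\Psi|D| = \mathrm{i} V(\vphi)$ is multiplication by a $2\times 2$ matrix of Sobolev functions whose entries are $\pm v(\vphi) = \pm b_0(\vphi)/(2m)$. Combining Lemma \ref{decadimento moltiplicazione} with \eqref{stime m} and \eqref{stime a2} yields
\begin{equation*}
|\Psi|D||_s^{\Lipg} \leq_s \|v\|_s^{\Lipg} \leq_s \e \bigl(1 + \|u\|_{s+\sigma}^{\Lipg}\bigr),
\end{equation*}
and \eqref{stime derivata b0} provides the corresponding bound on $\partial_u\Psi$. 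Enlarging $\mu$ in \eqref{ansatz} so as to absorb $\sigma$, the hypothesis $|\Psi|D||_{s_0}^{\Lipg} \leq 1$ of Lemma \ref{lem:inverti} is satisfied; then \eqref{PhINV}, \eqref{PhINV22}, \eqref{derivata-inversa-Phi}, \eqref{derivata-inversa-Phi2} immediately imply \eqref{stima cal W0}, \eqref{stima cal W01}, \eqref{stima derivata cal V} for $\mathcal{V}^{\pm 1}$ and a corresponding quadratic-in-$\|v\|$ estimate on $\mathcal{V}_{\geq 2}|D|$.

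For \eqref{stima bf R1} and \eqref{stima derivata cal R5} I multiply \eqref{definition bf R1} on the right by $|D|$ and treat each piece. The term $B_0(\mathcal{V}-\mathbb{I}_2)|D| = B_0\cdot[(\mathcal{V}-\mathbb{I}_2)|D|]$ is controlled by Lemma \ref{interpolazione decadimento Kirchoff} together with Lemma \ref{decadimento moltiplicazione} and \eqref{stime a2}; the term $\omega\cdot\partial_\vphi(\mathcal{V}-\mathbb{I}_2)|D|$ is bounded via Lemma \ref{elementarissimo decay}(ii) by $|(\mathcal{V}-\mathbb{I}_2)|D||_{s+1}$. For the conjugation $\mathcal{V}^{-1}\mathcal{R}_3\mathcal{V}|D|$, I exploit that $\mathcal{R}_3|D|$ has finite block-decay norm by \eqref{stime cal R3} (since $\mathcal{R}_3$ is an $\mathrm{i}$-scaled $2\times 2$ block whose entries are all $\pm\mathcal{R}^{(3)}$), commute $|D|$ past $\mathcal{V}$, and apply Lemma \ref{interpolazione decadimento Kirchoff} together with the already-established bound $|\mathcal{V}^{\pm 1}-\mathbb{I}_2|_{s_0} = O(\e)$.

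The delicate term is $[\mathrm{i} m T|D|, \mathcal{V}_{\geq 2}]|D|$. Since $V(\vphi)$ commutes with $|D|^{-k}$,
\begin{equation*}
[\mathrm{i} m T|D|,\mathcal{V}_{\geq 2}] \;=\; \mathrm{i} m \sum_{k\geq 2}\frac{\mathrm{i}^k}{k!}\,[T,V^k]\,|D|^{1-k},
\end{equation*}
which \emph{a priori} contains an $|D|^0$ contribution at $k=2$ and would violate the $1$-smoothing property required in \eqref{stima bf R1}. The matrix algebra rescues the step: a direct computation gives $V^2 = -v^2\,\mathbb{I}_2$, hence $[T,V^{2j}] = 0$ for every $j\geq 1$ and $[T,V^{2j+1}] = (-v^2)^j[T,V]$, where $[T,V]$ is multiplication by a block of the same form as $m^{-1}B_0$. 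Only odd $k\geq 3$ therefore survive, so the remaining series has leading order $|D|^{-2}$ and, after right-multiplication by $|D|$, is at worst $(-1)$-smoothing; its block-decay norm is summable in terms of $\|v\|_s^{\Lipg}$ by Lemmata \ref{decadimento moltiplicazione} and \ref{interpolazione decadimento Kirchoff}. This algebraic cancellation is the main obstacle of the lemma and the structural reason why the descent step produces a $1$-smoothing remainder: without the identity $V^2 \in \R\cdot\mathbb{I}_2$ an order-zero residue would persist and obstruct the reducibility scheme of Section \ref{sec:redu}. The derivative bounds \eqref{stima derivata cal V} and \eqref{stima derivata cal R5} follow by differentiating each formula above and reapplying Lemmata \ref{lem:inverti}, \ref{interpolazione decadimento Kirchoff}, \ref{decadimento moltiplicazione} together with \eqref{stime derivata b0} and \eqref{stime derivata cal R3}; the cumulative loss of derivatives is absorbed into a single constant $\bar\sigma > \sigma$ depending only on $\tau,\nu$.
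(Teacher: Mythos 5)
Your overall strategy is the same as the paper's: view $\mathcal V=\exp\Psi$ with $\Psi=\ii V(\vphi)|D|^{-1}$, estimate $\Psi|D|=\ii V(\vphi)$ via Lemma \ref{decadimento moltiplicazione} and the bounds on $b_0$ and $m$, apply Lemma \ref{lem:inverti}, and then estimate the pieces of $\mathcal R_4$ in \eqref{definition bf R1} by Lemmata \ref{elementarissimo decay}, \ref{interpolazione decadimento Kirchoff}, \ref{decadimento moltiplicazione}. That part is fine.

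However, your diagnosis of the commutator term is off. You claim that the $k=2$ summand of $[\ii m T|D|,\mathcal V_{\geq 2}]|D|$ is ``a priori $|D|^0$'' and would ``violate the $1$-smoothing property required in \eqref{stima bf R1}'', so that the algebraic identity $V^2=-v^2\mathbb I_2$ is the crux of the lemma. Neither half of that claim is correct. What \eqref{stima bf R1} requires is that $\mathcal R_4|D|$ has finite block-decay norm of size $O(\e)$; equivalently, $\mathcal R_4$ itself must be one-smoothing. The $k=2$ contribution to $[\ii mT|D|,\mathcal V_{\geq 2}]$ is $-\tfrac{m}{2}[T,V^2]\,|D|^{-1}$, which \emph{is already} $O(|D|^{-1})$: the commutator $[\ii mT|D|,\mathcal V_{\geq 2}]$ is $O(|D|^{-1})$ term by term for \emph{every} $k\geq 2$, with or without cancellation. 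After right-multiplying by $|D|$, the $k=2$ summand becomes $-\tfrac{m}{2}[T,V^2]$, a multiplication operator by a $\vphi$-only function, and Lemma \ref{decadimento moltiplicazione} gives $\bigl|[T,V^2]\bigr|_s\lessdot\|V^2\|_s\lessdot\|v\|_{s_0}\|v\|_s\lessdot \e^2(1+\|u\|_{s+\sigma})$, which is well within the budget $\e(1+\|u\|_{s+\bar\sigma})$. So an ``order-zero residue'' of this shape neither destroys the one-smoothing of $\mathcal R_4$ nor the reducibility scheme.

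In other words, the cancellation $V^2=-v^2\mathbb I_2$ is a correct piece of algebra (your computation of $[T,V^{2j}]=0$ and $[T,V^{2j+1}]=(-v^2)^j[T,V]$ checks out), and it does make the commutator slightly more regular, but it is not what the lemma hinges on. This is consistent with the paper's own proof, which simply cites Lemmata \ref{elementarissimo decay}, \ref{interpolazione decadimento Kirchoff}, \ref{decadimento moltiplicazione}, \ref{lem:inverti} and the estimates \eqref{stime a2}--\eqref{stime derivata cal R3}, \eqref{stima cal W0}--\eqref{stima derivata cal V} without ever invoking the identity. What actually produces the one-smoothing is the commutator structure itself: $[\,\cdot\,,V^k|D|^{-k}]$ gains a factor $|D|^{-1}$ against $T|D|$ because $V$ and $T$ commute with $|D|$. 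The confusion seems to come from implicitly treating ``has an $O(|D|^0)$ piece after multiplying by $|D|$'' as equivalent to ``is not one-smoothing''; those are different statements, since multiplication by a $\vphi$-only function has finite block-decay norm while $|D|$ does not.
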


\begin{proof}
{\sc Proof of \eqref{stima cal W0}-\eqref{stima derivata cal V}.} 
By Lemma \ref{decadimento moltiplicazione} one has 
\begin{equation}\label{spappolare - 1}
|(V |D|^{- 1}) |D||_s = \| V\|_s \leq \| v \|_s \stackrel{\eqref{stime m}, \eqref{stime a2}}{\leq_s} \e (1 + \| u \|_{s + \sigma})\,.
\end{equation}
By \eqref{ansatz} we have that $|(V |D|^{- 1}) |D||_{s_0} = \| V\|_{s_0} \lessdot \e \leq 1$, for $\e$ small enough, then Lemma \ref{lem:inverti} can be applied and the claimed estimate \eqref{stima cal W0} follows. The estimate \eqref{stima derivata cal V} follows by applying the estimate \eqref{derivata-inversa-Phi} and using that by  \eqref{definizione w0(vphi)}, \eqref{stime m}, \eqref{stime derivata b0}, $||D|^{- 1}|_s \leq 1$
$$
|\partial_u V |D|^{- 1}|_s \lessdot \| \partial_u v[h] \|_s \leq_s \e (\| h \|_{s + \sigma} +\| u \|_{s + \sigma} \| h \|_{s_0 + \sigma})\,.
$$

\noindent
{\sc Proof of \eqref{stima bf R1}, \eqref{stima derivata cal R5}.} The claimed estimates follow by the definition \eqref{definition bf R1},  by applying Lemmata \ref{elementarissimo decay}, \ref{interpolazione decadimento Kirchoff}, \ref{decadimento moltiplicazione}, \ref{lem:inverti}. and by the estimates \eqref{stime a2}-\eqref{stime derivata cal R3} and \eqref{stima cal W0}-\eqref{stima derivata cal V}. 
\end{proof}
\begin{lemma}\label{stime Hs x cal v}
Assume \eqref{ansatz} with $\mu = \overline \sigma + s_0$. Then for any $s_0 \leq s \leq S - \overline \sigma - s_0$ for any $\vphi \in \T^\nu$, ${\mathcal V}^{\pm 1}(\vphi) : {\bf H}^s_0(\T_x) \to {\bf H}^s_0(\T_x)$ (recall \eqref{definizione bf H s01}) and 
$$
|{\mathcal V}^{\pm 1}(\vphi)|_{s, x} \leq_s 1 + \| u \|_{s + \overline \sigma + s_0}\,.
$$
\end{lemma}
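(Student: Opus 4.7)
The plan is to exploit the bridge between the space-time block-decay norm $|\cdot|_{s}$ and the $\vphi$-frozen block-decay norm $|\cdot|_{s,x}$ provided by item $(ii)$ of Lemma \ref{lemma decadimento Kirchoff in x}, namely $|{\mathcal V}^{\pm 1}(\vphi)|_{s,x}\lessdot |{\mathcal V}^{\pm 1}|_{s+s_0}$. This reduces the lemma to an estimate of the form $|{\mathcal V}^{\pm 1}|_{s+s_0}\leq_s 1+\|u\|_{s+\overline\sigma+s_0}$, which is precisely the type of bound already prepared in Lemma \ref{stime prima riducibilita}, modulo the replacement of $|\cdot\,|D||_s$ by $|\cdot|_s$.

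First I would write
\begin{equation*}
{\mathcal V}^{\pm 1} = {\mathbb I}_2 + ({\mathcal V}^{\pm 1} - {\mathbb I}_2).
\end{equation*}
The identity ${\mathbb I}_2$ has trivial block-matrix coefficients (namely ${\bf R}_j^{j'}(\ell)=\delta_{\ell,0}\delta_{j,j'}{\bf I}_j$), so $|{\mathbb I}_2|_{s}=\sqrt 2$ for every $s\geq 0$. For the nontrivial piece I would invoke Lemma \ref{elementarissimo decay}-$(ii)$ to pass from $|\cdot\,|D||_s$ to $|\cdot|_s$, obtaining
\begin{equation*}
|{\mathcal V}^{\pm 1} - {\mathbb I}_2|_{s+s_0}\;\leq\;|({\mathcal V}^{\pm 1} - {\mathbb I}_2)|D||_{s+s_0}\;\leq_s\;\e\bigl(1+\|u\|_{s+s_0+\overline\sigma}\bigr),
\end{equation*}
where the last inequality is exactly \eqref{stima cal W0}. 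Combining these two bounds through Lemma \ref{lemma decadimento Kirchoff in x}-$(ii)$ and absorbing the $\e$ into the constant gives the desired estimate
\begin{equation*}
|{\mathcal V}^{\pm 1}(\vphi)|_{s,x}\;\lessdot\;|{\mathcal V}^{\pm 1}|_{s+s_0}\;\leq_s\;1+\|u\|_{s+\overline\sigma+s_0}.
\end{equation*}
This is the main content of the lemma, and it is essentially a bookkeeping argument: the single place where one loses $s_0$ extra derivatives is precisely the freezing estimate $|\cdot|_{s,x}\lessdot|\cdot|_{s+s_0}$, which forces the shift $\mu=\overline\sigma+s_0$ in the assumption \eqref{ansatz}.

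Finally, for the mapping property ${\mathcal V}^{\pm 1}(\vphi):{\bf H}^s_0(\T_x)\to {\bf H}^s_0(\T_x)$, I would argue in two steps. The $H^s$-boundedness in the $x$-variable follows from Lemma \ref{lemma decadimento Kirchoff in x}-$(i)$ applied to ${\mathcal V}^{\pm 1}(\vphi)$, once the just-proved bound on $|{\mathcal V}^{\pm 1}(\vphi)|_{s,x}$ is available. The invariance of the real subspace ${\bf H}^s_0(\T_x)$ (recall \eqref{sottospazio reale z bar z1}, \eqref{definizione bf H s01}) is built into the construction: by \eqref{cal W0} the matrix $\ii V(\vphi)|D|^{-1}$ has the structure \eqref{operatore matriciale decadimento} with real $v(\vphi)$, hence $\ii V(\vphi)|D|^{-1}$ satisfies \eqref{invarianza sottospazio reale operatori}, and this property is preserved under the exponential series. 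The only delicate point, which one should not expect to be a real obstacle but deserves a line, is the convergence of the exponential series that defines ${\mathcal V}^{\pm 1}$ in the $|\cdot|_{s,x}$-norm; this follows as in Lemma \ref{lem:inverti} from $|V|D|^{-1}|_{s_0,x}\leq |V|D|^{-1}|_{s_0+s_0}\lessdot \e\ll 1$ thanks to \eqref{ansatz}.
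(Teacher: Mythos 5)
Your proposal is correct and follows exactly the route the paper takes: apply Lemma \ref{lemma decadimento Kirchoff in x}-$(ii)$ to pass from the $\vphi$-frozen norm $|\cdot|_{s,x}$ to $|\cdot|_{s+s_0}$, then invoke \eqref{stima cal W0} (via Lemma \ref{elementarissimo decay}-$(ii)$ to drop the $|D|$) and absorb the $\e$ into the constant. The paper's proof is a one-line citation of precisely these two ingredients; you have merely unpacked it, including the harmless extra remarks about the identity's norm, the invariance of ${\bf H}^s_0(\T_x)$, and convergence of the exponential series, none of which changes the argument.
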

\begin{proof}
The claimed estimate follows by applying Lemma \ref{lemma decadimento Kirchoff in x}-$(ii)$ and by the estimates \eqref{stima cal W0}. 
\end{proof}
\section{$2 \times 2$ block-diagonal reduction}\label{sec:redu}
The goal of this section is to block-diagonalize the linear Hamiltonian operator ${\mathcal L}_5$  obtained in \eqref{cal L4}. We are going to perform an iterative Nash-Moser reducibility scheme for the linear Hamiltonian operator  
\be\label{L0}
{\mathcal L}_{0} := {\mathcal L}_4 = \o \cdot \partial_{\vphi} {\mathbb  I}_2  + {\mathcal D}_{0} + {\mathcal R}_{0} : {\bf H}_0^1(\T^{\nu + 1}) \to {\bf L}^2_0(\T^{\nu + 1}) \, , 
\ee
 where 
 \begin{equation}\label{cal D0 riducibilita}
 {\mathcal D}_0 = \ii \begin{pmatrix}
 {\mathcal D}_0^{(1)} & 0 \\
 0 & - {\mathcal D}_0^{(1)}
 \end{pmatrix}\,, \quad {\mathcal D}_0^{(1)} :=  m |D| = {\rm diag}_{j \in \Z \setminus \{ 0 \}}m |j|  
 \end{equation}
  and ${\mathcal R}_0 := {\mathcal R}_4$ is a Hamiltonian operator of the form 
  \begin{equation}\label{cal R0 riducibilita}
 {\mathcal R}_0 = \ii \begin{pmatrix}
 {\mathcal R}_0^{(1)} & {\mathcal R}_0^{(2)} \\
 - \overline{\mathcal R}_0^{(2)} & - \overline{\mathcal R}_0^{(1)}
 \end{pmatrix}\,, \quad {\mathcal R}_0^{(1)} = \big( {\mathcal R}_0^{(1)} \big)^*\,, \quad {\mathcal R}_0^{(2)} = ( {\mathcal R}_0^{(2)} )^T
 \end{equation}
 satisfying, by \eqref{stima bf R1}, for any $s_0 \leq s \leq S - \bar \sigma$ the estimates 
 \begin{equation}\label{stime primo resto KAM}
 |{\mathcal R}_0 |D| |_s^\Lipg \leq_s \e (1 + \| {u}\|_{s + \overline \sigma})\,,\, |\partial_{u} {\mathcal R}_0 [h]|_s \leq_s \e \Big( \| h \|_{s +  \overline \sigma} + \| { u}\|_{s +  \overline \sigma} \| h \|_{s_0 + \overline \sigma} \Big)\,,
 \end{equation}
where $\overline \sigma$ is the loss of derivatives given in Lemma \ref{stime prima riducibilita}. We define
\be\label{defN}
N_{-1} := 1 \, ,  \quad 
N_{\nu} := N_{0}^{\chi^{\nu}} \  \forall \nu \geq 0 \, ,  \quad  
\chi := 3 /2  
\ee
(then $ N_{\nu+1} = N_{\nu}^\chi $, $ \forall \nu \geq 0 $) and 
\begin{equation}\label{definizione alpha}
\mathtt a := 6 \tau + 4\,, \quad \mathtt b := \mathtt a + 1\,.
\end{equation}
  We assume that \eqref{ansatz} holds with $\mu = \bar \sigma + \mathtt b$, so that by \eqref{stime primo resto KAM}
  \begin{equation}\label{stima R0 s0 + mathtt b}
  |{\mathcal R}_0 |D||_{s_0 + \mathtt b}^\Lipg \lessdot \e \,, \qquad |\partial_u {\mathcal R}_0 [h]|_{s_0 + \mathtt b} \lessdot \e \|  h \|_{s_0 + \bar \sigma + \mathtt b}\,.
  \end{equation}
For the reducibility Theorem below, we use the $2 \times 2$ block representation of linear operators, given in Section \ref{sezione rappresentazione 2 per 2}. According to \eqref{notazione a blocchi} and recalling also \eqref{notazione operatore diagonale a blocchi}, the operator ${\mathcal D}_0^{(1)}$ can be written as 
 \begin{equation}\label{rappresentazione a blocchi cal D0 (1)}
 {\mathcal D}_0^{(1)} = {\rm diag}_{j \in \N} m j {\bf I}_j\,, 
 \end{equation}
   where ${\bf I}_j : {\bf E}_j \to {\bf E}_j$ is the identity, ${\bf E}_j = {\rm span}\{ e^{\ii j x}, e^{- \ii j x}\}$ is the two dimensional space \eqref{bf E alpha} and the real constant $m$ satisfies the estimates \eqref{stime m}. We also recall the definition of the space ${\mathcal S}({\bf E}_j)$ given in \eqref{cal S E alpha} which is isomorphic to the space of the $2 \times 2$ self-adjoint matrices, the definition of the norm $\| \cdot \|_{{\rm Op}(j, j')}$ given in \eqref{norma operatoriale su matrici alpha beta}, the identity ${\bf I}_{j, j'}$ in \eqref{operatore identita matrici alpha beta}, the definition of $M_L(A)$ in \eqref{definizione moltiplicazione sinistra matrici} and the definition of $M_R(B)$ in \eqref{definizione moltiplicazione destra matrici}. Now we are ready to state the following
 \begin{theorem}{{\bf (KAM reducibility)}} \label{thm:abstract linear reducibility}  
Let $\gamma \in (0, 1)$ and $\tau > 0$. Assume \eqref{ansatz} with $\mu = \bar \sigma + \mathtt b$ and with $S \geq s_0 + \overline \sigma + \mathtt b$. 
There exist $ N_{0 } = N_0(S, \tau, \nu) >0$ large enough, $\delta_0 = \delta_0(S, \tau, \nu) \in (0, 1)$ small enough, such that, if 
\begin{equation}\label{piccolezza1}
\e \gamma^{- 1} \leq \delta_0 
\end{equation}
then:

\begin{itemize}
\item[${\bf(S1)_{\nu}}$] 
For all $\nu \geq 0$, there exists an operator 
\be\label{def:Lj}
{\mathcal L}_\nu := \o \cdot \partial_{\vphi} {\mathbb I}_2 + {\mathcal D}_\nu + {\mathcal R}_\nu \quad 
\ee
\begin{equation}\label{cal D nu}
{\mathcal D}_\nu = \ii \begin{pmatrix}
 {\mathcal D}^{(1)}_\nu & 0 \\
0 & -  \overline{\mathcal D}^{(1)}_\nu
\end{pmatrix}\,,\qquad {\mathcal D}_\nu^{(1)} := {\rm diag}_{j \in \N } {\bf D}_j^\nu\,, 
\end{equation}
\be\label{mu-j-nu}
{\bf D}_j^\nu := {\bf D}_j^\nu (\omega ) = {\bf D}_j^0(\omega) + \widehat {\bf D}_j^\nu(\omega)\,, \quad {\bf D}_j^0 := m j {\bf I}_j\,, \quad \forall j \in \N\,,
\ee
(with $\widehat{\bf D}_j^0 = 0$) defined for all $ \omega \in \Omega_{\nu}^{\g}({\bf u})$, where 
$\Omega_{0}^{\g}({\bf u}) :=  \Omega_o = \Omega_o({\bf u})$, and for $\nu \geq 1$, $\Omega_\nu^\gamma = \Omega_\nu^\gamma({\bf u})$ is defined by
\begin{align}
\Omega_{\nu}^{\g} & := \Big\{\omega \in \Omega_{\nu - 1}^{\gamma} : \| {\bf A}_{\nu - 1}^{-}(\ell, j, j')^{- 1} \|_{{\rm Op}(j, j')} \leq \frac{\langle \ell \rangle^\tau}{\gamma \langle j - j'\rangle}\,, \forall (\ell,j, j') \neq (0, j, j)\,, \nonumber\\
& \qquad |\ell| \leq N_{\nu - 1}\,, \, \| {\bf A}_{\nu - 1}^+(\ell, j, j')^{- 1} \|_{{\rm Op}(j, j')} \leq \frac{\langle \ell \rangle^\tau}{\gamma \langle j + j' \rangle}\,, \nonumber\\
& \qquad \qquad  \forall (\ell, j, j')\,,  |\ell| \leq N_{\nu - 1} \Big\}\,, \label{Omgj}
\end{align}
where the operators ${\bf A}^{\pm}_{\nu - 1}(\ell, j, j') : {\mathcal L}({\bf E}_{j'}, {\bf E}_j) \to  {\mathcal L}({\bf E}_{j'}, {\bf E}_j)$ are defined by
\begin{equation}\label{bf A nu - (ell,alpha,beta)}
{\bf A}^{-}_{\nu - 1}(\ell,j, j') := \omega \cdot \ell {\bf I}_{j,  j'} + M_L({\bf D}_j^{\nu - 1}) - M_R({\bf D}_{j'}^{\nu - 1})\,,
\end{equation}
\begin{equation}\label{bf A nu + (ell,alpha,beta)}
{\bf A}^{+}_{\nu - 1}(\ell,  j, j') := \omega \cdot \ell {\bf I}_{j,  j'} + M_L({\bf D}_j^{\nu - 1}) + M_R(\overline{\bf D}_{j'}^{\nu - 1})\,.
\end{equation}
For $\nu \geq 0$, for all $j \in \N$, the $2 \times 2$ self-adjoint block $\widehat {\bf D}_j^\nu \in {\mathcal S}({\bf E}_j)$ satisfies 
\begin{equation}  \label{rjnu bounded}
 \| \widehat {\bf D}_j^\nu \|^\Lipg  \lessdot \e  j^{- 1} \, \quad \forall j \in \N\,. 
\end{equation}
The Hamiltonian remainder $  {\mathcal R}_\nu : {\bf H}^s_0(\T^{\nu + 1}) \to {\bf H}^s_0(\T^{\nu + 1}) $ satisfies $ \forall s \in [ s_0, S - \overline\s - \mathtt b ] $,  
\be\label{Rsb} 
|{\mathcal R}_{\nu} |D| |_{s}^\Lipg \leq \frac{|{\mathcal R}_{0} |D | |_{s+\mathtt b}^\Lipg}{ N_{\nu - 1}^{\mathtt a}} \, ,\,
|{\mathcal R}_{\nu} |D| |_{s + \mathtt b}^\Lipg \leq {|{\mathcal R}_{0} |D| |_{s+ \mathtt b}^\Lipg}{N_{\nu - 1}}\,.
\ee
Moreover, for any $ \nu \geq 1 $, 
\be	\label{Lnu+1}
{\mathcal L}_{\nu} = \Phi_{\nu-1}^{-1} {\mathcal L}_{\nu-1} \Phi_{\nu-1} \, , \quad \Phi_{\nu-1} := {\rm exp}(\Psi_{\nu - 1}) \, , 
\ee
where $\Psi_{\nu - 1}$ is Hamiltonian, $\Phi_{\nu - 1}$ is symplectic and they satisfy $ \Psi_{\nu-1}, \Phi_{\nu - 1}^{\pm 1} :{\bf H}^s_0(\T^{\nu + 1}) \to {\bf H}^s_0(\T^{\nu + 1}) $, 
\be\label{Psinus}  
 |\Psi_{\nu - 1}|_s^\Lipg\,, \,\left|\Psi_{\nu-1}|D| \right|_{s}^\Lipg \leq 
|{\mathcal R}_{0} |D| |_{s+\mathtt b}^\Lipg \gamma^{-1} N_{\nu-1}^{2 \t+1} N_{\nu - 2}^{- \mathtt a}  \, . 
\ee

\item[${\bf(S2)_{\nu}}$] 
For all $ j \in \N$,  there exists a Lipschitz extension to the whole parameter space $\Omega_o$, 
$ \widetilde{\bf D}_{j}^{\nu}(\cdot ): \Omega_o \to {\mathcal S}({\bf E}_j) $ of  $ {\bf D}_{j}^{\nu}(\cdot ) : \Omega_\nu^\g \to {\mathcal S}({\bf E}_j)$ 
satisfying,  for $\nu \geq 1$, 
\be\label{lambdaestesi}  
\|\widetilde{\bf D}_{j}^{\nu} -  \widetilde{\bf D}_{j}^{\nu-1} \|^\Lipg \lessdot  j^{- 1} | {\mathcal R}_{\nu-1} |D | |^\Lipg_{s_0}  \lessdot  N_{\nu - 2}^{- \mathtt a} \e j^{- 1}  \,.
\ee
\item[ ${\bf (S3)_{\nu}}$] 
Let $ {\bf u}_i(\omega) = (u_i(\omega), \psi_i(\omega)) $, $i = 1,2$ be Lipschitz families of Sobolev functions in $H^{s_0 + \bar \sigma + \mathtt b}(\T^{\nu + 1}, \R^2)$, 
defined for $\omega \in \Omega_o$ satisfying \eqref{ansatz} with $\mu = \bar \sigma + \mathtt b$. Then there exists a constant $K_0> 0$ such that, for $ \nu \geq 0 $,   $\forall \omega \in \Omega_{\nu}^{\g_1}({\bf u}_1) \cap  \Omega_{\nu}^{\g_2}({\bf u}_2)$, 
with $ \g_1, \g_2 \in [\g/2, 2\g]$, 
\begin{align}
& | {\mathcal R}_{\nu}(u_1) - {\mathcal R}_\nu(u_2)|_{ s_{0}}\leq  K_0 N_{\nu - 1}^{-\mathtt a} \e \Vert u_1 - u_2 \Vert_{s_0 + \overline\s + \mathtt b}, \label{derivate-R-nu}\\
& | {\mathcal R}_{\nu}(u_1) - {\mathcal R}_\nu(u_2)|_{ s_{0}+\mathtt b} \leq  K_0  N_{\nu - 1} \e \Vert u_1 - u_2 \Vert_{s_0 + \overline \s + \mathtt b} \, .\label{derivate-R-nu1}
\end{align} 
Moreover, for $\nu \geq 1$,  
$\forall j \in\N $, 
\begin{align}
& \big\|\big(\widehat{\bf D}_{j}^{\nu}(u_2) - \widehat{\bf D}_{j}^{\nu}(u_1)\big) - \big(\widehat{\bf D}_{j}^{\nu-1}(u_2) - \widehat{\bf D}_{j}^{\nu-1}(u_1)\big) \big\| \nonumber\\
& \qquad \leq  \vert {\mathcal R}_{\nu -1}(u_2) - \mR_{\nu -1}(u_1) \vert_{s_0} \,, \label{deltarj12}
\end{align}
\be\label{Delta12 rj}
\| \widehat{\bf D}_j^{\nu}(u_2) - \widehat{\bf D}_j^{\nu}(u_1) \| \lessdot  \e  \|  u_1 - u_2 \|_{s_0 + \overline \s + \mathtt b} \, . 
\ee 

\item[ ${\bf (S4)_{\nu}}$]
Let ${\bf u}_1, {\bf u}_2$ like in $({\bf S3})_\nu $ and $ 0 < \rho < \g / 2 $. For all $ \nu \geq 0 $
\begin{equation}\label{legno}
\e K_1 N_{\nu - 1}^{\tau} \Vert u_1 - u_2 \Vert_{s_0 + \overline \sigma + \mathtt b}^{\rm sup} \leq \rho 
\quad \Longrightarrow \quad
 \Omega_{\nu }^{\gamma}({\bf u}_1) \subseteq 
 \Omega_{\nu}^{\gamma - \rho}({\bf u}_2) \, ,
\end{equation}
where $K_1$ is a suitable constant depending on $\tau$ and $\nu$.
\end{itemize}
\end{theorem}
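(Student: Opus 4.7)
\medskip

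\noindent
\textbf{Plan of proof.} The proof proceeds by induction on $\nu$, following the standard KAM reducibility scheme but performed in the $2\times 2$ block framework of Section \ref{sezione rappresentazione 2 per 2}. The base case $\nu=0$ is immediate from the definitions \eqref{L0}--\eqref{cal R0 riducibilita} and from the estimate \eqref{stime primo resto KAM} (together with \eqref{stima R0 s0 + mathtt b}); note that $\widehat{\bf D}_j^0=0$ and $\Omega_0^\gamma = \Omega_o$, so all statements $(S1)_0$--$(S4)_0$ hold trivially. For the inductive step, assume $(S1)_\nu$--$(S4)_\nu$ and let us build the next conjugation $\Phi_\nu = \exp(\Psi_\nu)$.

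\medskip

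\noindent
\textbf{Homological equation.} First I would look for a symplectic Hamiltonian $\Psi_\nu$, having the same $2\times 2$ structure \eqref{cal R0 riducibilita} as ${\mathcal R}_\nu$, solving
\[
\omega\cdot\partial_\varphi \Psi_\nu + [{\mathcal D}_\nu,\Psi_\nu] = -\Pi_{N_\nu}\bigl({\mathcal R}_\nu - [{\mathcal R}_\nu]\bigr),
\]
where $\Pi_{N_\nu}$ is the time-Fourier truncation \eqref{SM}--\eqref{SM-block-matrix} and $[\,\cdot\,]$ is the $2\times 2$ block-diagonal projection \eqref{notazione operatore diagonale a blocchi}. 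Passing to blocks and to the $R_1$/$R_2$ decomposition of Hamiltonian operators, the equation becomes, for every $|\ell|\le N_\nu$ and $(\ell,j,j')\neq(0,j,j)$,
\[
{\bf A}_\nu^{-}(\ell,j,j')\,({\bf \Psi}_\nu^{(1)})_j^{j'}(\ell) = -({\bf R}_\nu^{(1)})_j^{j'}(\ell),\qquad
{\bf A}_\nu^{+}(\ell,j,j')\,({\bf \Psi}_\nu^{(2)})_j^{j'}(\ell) = -({\bf R}_\nu^{(2)})_j^{j'}(\ell),
\]
with ${\bf A}_\nu^\pm$ defined in \eqref{bf A nu - (ell,alpha,beta)}--\eqref{bf A nu + (ell,alpha,beta)}. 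On $\Omega_{\nu+1}^\gamma$ these $4\times 4$ operators (acting on ${\mathcal L}({\bf E}_{j'},{\bf E}_j)$) are invertible by definition, with norm bound $\langle\ell\rangle^\tau/(\gamma\langle j\mp j'\rangle)$, so the block coefficients of $\Psi_\nu$ are controlled pointwise. Summing over $(\ell,j,j')$ through \eqref{decadimento Kirchoff}, and using that the factor $\langle j\mp j'\rangle^{-1}$ exactly compensates the $|D|$ in the smoothing norm, I obtain
\[
|\Psi_\nu|_s^\Lipg,\ |\Psi_\nu|D||_s^\Lipg \ \lesssim_s\ \gamma^{-1}N_\nu^{2\tau+1}\,|{\mathcal R}_\nu|D||_s^\Lipg,
\]
which, via $(S1)_\nu$ applied at $s+\mathtt b$ and the second bound in \eqref{Rsb}, yields \eqref{Psinus} at step $\nu+1$. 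The required separation of $\langle j\mp j'\rangle$ from the operator norm is precisely what the second-order Melnikov conditions in \eqref{Omgj} encode, using Lemma \ref{properties operators matrices}$(ii)$ together with the asymptotic \eqref{rjnu bounded}.

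\medskip

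\noindent
\textbf{Conjugation and new remainder.} With $\Psi_\nu$ at hand I set $\Phi_\nu = \exp(\Psi_\nu)$ (symplectic, since $\Psi_\nu$ is Hamiltonian) and compute
\[
{\mathcal L}_{\nu+1} = \Phi_\nu^{-1}{\mathcal L}_\nu\Phi_\nu
 = \omega\cdot\partial_\varphi{\mathbb I}_2 + {\mathcal D}_{\nu+1} + {\mathcal R}_{\nu+1},
\]
with ${\mathcal D}_{\nu+1} := {\mathcal D}_\nu + [{\mathcal R}_\nu]$ and the new remainder
\[
{\mathcal R}_{\nu+1} = \Pi_{N_\nu}^\perp{\mathcal R}_\nu + \Phi_\nu^{-1}\bigl([{\mathcal R}_\nu,\Psi_\nu] + \text{higher order in }\Psi_\nu\bigr) - \Phi_\nu^{-1}(\Phi_\nu)_{\ge 2}\,[{\mathcal D}_\nu,\,\cdot\,]\text{-terms}.
\]
Lemma \ref{elementarissimo decay}$(iii)$ gives the new diagonal correction $\widehat{\bf D}_j^{\nu+1}-\widehat{\bf D}_j^\nu = \bigl[{\mathcal R}_\nu\bigr]_j^j(0)$, and together with \eqref{Rsb} at $\nu$ this yields both \eqref{rjnu bounded} and \eqref{lambdaestesi}. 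For ${\mathcal R}_{\nu+1}$ itself I apply Lemma \ref{lemma smoothing decay} to the smoothing piece and Lemmata \ref{interpolazione decadimento Kirchoff}, \ref{lem:inverti} to the commutator/conjugation pieces; the product structure and the bound on $\Psi_\nu|D|$ give the standard quadratic-plus-smoothing estimate
\[
|{\mathcal R}_{\nu+1}|D||_s^\Lipg \lesssim_s N_\nu^{-\mathtt b}|{\mathcal R}_\nu|D||_{s+\mathtt b}^\Lipg + \gamma^{-1} N_\nu^{2\tau+1} |{\mathcal R}_\nu|D||_s^\Lipg |{\mathcal R}_\nu|D||_{s_0}^\Lipg,
\]
and analogously at $s+\mathtt b$ (losing a factor $N_\nu$). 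The choice $\mathtt a = 6\tau+4$, $\mathtt b = \mathtt a+1$, $\chi = 3/2$ in \eqref{defN}--\eqref{definizione alpha} is tuned precisely so that, combined with the smallness \eqref{piccolezza1}, the iterative inequalities close to give \eqref{Rsb} at step $\nu+1$.

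\medskip

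\noindent
\textbf{Extension, Lipschitz in $\mathbf{u}$, and inclusion of Cantor sets.} Statement $(S2)_{\nu+1}$ follows from Kirszbraun's extension theorem applied to the Lipschitz map $\widehat{\bf D}_j^{\nu+1}-\widehat{\bf D}_j^\nu$, using \eqref{lambdaestesi}. For $(S3)_{\nu+1}$ I differentiate the identity ${\mathcal R}_{\nu+1}=\Phi_\nu^{-1}{\mathcal L}_\nu\Phi_\nu - \omega\partial_\varphi{\mathbb I}_2 - {\mathcal D}_{\nu+1}$ with respect to $u$, substitute the inductive bounds \eqref{derivate-R-nu}--\eqref{Delta12 rj} for $u=u_1,u_2$, and use the variational estimate \eqref{derivata-inversa-Phi} of Lemma \ref{lem:inverti} together with Lemma \ref{interpolazione decadimento Kirchoff}; the crucial point is that $\Psi_\nu$ depends on $u$ only through ${\mathcal R}_\nu$ and through the small divisors, and the latter contribute at most $\gamma^{-1} N_\nu^\tau$-factors, reabsorbed into the large constant $K_0$. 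Finally, $(S4)_{\nu+1}$ is a direct consequence of the Lipschitz estimates \eqref{Delta12 rj}: if two eigenvalue functions satisfy $\|\widetilde{\bf D}_j^\nu(u_1)-\widetilde{\bf D}_j^\nu(u_2)\|^{\sup}\lesssim \e\|u_1-u_2\|_{s_0+\bar\sigma+\mathtt b}^{\sup}$, then by Lemma \ref{risultato astratto operatori autoaggiunti}$(i)$ the corresponding $4\times 4$ operators ${\bf A}_\nu^\pm(u_1)$ and ${\bf A}_\nu^\pm(u_2)$ have inverses differing by at most $K_1 \e N_\nu^\tau \gamma^{-2}\langle j\mp j'\rangle^{-1}\|u_1-u_2\|$, whence \eqref{legno}.

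\medskip

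\noindent
\textbf{Main obstacles.} The delicate steps are twofold. First, the small divisor problem in the present setting is an \emph{operator-valued} inversion: each Melnikov denominator is a $4\times 4$ self-adjoint matrix rather than a scalar, so at every iteration one must verify that the spectrum of $M_L({\bf D}_j^\nu)\pm M_R({\bf D}_{j'}^\nu)+\omega\cdot\ell\,{\bf I}_{j,j'}$ stays uniformly away from zero; this is where Lemma \ref{properties operators matrices} and the preservation of the $O(\e j^{-1})$ asymptotic \eqref{rjnu bounded} play a central role. Second, the estimates must be performed throughout in the norm $|{\mathcal R}_\nu|D||_s^\Lipg$ rather than $|{\mathcal R}_\nu|_s^\Lipg$: one needs the $|D|^{+1}$ factor to survive the commutation $[{\mathcal D}_\nu,\Psi_\nu]=[\,\ii m T|D|+\ldots,\Psi_\nu]$ in the conjugation, and the algebra of the block-decay norm (Lemmata \ref{elementarissimo decay}--\ref{interpolazione decadimento Kirchoff}, \ref{lem:inverti}) is precisely designed to make this compatible with the quadratic Newton scheme.
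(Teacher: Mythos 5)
Your plan follows the same inductive KAM scheme as the paper: the same homological equation, the same $2\times 2$ block decomposition, the same Cantor set bootstrap, and Lipschitz extension plus a Neumann series perturbation for $(S2)$--$(S4)$. There is, however, one genuine gap in the iteration step. After solving the homological equation, the new remainder ${\mathcal R}_{\nu+1}$ contains the term $\omega\cdot\partial_\varphi\Phi_{\nu,\geq 2} + [{\mathcal D}_\nu,\Phi_{\nu,\geq 2}]$ where $\Phi_{\nu,\geq 2} = \sum_{n\geq 2}\Psi_\nu^n/n!$. Since ${\mathcal D}_\nu = \ii m T|D| + O(|D|^{-1})$ is an operator of order one, estimating this commutator naively through Lemmata \ref{interpolazione decadimento Kirchoff}, \ref{lem:inverti} as you propose would lose a derivative and the quadratic scheme would not close. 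The paper's fix is the algebraic identity \eqref{vesuvio},
$$
\omega\cdot\partial_\varphi(\Psi_\nu^n)+[{\mathcal D}_\nu,\Psi_\nu^n] = \sum_{i+k=n-1}\Psi_\nu^i\big([{\mathcal R}_\nu]-{\mathcal R}_\nu\big)\Psi_\nu^k\,,
$$
which uses the solved homological equation to convert the unbounded commutator into compositions of bounded (indeed $1$-smoothing) operators, after which the block-decay algebra applies cleanly. Your text names ``the commutator/conjugation pieces'' generically but never invokes this cancellation, so as written the bound for $|{\mathcal R}_{\nu+1}|D||_s^{\Lipg}$ does not follow.

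A related conceptual slip: the norm $|{\mathcal R}_\nu|D||_s$ is not there to ``survive the commutation $[{\mathcal D}_\nu,\Psi_\nu]$''; that commutator loss is removed purely algebraically as above. The $|D|$ weight is what propagates the $O(\e j^{-1})$ asymptotic \eqref{rjnu bounded} on the diagonal corrections $\widehat{\bf D}_j^\nu$, which in turn is what makes the operator-valued second order Melnikov conditions \eqref{Omgj} measure-generically satisfiable (cf.\ \eqref{asintotica intro}). Likewise, your claim that ``$\langle j\mp j'\rangle^{-1}$ exactly compensates the $|D|$'' is not the mechanism: in \eqref{stima norma sup coefficienti Psi} the paper simply uses $\langle j\mp j'\rangle^{-1}\le 1$ and carries the factor $|D|$ unchanged on both sides of the homological estimate. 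Once these two points are repaired, the remainder of your proposal agrees with the paper's proof.
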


\subsection{Proof of Theorem \ref{thm:abstract linear reducibility}} \label{subsec:proof of thm abstract linear reducibility}

\noindent
{\sc Proof  of ${\bf ({S}i)}_{0}$, $i=1,\ldots,4$.} 
Properties \eqref{def:Lj}-\eqref{Rsb} in ${\bf({S}1)}_0$
hold by \eqref{L0}-\eqref{cal R0 riducibilita} with $ {\bf D}^0_j $ defined in \eqref{mu-j-nu} and
 $ \widehat{\bf D}_j^0(\omega) = 0 $ (for \eqref{Rsb} recall  that $ N_{-1} := 1 $, see \eqref{defN}). 
Moreover, since $m$ is a real function, ${\bf D}_j^0$ is self-adjoint.  
Then there is nothing else to verify.  

${\bf({S}2)}_0 $ holds, since the function $ m(\omega)= m(\omega, u(\omega))$ is already defined for all $\omega \in \Omega_o = \Omega_o({\bf u})$. 

${\bf({S}3)}_0 $ follows by the estimate \eqref{stima R0 s0 + mathtt b} and by the mean value Theorem, by taking $K_0 > 0$ large enough. 

${\bf({S}4)}_0 $ is trivial because, by definition, $\Omega_0^\g({\bf u}_1) : = \Omega_o = :\Omega_0^{\g-\rho}({\bf u}_2)$.

\subsection{The reducibility step}
We now describe the inductive step, showing how to define a symplectic transformation 
$ \Phi_\nu := \exp(  \Psi_\nu ) $ 
so that  $ {\mathcal L}_{\nu+1 } = \Phi_\nu^{- 1} {\mathcal L}_\nu \Phi_\nu $ has the desired properties.
To simplify notations, in this section we drop the index $ \nu $ and we write $ + $ for $ \nu + 1$.
At each step of the iteration we have a Hamiltonian operator 
\begin{equation}\label{operator KAM step}
{\mathcal L} = \omega \cdot \partial_\vphi {\mathbb I}_2 + {\mathcal D} + {\mathcal R}\,
\end{equation}
where
\begin{equation}\label{diagonal operator KAM step}
{\mathcal D} := 
\ii \begin{pmatrix}
 {\mathcal D}^{(1)} & 0 \\
0 & -  \overline{\mathcal D}^{(1)}
\end{pmatrix}\,, \qquad {\mathcal D}^{(1)} := {\rm diag}_{j \in \N} {\bf D}_j \,,\quad 
\end{equation}
${\bf D}_j \in {\mathcal S}({\bf E}_j)$, $\forall j \in \N$ (recall the definiton \eqref{cal S E alpha}) and ${\mathcal R}$ is a Hamiltonian operator, namely it has the form 
\begin{equation}\label{cal R Hamiltoniano}
{\mathcal R} = \ii \begin{pmatrix}
{\mathcal R}^{(1)} & {\mathcal R}^{(2)} \\
- \overline{\mathcal R}^{(2)} & - \overline{\mathcal R}^{(1)}
\end{pmatrix}\,, \qquad {\mathcal R}^{(1)} = ({\mathcal R}^{(1)})^*\,, \qquad {\mathcal R}^{(2)} = ({\mathcal R}^{(2)})^T\,.
\end{equation}
Let us consider a transformation 
\begin{equation}\label{Phi Psi}
\Phi := {\rm exp}(\Psi)\,,\qquad \Psi :=  \ii \begin{pmatrix}
\Psi^{(1)} & \Psi^{(2)} \\
- \overline\Psi^{(2)} & - \overline\Psi^{(1)}
\end{pmatrix}\,,
\end{equation}
with $\Psi^{(1)} = (\Psi^{(1)})^*$, $\Psi^{(2)}= (\Psi^{(2)})^T$. Writing 
\begin{equation}\label{espansione Phi}
\Phi = {\mathbb I}_2 + \Psi + \Phi_{\geq 2}\,,\quad \Phi_{\geq 2} := \sum_{k \geq 2} \frac{\Psi^k}{k !}\,,
\end{equation}
we have 
\begin{eqnarray}
{\mathcal L} \Phi & = & \Phi \Big( \omega \cdot \partial_\vphi {\mathbb I}_2 + {\mathcal D} \Big) + \Big(\omega \cdot \partial_\vphi \Psi + [{\mathcal D}, \Psi] + \Pi_N {\mathcal R} \Big) + \Pi_N^\bot {\mathcal R} \nonumber\\
& & + \omega \cdot \partial_\vphi \Phi_{\geq 2} + [{\mathcal D}, \Phi_{\geq 2}] + {\mathcal R} (\Phi - I)\,, \label{quasi coniugio riducibilita}
\end{eqnarray}
We want to determine the operator $\Psi$ so that 
\begin{equation}\label{Homological equation}
\omega \cdot \partial_\vphi \Psi + [{\mathcal D}, \Psi] + \Pi_N {\mathcal R} = [{\mathcal R}],
\end{equation}
where, recalling the notation \eqref{notazione operatore diagonale a blocchi}, 
\begin{equation}\label{media di cal R}
[{\mathcal R}] := \ii \begin{pmatrix}
[{\mathcal R}^{(1)}] & 0 \\
0 &- [{\mathcal R}^{(1)}]
\end{pmatrix}\,,\qquad 
[{\mathcal R}^{(1)}] := {\rm diag}_{j \in \N} ({\bf R}^{(1)})_j^j(0)\,.
\end{equation}
We recall that, according to \eqref{notazione a blocchi}, the operator $({\bf R}^{(1)})_j^j(0)$, $j \in \N$ is identified with its $2 \times 2$ matrix representation
$$
({\bf R}^{(1)})_j^j(0) = \big( ({\mathcal R}^{(1)})_k^{k'}(0) \big)_{k, k' = \pm j}\,.
$$
Since ${\mathcal R}^{(1)}$ is self-adjoint, all the $2 \times 2$ blocks $({\bf R}^{(1)})_j^j(0)$ are self-adjoint and then also $[{\mathcal R}^{(1)}]$ is self-adjoint.
\begin{lemma} {\bf (Homological equation)}\label{homologica equation} 
For all $ \omega \in \Omega_{\nu + 1}^{{\gamma}}$ (see \eqref{Omgj}),  there exists a unique solution
$ \Psi  $ of the homological equation \eqref{Homological equation}, which is Hamiltonian and satisfies 
\be\label{PsiR}
|\Psi |D| |_ s^\Lipg \lessdot   N^{2 \tau + 1} \gamma^{-1} |{\mathcal R} |D| |_s^\Lipg \, . 
\ee
Moreover if $\gamma/2 \leq \gamma_1, \gamma_2 \leq 2 \gamma$, and if ${\bf u}_i(\omega) = (u_i(\omega), \psi_i(\omega)) \in H^{s_0 + \overline \sigma + \mathtt b}(\T^{\nu + 1}, \R^2)$, $i = 1, 2$ are Lipschitz families, then for all $s \in [s_0, s_0 + \mathtt b]$, for all $\omega \in \Omega_{\nu + 1}^{\gamma_1}({\bf u}_1) \cap \Omega_{\nu + 1}^{\gamma_2}({\bf u}_2)$ 
\begin{align}
& |\Psi(u_1) - \Psi(u_2)|_s \label{Psi(u1) - Psi(u2)}\\
& \leq_s N^{2 \tau + 1} \gamma^{- 1} \Big(|{\mathcal R}(u_1)|_s \|u_1 - u_2 \|_{s_0 + \overline \sigma + \mathtt b} + |{\mathcal R}(u_1) - {\mathcal R}(u_2)|_s \Big)\,. \nonumber
\end{align}
\end{lemma}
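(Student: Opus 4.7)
The idea is to solve \eqref{Homological equation} block by block in the $2\times 2$ representation of Section \ref{sezione rappresentazione 2 per 2}, exploiting that the diagonal operator ${\mathcal D}$ has the form \eqref{diagonal operator KAM step} with ${\bf D}_j \in {\mathcal S}({\bf E}_j)$. Writing $\Psi = \ii\,P$ with $P = \begin{pmatrix}\Psi^{(1)} & \Psi^{(2)}\\ -\overline{\Psi^{(2)}} & -\overline{\Psi^{(1)}}\end{pmatrix}$, a direct computation of $[{\mathcal D},\Psi]$ together with the fact that $\omega\cdot\partial_\vphi P$ commutes with the block structure shows that the $(1,1)$ and $(1,2)$ entries of \eqref{Homological equation} decouple into two independent scalar equations for $\Psi^{(1)}$ and $\Psi^{(2)}$. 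Taking Fourier coefficients in $\vphi$ and passing to the $2\times 2$ block representation \eqref{definizione blocco operatore}, for every $(\ell,j,j')$ with $|\ell|\leq N$ and $(\ell,j,j')\neq(0,j,j)$ these become
\[
{\bf A}^-_\nu(\ell,j,j')\bigl[({\bf \Psi}^{(1)})_j^{j'}(\ell)\bigr] = \ii\,({\bf R}^{(1)})_j^{j'}(\ell),\qquad
{\bf A}^+_\nu(\ell,j,j')\bigl[({\bf \Psi}^{(2)})_j^{j'}(\ell)\bigr] = \ii\,({\bf R}^{(2)})_j^{j'}(\ell),
\]
with ${\bf A}^\pm_\nu$ as in \eqref{bf A nu - (ell,alpha,beta)}--\eqref{bf A nu + (ell,alpha,beta)}. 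For $|\ell|>N$ I simply set the blocks to zero, and for $(\ell,j,j')=(0,j,j)$ I set $({\bf\Psi}^{(1)})_j^j(0)=0$ (the homological equation on these modes is already verified since the diagonal contribution to ${\mathcal R}^{(1)}$ is absorbed into $[{\mathcal R}]$).

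Since $\omega\in\Omega_{\nu+1}^\g$, the operators ${\bf A}^\pm_\nu(\ell,j,j')$ are invertible on ${\mathcal L}({\bf E}_{j'},{\bf E}_j)$ for $|\ell|\leq N$ with the bounds prescribed in \eqref{Omgj}, so the unique solution $\Psi$ is defined. The Hamiltonian structure $\Psi^{(1)}=(\Psi^{(1)})^*$, $\Psi^{(2)}=(\Psi^{(2)})^T$ is inherited from the corresponding structure of ${\mathcal R}$: by Lemma \ref{properties operators matrices}\,$(i)$, since ${\bf D}_j^\nu$ is self-adjoint, the operators $M_L({\bf D}_j)\pm M_R({\bf D}_{j'})$ (and hence ${\bf A}^\pm_\nu$) are self-adjoint with respect to the inner product \eqref{prodotto scalare traccia matrici}, and under the appropriate symmetry of the right-hand side (self-adjointness of ${\bf R}^{(1)}$ and symmetry of ${\bf R}^{(2)}$ up to taking $-\ell,-j,-j'$), the inverses produce blocks with the same symmetry. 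Consequently $\Psi$ is Hamiltonian in the sense of \eqref{Phi Psi}.

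For the size bound \eqref{PsiR}, I combine the definition \eqref{decadimento Kirchoff} of the block-decay norm with the estimate $\|({\bf\Psi}^{(i)}|D|)_j^{j'}(\ell)\|=j'\|({\bf\Psi}^{(i)})_j^{j'}(\ell)\|$ and the operator bound
\[
\|{\bf A}^\mp_\nu(\ell,j,j')^{-1}\|_{{\rm Op}(j,j')}\le \frac{\langle\ell\rangle^\tau}{\gamma\,\langle j\mp j'\rangle}\le \frac{N^\tau}{\gamma}\qquad(|\ell|\le N),
\]
which immediately yields the sup part $|\Psi|D||_s^{\sup}\lesssim_s N^\tau\gamma^{-1}|{\mathcal R}|D||_s^{\sup}$. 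The Lipschitz part in $\omega$ is obtained from the standard identity ${\bf A}^{-1}(\omega_1)-{\bf A}^{-1}(\omega_2)={\bf A}^{-1}(\omega_1)[{\bf A}(\omega_2)-{\bf A}(\omega_1)]{\bf A}^{-1}(\omega_2)$, using the Lipschitz bound $\|{\bf D}_j^\nu(\omega_1)-{\bf D}_j^\nu(\omega_2)\|\lesssim j\,\e\gamma^{-1}|\omega_1-\omega_2|$ (coming from \eqref{stime m} and \eqref{rjnu bounded}) together with $|(\omega_1-\omega_2)\cdot\ell|\le N|\omega_1-\omega_2|$. This produces two copies of the small-divisor bound (one from each ${\bf A}^{-1}$) and an extra factor $N$ from the $\omega\cdot\ell$ variation; after multiplying by $\gamma$ to form the $\Lipg$ norm, exactly the gain $N^{2\tau+1}\gamma^{-1}$ in \eqref{PsiR} emerges. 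The $u$-difference estimate \eqref{Psi(u1) - Psi(u2)} is obtained with the same resolvent identity, now decomposing
\[
\Psi(u_1)-\Psi(u_2)={\bf A}(u_1)^{-1}\bigl[{\mathcal R}(u_1)-{\mathcal R}(u_2)\bigr]+\bigl[{\bf A}(u_1)^{-1}-{\bf A}(u_2)^{-1}\bigr]{\mathcal R}(u_2),
\]
and controlling ${\bf A}(u_1)-{\bf A}(u_2)$ through the $u$-dependence of $m$ and $\widehat{\bf D}_j^\nu$ provided by \eqref{stime m} and \eqref{Delta12 rj}.

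The main obstacle is the Lipschitz bound: the naive application of $\|{\bf D}_j^\nu\|^{\lip}\lesssim j\e\gamma^{-1}$ introduces a factor $j+j'$ in the numerator which does not obviously combine with the $\langle j-j'\rangle^{-2}$ from squaring the inverse. I will absorb it by splitting $j+j'\le 2j'+\langle j-j'\rangle$: the $2j'$ contribution is converted, after multiplication by $j'$ from $|D|$, into an additional $|D|$ factor on ${\mathcal R}$, which is allowed because we only claim a bound on $\Psi|D|$ in terms of ${\mathcal R}|D|$; the $\langle j-j'\rangle$ contribution cancels one power of the $\langle j-j'\rangle^{-2}$. The smallness of $\e\gamma^{-1}$ provided by \eqref{piccolezza1} (and the ansatz \eqref{ansatz}) then ensures that this piece is subdominant with respect to the leading $N^{2\tau+1}\gamma^{-1}$ term.
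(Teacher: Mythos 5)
Your overall strategy — solving the homological equation block by block via \eqref{bf A nu - (ell,alpha,beta)}--\eqref{bf A nu + (ell,alpha,beta)}, reading off the sup bound from the small-divisor condition, and obtaining the Lipschitz and $u$-difference bounds through the resolvent identity — is exactly the paper's route, and the sup-norm part is fine. However, the way you close the Lipschitz estimate contains a genuine gap.

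The flaw is the factor $j+j'$ in $\|\Delta_\omega{\bf A}^-\|_{{\rm Op}(j,j')}$ and your attempt to absorb it. You estimate $\|\Delta_\omega{\bf D}_j\|\lesssim j\,\e\gamma^{-1}|\omega_1-\omega_2|$ and $\|\Delta_\omega{\bf D}_{j'}\|\lesssim j'\,\e\gamma^{-1}|\omega_1-\omega_2|$ \emph{separately}, producing $j+j'$. The absorption you propose cannot work: after the one factor of $j'$ from $\Psi|D|$ and the one factor of $1/j'$ from writing $\|({\bf R}^{(1)})_j^{j'}(\ell)\| = j'^{-1}\|({\bf R}^{(1)}|D|)_j^{j'}(\ell)\|$ have cancelled, the stray $2j'$ from your split $j+j'\le 2j'+\langle j-j'\rangle$ remains unmatched; converting it into ``an extra $|D|$'' would leave a bound in terms of $|{\mathcal R}|D|^2|_s$, not $|{\mathcal R}|D||_s$, and the conclusion \eqref{PsiR} does not allow that. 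Smallness of $\e\gamma^{-1}$ is a constant and cannot beat a factor that grows like $j'$.

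What actually makes the estimate close is a cancellation you have not exploited. Decomposing ${\bf D}_j = m\,j\,{\bf I}_j + \widehat{\bf D}_j$ as in \eqref{stima bf widehat D alpha (omega)}, the \emph{difference} $M_L(\Delta_\omega m\cdot j{\bf I}_j) - M_R(\Delta_\omega m\cdot j'{\bf I}_{j'})$ acts on any $X\in{\mathcal L}({\bf E}_{j'},{\bf E}_j)$ as $\Delta_\omega m\,(jX - Xj') = \Delta_\omega m\,(j-j')X$, so the ``diagonal'' contribution is $(\Delta_\omega m)(j-j')\,{\bf I}_{j,j'}$ — a factor $\langle j-j'\rangle$, not $j+j'$. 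The remaining corrections $\widehat{\bf D}_j$ satisfy $\|\widehat{\bf D}_j\|^{\rm lip}\lessdot\e\gamma^{-1}j^{-1}\le\e\gamma^{-1}$, so they also contribute at most $\e\gamma^{-1}\langle j-j'\rangle$. Altogether
$\|\Delta_\omega{\bf A}^-(\ell,j,j')\|_{{\rm Op}(j,j')}\lessdot(\langle\ell\rangle+\e\gamma^{-1}\langle j-j'\rangle)|\omega_1-\omega_2|$,
which combines cleanly with the squared inverse bound $N^{2\tau}\gamma^{-2}\langle j-j'\rangle^{-2}$ to give $\|\Delta_\omega{\bf A}^{-}(\ell,j,j')^{-1}\|_{{\rm Op}(j,j')}\leq N^{2\tau+1}\gamma^{-2}|\omega_1-\omega_2|$ for $|\ell|\le N$ and $\e\gamma^{-1}\le1$. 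With this replacement your argument goes through. The same correction should be made in the $u$-difference estimate, where the analogous identity $\Delta_{12}{\bf A}^-=(\Delta_{12}m)(j-j'){\bf I}_{j,j'}+M_L(\Delta_{12}\widehat{\bf D}_j)-M_R(\Delta_{12}\widehat{\bf D}_{j'})$ together with \eqref{Delta12 rj} gives a factor $\langle j-j'\rangle$ rather than $j+j'$.
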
 
\begin{proof}
Recalling \eqref{cal R Hamiltoniano}, \eqref{Phi Psi}, The equation \eqref{Homological equation} is splitted in the two equations 
\begin{equation}\label{prima equazione omologica}
\ii \omega \cdot \partial_\vphi \Psi^{(1)} + [\Psi^{(1)}, {\mathcal D}^{(1)}] + \ii \Pi_N {\mathcal R}^{(1)} = \ii [{\mathcal R}^{(1)}]\,,
\end{equation}
\begin{equation}\label{seconda equazione omologica}
\ii \omega \cdot \partial_\vphi \Psi^{(2)} - ({\mathcal D}^{(1)} \Psi^{(2)} + \Psi^{(2)} \overline{\mathcal D}^{(1)}) + \ii \Pi_N {\mathcal R}^{(2)} = 0\,.
\end{equation}
Using the decomposition \eqref{notazione a blocchi}, the equations \eqref{prima equazione omologica}, \eqref{seconda equazione omologica} become,
for all $j, j' \in \N$, $\ell \in \Z^\nu$ such that $|\ell| \leq N$, 
\begin{equation}\label{equazione omologica blocco psi diagonale}
 \omega \cdot \ell ({\bf \Psi}^{(1)})_j^{j'}(\ell) + {\bf D}_j ({\bf \Psi}^{(1)})_j^{j'}(\ell) - ({\bf \Psi}^{(1)})_j^{j'}(\ell){\bf D}_{j'}    =\ii ({\bf R}^{(1)})_j^{j'}(\ell) - \ii [{\bf R}^{(1)}]_j^{j'}\,,
\end{equation}
\begin{equation}\label{equazione omologica blocco psi fuori diagonale}
 \omega \cdot \ell ({\bf \Psi}^{(2)})_j^{j'}(\ell) + {\bf D}_j({\bf\Psi}^{(2)})_j^{j'}(\ell) + ({\bf \Psi}^{(2)})_j^{j'}(\ell) \overline{\bf D}_{j'} =  \ii ({\bf R}^{(2)})_j^{j'}(\ell)\,.
\end{equation}
By the Definitions \eqref{bf A nu - (ell,alpha,beta)}, \eqref{bf A nu + (ell,alpha,beta)}, 
the equations \eqref{equazione omologica blocco psi diagonale}, \eqref{equazione omologica blocco psi fuori diagonale} can be written in the form 
$$
{\bf A}^{-}(\ell, j, j') ({\bf \Psi}^{(1)})_j^{j'}(\ell)  = \ii ({\bf R}^{(1)})_j^{j'}(\ell) - \ii [{\bf R}^{(1)}]_j^{j'}\,, \quad
$$
$$
 {\bf A}^{+}(\ell,j, j') ({\bf \Psi}^{(2)})_j^{j'}(\ell) = \ii ({\bf R}^{(2)})_j^{j'}(\ell)\,.
$$ 
Then, since $\omega \in \Omega_{\nu + 1}^{\gamma}$, we can define, $\forall (\ell, j, j') \in \Z^\nu \times \N \times \N\,,\quad  (\ell, j, j') \neq (0, j, j)\,, \quad |\ell| \leq N$,
\begin{equation}\label{soluzione equazione omologica 1}
({\bf \Psi}^{(1)})_j^{j'}(\ell) = \ii {\bf A}^{-}(\ell, j, j')^{- 1} ({\bf R}^{(1)})_j^{j'}(\ell)\,, \quad 
\end{equation}
with the normalization $({\bf \Psi}^{(1)})_j^j(0) = 0$, and $\forall (\ell, j, j') \in \Z^\nu \times \N \times \N\,, \quad |\ell| \leq N$,
\begin{equation}\label{soluzione equazione omologica 2}
({\bf \Psi}^{(2)})_j^{j'}(\ell) = \ii {\bf A}^{+}(\ell,j , j')^{- 1} ({\bf R}^{(2)})_j^{j'}(\ell)\,.
\end{equation}
Since 
$$
\|{\bf A}^{-}(\ell, j, j')^{- 1} \|_{{\rm Op}(j, j')} \leq \frac{\langle \ell \rangle^\tau}{\gamma \langle j-  j' \rangle}\,, \quad \|{\bf A}^{+}(\ell, j, j')^{- 1} \|_{{\rm Op}(j, j')} \leq \frac{\langle \ell \rangle^\tau}{\gamma \langle j +j' \rangle}\,,
$$
(recall \eqref{norma operatoriale su matrici alpha beta}) we get immediately that 
\begin{equation}\label{stima norma sup coefficienti Psi}
\| ({\bf \Psi}^{(i)})_j^{j'}(\ell)\| \leq N^\tau \gamma^{- 1} \| ({\bf R}^{(i)})_j^{j'}(\ell)\|\,, \quad i = 1, 2\,.
\end{equation}
Now, let $\omega_1, \omega_2  \in \Omega_{\nu + 1}^{\gamma}$. As a notation for any function $f = f(\omega)$, we write $\Delta_{\omega} f := f(\omega_1) - f(\omega_2)\,.$
 
 \noindent
 By \eqref{soluzione equazione omologica 1}, one has 
\begin{align}
\Delta_{\omega} ({\bf \Psi}^{(1)})_j^{j'}(\ell ) & = \ii \big\{ \Delta_\omega {\bf A}^{-}(\ell, j, j')^{- 1}  \big\} ({\bf R}^{(1)})_j^{j'}(\ell; \omega_1)   \nonumber\\
& \quad +\ii {\bf A}^{-}(\ell, j, j'; \omega_2)^{- 1}  \big\{\Delta_\omega ({\bf R}^{(1)})_j^{j'}(\ell ) \big\}\,. \label{Delta omega 12 coefficienti Psi}
\end{align}
 The second term in the above formula satisfies
\begin{equation}\label{stima primo pezzo Delta omega 12 coefficienti Psi}
\| {\bf A}^{-}(\ell, j, j'; \omega_2)^{- 1}  \big\{ \Delta_\omega ({\bf R}^{(1)})_j^{j'}(\ell) \big\} \| \leq N^\tau \gamma^{- 1} \| \Delta_\omega ({\bf R}^{(1)})_j^{j'}(\ell)  \|\,,
\end{equation}
hence it remains to estimate only the first term in \eqref{Delta omega 12 coefficienti Psi}. We have 
\begin{align}
& \Delta_\omega {\bf A}^{-}(\ell, j, j')^{- 1} \label{bernardino 0} \\
& = - {\bf A}^{-}(\ell, j, j' ; \omega_1)^{- 1}\big\{ \Delta_\omega {\bf A}^{-}(\ell, j, j' )  \big\}{\bf A}^{-}(\ell, j, j' ; \omega_2)^{- 1}\,,\nonumber
\end{align}
therefore 
\begin{equation}\label{norma A - omega 12 inverso}
\|\Delta_\omega  {\bf A}^{-}(\ell, j, j')^{- 1} \|_{{\rm Op}(j, j')}  \leq \frac{N^{2 \tau} }{\gamma^2 \langle j - j' \rangle^2} \| \Delta_\omega {\bf A}^{-}(\ell, j, j')  \|_{{\rm Op}(j, j')} \,.
\end{equation}
Moreover 
\begin{align}
\Delta_\omega {\bf A}^{- }(\ell, j, j') & = (\omega_1 - \omega_2) \cdot \ell \,\,{\bf I}_{j,  j'} + M_L(\Delta_\omega{\bf D}_j) - M_R(\Delta_\omega{\bf D}_{j'})\label{Delta omega 12 bf A -}
\end{align}
and using that, by \eqref{mu-j-nu}, \eqref{rjnu bounded}
\begin{equation}\label{stima bf widehat D alpha (omega)}
{\bf D}_j(\omega) =  m(\omega) \,j  {\bf I}_j + \widehat{\bf D}_j(\omega)\,, \quad \text{with} \quad \| \widehat{\bf D}_j \|^\Lipg \lessdot  \e j^{- 1}\,, \quad \forall  j\in \N\,,
\end{equation}
we get 
\begin{align}
M_L(\Delta_\omega {\bf D}_j) - M_R(\Delta_\omega{\bf D}_{j'}) & = (\Delta_\omega m)\, (j - j'){\bf I}_{j,  j'} + M_L(\Delta_\omega \widehat{\bf D}_j) - M_R(\Delta_\omega \widehat{\bf D}_{j'})\,. \nonumber
\end{align}
By \eqref{stime m}, \eqref{stima bf widehat D alpha (omega)} and using the property \eqref{norma operatoriale ML MR} one gets 
\begin{align}
\| M_L(\Delta_\omega {\bf D}_j) - M_R(\Delta_\omega{\bf D}_{j'}) \|_{{\rm Op}(j, j')} & {\lessdot}  \e \gamma^{- 1} \langle j - j' \rangle |\omega_1 - \omega_2 |\,. \label{ML MR Delta omega 12}
\end{align}
Recalling \eqref{Delta omega 12 bf A -}, we get the estimate 
$$
\| \Delta_\omega {\bf A}^{- }(\ell, j, j') \|_{{\rm Op}(j, j')}  \lessdot \Big(\langle \ell \rangle + \e \gamma^{- 1} \langle j - j' \rangle \Big) |\omega_1 - \omega_2|\,, 
$$
which implies, by \eqref{norma A - omega 12 inverso}, for $\e \gamma^{- 1} \leq 1$ that 
$$
\| \Delta_\omega  {\bf A}^{-}(\ell, j, j')^{- 1} \|_{{\rm Op}(j, j')}  \leq N^{2 \tau + 1} \gamma^{- 2} |\omega_1 - \omega_2| \,.
$$
By \eqref{Delta omega 12 coefficienti Psi}, \eqref{stima primo pezzo Delta omega 12 coefficienti Psi}, we get the estimate 
\begin{align}\label{stima Delta omega 12 coefficienti Psi finale}
 \| \Delta_\omega ({\bf \Psi}^{(1)})_j^{j'}(\ell)\| & \leq N^\tau \gamma^{- 1} \| \Delta_\omega ({\bf R}^{(1)})_j^{j'}(\ell)  \| \\
 & \quad + N^{2 \tau + 1} \gamma^{- 2} \| ({\bf R}^{(1)})_j^{j'}(\ell; \omega_1) \|\,. \nonumber
\end{align}
Thus \eqref{stima norma sup coefficienti Psi}, \eqref{stima Delta omega 12 coefficienti Psi finale} and the definition \eqref{decadimento Kirchoff} imply 
$$
|\Psi^{(1)} |D| |_s^\Lipg \lessdot N^{2 \tau + 1} \gamma^{- 1} |{\mathcal R}^{(1)} |D||_s^\Lipg\,.
$$
The estimate of $\Psi^{(2)}$ in terms of ${\mathcal R}^{(2)}$ follows by similar arguments and then \eqref{PsiR} follows. 

\noindent
Now we prove the estimate \eqref{Psi(u1) - Psi(u2)}. As a notation, we write $\Delta_{12} A:= A(u_1) - A(u_2)$, for any operator $A$ depending on $u$. 
We prove the estimate \eqref{Psi(u1) - Psi(u2)} for the operator $\Psi^{(1)}$. The estimate for $\Psi^{(2)}$ is analogous. By \eqref{soluzione equazione omologica 1}, for all $ j, j' \in \N$, $\ell \in \Z^\nu$, $(\ell, j, j') \neq (0,j , j)$, $|\ell| \leq N$ one has 
\begin{align}\label{australia - 1}
\Delta_{12} ({\bf \Psi}^{(1)})_j^{j'}(\ell) & = \ii \big\{ \Delta_{12} {\bf A}^{-}(\ell, j, j')^{- 1}  \big\} ({\bf R}^{(1)})_j^{j'}(\ell; u_1) \\
& \quad +  \ii {\bf A}^{-}(\ell, j, j'; u_2)^{- 1} \big\{\Delta_{1 2} ({\bf R}^{(1)})_{j}^{j'}(\ell) \big\}\,. \nonumber
\end{align}
Since $\omega \in \Omega_{\nu + 1}^{\gamma_1}({\bf u}_1) \cap \Omega_{\nu + 1}^{\gamma_2}({\bf u}_2)$ and $\gamma/2 \leq \gamma_1, \gamma_2 \leq 2 \gamma$, we have 
\begin{equation}\label{australia 0}
\|  {\bf A}^{-}(\ell, j, j'; u_2)^{- 1}  \big\{\Delta_{1 2} ({\bf R}^{(1)})_j^{j'}(\ell) \big\} \| \leq N^\tau \gamma^{- 1} \|  \Delta_{1 2} ({\bf R}^{(1)})_j^{j'}(\ell)  \|\,.
\end{equation}
Moreover, arguing as in \eqref{bernardino 0}, \eqref{norma A - omega 12 inverso} (replacing $\omega_1$ resp. $\omega_2$ by $u_1$ resp. $u_2$), one has 
\begin{equation}\label{australia 1}
\|\Delta_{1 2} {\bf A}^{-}(\ell, j, j')^{- 1} \|_{{\rm Op}(j, j')} \leq  \frac{N^{2 \tau}}{\gamma^2 \langle j - j' \rangle^2} \| \Delta_{1 2} {\bf A}^{-}(\ell, j, j')\|_{{\rm Op}(j, j')}\,.
\end{equation}
By the definition \eqref{bf A nu - (ell,alpha,beta)}, we get 
\begin{align}
& \Delta_{12} {\bf A}^{-}(\ell, j, j')  = M_L(\Delta_{12} {\bf D}_j) -M_R(\Delta_{1 2}{\bf D}_{j'}) \nonumber\\
& \stackrel{\eqref{stima bf widehat D alpha (omega)}}{=} (\Delta_{12}m) (j - j')\,{\bf I}_{j,  j'} + M_L(\Delta_{1 2} \widehat{\bf D}_j) - M_R(\Delta_{1 2} \widehat{\bf D}_{j'})\,, 
\end{align}
therefore by \eqref{stime m}, \eqref{norma operatoriale ML MR}, \eqref{Delta12 rj}
\begin{align}
\| \Delta_{12} {\bf A}^{-}(\ell, j, j') \|_{{\rm Op}(j, j')} & \lessdot  \e \langle j - j' \rangle \|u_1 - u_2 \|_{s_0 + \overline \sigma + \mathtt b}\,. \label{australia 2}
\end{align}
Then \eqref{australia 1}, \eqref{australia 2}, $\e \gamma^{- 1} \leq 1$ imply that 
\begin{align*}
& \| \big\{ \Delta_{12} {\bf A}^{-}(\ell, j, j')^{- 1}  \big\} ({\bf R}^{(1)})_j^{j'}(\ell; u_1)  \|   \lessdot N^{2 \tau} \gamma^{- 1} \| ({\bf R}^{(1)})_j^{j'}(\ell; u_1)\| \| u_1 - u_2 \|_{s_0 + \overline\sigma + \mathtt b}\, 
\end{align*}
and recalling \eqref{australia - 1}, \eqref{australia 0}, we obtain the estimate
$$
\| \Delta_{12} ({\bf \Psi}^{(1)})_j^{j'}(\ell)\| \lessdot  N^{2 \tau} \gamma^{- 1} \Big( \| ({\bf R}^{(1)})_j^{j'}(\ell; u_1) \| \| u_1 - u_2 \|_{s_0 + \overline\sigma + \mathtt b} + \| \Delta_{1 2} ({\bf R}^{(1)})_j^{j'}(\ell) \| \Big)\,.
$$
This last estimate imply the estimate \eqref{Psi(u1) - Psi(u2)} for $\Delta_{12}\Psi^{(1)}$, by using the definiton of the norm $| \cdot |_s$ in \eqref{decadimento Kirchoff}. The estimate for $\Delta_{12} \Psi^{(2)}$ follows by similar arguments and then the proof is concluded. 
 \end{proof}
By \eqref{quasi coniugio riducibilita}, \eqref{Homological equation}, \eqref{media di cal R}, we get 
\begin{equation}\label{cal L+}
{\mathcal L}_+ := \Phi^{- 1} {\mathcal L} \Phi = \omega \cdot \partial_\vphi {\mathbb I}_2 + {\mathcal D}_+ + {\mathcal R}_+\,,
\end{equation}
where 
\begin{align} 
& {\mathcal D}_+ := {\mathcal D} + [{\mathcal R}]\,, \nonumber\\
& {\mathcal R}_+ :=  (\Phi^{- 1} - {\mathbb I}_2) {\mathcal R} + \Phi^{- 1} \Big( \Pi_N^\bot {\mathcal R} + \omega \cdot \partial_\vphi \Phi_{\geq 2} + [{\mathcal D}, \Phi_{\geq 2}] + {\mathcal R}(\Phi - {\mathbb I}_2) \Big)\,. \nonumber
\end{align}

\begin{lemma}[{\bf The new $2 \times 2$ block-diagonal part}]\label{the new diagonal part}
The new block-diagonal part is 
$$
{\mathcal D}_+ := {\mathcal D} + [{\mathcal R}] = \ii \begin{pmatrix}
 {\mathcal D}_+^{(1)} & 0 \\
0 & -  \overline{\mathcal D}_+^{(1)}
\end{pmatrix}\,,\qquad {\mathcal D}_+^{(1)} := {\mathcal D}^{(1)} + [{\mathcal R}^{(1)}] = {\rm diag}_{j \in \N } {\bf D}_j^+\,,
$$
where
$$
 {\bf D}_j^+ := {\bf D}_j + ({\bf R}^{(1)})_j^j(0) = m\, j {\bf I}_j  + \widehat{\bf D}_j + ({\bf R}^{(1)})_j^j(0) = m\,j {\bf I}_j + \widehat{\bf D}_j^+\,,\quad 
$$
\begin{equation}\label{espansione blocco D+ alpha}
\widehat{\bf D}_j^+ := \widehat{\bf D}_j + ({\bf R}^{(1)})_j^j(0)\,, \qquad \forall j \in \N\,,
\end{equation}
and 
\begin{equation}\label{mu + - mu}
 \|{\bf D}_j^+ - {\bf D}_j\|^\Lipg \lessdot  j^{- 1} |{\mathcal R} |D||_{s_0}^\Lipg\,.
\end{equation}
Moreover, if ${\bf u}_i(\omega) = (u_i(\omega), \psi_i(\omega))$, $i = 1, 2$ are families of Sobolev functions, for all $\omega \in \Omega_\nu^{\gamma_1}({\bf u}_1) \cap \Omega_\nu^{\gamma_2}({\bf u}_2)$, $\gamma /2 \leq \gamma_1, \gamma_2 \leq 2 \gamma$, for all $j \in \N$,  
\begin{equation}\label{Delta 12 mu + - mu}
\| \big(\widehat{\bf D}_j^+(u_1) - \widehat{\bf D}_j^+(u_2) \big) - \big( \widehat{\bf D}_j(u_1) - \widehat{\bf D}_j(u_2) \big) \|_{L^2} \lessdot |{\mathcal R}(u_1) - {\mathcal R}(u_2)|_{s_0}\,.
\end{equation}

\end{lemma}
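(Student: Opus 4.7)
The plan is to unfold the definitions and then read off the two estimates from the elementary bounds of Lemma \ref{elementarissimo decay}, exploiting the fact that $\mathcal{R}$ is $1$-smoothing (i.e.\ $\mathcal R |D|$ has finite block-decay norm).

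First I would verify the structural statement. Recalling the shape \eqref{diagonal operator KAM step} of $\mathcal{D}$ and the definition \eqref{media di cal R} of $[\mathcal{R}]$, both are $2\times 2$ block-diagonal in the index $j\in\N$, with $(1,2)$- and $(2,1)$-blocks equal to zero. Summing preserves this structure, so $\mathcal{D}_+ := \mathcal{D} + [\mathcal{R}]$ has the claimed form with $\mathcal{D}_+^{(1)} = \mathcal{D}^{(1)} + [\mathcal{R}^{(1)}]$, hence $\mathbf{D}_j^+ = \mathbf{D}_j + (\mathbf{R}^{(1)})_j^j(0)$ and, splitting off $m j\,\mathbf{I}_j$, one obtains the identity \eqref{espansione blocco D+ alpha} for $\widehat{\mathbf{D}}_j^+$. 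Self-adjointness of $\widehat{\mathbf{D}}_j^+$ is inherited from self-adjointness of $\widehat{\mathbf{D}}_j$ and of $(\mathbf{R}^{(1)})_j^j(0)$ (the latter because $\mathcal{R}^{(1)} = (\mathcal{R}^{(1)})^*$, cf.\ \eqref{cal R Hamiltoniano}).

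Second, for \eqref{mu + - mu} the key observation is that the $2\times 2$ block representation of the operator $\mathcal{R}^{(1)}|D|$ satisfies $(\mathbf{R}^{(1)}|D|)_j^j(0) = j\, (\mathbf{R}^{(1)})_j^j(0)$, since $|D|$ acts on $\mathbf{E}_j$ as multiplication by $j$. Applying Lemma \ref{elementarissimo decay}-$(iii)$-$(iv)$ to $\mathcal{R}^{(1)}|D|$ gives
\[
\sup_{j \in \N} j\,\|(\mathbf{R}^{(1)})_j^j(0)\|^{\Lipg} \;=\; \sup_{j \in \N}\|(\mathbf{R}^{(1)}|D|)_j^j(0)\|^{\Lipg} \;\lessdot\; |\mathcal{R}^{(1)}|D||_{s_0}^{\Lipg} \;\leq\; |\mathcal{R}|D||_{s_0}^{\Lipg},
\]
the last inequality coming from the matricial norm definition \eqref{norma decadimento operatore matriciale}. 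Dividing by $j$ and combining with $\mathbf{D}_j^+ - \mathbf{D}_j = (\mathbf{R}^{(1)})_j^j(0)$ yields \eqref{mu + - mu}.

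Third, for the Lipschitz-in-$u$ estimate \eqref{Delta 12 mu + - mu}, the $\widehat{\mathbf{D}}_j$-terms cancel thanks to \eqref{espansione blocco D+ alpha}, leaving the telescoping identity
\[
\big(\widehat{\mathbf{D}}_j^+(u_1) - \widehat{\mathbf{D}}_j^+(u_2)\big) - \big(\widehat{\mathbf{D}}_j(u_1) - \widehat{\mathbf{D}}_j(u_2)\big) = (\mathbf{R}^{(1)}(u_1))_j^j(0) - (\mathbf{R}^{(1)}(u_2))_j^j(0).
\]
Applying Lemma \ref{elementarissimo decay}-$(iii)$ (sup version) to the operator $\mathcal{R}(u_1) - \mathcal{R}(u_2)$ bounds the right-hand side by $|\mathcal{R}(u_1) - \mathcal{R}(u_2)|_{s_0}$, as required. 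There is no real obstacle here beyond keeping track of the block formalism; the only nontrivial point is extracting the factor $j^{-1}$ in \eqref{mu + - mu}, which is exactly why the regularization procedure of Section \ref{riduzione linearizzato} was carried out down to order $O(|D|^{-1})$.
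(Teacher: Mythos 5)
Your proposal is correct and follows essentially the same route as the paper: you verify the block-diagonal structure and self-adjointness from the definitions, reduce \eqref{mu + - mu} to the observation that $(\mathbf{R}^{(1)}|D|)_j^j(0)=j\,(\mathbf{R}^{(1)})_j^j(0)$ together with Lemma~\ref{elementarissimo decay}-$(iii)$--$(iv)$, and obtain \eqref{Delta 12 mu + - mu} by the same telescoping identity and Lemma~\ref{elementarissimo decay}-$(iii)$. The paper merely writes the chain of inequalities more compactly (inserting $\sup_k\|(\mathbf{R}^{(1)})_k^k(0)k\|$ as an intermediate step), but the mechanism is identical.
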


\begin{proof}
Notice that, since ${\mathcal R}^{(1)}(\vphi)$ is selfadjoint, the operators $({\bf R}^{(1)})_j^j(0) : {\bf E}_j \to {\bf E}_j$ are self-adjoint for all $j \in \N$, i.e. $({\bf R}^{(1)})_j^j(0) \in {\mathcal S}({\bf E}_j)$. Since ${\bf D}_j, \widehat{\bf D}_j$ are self-adjoint, we get that ${\bf D}_j^+, \widehat{\bf D}_j^+$ are self-adjoint for all $j \in \N$. Furthermore, by Lemma \ref{elementarissimo decay}
\begin{align*}
 \|{\bf D}_j^+ - {\bf D}_j\|^\Lipg & = \|\widehat{\bf D}_j^+ - \widehat{\bf D}_j\|^\Lipg = \| ({\bf R}^{(1)})_j^j(0)\|^\Lipg \\
 & \leq  j^{- 1} {\rm sup }_{k \in \N}\|  ({\bf R}^{(1)})_k^k(0) k \|^\Lipg  \leq j^{- 1} |{\mathcal R} |D||_{s_0}^\Lipg\,,
\end{align*}
which is the estimate \eqref{mu + - mu}. 

\noindent
Since, by \eqref{espansione blocco D+ alpha} we have $\big(\widehat{\bf D}_j^+(u_1) - \widehat{\bf D}_j^+(u_2) \big) - \big( \widehat{\bf D}_j(u_1) - \widehat{\bf D}_j(u_2) \big) = ({\bf R}^{(1)})_j^j(0; u_1) - ({\bf R}^{(1)})_j^j(0 ; u_2)\,,$
the estimate \eqref{Delta 12 mu + - mu} follows since, applying again Lemma \ref{elementarissimo decay}, for all $j \in \N$  
$$
\| ({\bf R}^{(1)})_j^j(0; u_1) - ({\bf R}^{(1)})_j^j(0 ; u_2) \| \lessdot |{\mathcal R}^{(1)}(u_1) - {\mathcal R}^{(1)}(u_2)|_{s_0}\,.
$$
\end{proof}
\subsection{The iteration}
Let $\nu \geq 0$ and let us suppose that $({\bf Si})_\nu$ are true. We prove $({\bf Si})_{\nu + 1}$. To simplify notations, in this proof we write $| \cdot |_s$ for $| \cdot |_s^\Lipg$.

{\sc Proof of $({\bf S1})_{\nu + 1}$}. Since the self-adjoint $2 \times 2$ blocks ${\bf D}_j^\nu \in {\mathcal S}({\bf E}_j)$ are defined on $\Omega_\nu^\gamma$, the set $\Omega_{\nu + 1}^\gamma$ is well-defined and by Lemma \ref{homologica equation}, the following estimates hold on $\Omega_{\nu + 1}^\gamma$
\begin{equation}\label{Psi nu norma alta}
| \Psi_\nu | D | |_s \leq_s  N_\nu^{2 \tau + 1} \gamma^{- 1} |{\mathcal R}_\nu | D ||_s 
\stackrel{\eqref{Rsb}}{\leq_s} N_\nu^{2 \tau + 1} N_{\nu - 1}^{- \mathtt a} \gamma^{- 1} | {\mathcal R}_0 |D | |_{s + {\mathtt b}}  \,,
\end{equation}
and in particular, by \eqref{stima R0 s0 + mathtt b}, \eqref{piccolezza1}, \eqref{definizione alpha}, \eqref{defN}, taking $\delta_0$ small enough, 
\begin{equation}\label{Psi nu norma bassa}
| \Psi_\nu |D | |_{s_0} \leq  1\,.
\end{equation}
By \eqref{Psi nu norma bassa}, we can apply Lemma \ref{lem:inverti} to the map $\Phi_\nu^{\pm 1} := {\rm exp}(\pm \Psi_\nu)$ and using also Lemma \ref{elementarissimo decay}-$(ii)$ we obtain that
\begin{equation}\label{Phi nu bassa alta}
 | \Phi_\nu^{\pm 1} - {\mathbb I}_2  |_s \leq | (\Phi_\nu^{\pm 1} - {\mathbb I}_2) |D|  |_s \leq_s | \Psi_\nu |D| |_s\,. 
\end{equation}

By \eqref{cal L+} we get  ${\mathcal L}_{\nu + 1} := \Phi_\nu^{- 1} {\mathcal L}_\nu \Phi_\nu = \omega \cdot \partial_\vphi {\mathbb I}_2 + {\mathcal D}_{\nu + 1} + {\mathcal R}_{\nu + 1}$, where ${\mathcal D}_{\nu + 1} := {\mathcal D}_\nu + [{\mathcal R}_\nu]$ and 
\begin{align}
 {\mathcal R}_{\nu + 1} & := (\Phi_\nu^{- 1} - {\mathbb I}_2) [{\mathcal R}_\nu] \nonumber\\
 &+  \Phi_\nu^{- 1} \Big( \Pi_{N_\nu}^\bot {\mathcal R}_\nu + \omega \cdot \partial_\vphi \Psi_{\nu , \geq 2} + [{\mathcal D}_\nu, \Psi_{\nu , \geq 2}] +  {\mathcal R}_\nu (\Phi_\nu - {\mathbb I}_2)  \Big)\,. \label{cal L nu + 1}
\end{align}
Note that, since ${\mathcal R}_\nu$ is defined on $\Omega_\nu^\gamma$ and $\Psi_\nu$ is defined on $\Omega_{\nu + 1}^\gamma$, the remainder ${\mathcal R}_{\nu + 1}$ is defined on $\Omega_{\nu + 1}^\gamma$ too. 
Since the remainder ${\mathcal R}_\nu : {\bf H}^s_0(\T^{\nu + 1}) \to {\bf H}^s_0(\T^{\nu + 1})$ is Hamiltonian, the map $\Psi_\nu : {\bf H}^s_0(\T^{\nu + 1}) \to {\bf H}^s_0(\T^{\nu + 1})$ is Hamiltonian, then $\Phi_\nu : {\bf H}^s_0(\T^{\nu + 1}) \to {\bf H}^s_0(\T^{\nu + 1})$ is symplectic and the operator ${\mathcal L}_{\nu + 1} : {\bf H}^s_0(\T^{\nu + 1}) \to {\bf H}^{s - 1}_0(\T^{\nu + 1})$ is still Hamiltonian.  

Now let us prove the estimates \eqref{Rsb} for ${\mathcal R}_{\nu + 1}$.  Applying Lemmata \ref{elementarissimo decay}, \ref{interpolazione decadimento Kirchoff} and the estimates  \eqref{Psi nu norma bassa}, \eqref{Psi nu norma alta}, \eqref{Phi nu bassa alta}, we get 
\begin{align}
& |(\Phi_\nu^{- 1} - {\mathbb I}_2) [{\mathcal R}_\nu] |D||_s\,,\, |\Phi_\nu^{- 1}{\mathcal R}_\nu (\Phi_\nu - {\mathbb I}_2) |D| |_s \nonumber \\
&    \leq_s N_\nu^{2 \tau + 1}\gamma^{- 1}  |{\mathcal R}_\nu |D| |_s |{\mathcal R}_\nu |D||_{s_0}\, \label{primo pezzo cal R nu + 1}
\end{align}
and 
\begin{equation}\label{secondo pezzo cal R nu + 1}
|\Phi_{\nu}^{- 1} \Pi_{N_\nu}^\bot {\mathcal R}_\nu |D||_s \leq_s |\Pi_{N_\nu} {\mathcal R}_\nu |D||_s + N_\nu^{2 \tau + 1}\gamma^{- 1}  |{\mathcal R}_\nu |D| |_s |{\mathcal R}_\nu |D||_{s_0}\,.
\end{equation}
Then, it remains to estimate the term $\Phi_\nu^{- 1} \big( \omega \cdot \partial_\vphi \Phi_{\nu , \geq 2} + [{\mathcal D}_\nu, \Phi_{\nu , \geq 2}] \big) |D|$ in \eqref{cal L nu + 1}. A direct calculation shows that for all $n \geq 2$
\begin{align}
\omega \cdot \partial_\vphi (\Psi_\nu^n) + [{\mathcal D}_\nu, \Psi_\nu^n] & = \sum_{i + k = n - 1} \Psi_\nu^i (\omega \cdot \partial_\vphi \Psi_\nu + [{\mathcal D}_\nu, \Psi_\nu]) \Psi_\nu^k \nonumber\\
&  \stackrel{\eqref{Homological equation}}{=} \sum_{i + k = n - 1} \Psi_\nu^i ([{\mathcal R}_\nu] - {\mathcal R}_\nu) \Psi_\nu^k\,, \label{vesuvio}
\end{align}
therefore using \eqref{Psi nu norma bassa}, \eqref{Psi nu norma alta},  \eqref{Mnab} and that $|\Psi_\nu|_s \stackrel{Lemma\,\ref{elementarissimo decay}-(ii)}{\leq} |\Psi_\nu|D||_s$ we get that for all $n \geq 2$ 
\begin{align}
& \Big| \Big(\omega \cdot \partial_\vphi (\Psi_\nu^n) + [{\mathcal D}_\nu, \Psi_\nu^n] \Big) |D | \Big|_s \nonumber\\
&  \leq  n^2 C(s)^n \Big(( |\Psi_\nu |D||_{s_0})^{n - 1} |{\mathcal R}_\nu |D||_s + (|\Psi_\nu |D||_{s_0})^{n - 2} |\Psi_\nu |D||_s |{\mathcal R}_\nu |D||_{s_0}  \Big)  \nonumber\\
& \leq 2 n^2 C(s)^n N_\nu^{2 \tau + 1} \gamma^{- 1} |{\mathcal R}_\nu |D||_s |{\mathcal R}_\nu |D||_{s_0}\,, \label{peppino 0}
\end{align}
for some constant $C(s) > 0$. Thus 
\begin{align}
& \Big| \Big(\omega \cdot \partial_\vphi \Psi_{\nu , \geq 2} + [{\mathcal D}_\nu, \Psi_{\nu , \geq 2}] \Big)  |D| \Big|_s  \leq \sum_{n \geq 2} \frac{1}{n !} \Big| \Big(\omega \cdot \partial_\vphi (\Psi_\nu^n) + [{\mathcal D}_\nu, \Psi_\nu^n] \Big) |D |  \Big|_s \nonumber\\
& \stackrel{\eqref{peppino 0}}{\leq} N_\nu^{2 \tau + 1} \gamma^{- 1} |{\mathcal R}_\nu |D||_s |{\mathcal R}_\nu |D||_{s_0} 2 \sum_{n \geq 2}\frac{C(s)^n n^2}{n !} \nonumber\\
& \leq_s N_\nu^{2 \tau + 1} \gamma^{- 1} |{\mathcal R}_\nu |D||_s |{\mathcal R}_\nu |D||_{s_0}\,. \label{peppino 1}
\end{align}
The estimates \eqref{Phi nu bassa alta}, \eqref{peppino 1} and Lemma \ref{interpolazione decadimento Kirchoff} imply that  
\begin{align}
& \Big|\Phi_\nu^{- 1} \big( \omega \cdot \partial_\vphi \Psi_{\nu , \geq 2} + [{\mathcal D}_\nu, \Psi_{\nu , \geq 2}] \big) |D|  \Big|_s \nonumber\\
& \leq_s N_\nu^{2 \tau + 1} \gamma^{- 1} |{\mathcal R}_\nu |D||_{s} |{\mathcal R}_\nu |D||_{s_0}\,. \label{quarto pezzo cal R nu + 1}
\end{align}
Collecting the estimates \eqref{primo pezzo cal R nu + 1}-\eqref{quarto pezzo cal R nu + 1} we obtain the estimate
\begin{equation}\label{stima induttiva cal R nu 1}
| {\mathcal R}_{\nu + 1} |D| |_s \leq_s | \Pi_{N_\nu} {\mathcal R}_\nu |D | |_s + N_\nu^{2 \tau + 1} \gamma^{- 1} | {\mathcal R}_\nu |D| |_s | {\mathcal R}_\nu |D| |_{s_0}\,,
\end{equation}
which implies (using the smoothing property \eqref{smoothingN} and \eqref{piccolezza1}, \eqref{Rsb}  )
\begin{equation}\label{stima induttiva cal R nu 2 bassa}
| {\mathcal R}_{\nu + 1} |D | |_s \leq_s N_\nu^{- \mathtt b } | {\mathcal R}_\nu |D | |_{s + \mathtt b} + N_\nu^{2 \tau + 1} \gamma^{- 1} | {\mathcal R}_\nu |D | |_s | {\mathcal R}_\nu |D | |_{s_0}\,,
\end{equation}
\begin{equation}\label{stima induttiva cal R nu 2 alta}
| {\mathcal R}_{\nu + 1} |D | |_{s + \mathtt b} \leq C(s + \mathtt b) | {\mathcal R}_\nu |D | |_{s + \mathtt b}\,.
\end{equation}
Hence
$$
| {\mathcal R}_{\nu + 1} | D | |_{s + \mathtt b} \ \stackrel{\eqref{stima induttiva cal R nu 2 alta},\eqref{Rsb}}{\leq} C(s + \mathtt b) | {\mathcal R}_0 |D| |_{s + \mathtt b} N_{\nu - 1} \leq |{\mathcal R}_0 |D| |_{s + \mathtt b} N_\nu\,,
$$
for $N_0 := N_0(s, \mathtt b) > 0$ large enough and then the second inequality in \eqref{Rsb} for ${\mathcal R}_{\nu + 1}$ has been proved. Let us prove the first inequality in \eqref{Rsb} at the step $\nu + 1$. We have 
\begin{align*}
| {\mathcal R}_{\nu + 1}|D | |_s & \stackrel{\eqref{stima induttiva cal R nu 2 alta}, \eqref{Rsb}}{\leq_s} N_\nu^{- \mathtt b} N_{\nu - 1} | {\mathcal R}_0 |D | |_{s + \mathtt b} + N_\nu^{2 \tau + 1}  N_{\nu - 1}^{- 2 \mathtt a} \gamma^{- 1} | {\mathcal R}_0|D| |_{s_0 + \mathtt b} | {\mathcal R}_0 |D | |_{s + \mathtt b} \\
& \leq | {\mathcal R}_0 |D | |_{s + \mathtt b} N_\nu^{- \mathtt a}\,,
\end{align*}
provided 
$$
N_{\nu}^{\mathtt b - \mathtt a} N_{\nu - 1}^{- 1} \geq 2 C(s)\,,\quad \gamma^{- 1} | {\mathcal R}_0 |D | |_{s_0 + \mathtt b} \leq \frac{ N_{\nu - 1}^{2 \mathtt a} N_\nu^{- {\mathtt a} - 2 \tau - 1}}{2 C(s)}\,,
$$
which are verified by \eqref{defN}, \eqref{definizione alpha}, \eqref{stima R0 s0 + mathtt b} and \eqref{piccolezza1}, taking $N_0$ large enough and $\delta_0$ small enough. 

\noindent
The estimate \eqref{rjnu bounded} for $\widehat{\bf D}_j^{\nu +1}$ follows by a telescopic argument, since $\widehat{\bf D}_j^{\nu + 1} = \sum_{k = 0}^\nu  \widehat{\bf D}_j^{k + 1} - \widehat{\bf D}_j^k$ and applying the estimates \eqref{mu + - mu}, \eqref{Rsb}, \eqref{stima R0 s0 + mathtt b}.

\noindent
{\sc Proof of $({\bf S2})_{\nu + 1}$} We now construct a Lipschitz extension of the function $\omega \in \Omega_{\nu + 1}^\gamma \mapsto {\bf D}_j^{\nu + 1}(\omega ) \in {\mathcal S}({\bf E}_j)$, for all $j \in \N$. We apply Lemma M.5 in \cite{KP}. Note that the space ${\mathcal S}({\bf E}_j)$, defined in \eqref{cal S E alpha}, is a Hilbert subspace of ${\mathcal L}({\bf E}_j)$ equipped by the scalar product defined in \eqref{prodotto scalare traccia matrici}. By the inductive hyphothesis, there exists a Lipschitz function $\widetilde{\bf D}^\nu_j : \Omega_o \to {\mathcal S}({\bf E}_j)$, satisfying $\widetilde{\bf D}_j^\nu(\omega) = {\bf D}_j^\nu(\omega)$, for all $\omega \in \Omega_\nu^\gamma$. Now we construct a self-adjoint extension of the self-adjoint operator ${\bf D}_j^{\nu + 1} = {\bf D}_j^\nu + [{\bf D}]_j^\nu $, where $ [{\bf D}]_j^\nu := ({\bf R}_\nu^{(1)})_j^j(0)$. 
By \eqref{mu + - mu}, for all $j \in \N$, one has that 
\begin{align*}
 \| [{\bf D}]_j^\nu \|^\Lipg & = \|{\bf D}_j^{\nu + 1} - {\bf D}_j^\nu  \|^\Lipg \lessdot j^{- 1} |{\mathcal R}_\nu |D| |_{s_0}^\Lipg  \stackrel{\eqref{Rsb}}{\lessdot}  N_{\nu - 1}^{- \mathtt a}   |{\mathcal R}_0 |D||_{s_0 + \mathtt b} j^{- 1}\\
 & \stackrel{\eqref{stima R0 s0 + mathtt b}}{\lessdot} N_{\nu - 1}^{- \mathtt a} \e j^{- 1}
\end{align*}
and then by Lemma M.5 in \cite{KP} there exists a Lipschitz extension $[\widetilde{\bf D}]_j^\nu : \Omega_o \to {\mathcal S}({\bf E}_j)$  of $[{\bf D}]_j^\nu : \Omega_\nu^\gamma \to {\mathcal S}({\bf E}_j)$ still satisfying the above estimate. Therefore we define $\widetilde{\bf D}_j^{\nu + 1} := \widetilde{\bf D}_j^\nu + [\widetilde{\bf D}]_j^\nu$. 

\noindent
{\sc Proof of ${\bf (S3)}_{\nu + 1}$}. As a notation we write $\Delta_{1 2} A := A(u_1) - A(u_2)$, for any operator $A = A(u)$ depending on $u$. 
Now we will estimate the operator $\Delta_{12} {\mathcal R}_{\nu + 1}$, where ${\mathcal R}_{\nu + 1}$ is defined in \eqref{cal L nu + 1}. Moreover, we define 
$$
R_\nu(s) := {\rm max}\{ |{\mathcal R}_\nu(u_1)|_{s}, |{\mathcal R}_\nu(u_2)|_{s} \}\,, \qquad \forall s \in [s_0, s_0 + \mathtt b]\,. 
$$
Note that, by \eqref{Rsb} and \eqref{stima R0 s0 + mathtt b} and by Lemma \ref{elementarissimo decay}-$(ii)$, one gets 
\begin{equation}\label{R(s0) R(s0 + mathtt b)}
R_\nu(s_0) \lessdot \e N_{\nu - 1}^{- \mathtt a}\,, \qquad R_\nu(s_0 + \mathtt b) \lessdot \e N_{\nu - 1}\,.
\end{equation}
By \eqref{PsiR}, \eqref{Psi(u1) - Psi(u2)}, \eqref{R(s0) R(s0 + mathtt b)}, \eqref{derivate-R-nu}, \eqref{derivate-R-nu1}, Lemma\,\ref{elementarissimo decay}-$(ii)$, one has 
\begin{align}
|\Delta_{12} \Psi_\nu|_{s_0} & {\lessdot}  N_\nu^{2 \tau + 1} N_{\nu - 1}^{- \mathtt a} \e \gamma^{- 1} \| u_1 - u_2 \|_{s_0 + \overline \sigma + \mathtt b}\,,\label{delta 12 Psi nu s0} \\
|\Delta_{12} \Psi_\nu|_{s_0 + \mathtt b} &\lessdot N_\nu^{2 \tau + 1} N_{\nu - 1} \e \gamma^{- 1} \| u_1 - u_2 \|_{s_0 + \overline\sigma + \mathtt b}\,,\label{delta 12 Psi nu s0 + beta} \\
|\Psi_\nu(u_1)|_{s_0}\,, |\Psi_\nu(u_2)|_{s_0} &  \lessdot N_\nu^{2 \tau + 1} N_{\nu - 1}^{- \mathtt a} \e \gamma^{- 1}\,, \label{Psi nu u1 u2 s0} \\
|\Psi_\nu(u_1)|_{s_0 + \mathtt b}\,, |\Psi_\nu(u_2)|_{s_0 + \mathtt b} &  \lessdot N_\nu^{2 \tau + 1} N_{\nu - 1} \e \gamma^{- 1}\,. \label{Psi nu u1 u2 s0 + mathtt b}
\end{align}
By the estimates \eqref{Phi nu bassa alta} (applied to $\Phi_\nu = \Phi_\nu(u_i)$, $i = 1, 2$ ), \eqref{Psi nu u1 u2 s0}-\eqref{Psi nu u1 u2 s0 + mathtt b} and using also \eqref{derivata-inversa-Phi}, one gets
\begin{align}
& |\Phi_\nu^{\pm 1}(u_i) - {\mathbb I}_2|_{s_0} \lessdot N_\nu^{2 \tau + 1} N_{\nu - 1}^{- \mathtt a} \e \gamma^{- 1}\,, \label{Phi (u1 u2) - I2} \\
&  |\Phi_\nu^{\pm 1}(u_i) - {\mathbb I}_2|_{s_0 + \mathtt b} \lessdot N_\nu^{2 \tau + 1} N_{\nu - 1} \e \gamma^{- 1}\,, \label{Phi (u1 u2) - I21} \\
& |\Delta_{12} \Phi_\nu^{\pm 1} |_{s_0}  \lessdot  N_\nu^{2 \tau + 1} N_{\nu - 1}^{- \mathtt a} \e \gamma^{- 1} \| u_1 - u_2 \|_{s_0 + \overline \sigma + \mathtt b}\,,  \\
& |\Delta_{12} \Phi_\nu^{\pm 1} |_{s_0 + \mathtt b}  \lessdot   N_\nu^{2 \tau + 1} N_{\nu - 1} \e \gamma^{- 1}  \| u_1 - u_2\|_{s_0 + \overline \sigma + \mathtt b}\,. \label{Delta 12 Phi nu norma bassa}
\end{align}
We estimate separately the terms of $\Delta_{12} {\mathcal R}_{\nu + 1}$, where, by \eqref{cal L nu + 1} 
\begin{equation}\label{R nu + 1 differenze}
{\mathcal R}_{\nu + 1} = (\Phi_\nu^{- 1} - {\mathbb I}_2) [{\mathcal R}_\nu] +  \Phi_\nu^{- 1} {\mathcal H}_\nu\,, \qquad   
\end{equation}
$$
{\mathcal H}_\nu := \Pi_{N_\nu}^\bot {\mathcal R}_\nu + \omega \cdot \partial_\vphi \Psi_{\nu , \geq 2} + [{\mathcal D}_\nu, \Psi_{\nu , \geq 2}] +  {\mathcal R}_\nu (\Phi_\nu - {\mathbb I}_2)\,.
$$
In the following we will use that by \eqref{defN}, \eqref{definizione alpha}, \eqref{piccolezza1} (choosing $\delta_0$ small enough) 
\begin{equation}\label{condizione u1 - u2 stima delta 12 Phi}
N_\nu^{2 \tau + 1} N_{\nu - 1}^{- \mathtt a} \e \gamma^{- 1} \leq 1\,, \quad \forall \nu \geq 0\,.
\end{equation}
 Lemma \ref{elementarissimo decay}-$(iii)$, Lemma \ref{interpolazione decadimento Kirchoff} and the estimates \eqref{Phi (u1 u2) - I2}-\eqref{Delta 12 Phi nu norma bassa}, \eqref{R(s0) R(s0 + mathtt b)}, \eqref{derivate-R-nu}, \eqref{derivate-R-nu1}, \eqref{condizione u1 - u2 stima delta 12 Phi} imply that 
 \begin{align}\label{stima secondo pezzo delta 12 R nu + 1 s0}
& |\Delta_{12} \big\{ (\Phi_\nu^{- 1} - {\mathbb I}_2) [{\mathcal R}_\nu] \big\}|_{s_0}\,,\, |\Delta_{12}\big\{ {\mathcal R}_\nu (\Phi_\nu - {\mathbb I}_2) \big\} |_{s_0}  \\
& \lessdot N_\nu^{2 \tau + 1} N_{\nu - 1}^{- 2 \mathtt a} \e^2 \gamma^{- 1} \|u_1 - u_2 \|_{s_0 +\overline \sigma + \mathtt b}\,, \nonumber\\
& \quad |\Delta_{12} \big\{ (\Phi_\nu^{- 1} - {\mathbb I}_2) [{\mathcal R}_\nu] \big\}|_{s_0 + \mathtt b}\,,\,  |\Delta_{12}\big\{ {\mathcal R}_\nu (\Phi_\nu - {\mathbb I}_2) \big\} |_{s_0 + \mathtt b} \label{stima secondo pezzo delta 12 R nu + 1 s0 + beta}\\
& \lessdot N_{\nu - 1} \e \| u_1 - u_2 \|_{s_0 + \overline \sigma + \mathtt b}\,,\nonumber
\end{align}
 Moreover by \eqref{smoothingN}, \eqref{derivate-R-nu}, \eqref{derivate-R-nu1} one gets 
 \begin{align}
 |\Pi_{N_\nu}^\bot\Delta_{12} {\mathcal R}_\nu|_{s_0} & \lessdot N_{\nu}^{- \mathtt b} N_{\nu - 1} \e \|  u_1 - u_2\|_{s_0 + \overline \sigma + \mathtt b}\,, \label{Pi N Delta 12 R nu bassa} \\
 |\Pi_{N_\nu}^\bot\Delta_{12} {\mathcal R}_\nu|_{s_0 + \mathtt b} & \lessdot  N_{\nu - 1} \e \|  u_1 - u_2\|_{s_0 + \overline \sigma + \mathtt b}\,, \label{Pi N Delta 12 R nu alta} 
 \end{align}
It remains to estimate only the term $\Delta_{12} \big\{ \omega \cdot \partial_\vphi \Phi_{\nu, \geq 2} + [{\mathcal D}_\nu, \Phi_{\nu, \geq 2}] \big\}$, where we recall that $\Phi_{\nu, \geq 2} = \sum_{n \geq 2} \frac{\Psi_\nu^n}{n !}$.
By \eqref{vesuvio}, for all $n \geq 2$ we have 
\begin{equation}\label{alfredo}
\Delta_{12} \big\{ \omega \cdot \partial_\vphi \Psi_\nu^n + [{\mathcal D}_\nu, \Psi_\nu^n] \big\} = \sum_{i + k = n - 1} \Delta_{12}\big\{  \Psi_\nu^i ([{\mathcal R}_\nu] - {\mathcal R}_\nu) \Psi_\nu^k \big\}\,.
\end{equation}
Iterating the interpolation estimate of Lemma \ref{interpolazione decadimento Kirchoff} and using \eqref{derivate-R-nu}, \eqref{derivate-R-nu1}, \eqref{R(s0) R(s0 + mathtt b)}, \eqref{delta 12 Psi nu s0}-\eqref{Psi nu u1 u2 s0 + mathtt b}, we have that for all $i + k = n - 1$, 
\begin{align}
& |\Delta_{12}\big\{  \Psi_\nu^i ([{\mathcal R}_\nu] - {\mathcal R}_\nu) \Psi_\nu^k \big\} |_{s_0} \nonumber\\
&  \leq n C(s_0)^n \Big( N_\nu^{2 \tau + 1} N_{\nu - 1}^{- \mathtt a} \e \gamma^{- 1}\Big)^{n - 2} N_\nu^{2 \tau + 1} N_{\nu - 1}^{- 2 \mathtt a}\e^2 \gamma^{- 1}  \| u_1 - u_2 \|_{s_0 + \overline \sigma + \mathtt b} \nonumber\\
& \stackrel{\eqref{condizione u1 - u2 stima delta 12 Phi}}{\leq} n C(s_0)^n N_\nu^{2 \tau + 1} N_{\nu - 1}^{- 2 \mathtt a} \e^2 \gamma^{- 1} \| u_1 - u_2 \|_{s_0 + \overline \sigma + \mathtt b} \label{alfredo n s0}
\end{align}
and 
\begin{align}
& |\Delta_{12}\big\{  \Psi_\nu^i ([{\mathcal R}_\nu] - {\mathcal R}_\nu) \Psi_\nu^k \big\} |_{s_0 + \mathtt b}  \nonumber\\ 
& \leq n C(s_0 + \mathtt b)^n \Big( N_\nu^{2 \tau + 1} N_{\nu - 1}^{- \mathtt a} \e \gamma^{- 1}\Big)^{n - 1} N_{\nu - 1} \e \| u_1 - u_2 \|_{s_0 + \overline \sigma + \mathtt b} \nonumber\\
& \stackrel{\eqref{condizione u1 - u2 stima delta 12 Phi}}{\leq} n C(s_0 + \mathtt b)^{n} N_{\nu - 1} \e \|u_1 - u_2 \|_{s_0 + \overline \sigma + \mathtt b}\,. \label{alfredo n s0 + beta}
\end{align}
Hence 
\begin{align}
& \Big| \omega \cdot \partial_\vphi \Phi_{\nu, \geq 2} + [{\mathcal D}_\nu, \Phi_{\nu, \geq 2}] \Big|_{s_0}  
  \stackrel{\eqref{alfredo}}{\leq} \sum_{n \geq 2} \frac{1}{n!} \sum_{i + k = n - 1} \big|\Delta_{12}\big\{  \Psi_\nu^i ([{\mathcal R}_\nu] - {\mathcal R}_\nu) \Psi_\nu^k \big\} \big|_{s_0} \nonumber\\
& \stackrel{\eqref{alfredo n s0}}{\leq} N_\nu^{2 \tau + 1} N_{\nu - 1}^{- 2 \mathtt a} \e^2 \gamma^{- 1} \| u_1 - u_2 \|_{s_0 + \overline \sigma + \mathtt b} \sum_{n \geq 2} \frac{n^2 C(s_0)^n }{n  !} \nonumber\\
& \lessdot N_\nu^{2 \tau + 1} N_{\nu - 1}^{- 2 \mathtt a} \e^2 \gamma^{- 1} \| u_1 - u_2 \|_{s_0 + \overline \sigma + \mathtt b} \label{stima quarto pezzo delta 12 R nu + 1 s0}
\end{align}
and
\begin{align}
& \Big| \omega \cdot \partial_\vphi \Phi_{\nu, \geq 2} + [{\mathcal D}_\nu, \Phi_{\nu, \geq 2}] \Big|_{s_0 + \mathtt b}  \stackrel{\eqref{alfredo}}{\leq} \sum_{n \geq 2} \frac{1}{n!} \sum_{i + k = n - 1} \big|\Delta_{12}\big\{  \Psi_\nu^i ([{\mathcal R}_\nu] - {\mathcal R}_\nu) \Psi_\nu^k \big\} \big|_{s_0 + \mathtt b} \nonumber\\
& \stackrel{\eqref{alfredo n s0 + beta}}{\leq} N_{\nu - 1} \e \| u_1 - u_2 \|_{s_0 + \overline \sigma + \mathtt b} \sum_{n \geq 2} \frac{n^2 C(s_0 + \mathtt b)^n }{n !} \nonumber\\
& \lessdot N_{\nu - 1} \e \| u_1 - u_2 \|_{s_0 + \overline \sigma + \mathtt b}\,. \label{stima quarto pezzo delta 12 R nu + 1 s0 + beta}
\end{align}
Collecting the estimates \eqref{Pi N Delta 12 R nu bassa}, \eqref{Pi N Delta 12 R nu alta}, \eqref{stima quarto pezzo delta 12 R nu + 1 s0}, \eqref{stima quarto pezzo delta 12 R nu + 1 s0 + beta} and recalling the definition of ${\mathcal H}_\nu$ in \eqref{R nu + 1 differenze}, one gets
\begin{align}
|\Delta_{12} {\mathcal H}_\nu|_{s_0} & \lessdot \Big(N_\nu^{- \mathtt b} N_{\nu - 1} \e + N_\nu^{2 \tau + 1} N_{\nu - 1}^{- 2 \mathtt a} \e^2 \gamma^{- 1} \Big) \| u_1 - u_2\|_{s_0 + \overline \sigma + \mathtt b} \label{stima cal H nu + 1 s0} \\
|\Delta_{12} {\mathcal H}_\nu|_{s_0 + \mathtt b} & \lessdot N_{\nu - 1} \e \| u_1 - u_2\|_{s_0 + \overline \sigma + \mathtt b}\,. \label{stima cal H nu + 1 s0 + mathtt b}
\end{align}
Arguing as in the proof of \eqref{Rsb} one can obtain that for $i = 1, 2$
\begin{align}
|{\mathcal H}_\nu(u_i)|_{s_0}  & \lessdot N_\nu^{- \mathtt b} N_{\nu - 1} \e + N_\nu^{2 \tau + 1} N_{\nu - 1}^{- 2 \mathtt a} \e^2 \gamma^{- 1}\,, \\
\quad |{\mathcal H}_\nu(u_i)|_{s_0 + \mathtt b} &   \lessdot N_{\nu - 1} \e\,,  \label{stima cal H nu} 
 \end{align}
 thus, by \eqref{R nu + 1 differenze}, writing $\Phi_\nu^{- 1} {\mathcal H}_\nu = {\mathcal H}_\nu + (\Phi_\nu^{- 1} - {\mathbb I}_2) {\mathcal H}_\nu$, using Lemma \ref{interpolazione decadimento Kirchoff}, and the estimates \eqref{Phi (u1 u2) - I2}-\eqref{Delta 12 Phi nu norma bassa}, \eqref{condizione u1 - u2 stima delta 12 Phi}-\eqref{stima secondo pezzo delta 12 R nu + 1 s0 + beta}, \eqref{stima cal H nu + 1 s0}-\eqref{stima cal H nu}, one obtains 
 \begin{align}
 |\Delta_{12} {\mathcal R}_{\nu + 1}|_{s_0} & \leq C(\tau, \nu) \Big(N_\nu^{- \mathtt b} N_{\nu - 1} \e + N_\nu^{2 \tau + 1} N_{\nu - 1}^{- 2 \mathtt a} \e^2 \gamma^{- 1} \Big) \| u_1 - u_2\|_{s_0 + \overline \sigma + \mathtt b} \nonumber\\
  & \leq K_0 N_\nu^{- \mathtt a} \e\,, \nonumber
 \end{align}
and
 \begin{align}
 |\Delta_{12} {\mathcal R}_{\nu + 1}|_{s_0 + \mathtt b} & \leq C(\tau, \nu) N_{\nu - 1} \e  \| u_1 - u_2\|_{s_0 + \overline \sigma + \mathtt b} \leq K_0   N_\nu \e \| u_1 - u_2\|_{s_0 + \overline \sigma + \mathtt b} \nonumber
 \end{align}
 by \eqref{defN}, \eqref{definizione alpha}, taking $N_0$ large enough and $\e \gamma^{- 1}$ small enough. Then the estimate \eqref{derivate-R-nu}, \eqref{derivate-R-nu1} has been proved at the step $\nu + 1$. The estimates \eqref{deltarj12}, \eqref{Delta12 rj} follow by \eqref{Delta 12 mu + - mu}, \eqref{derivate-R-nu} and by a telescopic argument. 

\noindent
{\sc Proof of $({\bf S4})_{\nu + 1}$} We have to prove that, if 
\begin{equation}\label{caltanisetta}
 K_1 N_\nu^\tau \e \| u_1 - u_2 \|_{s_0 + \overline \sigma + \mathtt b} \leq \rho\,,
 \end{equation}
 for a suitable constant $K_1 = K_1(\tau, \nu) > 0$, then 
$$
\omega \in \Omega_{\nu + 1}^\gamma({\bf u}_1) \qquad  \Longrightarrow \qquad   \omega \in \Omega_{\nu + 1}^{\gamma - \rho}({\bf u}_2)\,.
$$
By the definiton \eqref{Omgj}, we have $\Omega_{\nu + 1}^\gamma({\bf u}_1) \subseteq \Omega_\nu^\gamma({\bf u}_1)$, and by the inductive hyphothesis $\Omega_\nu^\gamma({\bf u}_1) \subseteq \Omega_\nu^{\gamma - \rho}({\bf u}_2)$, hence 
\begin{equation}\label{palermo - 1}
\omega \in \Omega_{\nu + 1}^\gamma({\bf u}_1) \qquad \Longrightarrow \qquad \omega \in \Omega_\nu^{\gamma - \rho}({\bf u}_2) \subseteq \Omega_\nu^{\gamma/2}({\bf u}_2) \,.
\end{equation}
Then for all $j \in \N$, the $2 \times 2$ matrices ${\bf D}_j^\nu(u_2) = {\bf D}_j^\nu(\omega, u_2(\omega))$ are well defined on $\Omega_{\nu + 1}^\gamma({\bf u}_1)$. We set for convenience 
$$
\Delta_{12} {\bf A}^-_\nu(\ell, j, j') := {\bf A}^-_\nu(\ell, j, j' ; u_2)  - {\bf A}^-_\nu(\ell, j, j' ; u_1)\,.
$$
By \eqref{palermo - 1}, on the set $\Omega_{\nu + 1}^\gamma({\bf u}_1)$, both the operators ${\bf A}_\nu^-(\ell, j, j' ; u_1)$ and ${\bf A}_\nu^-(\ell, j, j' ; u_2)$ are well defined. By the estimate \eqref{australia 2}, 
\begin{align}
\| \Delta_{12} {\bf A}^-_\nu(\ell, j, j') \|_{{\rm Op}(j, j')} \lessdot  \e \langle j - j' \rangle \| u_1 - u_2 \|_{s_0 + \overline\sigma + \mathtt b}\,. \label{palermo 0}
\end{align}
Now we write 
\begin{align}
& {\bf A}^-_\nu(\ell,j , j' ; u_2)  = {\bf A}^-_\nu(\ell, j, j' ; u_1) + \Delta_{12} {\bf A}_\nu^-(\ell, j, j') \nonumber\\
& = {\bf A}^-_\nu(\ell, j, j' ; u_1)  \Big\{ {\bf I}_{j,  j'} + {\bf A}^-_\nu(\ell, j, j' ; u_1)^{- 1} \Delta_{12} {\bf A}_\nu^-(\ell, j, j')  \Big\}\,. \label{palermo 2}
\end{align}
For all $|\ell| \leq N_\nu$, we have
\begin{align}
& \| {\bf A}^-_\nu(\ell, j, j' ; u_1)^{- 1} \Delta_{12} {\bf A}_\nu^-(\ell, j, j')  \|_{{\rm Op}(j, j')} \nonumber\\
&  \leq \| {\bf A}^-_\nu(\ell, j, j' ; u_1)^{- 1} \|_{{\rm Op}(j, j')} \| \Delta_{12} {\bf A}_\nu^-(\ell, j, j') \|_{{\rm Op}(j, j')} \nonumber\\
& \stackrel{\eqref{palermo 0}}{\lessdot} \frac{\langle \ell \rangle^\tau}{\gamma \langle j - j' \rangle} \e \langle j - j' \rangle \| u_1 - u_2 \|_{s_0 + \overline\sigma + \mathtt b}  \lessdot \langle \ell \rangle^\tau \e \gamma^{- 1} \| u_1 - u_2 \|_{s_0 + \overline\sigma + \mathtt b} \nonumber\\
& \lessdot   N_\nu^\tau \e \gamma^{- 1} \| u_1 - u_2 \|_{s_0 + \overline\sigma + \mathtt b} \stackrel{\eqref{caltanisetta}}{\leq} \rho \gamma^{- 1}\,.
\end{align}
Since $\rho \leq \gamma/2$, we get that the operator ${\bf I}_{j,  j'} + {\bf A}^-_\nu(\ell, j, j' ; u_1)^{- 1} \Delta_{12} {\bf A}_\nu^-(\ell, j, j')$ is invertible and by Neumann series we get 
\begin{equation}\label{palermo 1}
\Big\| \Big( {\bf I}_{j,  j'} + {\bf A}^-_\nu(\ell, j, j' ; u_1)^{- 1} \Delta_{12} {\bf A}_\nu^-(\ell, j, j') \Big)^{- 1} \Big\|_{{\rm Op}(j, j')} \leq \frac{\gamma}{\gamma - \rho}\,.
\end{equation}
By \eqref{palermo 2}, \eqref{palermo 1} we get 
\begin{align}
\|{\bf A}_\nu^-(\ell, j, j' ; u_2)^{- 1} \|_{{\rm Op}(j, j')} & \leq \frac{\gamma}{\gamma - \rho}  \|{\bf A}_\nu^-(\ell, j, j' ; u_1)^{- 1} \|_{{\rm Op}(j, j')} \nonumber\\
& \leq \frac{\gamma}{\gamma - \rho} \frac{\langle \ell\rangle^\tau}{\gamma \langle j - j' \rangle} \leq \frac{\langle \ell\rangle^\tau}{(\gamma - \rho) \langle j - j' \rangle}\,. \label{pippe 1}
\end{align}
By similar arguments one can also prove that on the set $\Omega_{\nu + 1}^\gamma({\bf u}_1)$ the following estimate holds
\begin{equation}\label{pippe 2}
\| {\bf A}_\nu^+(\ell, j, j' ; u_2)^{- 1}\|_{{\rm Op}(j, j')} \leq \frac{\langle \ell\rangle^\tau}{(\gamma - \rho) \langle j + j' \rangle}\,.
\end{equation}
Summarizing we have proved that if $\omega \in \Omega_{\nu + 1}^\gamma({\bf u}_1)$, then \eqref{pippe 1}, \eqref{pippe 2} hold, implying that $\omega \in \Omega_{\nu + 1}^{\gamma - \rho}({\bf u}_2)$ (recall the definition \eqref{Omgj}). This concludes the proof of $({\bf S4})_{\nu + 1}$. 

\subsection{Conjugation to a $2 \times 2$-block diagonal operator}
In this Section we prove that the operator ${\mathcal L}_0$ in \eqref{L0} can be conjugated to the $2 \times 2$, time independent, block-diagonal operator ${\mathcal L}_\infty$ in \eqref{bf L infinito esplicito}. This will be proved in Theorem \ref{teoremadiriducibilita} and it is a consequence of the KAM reducibility Theorem \ref{thm:abstract linear reducibility}. First, we state some auxiliary results.
\begin{corollary}\label{lem:convPhi}
{\bf (KAM transformation)} 
$ \forall \omega \in  \cap_{\nu \geq 0} \Omega_{\nu}^{\g} $ 
the sequence 
\be\label{Phicompo}
\widetilde{\Phi}_{\nu} := \Phi_{0} \circ \Phi_1 \circ \cdots\circ \Phi_{\nu} 
\ee
converges in $ |\cdot |_{s}^{\Lipg}$ to an operator $\Phi_{\infty} $
and 
\be\label{Phinftys}
\left|\Phi_{\infty}^{\pm 1} - {\mathbb I}_2 \right|_{s}^{\Lipg} \leq_s \left|{\mathcal R}_{0} |D|\right|_{s + \mathtt b}^{\Lipg} \gamma^{-1} \, . 
\ee
Moreover $\Phi_\infty^{\pm 1}$ is symplectic. 
\end{corollary}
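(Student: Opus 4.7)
The plan is to prove Cauchy convergence of $\widetilde\Phi_\nu$ in the norm $|\cdot|_s^\Lipg$ via a telescopic argument, using the super-exponential smallness of the generators $\Psi_\nu$ furnished by Theorem \ref{thm:abstract linear reducibility}, and then to pass symplecticity to the limit.

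First I would record the key bound on each $\Phi_\nu = {\rm exp}(\Psi_\nu)$. By $({\bf S1})_\nu$, estimate \eqref{Psinus} combined with the smallness $\e\gamma^{-1} \leq \delta_0$ (cf.~\eqref{piccolezza1}) yields $|\Psi_\nu |D||_{s_0}^\Lipg \leq 1$ uniformly in $\nu$, so Lemma \ref{lem:inverti} applies and gives
\begin{equation}\label{phi-step-bound}
|(\Phi_\nu^{\pm 1} - {\mathbb I}_2)|_s^\Lipg \leq_s |\Psi_\nu|_s^\Lipg \leq_s |{\mathcal R}_0|D||_{s+\mathtt b}^\Lipg\,\gamma^{-1}\,N_\nu^{2\tau+1}\,N_{\nu-1}^{-\mathtt a}.
\end{equation}
Since by \eqref{definizione alpha} one has $\mathtt a = 6\tau+4 > 2\tau+1$ and $N_\nu = N_{\nu-1}^{3/2}$ (see \eqref{defN}), the sequence $N_\nu^{2\tau+1}N_{\nu-1}^{-\mathtt a}$ is summable (indeed it decays super-exponentially). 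In particular, at $s = s_0$, \eqref{phi-step-bound} and \eqref{stima R0 s0 + mathtt b} give $\sum_{\nu \geq 0}|\Phi_\nu^{\pm 1} - {\mathbb I}_2|_{s_0}^\Lipg \lessdot \e\gamma^{-1}$.

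Second, I would write the telescopic identity
\begin{equation}\label{telesc}
\widetilde\Phi_\nu - \widetilde\Phi_{\nu-1} = \widetilde\Phi_{\nu-1}(\Phi_\nu - {\mathbb I}_2), \qquad \widetilde\Phi_{-1} := {\mathbb I}_2,
\end{equation}
and apply the algebra estimate of Lemma \ref{interpolazione decadimento Kirchoff} iteratively. At low regularity $s_0$, the bound \eqref{phi-step-bound} and its summability give, by an elementary product estimate, $|\widetilde\Phi_\nu|_{s_0}^\Lipg \leq \prod_{k=0}^\nu(1 + C(s_0)|\Phi_k - {\mathbb I}_2|_{s_0}^\Lipg) \leq 2$ uniformly in $\nu$. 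At general $s \in [s_0, S-\bar\sigma-\mathtt b]$, combining \eqref{telesc}, Lemma \ref{interpolazione decadimento Kirchoff} and the uniform $s_0$-bound yields
\begin{equation*}
|\widetilde\Phi_\nu - \widetilde\Phi_{\nu-1}|_s^\Lipg \leq_s |\widetilde\Phi_{\nu-1}|_{s_0}^\Lipg|\Phi_\nu - {\mathbb I}_2|_s^\Lipg + |\widetilde\Phi_{\nu-1}|_s^\Lipg|\Phi_\nu - {\mathbb I}_2|_{s_0}^\Lipg,
\end{equation*}
which, by a discrete Gronwall argument on $|\widetilde\Phi_\nu|_s^\Lipg$ together with the summability of $|\Phi_\nu - {\mathbb I}_2|_{s_0}^\Lipg$, gives uniform bounds in $\nu$ for $|\widetilde\Phi_\nu|_s^\Lipg$ and Cauchyness of $\{\widetilde\Phi_\nu\}_\nu$ in $|\cdot|_s^\Lipg$. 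Summing the telescopic series then produces the limit $\Phi_\infty$ with $|\Phi_\infty - {\mathbb I}_2|_s^\Lipg \leq_s \sum_\nu |\Phi_\nu - {\mathbb I}_2|_s^\Lipg \leq_s |{\mathcal R}_0|D||_{s+\mathtt b}^\Lipg\gamma^{-1}$, which is \eqref{Phinftys}. The inverse $\Phi_\infty^{-1}$ is constructed symmetrically from $\widetilde\Phi_\nu^{-1} = \Phi_\nu^{-1}\circ\cdots\circ\Phi_0^{-1}$.

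Finally, symplecticity of $\Phi_\infty^{\pm 1}$ follows because each $\Phi_\nu = {\rm exp}(\Psi_\nu)$ is the time-one flow of a Hamiltonian vector field (since $\Psi_\nu$ has the structure \eqref{Phi Psi} with selfadjoint and symmetric diagonal/off-diagonal blocks, per Lemma \ref{homologica equation}), hence symplectic by Definition \ref{definizione mappa simplettica complessa}, so the compositions $\widetilde\Phi_\nu$ are symplectic; the symplectic condition $\Phi(\vphi)^T J \Phi(\vphi) = J$ is closed under $|\cdot|_{s_0}^\Lipg$-convergence (it involves only continuity of the bilinear pairing ${\bf\Gamma}$), so $\Phi_\infty$ inherits it. The main (purely technical) obstacle is to prevent the constants $C(s)$ from Lemma \ref{interpolazione decadimento Kirchoff} from spoiling the infinite composition at high $s$; this is handled by the standard device above, splitting low and high regularity and exploiting that only $|\Phi_\nu - {\mathbb I}_2|_{s_0}^\Lipg$ — not its $s$-counterpart — needs to be summed in the product estimate.
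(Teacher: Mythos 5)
Your argument is correct and coincides with the standard proof the paper invokes but omits (Corollary 4.1 in \cite{BBM-Airy}): telescope $\widetilde\Phi_\nu - \widetilde\Phi_{\nu-1} = \widetilde\Phi_{\nu-1}(\Phi_\nu - {\mathbb I}_2)$, use the super-exponential decay of $|\Psi_\nu|_s^\Lipg$ from \eqref{Psinus} together with Lemma \ref{lem:inverti} and the interpolation estimate of Lemma \ref{interpolazione decadimento Kirchoff}, control the high norm of the partial products by a Gronwall-type iteration anchored on the uniform $s_0$-bound, and note that symplecticity is a closed condition in the operator topology. The only slight imprecision is the final displayed inequality $|\Phi_\infty - {\mathbb I}_2|_s^\Lipg \leq_s \sum_\nu |\Phi_\nu - {\mathbb I}_2|_s^\Lipg$, which is not a one-line consequence of the telescopic sum but requires exactly the Gronwall bound on $|\widetilde\Phi_{\nu-1}|_s^\Lipg$ that you already describe, so the conclusion stands.
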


\begin{proof}
The proof is similar to the one of Corollary 4.1 in \cite{BBM-Airy} and hence it is omitted.  
\end{proof}
By Theorem \ref{thm:abstract linear reducibility}-${\bf(S2)_{\nu}}$, for all $j \in \N$, the sequence of the $2 \times 2$ blocks $(\widetilde{\bf D}_{j}^{\nu})_{\nu \geq 0}$ (defined for $\omega \in \Omega_o$) is a Cauchy sequence in ${\mathcal S}({\bf E}_j)$ (recall \eqref{cal S E alpha}) with respect to $\| \cdot \|^\Lipg$, then, it converges to a limit ${\bf D}_j^\infty(\omega) \in {\mathcal S}({\bf E}_j)$, for any $\omega \in \Omega_o$. We have
\begin{align}
{\bf D}_j^\infty(\omega)&  : = \lim_{\nu \to + \infty} \widetilde{\bf D}_{j}^{\nu}(\omega) = \widetilde{\bf D}_j^0(\omega) + \widehat{\bf D}_j^\infty(\omega). \label{definizione bf D infinito}\\
\quad \widehat{\bf D}_j^\infty(\omega) & := \sum_{\nu \geq 0} \widetilde{\bf D}_j^{\nu + 1}(\omega) - \widetilde{\bf D}_j^\nu(\omega)\,. \nonumber
\end{align}
It could happen that $ \Omega_{\nu_0}^\g = \emptyset $ (see \eqref{Omgj}) for some $ \nu_0 $. In such a case
the iterative process of Theorem \ref{thm:abstract linear reducibility} stops  after finitely many steps. 
However, we can always set $ \widetilde{\bf D}_{j}^{\nu}  := \widetilde{\bf D}_{j}^{\nu_0} $, $ \forall \nu \geq \nu_0 $,
and the functions $ {\bf D}^{\infty}_{j} : \Omega_o \to {\mathcal S}({\bf E}_j)$  are always well defined.

\begin{corollary} {\bf (Final blocks)} For all $ \nu \geq 0, j \in \N $, 
\be\label{autovcon}
\| { \bf D }_{j}^{\infty} - \widetilde{\bf D}^{\nu}_{j} \|^{\Lipg}
\lessdot 
 N_{\nu-1}^{-\mathtt a} \e j^{- 1}   \, , \ \ 
 \| \widehat{\bf D}_j^\infty \|^{\Lipg}
 \lessdot \e j^{- 1}  \, . 
\ee
\end{corollary}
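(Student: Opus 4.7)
The plan is to exhibit $ {\bf D}_j^\infty - \widetilde{\bf D}_j^\nu $ and $\widehat{\bf D}_j^\infty$ as telescoping series of the differences estimated in \eqref{lambdaestesi} and then to exploit the super-geometric growth of the scale $N_\nu = N_0^{\chi^\nu}$ to sum the series.

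First I would use that, by the definition \eqref{definizione bf D infinito}, for every $\nu \geq 0$,
\begin{equation*}
{\bf D}_j^\infty - \widetilde{\bf D}_j^\nu = \sum_{k \geq \nu} \bigl(\widetilde{\bf D}_j^{k+1} - \widetilde{\bf D}_j^k\bigr),
\end{equation*}
the series being absolutely convergent in $\| \cdot \|^\Lipg$ by $({\bf S2})_\nu$. Applying the triangle inequality and the bound \eqref{lambdaestesi} term by term yields
\begin{equation*}
\| {\bf D}_j^\infty - \widetilde{\bf D}_j^\nu \|^\Lipg \;\lessdot\; j^{-1} \e \sum_{k \geq \nu} N_{k-1}^{-\mathtt a}.
\end{equation*}
Since $N_{k-1}^{-\mathtt a} = N_0^{-\mathtt a \chi^{k-1}}$ and $\chi = 3/2 > 1$, the tail sum is dominated (up to a constant depending only on $\chi$ and $\mathtt a$) by its first term $N_{\nu-1}^{-\mathtt a}$, so $\sum_{k \geq \nu} N_{k-1}^{-\mathtt a} \lessdot N_{\nu-1}^{-\mathtt a}$. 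This proves the first bound in \eqref{autovcon}.

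For the second estimate, the same telescoping applied at $\nu=0$ gives
\begin{equation*}
\widehat{\bf D}_j^\infty = \sum_{\nu \geq 0} \bigl(\widetilde{\bf D}_j^{\nu+1} - \widetilde{\bf D}_j^{\nu}\bigr),
\end{equation*}
recalling $\widetilde{\bf D}_j^0 = {\bf D}_j^0 = mj\,{\bf I}_j$, i.e.\ $\widehat{\bf D}_j^0 = 0$. For $\nu = 0$, the estimate \eqref{lambdaestesi} combined with the initial smallness \eqref{stima R0 s0 + mathtt b}, namely $|{\mathcal R}_0 |D||_{s_0}^\Lipg \lessdot \e$, gives $\|\widetilde{\bf D}_j^1 - \widetilde{\bf D}_j^0\|^\Lipg \lessdot \e j^{-1}$; for $\nu \geq 1$ it gives the geometrically decaying bound $\lessdot N_{\nu-1}^{-\mathtt a} \e j^{-1}$. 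Summing and using again that $\sum_{\nu \geq 0} N_{\nu-1}^{-\mathtt a}$ is finite (with the convention $N_{-1} = 1$ from \eqref{defN}), we obtain $\|\widehat{\bf D}_j^\infty\|^\Lipg \lessdot \e j^{-1}$, as claimed.

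No serious obstacle is expected: the argument is purely a Cauchy/telescoping computation, and the only input needed beyond the definitions is the per-step bound \eqref{lambdaestesi} provided by ${\bf(S2)_\nu}$, together with the super-geometric nature of the sequence $(N_\nu)$ which makes the tail of the series commensurate with its leading term.
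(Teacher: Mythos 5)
Your proof is correct and takes essentially the same route as the paper, which simply asserts that \eqref{autovcon} "follows by a telescopic argument, applying the estimate \eqref{lambdaestesi}"; you have merely spelled out that the super-geometric decay $N_k = N_0^{\chi^k}$ makes the tail $\sum_{k \geq \nu} N_{k-1}^{-\mathtt a}$ comparable to its first term, and noted that the second bound is the first one at $\nu = 0$ since $\widehat{\bf D}_j^\infty = {\bf D}_j^\infty - \widetilde{\bf D}_j^0$.
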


\begin{proof}
The bound \eqref{autovcon} follows by a telescopic argument, applying the estimate \eqref{lambdaestesi}. 
\end{proof}
Now we define the Cantor set 
\begin{align}
& \Omega_{\infty}^{2 \g}  := 
\Omega_{\infty}^{2 \gamma}({\bf u}) \nonumber\\
& := \Big\{\omega \in \Omega_o : \| {\bf A}_{\infty}^{-}(\ell, j, j' ; \omega)^{- 1} \| \leq \frac{\langle \ell \rangle^\tau}{2 \gamma \langle j - j' \rangle}\,, \quad \forall (\ell, j, j') \in \Z^\nu \times \N \times \N\,, \nonumber\\ 
& \qquad (\ell, j, j')\neq (0, j, j)\,,
 \qquad  \| {\bf A}_{\infty}^+(\ell, j, j'; \omega)^{- 1} \| \leq \frac{\langle \ell \rangle^\tau}{2 \gamma \langle j + j' \rangle}\,, \nonumber\\
 &  \forall (\ell, j, j') \in \Z^\nu \times \N \times \N\Big\}\,, \label{Omegainfty}
\end{align}
where the operators ${\bf A}^{\pm}_{\infty}(\ell, j, j') = {\bf A}^{\pm}_{\infty}(\ell, j, j'; \omega) = {\bf A}^{\pm}_{\infty}(\ell, j, j'; \omega, u(\omega)) : {\mathcal L}({\bf E}_{j'}, {\bf E}_j) \to {\mathcal L}({\bf E}_{j'}, {\bf E}_j)$ are defined by 
\begin{equation}\label{bf A infinito - (ell,alpha,beta)}
{\bf A}^{-}_{\infty}(\ell, j, j')  := \omega \cdot \ell {\bf I}_{j,  j'} + M_L({\bf D}_j^{\infty}) - M_R({\bf D}_{j'}^{\infty})\,,
\end{equation}
\begin{equation}\label{bf A infinito + (ell,alpha,beta)}
{\bf A}^{+}_{\infty}(\ell, j, j')  := \omega \cdot \ell {\bf I}_{j,  j'} + M_L({\bf D}_j^{\infty}) + M_R(\overline{\bf D}_{j'}^{\infty})\,.
\end{equation}

\begin{lemma} {\bf (Cantor set)}
\be\label{cantorinclu}
\Omega_{\infty}^{2 \g} \subset \cap_{\nu \geq 0} \Omega_{\nu}^\g \, . 
\ee 
\end{lemma}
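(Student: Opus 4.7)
The plan is to prove the inclusion by induction on $\nu \geq 0$. The base case $\nu=0$ is trivial because $\Omega_0^\gamma = \Omega_o$ by definition, and $\Omega_\infty^{2\gamma} \subseteq \Omega_o$ directly from \eqref{Omegainfty}. For the inductive step, I would assume $\Omega_\infty^{2\gamma} \subseteq \Omega_\nu^\gamma$ and fix an arbitrary $\omega \in \Omega_\infty^{2\gamma}$; by the inductive hypothesis the $2\times 2$ blocks ${\bf D}_j^\nu(\omega)$ are well defined, hence so are the operators ${\bf A}_\nu^{\pm}(\ell,j,j')$. The goal is then to verify the two bounds in \eqref{Omgj} at level $\nu+1$, i.e.
\[
\|{\bf A}_\nu^-(\ell,j,j')^{-1}\|_{{\rm Op}(j,j')} \leq \frac{\langle\ell\rangle^\tau}{\gamma\langle j-j'\rangle}, \qquad \|{\bf A}_\nu^+(\ell,j,j')^{-1}\|_{{\rm Op}(j,j')} \leq \frac{\langle\ell\rangle^\tau}{\gamma\langle j+j'\rangle},
\]
for $|\ell|\leq N_\nu$ (and $(\ell,j,j')\neq (0,j,j)$ in the first case).

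The key idea is a perturbative comparison between ${\bf A}_\nu^{\pm}$ and ${\bf A}_\infty^{\pm}$. From the definitions \eqref{bf A nu - (ell,alpha,beta)}, \eqref{bf A nu + (ell,alpha,beta)}, \eqref{bf A infinito - (ell,alpha,beta)}, \eqref{bf A infinito + (ell,alpha,beta)} one has
\[
{\bf A}_\nu^{\pm}(\ell,j,j') - {\bf A}_\infty^{\pm}(\ell,j,j') = M_L({\bf D}_j^\nu - {\bf D}_j^\infty) \pm M_R(\overline{{\bf D}_{j'}^\nu - {\bf D}_{j'}^\infty})\,,
\]
so by \eqref{norma operatoriale ML MR} together with the final-block estimate \eqref{autovcon},
\[
\|{\bf A}_\nu^{\pm}(\ell,j,j') - {\bf A}_\infty^{\pm}(\ell,j,j')\|_{{\rm Op}(j,j')} \lessdot N_{\nu-1}^{-\mathtt a}\,\e\,(j^{-1}+j'^{-1}) \lessdot N_{\nu-1}^{-\mathtt a}\,\e.
\]
Since $\omega \in \Omega_\infty^{2\gamma}$, the inverse $({\bf A}_\infty^{\pm})^{-1}$ exists and obeys the bound in \eqref{Omegainfty}. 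Writing
\[
{\bf A}_\nu^{\pm} = {\bf A}_\infty^{\pm}\bigl({\bf I}_{j,j'} + ({\bf A}_\infty^{\pm})^{-1}({\bf A}_\nu^{\pm} - {\bf A}_\infty^{\pm})\bigr),
\]
the norm of the correction is bounded by
\[
\frac{\langle\ell\rangle^\tau}{2\gamma\langle j\mp j'\rangle} \cdot C N_{\nu-1}^{-\mathtt a}\e \leq \frac{C\e}{2\gamma}\, N_\nu^{\tau} N_{\nu-1}^{-\mathtt a} = \frac{C\e}{2\gamma}\, N_{\nu-1}^{\frac{3\tau}{2}-\mathtt a}
\]
for $|\ell|\leq N_\nu = N_{\nu-1}^{3/2}$. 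The choice \eqref{definizione alpha} gives $\mathtt a = 6\tau+4$, so the exponent $\tfrac{3\tau}{2}-\mathtt a = -\tfrac{9\tau}{2}-4$ is strictly negative; together with the smallness hypothesis \eqref{piccolezza1} and $N_0$ large, this quantity is $\leq 1/2$.

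By Neumann series the operator ${\bf I}_{j,j'} + ({\bf A}_\infty^{\pm})^{-1}({\bf A}_\nu^{\pm}-{\bf A}_\infty^{\pm})$ is invertible with inverse bounded by $2$, hence
\[
\|{\bf A}_\nu^{\pm}(\ell,j,j')^{-1}\|_{{\rm Op}(j,j')} \leq 2\,\|({\bf A}_\infty^{\pm})^{-1}\|_{{\rm Op}(j,j')} \leq \frac{\langle\ell\rangle^\tau}{\gamma\langle j\mp j'\rangle},
\]
which are precisely the conditions defining $\Omega_{\nu+1}^\gamma$ in \eqref{Omgj}. This closes the induction and proves \eqref{cantorinclu}. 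The main (purely routine) technical point to watch is the matching of the exponents, ensuring $\tfrac{3\tau}{2}-\mathtt a<0$, so that the perturbative estimate beats the worst-case size $N_\nu^\tau$ of $\langle\ell\rangle^\tau$ appearing at step $\nu+1$; this is built into the choice of $\mathtt a$ in \eqref{definizione alpha}.
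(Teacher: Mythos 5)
Your proof is correct and follows essentially the same route as the paper: induction on $\nu$, writing $\mathbf{A}^{\pm}_\nu$ as a perturbation of $\mathbf{A}^{\pm}_\infty$, bounding the perturbation via \eqref{norma operatoriale ML MR} and \eqref{autovcon}, and closing by Neumann series with the factor $2$ absorbing $2\gamma \mapsto \gamma$. The only cosmetic difference is that the paper retains the harmless factor $j^{-1}\langle j-j'\rangle^{-1}$ a bit longer before discarding it, whereas you drop it immediately; the exponent check $\tfrac{3\tau}{2}-\mathtt a<0$ you flag is indeed the one the paper invokes.
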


\begin{proof} 
It suffices to show that for any $\nu \geq 0$, $\Omega_\infty^{2 \gamma} \subseteq \Omega_\nu^\gamma$. We argue by induction. For $\nu = 0$, since $\Omega_0^\gamma = \Omega_o$, it follows from the definition \eqref{Omegainfty} that $\Omega_\infty^{2 \gamma} \subseteq \Omega_0^\gamma$.
Assume that $\Omega_\infty^{2 \gamma} \subseteq \Omega_\nu^\gamma$ for some $\nu \geq 0$ and let us prove that $\Omega_\infty^{2 \gamma} \subseteq \Omega_{\nu + 1}^\gamma$. Let $\omega \in \Omega_\infty^{2 \gamma}$. By the inductive hyphothesis $\omega \in \Omega_\nu^\gamma$, hence by Theorem \ref{thm:abstract linear reducibility}, the self-adjoint matrices ${\bf D}_j^\nu(\omega) \in {\mathcal S}({\bf E}_j)$ are well defined for all $j \in \N$ and ${\bf D}_j^\nu(\omega) = \widetilde{\bf D}_j^\nu(\omega)$. By the definitions \eqref{bf A nu - (ell,alpha,beta)}, \eqref{bf A nu + (ell,alpha,beta)}, also the matrices ${\bf A}_\nu^{\pm}(\ell, j, j'; \omega)$ are well defined.
Since $\omega \in \Omega_\infty^{2 \gamma}$, ${\bf A}^-_\infty(\ell, j, j'; \omega)$ is invertible and we may write 
\begin{align*}
{\bf A}_\nu^-(\ell, j, j'; \omega)& = {\bf A}_\infty^- (\ell, j, j'; \omega) + \Delta_\infty^-(\ell, j, j'; \omega) \\
& =  {\bf A}_\infty^- (\ell, j, j'; \omega) \Big( {\bf I}_{j,  j'} + {\bf A}_\infty^- (\ell, j, j'; \omega)^{- 1}\Delta_\infty^-(\ell, j, j'; \omega)  \Big)  
\end{align*}
where
$$
\Delta^-_\infty(\ell, j, j'; \omega) := M_L\big( {\bf D}_j^\nu(\omega) - {\bf D}_j^\infty(\omega) \big) - M_R \big({\bf D}_{j'}^\nu(\omega) - {\bf D}_{j'}^\infty(\omega) \big)\,.
$$
By the property \eqref{norma operatoriale ML MR} and by the estimate \eqref{autovcon}
\begin{equation}\label{copenaghen 0}
\| {\Delta}^-_\infty(\ell, j, j'; \omega) \|_{{\rm Op}(j, j')} \lessdot N_{\nu - 1}^{- \mathtt a} \e j^{- 1}\,.
\end{equation}
Since for all $|\ell| \leq N_\nu$, $j, j' \in \N$, with $(\ell, j, j') \neq (0, j, j)$
\begin{equation}\label{copenaghen 1}
\| {\bf A}_\infty^-(\ell, j, j'; \omega)^{- 1} \Delta_\infty^-(\ell, j, j'; \omega)  \|_{{\rm Op}(j, j')} \lessdot  \frac{N_\nu^\tau N_{\nu - 1}^{-\mathtt a}}{2 \gamma \langle j - j' \rangle } \e  \leq \frac12\,,
\end{equation}
by \eqref{definizione alpha}, and for $\delta_0$ in \eqref{piccolezza1} small enough. Therefore the operator ${\bf A}_\nu^-(\ell, j, j'; \omega)$ is invertible, with inverse given by the Neumann series. For all $|\ell| \leq N_\nu$, $j, j' \in \N$ with $(\ell,j, j') \neq (0, j, j')$
\begin{align*}
\|{\bf A}_\nu^-(\ell, j, j'; \omega)^{- 1} \|_{{\rm Op}(j, j')} & \leq \frac{\| {\bf A}_\infty^-(\ell, j, j'; \omega)^{- 1}\|_{{\rm Op}(j, j')}}{1 - \| {\bf A}_\infty^-(\ell, j, j' ; \omega)^{- 1} {\Delta}_\infty^-(\ell, j, j'; \omega) \|_{{\rm Op}(j, j')}}  \\
& \stackrel{\eqref{copenaghen 1}}{\leq } 2 \| {\bf A}_\infty^-(\ell, j, j'; \omega)^{- 1}\|_{{\rm Op}(j, j')} \stackrel{\eqref{Omegainfty}}{\leq} \frac{\langle \ell \rangle^\tau}{\gamma \langle j - j' \rangle}\,.
\end{align*}
By similar arguments, one can also obtain that for any $(\ell, j, j') \in \Z^\nu \times \N \times \N$ with $|\ell| \leq N_\nu$
$$
\| {\bf A}_\nu^+(\ell, j, j'; \omega)^{- 1}\|_{{\rm Op}(j, j')} \leq \frac{\langle \ell \rangle^\tau}{\gamma \langle j + j' \rangle}\,
$$
and then the lemma follows. 
\end{proof}

\noindent
To state the main result of this section we introduce the operator
\begin{equation}\label{cal D infinito}
{\mathcal D}_\infty := \ii \begin{pmatrix}
 {\mathcal D}_\infty^{(1)} & 0 \\
0 &   - \overline{\mathcal D}_\infty^{(1)}
\end{pmatrix}\,,\quad {\mathcal D}_\infty^{(1)} := {\rm diag}_{j \in \N} {\bf D}_j^\infty\,,
\end{equation}
where the $2 \times 2$ self-adjoint blocks ${\bf D}_j^\infty \in {\mathcal S}({\bf E}_j)$ are defined in \eqref{definizione bf D infinito}. Furthermore, we introduce, for $\omega \in \Omega_o$, the operator
\begin{equation}\label{bf L infinito esplicito}
{\mathcal L}_\infty(\omega) := \omega \cdot \partial_\vphi {\mathbb I}_2 + {\mathcal D}_\infty(\omega).
\end{equation}
Then ${\mathcal L}_\infty(\omega)$ is a $\vphi$-independent block-diagonal bounded linear Hamiltonian operator ${\mathcal L}_\infty(\omega) : {\bf H}^s_0(\T^{\nu + 1}) \to {\bf H}^{s - 1}_0(\T^{\nu + 1})$, for any $s \geq 1$.

\begin{theorem}\label{teoremadiriducibilita}
Under the same assumptions of Theorem \ref{thm:abstract linear reducibility}, the following holds: 

\noindent
$(i)$ For all $\omega \in \Omega_\infty^{2\g}$ and $s \in [s_0, S - \bar \sigma - \mathtt b]$, the transformations $\Phi_\infty^{\pm 1} : {\bf H}^s_0(\T^{\nu + 1}) \to {\bf H}^s_0(\T^{\nu + 1})$ satisfy the estimates
\begin{equation} \label{stima Phi infty}
 \quad |\Phi_\infty^{\pm 1} - {\mathbb I}_2 |_{s}^\Lipg
\leq_s  \e \gamma^{- 1} (1 + \| {\bf u}\|_{s + \bar \sigma + \mathtt b}^\Lipg)  \,.
\end{equation}

\noindent
$(ii)$
On the set $ \Omega_\infty^{2 \gamma}$, the Hamiltonian operator ${\mathcal L}_0$ in \eqref{L0} is conjugated to the Hamiltonian operator ${\mathcal L}_\infty$ by $\Phi_\infty$, namely for all $\omega \in \Omega_\infty^{2 \gamma}$, 
\be\label{Lfinale}
{\mathcal L}_{\infty}(\omega)
= \Phi_{\infty}\inv(\omega) {\mathcal L}_0(\omega)   \Phi_{\infty}(\omega)\,.
\ee
\end{theorem}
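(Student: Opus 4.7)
The plan is to obtain both items as the limit of the iterative scheme of Theorem \ref{thm:abstract linear reducibility}. The key input, supplied by \eqref{cantorinclu}, is that on $\Omega_\infty^{2\g}$ every set $\Omega_\nu^\g$ is available, so the compositions $\widetilde{\Phi}_\nu = \Phi_0 \circ \cdots \circ \Phi_\nu$ in \eqref{Phicompo} are simultaneously defined for all $\nu \geq 0$ and, by Corollary \ref{lem:convPhi}, converge to $\Phi_\infty$ in the norm $|\cdot|_s^{\Lipg}$.

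Item $(i)$ is essentially contained in Corollary \ref{lem:convPhi}: the bound \eqref{Phinftys} reads $|\Phi_\infty^{\pm 1} - {\mathbb I}_2|_s^{\Lipg} \leq_s |{\mathcal R}_0 |D||_{s+\mathtt b}^{\Lipg}\g^{-1}$, and plugging in \eqref{stime primo resto KAM} (applied to ${\mathcal R}_0 = {\mathcal R}_4$) yields \eqref{stima Phi infty}, once the total loss of derivatives is absorbed into the common constant $\bar\sigma + \mathtt b$.

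For item $(ii)$ I would iterate \eqref{Lnu+1}: for every $\nu \geq 1$,
\[
{\mathcal L}_\nu \;=\; \widetilde{\Phi}_{\nu-1}^{-1}\, {\mathcal L}_0\, \widetilde{\Phi}_{\nu-1}
\]
on any $\omega$ where the construction is available, and by \eqref{cantorinclu} this holds for every $\omega \in \Omega_\infty^{2\g}$. Writing ${\mathcal L}_\nu = \omega \cdot \partial_\vphi {\mathbb I}_2 + {\mathcal D}_\nu + {\mathcal R}_\nu$, I would establish two separate convergences. First, ${\mathcal R}_\nu \to 0$ in ${\mathcal L}({\bf H}^{s_0+1}_0(\T^{\nu+1}),\, {\bf H}^{s_0}_0(\T^{\nu+1}))$, via Lemma \ref{stime tame operatori decadimento Kirchoff} applied to ${\mathcal R}_\nu |D|$ together with \eqref{Rsb}: $|{\mathcal R}_\nu |D||_{s_0} \leq |{\mathcal R}_0 |D||_{s_0+\mathtt b} N_{\nu-1}^{-\mathtt a} \to 0$. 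Second, ${\mathcal D}_\nu \to {\mathcal D}_\infty$ in ${\mathcal L}({\bf H}^1_0(\T^{\nu+1}),\, {\bf L}^2_0(\T^{\nu+1}))$: since ${\mathcal D}_\nu - {\mathcal D}_\infty$ is $2 \times 2$ block-diagonal in the sense of Section \ref{sezione rappresentazione 2 per 2}, its operator norm is controlled by $\sup_{j \in \N}\|{\bf D}_j^\nu - {\bf D}_j^\infty\|$, which by \eqref{autovcon} is $O(\e N_{\nu-1}^{-\mathtt a} j^{-1})$; using the $j$-gain against the factor $|D|$ hidden in ${\mathcal D}_\infty$ yields $\|({\mathcal D}_\nu - {\mathcal D}_\infty) h\|_{{\bf L}^2_0} \lessdot \e N_{\nu-1}^{-\mathtt a} \|h\|_{{\bf H}^1_0}$. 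Together these two facts give ${\mathcal L}_\nu \to {\mathcal L}_\infty$ in ${\mathcal L}({\bf H}^1_0, {\bf L}^2_0)$. On the other hand, by Corollary \ref{lem:convPhi} and Lemma \ref{stime tame operatori decadimento Kirchoff}, $\widetilde{\Phi}_{\nu-1}^{\pm 1} \to \Phi_\infty^{\pm 1}$ as bounded operators on the relevant Sobolev spaces, so that $\widetilde{\Phi}_{\nu-1}^{-1}\,{\mathcal L}_0\, \widetilde{\Phi}_{\nu-1} \to \Phi_\infty^{-1}\,{\mathcal L}_0\, \Phi_\infty$ in ${\mathcal L}({\bf H}^{s_0+2}_0,\, {\bf L}^2_0)$. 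Matching the two limits yields \eqref{Lfinale}.

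The main delicate point will be keeping the functional framework consistent during the passage to the limit. Since ${\mathcal L}_0$ loses one spatial derivative (through $|D|$) and one time derivative (through $\omega \cdot \partial_\vphi$), the cleanest approach is probably to establish \eqref{Lfinale} first as an identity of bounded operators from, say, ${\bf H}^{s_0+2}_0(\T^{\nu+1})$ into ${\bf L}^2_0(\T^{\nu+1})$, where every factor in the chain $\widetilde{\Phi}_{\nu-1}^{-1}\,{\mathcal L}_0\,\widetilde{\Phi}_{\nu-1}$ is uniformly bounded in $\nu$ by \eqref{Phinftys}, and then extend it by density to ${\bf H}^1_0(\T^{\nu+1})$, both sides being a priori in ${\mathcal L}({\bf H}^1_0(\T^{\nu+1}),\, {\bf L}^2_0(\T^{\nu+1}))$.
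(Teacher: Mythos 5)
Your proposal follows essentially the same route as the paper: item $(i)$ is identical (Corollary \ref{lem:convPhi} plus \eqref{stime primo resto KAM}), and for item $(ii)$ both proofs iterate \eqref{Lnu+1} into ${\mathcal L}_\nu = \widetilde\Phi_{\nu-1}^{-1}{\mathcal L}_0\widetilde\Phi_{\nu-1}$, establish ${\mathcal R}_\nu\to 0$ and ${\mathcal D}_\nu\to{\mathcal D}_\infty$, and pass to the limit. The main difference is the choice of topology: the paper works entirely in the block-decay norm $|\cdot|_s^{\Lipg}$, showing $|{\mathcal L}_\nu-{\mathcal L}_\infty|_s^{\Lipg}\to 0$ and $|\widetilde\Phi_\nu^{\pm 1}-\Phi_\infty^{\pm 1}|_s^{\Lipg}\to 0$ and leaving the passage to the limit on the unbounded operator ${\mathcal L}_0$ implicit, whereas you translate (via Lemma \ref{stime tame operatori decadimento Kirchoff}) into operator norms between specific Sobolev spaces and make the unbounded-operator issue explicit by pinning down the domains ${\bf H}^{s_0+2}_0\to{\bf L}^2_0$ and then extending to ${\bf H}^1_0\to{\bf L}^2_0$. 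Your framing makes the functional-analytic bookkeeping in the final limit cleaner; the paper's is shorter because it never leaves its native norm. One small slip in your write-up: you speak of "using the $j$-gain against the factor $|D|$ hidden in ${\mathcal D}_\infty$", but the $m\,j\,{\bf I}_j$ part cancels in the difference ${\bf D}_j^\nu-{\bf D}_j^\infty=\widehat{\bf D}_j^\nu-\widehat{\bf D}_j^\infty$; the uniform bound $\sup_j\|{\bf D}_j^\nu-{\bf D}_j^\infty\|\lessdot\e N_{\nu-1}^{-\mathtt a}$ already follows from \eqref{autovcon} with no derivative compensation needed, so the operator ${\mathcal D}_\nu-{\mathcal D}_\infty$ is in fact bounded on ${\bf L}^2_0$. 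This is harmless since your chosen estimate ${\bf H}^1_0\to{\bf L}^2_0$ is weaker, but the mechanism you describe is not what is actually at work.
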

\begin{proof} $(i)$ Since $\Omega_\infty^{2 \gamma}({\bf u}) \stackrel{\eqref{cantorinclu}}{\subseteq} \cap_{\nu \geq 0} \Omega_\nu^\gamma({\bf u})$, the estimate \eqref{Phinftys} holds on the set $\Omega_\infty^{2 \gamma}$, and $\forall s_0 \leq s \leq S - \bar \sigma - \mathtt b\,,$
$$
|\Phi_\infty^{\pm 1} - {\mathbb I}_2 |_{s}^\Lipg  {\leq_s}  \gamma^{- 1} |{\mathcal R}_0 |D||_{s + \mathtt b}^\Lipg \stackrel{\eqref{stime primo resto KAM}}{\leq_s}\, \e \gamma^{- 1} (1 + \| {\bf u}\|_{s + \bar \sigma + \mathtt b}^\Lipg)\,, \qquad 
$$
which is the claimed estimate \eqref{stima Phi infty}. 

\noindent
$(ii)$ By \eqref{Lnu+1}, \eqref{Phicompo} we get 
\begin{equation}\label{spanish}
{\mathcal L}_\nu = \widetilde{\Phi}_{\nu - 1}^{- 1} {\mathcal L}_0 \widetilde \Phi_{\nu - 1} = \omega \cdot \partial_\vphi {\mathbb I}_2 + {\mathcal D}_\nu + {\mathcal R}_\nu\,, \qquad \widetilde  \Phi_\nu = \Phi_0 \circ \ldots \circ \Phi_\nu\,.
\end{equation}
Note that, for all $\nu \geq 0$, 
\begin{align}
|{\mathcal D}_\infty^{(1)} - {\mathcal D}_\nu^{(1)}|^\Lipg_s & \stackrel{Lemma\,\ref{elementarissimo decay}-(ii)}{\leq} |({\mathcal D}_\infty^{(1)} - {\mathcal D}_\nu^{(1)}) |D||_s^\Lipg = \sup_{j \in \N} j \| {\bf D}_j^\infty - {\bf D}_j^\nu \|^\Lipg \nonumber\\
& \stackrel{\eqref{autovcon}}{\lessdot}  \e N_{\nu - 1}^{- \mathtt a} \stackrel{\nu \to + \infty}{\to} 0
\end{align}
and for any $s \in [s_0, S - \bar \sigma - \mathtt b]$ 
$$
|{\mathcal R}_\nu|_{s}^\Lipg \stackrel{Lemma\,\ref{elementarissimo decay}-(ii)}{\leq} |{\mathcal R}_\nu |D| |_{s}^\Lipg \stackrel{\eqref{Rsb}, \eqref{stime primo resto KAM}}{\lessdot} \e N_{\nu - 1}^{- \mathtt a} \big(1  +  \| {\bf u} \|_{s+ \bar \sigma + \mathtt b}^{\Lipg} \big) \stackrel{\nu \to + \infty}{\to} 0\,.
$$
Hence, $|{\mathcal L}_\nu - {\mathcal L}_\infty|_s^\Lipg \stackrel{\nu \to + \infty}{\to}0$ for all $s_0 \leq s \leq S - \bar \sigma - \mathtt b$. Since, by Lemma \ref{lem:convPhi}, $\widetilde\Phi_\nu^{\pm 1} \stackrel{\nu \to + \infty}{\to} \Phi_\infty^{\pm 1}$ in the norm $|\cdot |_{s}^\Lipg$, formula \eqref{Lfinale} follows by passing to the limit in \eqref{spanish}. 
\end{proof}
\begin{corollary}\label{stime Hs x Phi infty}
Assume \eqref{ansatz} with $\mu = \overline \sigma + \mathtt b + s_0$, then for any $s_0 \leq s \leq S - \overline \sigma - \mathtt b - s_0$, for any $\vphi \in \T^\nu$, the maps $\Phi_\infty^{\pm 1}(\vphi) : {\bf H}^s_0(\T_x) \to {\bf H}_0^s(\T_x)$ and it satisfies the estimates 
$$
|\Phi_\infty^{\pm 1}(\vphi)|_{s, x} \leq_s 1 + \| u \|_{s + \overline \sigma + \mathtt b + s_0}\,. 
$$ 
\end{corollary}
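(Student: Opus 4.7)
The plan is to reduce the claim to the two already-established facts that (a) $\Phi_\infty^{\pm 1} - {\mathbb I}_2$ has finite space-time block-decay norm, controlled by Theorem \ref{teoremadiriducibilita}$(i)$, and (b) the space-only block-decay norm is dominated by the space-time one, up to a shift by $s_0$ derivatives, as stated in Lemma \ref{lemma decadimento Kirchoff in x}$(ii)$. The mapping property ${\bf H}^s_0(\T_x) \to {\bf H}^s_0(\T_x)$ will then follow from Lemma \ref{lemma decadimento Kirchoff in x}$(i)$, while the preservation of the real subspace ${\bf L}^2_0(\T_x)$ (i.e.\ of the $(z,\bar z)$-structure) follows from the fact that $\Phi_\infty$ is the limit, with respect to the norm $| \cdot |_s^{\Lipg}$, of the symplectic maps $\widetilde \Phi_\nu = \Phi_0 \circ \cdots \circ \Phi_\nu$ from Corollary \ref{lem:convPhi}, each of which has the form \eqref{operatore matriciale decadimento}.

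More precisely, I first apply Theorem \ref{teoremadiriducibilita}$(i)$ at the Sobolev index $s + s_0$, which is admissible because the hypothesis $s \leq S - \overline \sigma - \mathtt b - s_0$ implies $s + s_0 \leq S - \overline\sigma - \mathtt b$, and because the strengthened ansatz \eqref{ansatz} with $\mu = \overline \sigma + \mathtt b + s_0$ guarantees the required smallness $\|{\bf u}\|_{s_0 + \overline \sigma + \mathtt b}^{\Lipg} \leq 1$ needed to invoke the whole reducibility construction. This yields
\begin{equation*}
\big| \Phi_\infty^{\pm 1} - {\mathbb I}_2 \big|_{s + s_0}^{\Lipg} \leq_s \e \gamma^{-1} \big(1 + \|{\bf u}\|_{s + \overline \sigma + \mathtt b + s_0}^{\Lipg}\big).
\end{equation*}
Then Lemma \ref{lemma decadimento Kirchoff in x}$(ii)$, applied componentwise through the definition \eqref{decadimento operatore matriciale spazio} (since $\Phi_\infty^{\pm 1} - {\mathbb I}_2$ has the form \eqref{operatore matriciale decadimento}), gives, for every $\vphi \in \T^\nu$,
\begin{equation*}
\big|\Phi_\infty^{\pm 1}(\vphi) - {\mathbb I}_2 \big|_{s,x} \lessdot \big| \Phi_\infty^{\pm 1} - {\mathbb I}_2 \big|_{s + s_0} \leq_s \e \gamma^{-1}\big(1 + \| u \|_{s + \overline \sigma + \mathtt b + s_0}\big),
\end{equation*}
where I used $\|u\|_r \leq \|{\bf u}\|_r$ and dropped the Lipschitz part by estimating the sup-norm alone.

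By the triangle inequality applied to the space-only decay norm, together with the trivial bound $|{\mathbb I}_2|_{s,x} \lessdot 1$ (the identity being represented by the block-diagonal family $\{{\bf I}_j\}_{j\in\N}$ with $\|{\bf I}_j\| = \sqrt 2$), and using the smallness $\e \gamma^{-1} \leq \delta_0$ from \eqref{piccolezza1},
\begin{equation*}
\big|\Phi_\infty^{\pm 1}(\vphi)\big|_{s,x} \leq \big|{\mathbb I}_2\big|_{s,x} + \big|\Phi_\infty^{\pm 1}(\vphi) - {\mathbb I}_2\big|_{s,x} \leq_s 1 + \|u\|_{s + \overline \sigma + \mathtt b + s_0},
\end{equation*}
which is exactly the claimed estimate. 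Finally, the mapping property $\Phi_\infty^{\pm 1}(\vphi): {\bf H}^s_0(\T_x) \to {\bf H}^s_0(\T_x)$ follows from Lemma \ref{lemma decadimento Kirchoff in x}$(i)$ applied to the perturbation $\Phi_\infty^{\pm 1}(\vphi) - {\mathbb I}_2$ at any index $s \geq s_0 \geq 1$, combined with the preservation of the $(z,\bar z)$-structure noted above.

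The step I expect to be most delicate is purely bookkeeping: tracking the loss of $s_0$ derivatives between the space-time and space-only block-decay norms, and making sure that the ansatz \eqref{ansatz} has been strengthened just enough ($\mu = \overline\sigma + \mathtt b + s_0$ rather than $\overline\sigma + \mathtt b$) so that the Sobolev index $s + s_0$ at which Theorem \ref{teoremadiriducibilita}$(i)$ is invoked still lies within the admissible range determined by $S$. No genuinely new analytical input beyond Theorem \ref{teoremadiriducibilita} and Lemma \ref{lemma decadimento Kirchoff in x} is required.
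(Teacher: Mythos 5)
Your proposal is correct and follows exactly the paper's own (much terser) proof: apply the decay estimate \eqref{stima Phi infty} from Theorem \ref{teoremadiriducibilita}$(i)$ at the shifted Sobolev index $s+s_0$, then transfer it to the space-only block-decay norm via Lemma \ref{lemma decadimento Kirchoff in x}$(ii)$, with a triangle inequality to absorb the identity. The bookkeeping of the index shift and the strengthened ansatz is handled correctly.
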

\begin{proof}
The claimed estimate follows by Lemma \ref{lemma decadimento Kirchoff in x}-$(ii)$ and by the estimate \eqref{stima Phi infty}. 
\end{proof}
\bigskip

\section{Inversion of the operator ${\mathcal L}$}\label{sezione invertibilita cal L}
We define 
\begin{equation}\label{definizione cal W 12}
{\mathcal W}_1 := {\mathcal S} {\mathcal B} ({\mathcal A}{\mathbb I}_2) {\mathcal V} \Phi_\infty\,, \quad {\mathcal W}_2 := {\mathcal S} {\mathcal B} ({\mathcal A}{\mathbb I}_2) \rho {\mathcal V} \Phi_\infty
\end{equation}
(recall the Definitions \eqref{cal S1}, \eqref{def cal A}, \eqref{cal W0}, \eqref{definition rho} and Corollary \ref{lem:convPhi}).
By Sections \ref{riduzione linearizzato}, \ref{sec:redu}, the operator ${\mathcal L}$ in \eqref{operatore linearizzato} may be written as 
\begin{equation}\label{semiconiugio}
{\mathcal L} = {\mathcal W}_2 {\mathcal L}_\infty {\mathcal W}_1^{- 1}\,,
\end{equation}
where the operator ${\mathcal L}_\infty$ is given in \eqref{bf L infinito esplicito}. 

\begin{lemma}\label{stime trasformazioni totali}
There exists $\mu_0 := \mu_0(\tau, \nu) > 0$, $\mu_0 \geq \bar \sigma + \mathtt b$ such that if \eqref{ansatz} holds with $\mu = \mu_0$, then the operators ${\mathcal W}_1$ and ${\mathcal W}_2$ defined in \eqref{definizione cal W 12} satisfies for any $s_0 \leq s \leq S - \mu_0$, $m = 1, 2$ 
$$
{\mathcal W}_m : {\bf H}^{s + \frac12}_0(\T^{\nu + 1}) \to H^s_0(\T^{\nu + 1} ,\R^2)\,, \quad {\mathcal W}_m^{- 1}: {H}^{s + \frac12}_0(\T^{\nu + 1} ,\R^2) \to {\bf H}^{s}_0(\T^{\nu + 1})
$$ 
$$
\| {\mathcal W}_m^{\pm 1} {\bf h}_{\pm} \|_s^\Lipg \leq_s \| {\bf h}_{\pm} \|_{s + \mu_0}^\Lipg + \| {\bf u} \|_{s + \mu_0}^\Lipg \| {\bf h}_{\pm} \|_{s_0 + \mu_0}^\Lipg\,,\quad m = 1,2 
$$
for any Lipschitz family of Sobolev functions ${\bf h}_+ (\cdot ; \omega) \in {\bf H}^{s + \mu_0}_0(\T^{\nu + 1})$, ${\bf h}_-(\cdot; \omega) \in H^{s + \mu_0}(\T^{\nu + 1}, \R^2)$, $\omega \in \Omega_\infty^{2 \gamma}({\bf u})$. 
\end{lemma}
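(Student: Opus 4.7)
The strategy is to compose the tame estimates already established for each factor appearing in the definition \eqref{definizione cal W 12} of $\mathcal{W}_1, \mathcal{W}_2$, keeping careful track of the mapping properties between the real subspaces ${\bf H}^s_0$ and $H^s_0(\T^{\nu+1}, \R^2)$ and of the cumulative loss of derivatives. Concretely, I would choose $\mu_0 := \bar\sigma + \mathtt{b} + s_0 + C$ for some universal constant $C$ (with $\bar\sigma$ the loss from Lemma \ref{stime prima riducibilita} and $\mathtt{b}$ from \eqref{definizione alpha}), large enough so that every single factor of $\mathcal{W}_m^{\pm 1}$ can absorb its individual derivative loss.

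The verification proceeds factor by factor, starting from the innermost operator. First, the KAM transformation $\Phi_\infty^{\pm 1}$ maps ${\bf H}^s_0(\T^{\nu+1}) \to {\bf H}^s_0(\T^{\nu+1})$ with the tame bound \eqref{stima Phi infty} (from Theorem \ref{teoremadiriducibilita}-(i)), which combined with Lemma \ref{stime tame operatori decadimento Kirchoff} gives the required tame estimate on Sobolev norms. Next, $\mathcal{V}^{\pm 1}$ preserves ${\bf H}^s_0(\T^{\nu+1})$ by Lemma \ref{stime prima riducibilita} with estimates \eqref{stima cal W0}, \eqref{stima cal W01} (again upgraded from block-decay to Sobolev by Lemma \ref{stime tame operatori decadimento Kirchoff}). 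Multiplication by $\rho^{\pm 1}$ (which only enters in $\mathcal{W}_2$) is controlled by \eqref{stima rho} together with the product estimate of Lemma \ref{interpolazione C1 gamma}. Then $\mathcal{A}^{\pm 1}\mathbb{I}_2$ is a reparametrization of time acting componentwise and satisfies the tame estimates \eqref{stima cal A}, \eqref{stima cal A1}. The operator $\mathcal{B}^{\pm 1}$ is a constant $2 \times 2$ matrix of \eqref{matrice coordinate complesse} and, by \eqref{proprieta coordinate complesse}, is an isomorphism between ${\bf H}^s_0(\T^{\nu+1})$ and $H^s_0(\T^{\nu+1}, \R^2)$ with no loss of derivatives. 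Finally $\mathcal{S}^{\pm 1}$ is the only factor costing half a derivative, and it is estimated by Lemma \ref{Lemma dopo simmetrizzazione} (estimates \eqref{stime cal S1 cal S2 sup}, \eqref{stime cal S1 cal S2}); crucially, $\mathcal{S}$ maps from the real space $H^{s+1/2}_0(\T^{\nu+1}, \R^2)$ back into $H^s_0(\T^{\nu+1}, \R^2)$, which is exactly what we need at the outermost layer.

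The composition itself is then routine: at each stage I apply the inequality $\| A B {\bf h}\|_s^\Lipg \leq_s \| B {\bf h}\|_{s+\sigma_A}^\Lipg + \| {\bf u}\|_{s+\sigma_A}^\Lipg \| B{\bf h}\|_{s_0}^\Lipg$ (with $\sigma_A$ the loss of $A$), then absorb the inner factor via its own tame estimate, using the ansatz \eqref{ansatz} with $\mu = \mu_0$ to bound the low-norm contribution $\| {\bf u}\|_{s_0+\mu_0}^\Lipg \lessdot 1$. Interpolation (Lemma \ref{interpolazione C1 gamma}) keeps the low-norm factors uniformly bounded so that high-norm terms only appear linearly, as the statement requires. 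For $\mathcal{W}_m^{-1}$ one simply inverts each factor in reverse order, noting that each of $\mathcal{S}$, $\mathcal{B}$, $\mathcal{A}\mathbb{I}_2$, $\rho$, $\mathcal{V}$, $\Phi_\infty$ is invertible with tame estimates of the same form (on the invariant Cantor set $\Omega_\infty^{2\gamma}({\bf u})$ where $\Phi_\infty^{-1}$ is defined).

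The main obstacle is genuinely only bookkeeping, specifically: (i) checking that the sequence of spaces lines up correctly so that the real-vs-complex structure is preserved at each stage (the factor $\mathcal{B}$ is what mediates between ${\bf H}^s_0$ and $H^s_0(\T^{\nu+1}, \R^2)$); and (ii) aggregating the derivative losses $\tfrac{1}{2}$ (from $\mathcal{S}$), $\sigma$ (from $\mathcal{A}$), $\bar\sigma$ (from $\mathcal{V}$), and $\bar\sigma + \mathtt{b}$ (from $\Phi_\infty$) into a single $\mu_0 = \mu_0(\tau, \nu)$, while ensuring the upper bound $s \leq S - \mu_0$ is compatible with the upper bounds appearing in each invoked estimate.
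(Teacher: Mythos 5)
Your proof follows exactly the same route as the paper's (which is a terse one-liner citing the tame estimates \eqref{stime cal S1 cal S2}, \eqref{stima cal A}, \eqref{stima cal A1}, \eqref{stima cal W01}, \eqref{stima rho}, \eqref{stima Phi infty} and Lemma \ref{stime tame operatori decadimento Kirchoff}): compose the factor-by-factor tame estimates, upgrade block-decay bounds to Sobolev bounds via Lemma \ref{stime tame operatori decadimento Kirchoff}, keep the low-norm factors bounded by the ansatz \eqref{ansatz}, and absorb the accumulated derivative losses into a single $\mu_0$. You simply spell out the bookkeeping (including the real/complex mediation by $\mathcal{B}$ and the $\tfrac12$-derivative cost of $\mathcal{S}$) that the paper leaves implicit.
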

\begin{proof}
The lemma follows by the estimates \eqref{stime cal S1 cal S2}, \eqref{stima cal A}, \eqref{stima cal A1}, \eqref{stima cal W01}, \eqref{stima rho}, \eqref{stima Phi infty} and applying also Lemma \ref{stime tame operatori decadimento Kirchoff}. 
\end{proof}

\noindent
 For all $\ell \in \Z^\nu$, for all $j \in \N$, for all $\omega \in \Omega_o = \Omega_o({\bf u})$, we define  the matrix ${\bf B}_\infty(\ell, j; \omega) = {\bf B}_\infty(\ell, j; \omega, u(\omega))$ as
\begin{equation}\label{matrice prime di Melnikov}
{\bf B}_\infty(\ell, j; \omega) := \omega \cdot \ell \,\,{\bf I}_{j} + {\bf D}_j^\infty(\omega)\,.
\end{equation}
Then Define the set 
\begin{equation}\label{prime e seconde di Melnikov}
\Lambda_{\infty}^{2 \gamma}({\bf u}) := \Big\{ \omega \in \Omega_o({\bf u}) : \| {\bf B}_\infty(\ell, j; \omega)^{- 1} \| \leq \frac{\langle \ell \rangle^\tau}{2 \gamma\, j}\,, \quad \forall (\ell, j) \in \Z^\nu \times \N \Big\}.
\end{equation}
We prove the following 
\begin{lemma}[\bf Invertibility of ${\mathcal L}_\infty$]\label{invertibilita cal D Kn}
For all $\omega \in \Lambda_{\infty}^{2 \gamma}({\bf u})$, the operator ${\mathcal L}_\infty$ is invertible and its inverse ${\mathcal L}_\infty^{- 1} : {\bf H}^{s + 2 \tau + 1}_0(\T^{\nu + 1}) \to {\bf H}^s_0(\T^{\nu + 1})$ satisfies the tame estimate 
$$
\| {\mathcal L}_\infty^{- 1} {\bf h} \|_s^\Lipg \lessdot \gamma^{- 1}\| {\bf h} \|_{s + 2 \tau + 1}^\Lipg
$$
for any Lipschitz family ${\bf h}(\cdot; \omega) \in {\bf H}^{s + 2 \tau + 1}_0(\T^{\nu + 1})$, $\omega \in \Lambda_\infty^{2 \gamma}({\bf u})$ 
\end{lemma}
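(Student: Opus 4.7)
The plan is to exploit the fact that $\mathcal{L}_\infty$ is already $2\times 2$ block-diagonal in Fourier space, so that inversion reduces to inverting a countable family of finite-dimensional operators, one per pair $(\ell,j) \in \Z^\nu \times \N$, on the two-dimensional space ${\bf E}_j$. Given ${\bf h} = (h,\overline h) \in {\bf H}^1_0(\T^{\nu+1})$, write $h(\vphi,x) = \sum_{\ell \in \Z^\nu,\,j\in\N} \widehat{\bf h}_j(\ell,x) e^{\ii \ell\cdot\vphi}$ with $\widehat{\bf h}_j(\ell,\cdot) \in {\bf E}_j$ as in \eqref{raggruppamento modi Fourier}. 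Since $\mathcal{L}_\infty = \omega\cdot\partial_\vphi {\mathbb I}_2 + \mathcal{D}_\infty$ with $\mathcal{D}_\infty^{(1)} = \mathrm{diag}_{j\in\N} {\bf D}_j^\infty$, the equation $\mathcal{L}_\infty {\bf h} = {\bf g}$ decouples, on the first component, into
\begin{equation*}
\ii\, {\bf B}_\infty(\ell,j;\omega)\, \widehat{\bf h}_j(\ell,\cdot) = \widehat{\bf g}_j(\ell,\cdot), \qquad (\ell,j) \in \Z^\nu\times\N,
\end{equation*}
where ${\bf B}_\infty(\ell,j;\omega)$ is the operator on ${\bf E}_j$ defined in \eqref{matrice prime di Melnikov}. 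The second component is its complex conjugate and gives an identical estimate.

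For $\omega \in \Lambda_\infty^{2\gamma}({\bf u})$, each ${\bf B}_\infty(\ell,j)$ is invertible by the very definition \eqref{prime e seconde di Melnikov}, with $\|{\bf B}_\infty(\ell,j;\omega)^{-1}\| \leq \langle \ell\rangle^\tau/(2\gamma j)$. Hence $\widehat{\bf h}_j(\ell,\cdot) = -\ii\, {\bf B}_\infty(\ell,j;\omega)^{-1} \widehat{\bf g}_j(\ell,\cdot)$ and, using the norm expansion \eqref{altro modo norma s}, together with $\langle \ell\rangle^{2\tau} \leq \langle \ell, j\rangle^{2\tau}$ and $j^{-2} \leq 1$,
\begin{equation*}
\|h\|_s^2 \lessdot \gamma^{-2} \sum_{\ell,j} \langle \ell, j\rangle^{2s+2\tau} \|\widehat{\bf g}_j(\ell,\cdot)\|_{L^2}^2 = \gamma^{-2} \|g\|_{s+\tau}^2,
\end{equation*}
which yields the sup-norm bound $\|\mathcal{L}_\infty^{-1} {\bf h}\|_s^{\sup} \lessdot \gamma^{-1} \|{\bf h}\|_{s+\tau}^{\sup}$.

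For the Lipschitz seminorm, the standard identity
\begin{equation*}
{\bf B}_\infty^{-1}(\omega_1) - {\bf B}_\infty^{-1}(\omega_2) = -{\bf B}_\infty^{-1}(\omega_1)\bigl[{\bf B}_\infty(\omega_1) - {\bf B}_\infty(\omega_2)\bigr]{\bf B}_\infty^{-1}(\omega_2)
\end{equation*}
reduces matters to estimating $\|{\bf B}_\infty(\ell,j;\omega_1) - {\bf B}_\infty(\ell,j;\omega_2)\|$. Writing ${\bf D}_j^\infty(\omega) = m(\omega)\,j\,{\bf I}_j + \widehat{\bf D}_j^\infty(\omega)$ and using the Lipschitz bound $|m|^{\mathrm{lip}} \lessdot \e\gamma^{-1}$ from \eqref{stime m} together with $\|\widehat{\bf D}_j^\infty\|^{\mathrm{lip}} \lessdot \e\gamma^{-1} j^{-1}$ coming from \eqref{autovcon}, one gets
\begin{equation*}
\|{\bf B}_\infty(\omega_1) - {\bf B}_\infty(\omega_2)\| \lessdot \bigl(\langle \ell\rangle + \e\gamma^{-1} j + \e\gamma^{-1} j^{-1}\bigr) |\omega_1 - \omega_2|.
\end{equation*}
Combining with $\|{\bf B}_\infty^{-1}(\omega_k)\| \leq \langle\ell\rangle^\tau/(2\gamma j)$ and using $\e\gamma^{-1} \leq 1$, the dominant contribution is
\begin{equation*}
\|{\bf B}_\infty^{-1}(\omega_1) - {\bf B}_\infty^{-1}(\omega_2)\| \lessdot \gamma^{-2} \langle\ell\rangle^{2\tau+1} j^{-2} |\omega_1 - \omega_2|,
\end{equation*}
so that, again by \eqref{altro modo norma s}, $\gamma \|[\mathcal{L}_\infty^{-1}]^{\mathrm{lip}} {\bf g}\|_s \lessdot \gamma^{-1} \|{\bf g}\|_{s+2\tau+1}^{\sup}$. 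Combining with the previous step and with the triangle inequality $\|\mathcal{L}_\infty^{-1}(\omega_1)[{\bf g}(\omega_1)-{\bf g}(\omega_2)]\|_s \leq \gamma^{-1} \|{\bf g}(\omega_1)-{\bf g}(\omega_2)\|_{s+\tau}$ used to split the difference $\mathcal{L}_\infty^{-1}(\omega_1){\bf g}(\omega_1) - \mathcal{L}_\infty^{-1}(\omega_2){\bf g}(\omega_2)$ yields the weighted bound $\|\mathcal{L}_\infty^{-1} {\bf h}\|_s^{\Lipg} \lessdot \gamma^{-1} \|{\bf h}\|_{s+2\tau+1}^{\Lipg}$. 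No real obstacle is expected: the only delicate point is carefully balancing the three contributions to $\|{\bf B}_\infty(\omega_1)-{\bf B}_\infty(\omega_2)\|$ (the $\omega\cdot\ell$ piece, the $m\,j$ piece, and the $\widehat{\bf D}_j^\infty$ piece) and checking that the $j$ factors always recombine into powers of $\langle \ell, j\rangle$ after absorbing the $j^{-2}$ gain from $\|{\bf B}_\infty^{-1}(\omega_1)\|\cdot\|{\bf B}_\infty^{-1}(\omega_2)\|$.
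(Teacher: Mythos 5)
Your proposal is correct and follows essentially the same route as the paper: reduce to the block-diagonal component $\mathcal{L}_\infty^{(1)}$, invert $(\ell,j)$-blockwise using the $\Lambda_\infty^{2\gamma}$ bound on $\|{\bf B}_\infty(\ell,j)^{-1}\|$, and handle the Lipschitz seminorm via the resolvent identity together with the Lipschitz bounds on $m$ and $\widehat{\bf D}_j^\infty$. The only inaccuracy is your claim that the dominant piece of $\|{\bf B}_\infty^{-1}(\omega_1)-{\bf B}_\infty^{-1}(\omega_2)\|$ is $\gamma^{-2}\langle\ell\rangle^{2\tau+1}j^{-2}$; after absorbing $\e\gamma^{-1}\le 1$ the $m(\omega)j$-variation actually gives $\gamma^{-2}\langle\ell\rangle^{2\tau}j^{-1}$, which for $j\gg\langle\ell\rangle$ is larger — but since both are $\lessdot\gamma^{-2}\langle\ell\rangle^{2\tau+1}$ once $j\ge 1$ and $\langle\ell\rangle\ge 1$, this does not affect the final estimate, and the paper simply bounds the whole thing by $\langle\ell\rangle^{2\tau+1}\gamma^{-2}$ without singling out a dominant term.
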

\begin{proof}
By \eqref{cal D infinito}, \eqref{bf L infinito esplicito} the operator ${\mathcal L}_\infty$ has the form 
$$
{\mathcal L}_\infty = \begin{pmatrix}
{\mathcal L}_\infty^{(1)} & 0 \\
0 & \overline{\mathcal L}_\infty^{(1)}
\end{pmatrix}\,, \qquad {\mathcal L}_\infty^{(1)} := \omega \cdot \partial_\vphi + \ii {\mathcal D}_\infty^{(1)} \,,
$$
then it suffices to prove that ${\mathcal L}_\infty^{(1)}$ is invertible and its inverse satisfies the claimed estimate. 
 Let $\omega \in \Lambda_{\infty}^{2 \gamma}({\bf u})$, and let $h \in H^{s + \tau}_0(\T^{\nu + d})$. Using the block-representation \eqref{azione operator Toplitz a blocchi}, we have 
\begin{equation}\label{rappresentazione blocchi Fourier cal D Kn}
[{\mathcal L}_\infty^{(1)}]^{- 1} h(\vphi, x) = \sum_{(\ell, j) \in \Z^\nu \times \N} {\bf B}_\infty(\ell, j; \omega)^{- 1} [\widehat{\bf h}_j(\ell)] e^{\ii \ell \cdot \vphi}\,,
\end{equation}
where the $2 \times 2$ self-adjoint matrices ${\bf B}_\infty(\ell, j;\omega)$ are defined in \eqref{matrice prime di Melnikov}. Using \eqref{altro modo norma s}, we get 
\begin{align}
\| [{\mathcal L}_\infty^{(1)}]^{- 1} h \|_s^2 &  \leq  \sum_{(\ell, j) \in \Z^\nu \times \N} \langle \ell, j \rangle^{2 s} \| {\bf B}_\infty(\ell, j; \omega)^{- 1} \widehat{\bf h}_j(\ell) \|_{L^2}^2  \nonumber\\
&  \stackrel{\eqref{prime e seconde di Melnikov}}{\lessdot} \sum_{(\ell, j) \in \Z^\nu \times \N} \langle \ell, j \rangle^{2 s} \langle \ell \rangle^{2 \tau} \gamma^{- 2}  \|\widehat{\bf h}_j(\ell)\|_{L^2}^2  \lessdot \gamma^{- 2} \| h \|_{s + \tau}^2\,. \label{stima cal D Kn inv sup} 
\end{align}
Now let us consider a Lipschitz family of Sobolev functions $\omega \in \Lambda_\infty^{2 \gamma}({\bf u}) \to h(\cdot ; \omega) \in H^{s + 2 \tau + 1}_0$. Then, for any $\omega_1, \omega_2 \in \Lambda_\infty^{2 \gamma}({\bf u})$ one has that 
\begin{align}
& [{\mathcal L}_\infty^{(1)}(\omega_1)]^{- 1} h(\omega_1) - [{\mathcal L}_\infty^{(1)}(\omega_2)]^{- 1} h(\omega_2) \nonumber\\
& =  [{\mathcal L}_\infty^{(1)}(\omega_1)]^{- 1} \Big( h(\omega_1) - h(\omega_2) \Big) + \Big({\mathcal L}_\infty^{(1)}(\omega_1) - {\mathcal L}_\infty^{(1)}(\omega_2) \Big) h(\omega_2)\,. \label{decomposizione lip L infinito 1}
\end{align}
Arguing as in \eqref{stima cal D Kn inv sup}, the first term in \eqref{decomposizione lip L infinito 1} satisfies 
\begin{align}
 \Big\| [{\mathcal L}_\infty^{(1)}(\omega_1)]^{- 1} \Big( h(\omega_1) - h(\omega_2) \Big)\Big\|_s & \lessdot \gamma^{- 1} \| h(\omega_1) - h(\omega_2)\|_{s + \tau} \nonumber\\
 & \lessdot \gamma^{- 2} \| h \|_{s + \tau}^\Lipg |\omega_1 - \omega_2|\,. \label{stima primo pezzo decomposizione lip L infinito 1}
\end{align}
Now we estimate the second term in \eqref{decomposizione lip L infinito 1}. One has that 
\begin{align*}
& \Big({\mathcal L}_\infty^{(1)}(\omega_1)^{- 1} - {\mathcal L}_\infty^{(1)}(\omega_2)^{- 1} \Big) h(\omega_2)  \\
& = \sum_{(\ell, j) \in \Z^\nu \times \N} \Big({\bf B}_\infty(\ell, j; \omega_1)^{- 1} - {\bf B}_\infty(\ell, j; \omega_2)^{- 1} \Big)\widehat{\bf h}_j(\ell; \omega_2) e^{\ii \ell \cdot \vphi}\,.
\end{align*} 
Since 
\begin{align*}
&  {\bf B}_\infty(\ell, j; \omega_1)^{- 1} - {\bf B}_\infty(\ell, j; \omega_2)^{- 1} \\
&  = {\bf B}_\infty(\ell, j; \omega_1)^{- 1} \Big( {\bf B}_\infty(\ell, j; \omega_2) - {\bf B}_\infty(\ell, j ; \omega_1) \Big) {\bf B}_\infty(\ell, j; \omega_2)^{- 1}\,,
\end{align*}
using that $\omega_1, \omega_2 \in \Lambda_\infty^{2 \gamma}({\bf u})$, one has that 
\begin{align}
& \Big\|  {\bf B}_\infty(\ell, j; \omega_1)^{- 1} - {\bf B}_\infty(\ell, j; \omega_2)^{- 1} \Big\|  \nonumber\\
& \leq \frac{\langle \ell \rangle^{2 \tau}}{\gamma^2 j^2} \Big\| {\bf B}_\infty(\ell, j ; \omega_2) - {\bf B}_\infty(\ell, j; \omega_1) \Big\|\,. \label{acp}  
\end{align}
Furthermore, by \eqref{definizione bf D infinito}, \eqref{mu-j-nu} one gets 
\begin{align}
& \| {\bf B}_\infty(\ell, j; \omega_2) - {\bf B}_\infty(\ell, j; \omega_1) \| \nonumber\\
& \leq |\omega_1 - \omega_2||\ell| +  | m(\omega_1) - m(\omega_2) | j + \| \widehat{\bf D}_j^\infty(\omega_2) - \widehat{\bf D}_j^\infty(\omega_1) \| \nonumber\\
& \stackrel{ \eqref{stime m}, \eqref{autovcon}}{\lessdot} \big( |\ell| + \e \gamma^{- 1}  j  \big) |\omega_1 - \omega_2|\,. \label{mala femmina 0}
\end{align}
The estimates \eqref{acp}, \eqref{mala femmina 0} (using that $\e \gamma^{- 1} \leq 1$) imply that 
\begin{equation}\label{mala femmina 1}
\Big\|  {\bf B}_\infty(\ell, j; \omega_1)^{- 1} - {\bf B}_\infty(\ell, j; \omega_2)^{- 1} \Big\|  \lessdot  \langle \ell \rangle^{2 \tau + 1} \gamma^{- 2} |\omega_1 - \omega_2|\,.
\end{equation}
Therefore 
\begin{align}
& \Big\| \Big({\mathcal L}_\infty^{(1)}(\omega_1)^{- 1} - {\mathcal L}_\infty^{(1)}(\omega_2)^{- 1} \Big) h(\omega_2) \Big\|_s^2 \nonumber\\
&  \stackrel{\eqref{altro modo norma s}}{\leq}\sum_{(\ell, j) \in \Z^\nu \times \N} \langle \ell, j \rangle^{2 s} \Big\| \Big({\bf B}_\infty(\ell, j; \omega_1)^{- 1} - {\bf B}_\infty(\ell, j; \omega_2)^{- 1} \Big)\widehat{\bf h}_j(\ell; \omega_2) \Big\|_{L^2}^2 \nonumber\\
& \stackrel{\eqref{acp}}{\lessdot} \gamma^{- 4} \sum_{(\ell, j) \in \Z^\nu \times \N} \langle \ell, j \rangle^{2 s} \langle \ell \rangle^{4 \tau + 2} \|\widehat{\bf h}_j(\ell; \omega_2)\|_{L^2}^2  |\omega_1 - \omega_2|^2
\end{align}
which implies (recalling \eqref{altro modo norma s}) that 
\begin{equation}\label{stima secondo pezzo decomposizione lip L infinito 1}
\Big\| \Big({\mathcal L}_\infty^{(1)}(\omega_1) - {\mathcal L}_\infty^{(1)}(\omega_2) \Big) h(\omega_2) \Big\|_s \lessdot \gamma^{- 2} \| h \|_{s + 2 \tau + 1}^{\rm sup} |\omega_1 - \omega_2|\,.
\end{equation}
Hence, by \eqref{decomposizione lip L infinito 1}, \eqref{stima primo pezzo decomposizione lip L infinito 1}, \eqref{stima secondo pezzo decomposizione lip L infinito 1}, one gets 
\begin{equation}\label{acp 1}
\gamma \|[{\mathcal L}_\infty^{(1)}]^{- 1} h \|_s^{\rm lip} \lessdot \gamma^{- 1} \| h \|_{s + 2 \tau + 1}^\Lipg\,.
\end{equation}
Recalling \eqref{stima cal D Kn inv sup} and the definition of the norm $\| \cdot \|^{\Lipg}_s$ in \eqref{def norma Lipg}, the lemma follows. 
\end{proof}

\begin{theorem}[\bf Invertibility of ${\mathcal L}$]\label{partial invertibility of the linearized operator}
Let $\gamma \in (0, 1)$ and $\tau > 0$. There exists a constant 
\begin{equation}\label{costante finale mu 1 linearizzato}
\mu_1 = \mu_1(\tau, \nu) \geq \mu_0 \geq \overline \sigma + \mathtt b
\end{equation}
 where $\mu_0$ is given in Lemma \ref{stime trasformazioni totali}, such that for any Lipschitz family ${\bf u}(\cdot; \omega) \in { H}^S_0(\T^{\nu + 1}, \R^2)$, $S \geq s_0 + \mu_1$, satisfying 
\begin{equation}\label{ansatz finale inversione}
\|{\bf u} \|_{s_0 + \mu_1}^\Lipg \leq 1
\end{equation}
there exists a constant $\delta_1 = \delta_1(S, \tau, \nu) > 0$ (eventually smaller than the constant $\delta_0$ given in Theorem \ref{thm:abstract linear reducibility}) such that if 
\begin{equation}\label{final smallness inversion}
\e \gamma^{- 1} \leq \delta_1\,,
\end{equation}
then for all $\omega \in {\bf \Omega}_{\infty}^{2 \gamma}({\bf u}) := \Omega_\infty^{2 \gamma}({\bf u}) \cap \Lambda_\infty^{2 \gamma}({\bf u})$ $($see \eqref{prime e seconde di Melnikov}, \eqref{Omegainfty}$)$, the operator ${\mathcal L}$ defined in \eqref{operatore linearizzato} is invertible and its inverse ${\mathcal L}^{- 1} : H^{s + \mu_1}_0(\T^{\nu +1}, \R^2) \to H^{s}_0(\T^{\nu +1}, \R^2)$ satisfies $\forall s_0 \leq s \leq S - \mu_1$ the tame estimate  
\begin{equation}\label{stima cal T}
\| {\mathcal L}^{- 1} {\bf h} \|_s^\Lipg \leq_s \gamma^{- 1} \big( \| {\bf h} \|_{s + \mu_1}^\Lipg + \| {\bf u} \|_{s + \mu_1}^\Lipg\| {\bf h} \|_{s_0 + \mu_1}^\Lipg \big)\,, \quad 
\end{equation}
for any Lipschitz family ${\bf h}(\cdot; \omega) \in H^{s + \mu_1}_0(\T^{\nu +1}, \R^2)$, $\omega \in {\bf \Omega}_\infty^{2 \gamma}({\bf u})$. 
\end{theorem}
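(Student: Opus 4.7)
The plan is to invert \eqref{semiconiugio} directly, writing
\[
{\mathcal L}^{-1} = {\mathcal W}_1 \, {\mathcal L}_\infty^{-1} \, {\mathcal W}_2^{-1},
\]
so that the whole argument reduces to composing tame estimates for the three factors. The first step is to verify that the hypotheses of Theorem \ref{thm:abstract linear reducibility} and of Lemma \ref{stime trasformazioni totali} are met: choosing $\mu_1 \geq \mu_0$ (with $\mu_0$ from Lemma \ref{stime trasformazioni totali}), the ansatz \eqref{ansatz finale inversione} implies \eqref{ansatz}, and \eqref{final smallness inversion} with $\delta_1 \leq \delta_0$ implies \eqref{piccolezza1}. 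Therefore, for $\omega \in \mathbf{\Omega}_\infty^{2\gamma}({\bf u}) \subseteq \Omega_\infty^{2\gamma}({\bf u})$, the conjugation \eqref{semiconiugio} holds with ${\mathcal W}_1, {\mathcal W}_2$ well defined, symplectic-type maps satisfying the tame estimates of Lemma \ref{stime trasformazioni totali}, and for $\omega \in \mathbf{\Omega}_\infty^{2\gamma}({\bf u}) \subseteq \Lambda_\infty^{2\gamma}({\bf u})$, Lemma \ref{invertibilita cal D Kn} gives the invertibility of ${\mathcal L}_\infty$ with loss of $2\tau + 1$ derivatives.

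The core calculation will be a three-step chain. Given $\mathbf{h} \in H^{s+\mu_1}_0(\T^{\nu+1},\R^2)$, set $\mathbf{g} := {\mathcal W}_2^{-1}\mathbf{h} \in \mathbf{H}^{s+2\tau+1}_0(\T^{\nu+1})$, $\mathbf{k} := {\mathcal L}_\infty^{-1}\mathbf{g} \in \mathbf{H}^s_0(\T^{\nu+1})$, and ${\mathcal L}^{-1}\mathbf{h} = {\mathcal W}_1 \mathbf{k}$. Applying Lemma \ref{stime trasformazioni totali} at regularity $s + 2\tau + 1$ to control $\mathbf{g}$, then Lemma \ref{invertibilita cal D Kn} for $\mathbf{k}$ (at both $s$ and $s_0$), and finally Lemma \ref{stime trasformazioni totali} once more for ${\mathcal W}_1\mathbf{k}$, one obtains
\[
\|{\mathcal L}^{-1}\mathbf{h}\|_s^\Lipg \leq_s \|\mathbf{k}\|_{s+\mu_0}^\Lipg + \|\mathbf{u}\|_{s+\mu_0}^\Lipg \|\mathbf{k}\|_{s_0+\mu_0}^\Lipg,
\]
after which $\|\mathbf{k}\|_{\sigma}^\Lipg \lessdot \gamma^{-1}\|\mathbf{g}\|_{\sigma + 2\tau + 1}^\Lipg$, and the same Lemma \ref{stime trasformazioni totali} applied to $\mathbf{g}$. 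The ansatz $\|\mathbf{u}\|_{s_0+\mu_1}^\Lipg \leq 1$ is used to absorb the cross-terms $\|\mathbf{u}\|_{s_0+\mu_0+2\tau+1}^\Lipg$ that appear during the composition, so that the double-product terms collapse into the standard tame form. Taking $\mu_1 := 2\mu_0 + 2\tau + 1$ (rounded up so that \eqref{costante finale mu 1 linearizzato} holds) yields the required estimate \eqref{stima cal T}.

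The main technical delicacy is not any single step but keeping the tame structure clean through composition: after each factor, the estimate naturally produces a product $\|\mathbf{u}\|_{\text{high}}\|\mathbf{h}\|_{\text{low}}$, and one must ensure, via the smallness hypothesis \eqref{ansatz finale inversione} and careful interpolation, that after combining three such factors the final estimate is still \emph{linear} in $\|\mathbf{h}\|_{s+\mu_1}^\Lipg$ plus a single tame correction of the form $\|\mathbf{u}\|_{s+\mu_1}^\Lipg\|\mathbf{h}\|_{s_0+\mu_1}^\Lipg$, rather than a product of two high-norm terms. The Lipschitz part is handled in parallel throughout, since all the auxiliary lemmas are already stated in $\|\cdot\|_s^\Lipg$ and $|\cdot|_s^\Lipg$ norms, so no extra argument is needed for the dependence on $\omega$ beyond what the constituent lemmas already provide.
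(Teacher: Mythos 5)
Your proposal matches the paper's own proof, which is precisely to read off \eqref{semiconiugio} as ${\mathcal L}^{-1} = {\mathcal W}_1\,{\mathcal L}_\infty^{-1}\,{\mathcal W}_2^{-1}$ and chain Lemma \ref{stime trasformazioni totali} with Lemma \ref{invertibilita cal D Kn}, using the low-norm ansatz $\|{\bf u}\|_{s_0+\mu_1}^\Lipg \leq 1$ to absorb cross-terms into the tame form; the bookkeeping $\mu_1 = 2\mu_0 + 2\tau + 1$ is correct. One small imprecision: you state the intermediate ${\bf g} = {\mathcal W}_2^{-1}{\bf h}$ lives in ${\bf H}^{s+2\tau+1}_0$, but to close the chain at the top norm you in fact need ${\bf g}$ controlled at regularity $s+\mu_0+2\tau+1$ (and at $s_0+\mu_0+2\tau+1$), which is what the definition $\mu_1 = 2\mu_0 + 2\tau + 1$ exactly provides — the argument is unaffected.
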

\begin{proof}
 The estimate \eqref{stima cal T} follows by \eqref{semiconiugio} and by Lemmata \ref{stime trasformazioni totali}, \ref{invertibilita cal D Kn}. 
 \end{proof}

\section{The Nash-Moser iteration}\label{sec:NM}
 
Our next goal is to prove Theorem \ref{main theorem kirchoff 1}. It will be a consequence of Theorem \ref{iterazione-non-lineare} below where we construct iteratively a sequence of better and better approximate solutions of the operator
${\mathcal F}({\bf u} ) = {\mathcal F} (\e, \omega, {\bf u})$, defined in \eqref{operatore su media nulla} and of the Sections \ref{sezione stime in misura}, \ref{sezione conclusiva dim}.

\noindent 
We consider the finite-dimensional subspaces 
$$
{\mathcal H}_n := \Big\{ {\bf u} \in L_0^2(\T^{\nu + 1}, \R^2): \ \  
{\bf u} = { \Pi}_n {\bf u} \Big\},
$$
where $ {\Pi}_n  $ is the projector
\be\label{truncation NM}
{ \Pi}_n {\bf u} := (\Pi_n u, \Pi_n \psi) \,, \qquad \Pi_n h(\vphi, x) := \sum_{|(\ell, j)| \leq N_n} \widehat h_j(\ell) e^{\ii (\ell \cdot \vphi + j \cdot x)}
\ee
with $ N_n = N_0^{\chi^n} $ (see \eqref{defN}).
We also define $ \Pi_n^\bot := {\rm Id} - \Pi_n $.  
The projectors $ \Pi_n $, $ \Pi_n^\bot$ satisfy the following classical  smoothing properties
for the weighted norm $\| \cdot \|_s^\Lipg$, namely 
$ \forall  s, b \geq 0$, 
\begin{align}\label{smoothing-u1}
\|\Pi_{n} {\bf u} \|_{s + b}^\Lipg 
\leq K_{n}^{b} \| {\bf u} \|_{s}^\Lipg \, , 
\qquad \ \,   \|\Pi_{n}^\bot {\bf u} \|_{s}^\Lipg 
& \leq K_n^{- b} \| {\bf u} \|_{s + b}^\Lipg \,   . 
\end{align}
In view of the Nash-Moser Theorem \ref{iterazione-non-lineare} we introduce the constants 
\begin{align}\label{costanti nash moser}
&   \kappa := 6 \mu_1 + 19\,, \quad {\mathtt b}_1 := 2 \mu_1 + 4  + \kappa (1 + \chi^{- 1}) + 1\,,   \\
&   \mathtt a_1 :=  \kappa \chi^{- 1} - 2\mu_1\,, \quad \chi := \frac32
\label{costanti nash moser 1}
\end{align}
where 
 $\mu_1 := \mu_1(\tau, \nu) > 0$ is given in Theorem \ref{partial invertibility of the linearized operator}.  
\begin{theorem}\label{iterazione-non-lineare} 
{\bf (Nash-Moser)} 
Let $\gamma \in (0, 1)$ and $\tau > 0$. Assume that $ f \in {\mathcal C}^q(\T^{\nu} \times \T, \R) $, with $ q \geq  s_0 + \mathtt b_1  $. There exist $ \d \in (0, 1)$ small enough and $N_0 > 0$, $C_* > 0$ large enough such that if
\begin{equation}\label{nash moser smallness condition}  
 \e \gamma^{- 1}  \leq \d 
\end{equation}
 then: 
\begin{itemize}
\item[$({\mathcal P}1)_{n}$] 
 For all $n \geq 0$, there exists  a function ${\bf u}_n := (u_n, \psi_n) : \mG_n \subseteq \Omega \to {\mathcal H}_n$, $\omega \mapsto {\bf u}_n(\omega) = (u_n(\omega), \psi_n(\omega))$,  
with 
\begin{equation}\label{ansatz induttivi nell'iterazione}
 \| {\bf u}_{n} \|_{s_0 + \mu_1}^{\Lipg} \leq 1 \,, \qquad  
 \end{equation}
  $ {\bf u}_0 := 0$, 
where ${\mathcal G}_{n} $ are Cantor like subsets of $\Omega$ defined inductively by: 
\begin{equation}\label{G-n+1}
{\mathcal G}_0 := \Omega_{\gamma, \tau}\,, \quad {\mathcal G}_{n + 1} :=  {\bf \Omega}_\infty^{2 \gamma_n}({\bf u}_n)\,, \quad \forall n \geq 0\,,
\end{equation} 
(recall \eqref{diofantei Kn}) where $ \gamma_{n}:=\gamma (1 + 2^{-n}) $ and the set ${\bf \Omega}^{2 \gamma_n}({\bf u}_n)$ is given in Theorem \ref{partial invertibility of the linearized operator}, with $\Omega_o({\bf u}_n) = {\mathcal G}_n$. There exists a constant $C_*' > C_*$ such that for all $n \geq 1$, the difference ${\bf h}_n := {\bf u}_{n} - {\bf u}_{n-1}$ satisfies
\be  \label{hn}
\| {\bf h}_{n} \|_{s_0 + \mu_1}^\Lipg \leq C_*' \e \gamma^{-1} N_{n }^{-\mathtt a_1}\,.
\ee
\item[$({\mathcal P}2)_{n}$] For all $n \geq 0$, $ \| {\mathcal F}({\bf u}_n) \|_{s_{0}}^{\Lipg} \leq C_* \e N_{n }^{- \kappa}$.

\item[$({\mathcal P}3)_{n}$] For all $n \geq 0$, $ \|{\bf u}_n \|_{\mathfrak s_{0}+ \mathtt b_1}^{\Lipg} 
\leq C_* \e \gamma^{-1} N_{n }^{\kappa}\,, \qquad \| {\mathcal F}({\bf u}_n)\|_{s_0 + \mathtt b_1}^\Lipg \leq C_* \e N_{n }^\kappa$
\end{itemize}
All the Lip norms are defined on $ {\mathcal G}_{n} $.

\end{theorem}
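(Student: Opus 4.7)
The plan is to prove $({\mathcal P}1)_n$, $({\mathcal P}2)_n$, $({\mathcal P}3)_n$ simultaneously by strong induction on $n$. For the base case $n=0$, set ${\bf u}_0 := 0$: then $({\mathcal P}1)_0$ holds trivially, and since ${\mathcal F}(0) = (0, -\e f_\bot)^T$, one has $\| {\mathcal F}(0)\|_s^{\Lipg} \leq \e \| f \|_s$, so $({\mathcal P}2)_0$ and $({\mathcal P}3)_0$ follow by choosing the constant $C_* = C_*(\|f\|_{{\mathcal C}^{s_0 + \mathtt b_1}})$ large enough.

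For the inductive step from $n$ to $n+1$, I define the Newton-type correction with frequency truncation
$$
{\bf h}_{n+1} := - {\Pi}_{n+1} {\mathcal L}_n^{-1} {\Pi}_{n+1} {\mathcal F}({\bf u}_n), \qquad {\bf u}_{n+1} := {\bf u}_n + {\bf h}_{n+1},
$$
where ${\mathcal L}_n := \partial_{\bf u} {\mathcal F}({\bf u}_n)$ is the linearized operator analyzed in Sections \ref{riduzione linearizzato}--\ref{sec:redu}. By construction, ${\bf u}_{n+1} \in {\mathcal H}_{n+1}$. The point is that the inductive ansatz \eqref{ansatz induttivi nell'iterazione} together with the smallness \eqref{nash moser smallness condition} is exactly what is required to apply Theorem \ref{partial invertibility of the linearized operator} on the Cantor set ${\mathcal G}_{n+1} = {\bf \Omega}_\infty^{2\gamma_n}({\bf u}_n)$ (with parameter set $\Omega_o = {\mathcal G}_n$), yielding the tame inverse estimate \eqref{stima cal T} for ${\mathcal L}_n^{-1}$ with loss of $\mu_1$ derivatives. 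Combining \eqref{stima cal T} with the smoothing properties \eqref{smoothing-u1} and the inductive bounds on $\| {\mathcal F}({\bf u}_n)\|_{s_0}^{\Lipg}$ and $\| {\mathcal F}({\bf u}_n)\|_{s_0 + \mathtt b_1}^{\Lipg}$, I obtain control of ${\bf h}_{n+1}$ both in low and in high norm, in particular the estimate \eqref{hn} and the high-norm bound $\| {\bf u}_{n+1} \|_{s_0 + \mathtt b_1}^{\Lipg} \leq C_* \e \gamma^{-1} N_{n+1}^\kappa$ in $({\mathcal P}3)_{n+1}$; the inductive ansatz $\| {\bf u}_{n+1}\|_{s_0 + \mu_1}^{\Lipg} \leq 1$ then follows by Sobolev interpolation between $s_0 + \mu_1 = (s_0)(1-\theta) + (s_0 + \mathtt b_1)\theta$ and a telescoping argument using \eqref{hn}, provided $\mathtt a_1(1-\theta) - \kappa \theta > 0$, which is ensured by the choice \eqref{costanti nash moser}--\eqref{costanti nash moser 1}.

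The quadratic convergence $({\mathcal P}2)_{n+1}$ is obtained via Taylor expansion. Since the nonlinear operator ${\mathcal F}$ in \eqref{operatore su media nulla} is a polynomial in ${\bf u}$ of degree three (the only nonlinearity is the cubic term $\e (\int |\partial_x u|^2\,dx)\partial_{xx} u$), the remainder $Q({\bf u}_n, {\bf h}_{n+1}) := {\mathcal F}({\bf u}_{n+1}) - {\mathcal F}({\bf u}_n) - {\mathcal L}_n {\bf h}_{n+1}$ is an explicit multilinear expression, quadratic and cubic in ${\bf h}_{n+1}$, that admits a tame bound by Lemma \ref{interpolazione C1 gamma}. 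Substituting the definition of ${\bf h}_{n+1}$ yields
$$
{\mathcal F}({\bf u}_{n+1}) = {\Pi}_{n+1}^\bot {\mathcal F}({\bf u}_n) + [{\mathcal L}_n, {\Pi}_{n+1}] {\mathcal L}_n^{-1} {\Pi}_{n+1} {\mathcal F}({\bf u}_n) + Q({\bf u}_n, {\bf h}_{n+1}).
$$
The first term is controlled by the smoothing \eqref{smoothing-u1} together with $({\mathcal P}3)_n$, giving a factor $N_{n+1}^{-\mathtt b_1}N_n^\kappa$; the commutator term, after interpolation with $({\mathcal P}3)_n$ and the low-norm estimate for ${\bf h}_{n+1}$, contributes a term of order $\e^2 \gamma^{-2} N_{n+1}^{\text{const}} N_n^{-2\kappa}$; and the quadratic term inherits the square of the low-norm estimate on ${\bf h}_{n+1}$. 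A careful arithmetic with the exponents \eqref{costanti nash moser}--\eqref{costanti nash moser 1}, combined with $N_{n+1} = N_n^\chi$ and $\chi = 3/2$, shows that each contribution is bounded by $C_* \e N_{n+1}^{-\kappa}$, closing the induction.

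The main obstacle is the simultaneous balancing of the three estimates. Theorem \ref{partial invertibility of the linearized operator} loses $\mu_1$ derivatives and only invertibility holds on a shrinking Cantor set ${\mathcal G}_{n+1}$ that itself depends on the previous iterate ${\bf u}_n$; to make the scheme converge one must choose $\kappa$, $\mathtt a_1$, $\mathtt b_1$ so that (i) the quadratic Newton gain in low norm ($N_n^{-2\kappa}$) beats the loss $N_{n+1}^{2\mu_1 + \text{lower order}}$ coming from ${\mathcal L}_n^{-1}$, (ii) the high-norm growth $N_n^\kappa$ in $({\mathcal P}3)_n$ is consistent with the smoothing gain in the truncation error $N_{n+1}^{-\mathtt b_1} N_n^\kappa \lesssim N_{n+1}^{-\kappa}$, and (iii) interpolation feeds back the high-norm growth into the ansatz $\| {\bf u}_n\|_{s_0 + \mu_1}^{\Lipg} \leq 1$. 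These three constraints are precisely the source of the definitions $\kappa = 6\mu_1 + 19$, $\mathtt a_1 = \kappa/\chi - 2\mu_1$, and $\mathtt b_1$ as in \eqref{costanti nash moser}, and verifying them algebraically (uniformly in $n$, with a fixed $C_*$ absorbing the $s$-dependent constants $C(s_0 + \mathtt b_1)$ coming from the tame estimates) is the technical heart of the iteration, which dictates the choice of $N_0$ large and $\delta$ small in \eqref{nash moser smallness condition}.
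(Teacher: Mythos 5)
Your proposal is correct and follows essentially the same approach as the paper's: the same Newton-type update with double frequency truncation ${\bf h}_{n+1} = -\Pi_{n+1}{\mathcal L}_n^{-1}\Pi_{n+1}{\mathcal F}({\bf u}_n)$, the same decomposition of ${\mathcal F}({\bf u}_{n+1})$ into the truncation remainder $\Pi_{n+1}^\bot{\mathcal F}({\bf u}_n)$, a commutator term, and the Taylor remainder $Q_n$, and the same exponent arithmetic closing the iteration. One small remark: the Sobolev interpolation you invoke for the ansatz $\|{\bf u}_{n+1}\|_{s_0+\mu_1}^{\Lipg}\leq 1$ is unnecessary (and your constraint $\mathtt a_1(1-\theta)-\kappa\theta>0$ is not where the constants are actually used) --- since ${\bf h}_{n+1}\in{\mathcal H}_{n+1}$, the smoothing bound $\|{\bf h}_{n+1}\|_{s_0+\mu_1}\leq N_{n+1}^{\mu_1}\|{\bf h}_{n+1}\|_{s_0}$ already gives \eqref{hn}, and the ansatz then follows from the telescoping sum $\sum_k N_k^{-\mathtt a_1}<\infty$ alone, which is exactly what the paper does.
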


\begin{proof}
To simplify notations, in this proof we write $\| \cdot \|_s $ instead of $\| \cdot \|^\Lipg_s$. 

\medskip

{\sc Step 1:} \emph{Proof of} $({\mathcal P}1, 2, 3)_0$.
They follow, since by \eqref{operatore su media nulla}, 
$$
{\mathcal F}(0) = \begin{pmatrix}
0 \\
\e f_\bot(\vphi, x)
\end{pmatrix}\,, \quad \|{\mathcal F}(0) \|_s \leq \e \| f\|_s
$$
and taking $C_* > {\rm max}\{ \| f \|_{s_0} N_0^\kappa\,,\, \| f \|_{s_0 + \mathtt b_1} \} $.

\medskip

{\sc Step 2:} \emph{Assume that $({\mathcal P}1,2,3)_n$ hold for some $n \geq 0$, and prove $({\mathcal P}1,2,3)_{n+1}$.}
We are going to define the successive approximation ${\bf u}_{n+1} $. By $({\mathcal P}1)_n$, one has $\| {\bf u}_n\|_{s_0 + \mu_1} \leq 1$, moreover the smallness condition \eqref{nash moser smallness condition} implies
the smallness condition \eqref{final smallness inversion} of Theorem \ref{partial invertibility of the linearized operator}, by taking $\delta < \delta_1 = \delta_1(S, \tau, \nu)$ with $S = s_0 + \mu_1 + \mathtt b_1$. Then Theorem \ref{partial invertibility of the linearized operator} can be applied to the linearized operator 
\begin{equation}\label{definizione cal Ln}
{\mathcal L}_n = {\mathcal L}({\bf u}_n) = \partial_{\bf u} {\mathcal F}({\bf u}_n)\,,
\end{equation}
implying that for all $\omega \in {\mathcal G}_{n + 1} = {\bf \Omega}_\infty^{2 \gamma_n}({\bf u}_n)$, the operator ${\mathcal L}_n$ is invertible and its inverse satisfies $\forall s_0 \leq s \leq s_0 + \mathtt b_1$, $\forall {\bf h} \in {H}_0^{s + \mu_1}(\T^{\nu + 1}, \R^2)$, the tame estimate
\begin{equation}\label{stima Tn}
\| {\mathcal L}_n^{- 1} {\bf h} \|_s \leq_s \gamma^{- 1} \Big( \| {\bf h} \|_{s + \mu_1}  + \| {\bf u}_n\|_{s + \mu_1} \| {\bf h} \|_{s_0 + \mu_1}\Big)\,.
\end{equation}
Specializing the above estimate for $s= s_0$, using \eqref{ansatz induttivi nell'iterazione}, one gets 
\begin{align}
& \| {\mathcal L}_n^{- 1} {\bf h} \|_{s_0} \leq_{s_0} \gamma^{- 1} \| {\bf h} \|_{s_0 + \mu_1}\,. \label{stima Tn norma bassa} \\
\end{align}
 We define the successive approximation 
\begin{equation}\label{soluzioni approssimate}
{\bf u}_{n + 1} := {\bf u}_n + {\bf h}_{n + 1}\,, \quad 
{\bf h}_{n + 1}  :=  - {\Pi}_{n + 1 } {\mathcal L}_n^{- 1} \Pi_{n + 1} {\mathcal F}({\bf u}_n) 
\in {\mathcal H}_{n + 1}  
\end{equation}
where  $ {\Pi}_n $ is defined in \eqref{truncation NM}.
We now show that the iterative scheme in \eqref{soluzioni approssimate} is rapidly converging. 
We write  
$$ 
{\mathcal F}({\bf u}_{n + 1}) =  {\mathcal F}({\bf u}_n) + {\mathcal L}_n {\bf h}_{n + 1} + Q_n 
$$ 
where $ {\mathcal L}_n := \partial_{\bf u} {\mathcal F}({\bf u}_n) $ and  
\begin{equation}\label{def:Qn}
Q_n := Q({\bf u}_n, {\bf h}_{n + 1}) \, , \quad 
Q ({\bf u}_n, {\bf h})  :=  {\mathcal F}({\bf u}_n + {\bf h} ) - {\mathcal F}({\bf u}_n) - {\mathcal L}_n {\bf h} \,,
\end{equation}
${\bf h} \in {\mathcal H}_{n + 1}$. Then, by the the definition of $ {\bf h}_{n+1} $ in \eqref{soluzioni approssimate}, writing $\Pi_{n + 1} = {\rm Id} - \Pi_{n + 1}^\bot$,
we have 
\begin{align}
{\mathcal F}({\bf u}_{n + 1}) & = 
 {\mathcal F}({\bf u}_n) - {\mathcal L}_n { \Pi}_{n +1} {\mathcal L}_n^{- 1} \Pi_{n + 1}  {\mathcal F}({\bf u}_n) + Q_n \nonumber\\
 & = \Pi_{n + 1}^\bot {\mathcal F}({\bf u}_n) +  R_n + Q_n\,, \label{relazione algebrica induttiva} 
 \end{align}
where 
\begin{equation}\label{Rn Q tilde n}
R_n := [{\mathcal L}_n \,,\, { \Pi}_{n + 1}^\bot ] {\mathcal L}_n^{- 1} \Pi_{n + 1} {\mathcal F}({\bf u}_n) = [\Pi_{n + 1}\,,\, {\mathcal L}_n] {\mathcal L}_n^{- 1} \Pi_{n + 1} {\mathcal F}({\bf u}_n) \,.
\end{equation}
We first note that,  for all $ \om \in {\mathcal G}_n$, by $({\mathcal P}2)_n$ and by the smallness condition \eqref{nash moser smallness condition}, one has that 
\begin{equation}\label{piccolo ansatz Fn}
 \| {\mathcal F}({\bf u}_n)\|_{s_0} \gamma^{- 1} \leq 1\,.
\end{equation} 
\begin{lemma}
On the set  ${\mathcal G}_{n + 1}$, defining  
\begin{equation}\label{definizione Bn Nash Moser}
B_n :=  \| {\mathcal F}({\bf u}_n)\|_{s_0 + \mathtt b_1} + \e \| {\bf u}_n\|_{s_0 + \mathtt b_1}\,,
\end{equation}
we have
\begin{align}
B_{n + 1} & \leq_{s_0 +\mathtt b_1} N_{n + 1}^{2 \mu_1 + 6} B_n\,, \label{stima induttiva Bn Nash Moser}\\
  \| {\mathcal F}({\bf u}_{n + 1})\|_{s_0}  & \leq_{s_0 + {\mathtt b}_1}    N_{n + 1}^{2 \mu_1 + 4 - {\mathtt b}_1} 
 B_n 
+ N_{n + 1}^{2 \mu_1 + 6} \e \gamma^{- 2} \| {\mathcal F}({\bf u}_n)\|_{s_0}^2 \,.
\label{F(U n+1) norma bassa} 
\end{align}

\end{lemma}

\begin{proof} 
We first estimate $ {\bf h}_{n +1} $ defined in  \eqref{soluzioni approssimate}.

\noindent
{\bf Estimates of $ {\bf h}_{n+1} $.}
 By \eqref{soluzioni approssimate} and \eqref{smoothing-u1}, 
\eqref{stima Tn} (applied for $s = s_0 + \mathtt b_1$), \eqref{stima Tn norma bassa}, \eqref{ansatz induttivi nell'iterazione}, we get 
\begin{align}
\|  {\bf h}_{n + 1} \|_{s_0 + {\mathtt b}_1} 
& \leq_{s_0 + {\mathtt b}_1} \gamma^{- 1}
\big(  \|\Pi_{n + 1}{\mathcal F}({\bf u}_n) \|_{s_0 + \mu_1 +  {\mathtt b}_1}  \nonumber\\
& \quad \quad \quad  + 
\|{\bf u}_n \|_{s_0 + \mu_1 +  {\mathtt b}_1}\| \Pi_{n + 1} {\mathcal F}({\bf u}_n)\|_{s_0 + \mu_1}  \big)
\nonumber \\
& \stackrel{ \eqref{smoothing-u1}}{\leq_{s_0 + {\mathtt b}_1}} N_{n + 1}^{2 \mu_1  } \gamma^{- 1}\big( \| {\mathcal F}({\bf u}_n)\|_{s_0 + \mathtt b_1}  + \|{\bf u}_n \|_{s_0 + \mathtt b_1} \| {\mathcal F}({\bf u}_n)\|_{s_0} \big)\, ,   \nonumber\\
& \stackrel{\eqref{piccolo ansatz Fn}}{\leq_{s_0 + \mathtt b_1}} N_{n + 1}^{2 \mu_1} \Big(\gamma^{- 1} \| {\mathcal F}({\bf u}_n)\|_{s_0 + \mathtt b_1} +  \|{\bf u}_n \|_{s_0 + \mathtt b_1} \Big)\label{H n+1 alta} 
\\
\label{H n+1 bassa}
\|  {\bf h}_{n + 1}\|_{s_0} 
& \leq_{s_0} \gamma^{-1}N_{n + 1}^{\mu_1} \|  {\mathcal F}({\bf u}_n )\|_{s_0} \, .
\end{align}
Now we  estimate the terms $ Q_n $ in \eqref{def:Qn} and $R_n$ in \eqref{Rn Q tilde n}. 
\\[1mm]
{\bf Estimate of $ Q_n $.}
By  \eqref{def:Qn}, \eqref{operatore su media nulla}, \eqref{interpolazione C1 gamma}, \eqref{ansatz induttivi nell'iterazione}, \eqref{smoothing-u1}, 
we have the quadratic estimate
\be\label{stima parte quadratica norma bassa}
\| Q( {\bf u}_n , {\bf h}) \|_{s}  \leq_{s}\e N_{n + 1}^6  \Big( \| {\bf h}\|_{s} \| {\bf h}\|_{s_0} + \| {\bf u}_n\|_{s} \| {\bf h}\|_{s_0}^2 \Big)   \, , 
\ee
$\forall  {\bf h} \in {\mathcal H}_{n + 1}$, $\forall s \geq s_0$. Then the term $ Q_n $ in \eqref{def:Qn} satisfies, 
by \eqref{stima parte quadratica norma bassa}, \eqref{H n+1 alta}, 
 \eqref{H n+1 bassa}, \eqref{piccolo ansatz Fn}
\begin{align} 
\| Q_n\|_{s_0 + \mathtt b_1} & \leq_{s_0 + \mathtt b_1} N_{n + 1}^{2 \mu_1 + 6} \e \Big( \| {\mathcal F}({\bf u}_n)\|_{s_0 + \mathtt b_1} \gamma^{- 1} + \| {\bf u}_n\|_{s_0 + \mathtt b_1} \Big)\,, \label{Qn norma alta} \\
\| Q_n \|_{s_0} 
&  \leq_{s_0}  N_{n + 1}^{2 \mu_1 + 6 } \e \gamma^{- 2} \| {\mathcal F}({\bf u}_n ) \|_{s_0}^2\, . \label{Qn norma bassa}
\end{align}
{\bf Estimate of $ R_n $.} Now we estimate the term $R_n$ defined in \eqref{Rn Q tilde n}. By \eqref{definizione cal Ln}, \eqref{operatore linearizzato}, Lemma \ref{interpolazione C1 gamma}, \eqref{smoothing-u1}, \eqref{ansatz induttivi nell'iterazione}  the operator ${\mathcal L}_n$ satisfies the estimates 
\begin{align}
& \| [{\mathcal L}_n, \Pi_{n + 1}^\bot] {\bf h}\|_{s_0}   \leq_{s_0 + \mathtt b_1} N_{n + 1}^{- \mathtt b_1 + 2} \e \Big( \| {\bf h}\|_{s_0 + \mathtt b_1 } + \| {\bf u}_n\|_{s_0 + \mathtt b_1 } \| {\bf h}\|_{s_0 + 2} \Big)\,,  \nonumber\\
& \| [{\mathcal L}_n, \Pi_{n + 1}^\bot] {\bf h}\|_{s_0 + \mathtt b_1}  = \| [ \Pi_{n + 1}, {\mathcal L}_n] {\bf h}\|_{s_0 + \mathtt b_1} \nonumber\\
&  \leq_{s_0 + \mathtt b_1} N_{n + 1}^2 \e \Big( \| {\bf h}\|_{s_0 + \mathtt b_1 } + \| {\bf u}_n\|_{s_0 + \mathtt b_1 } \| {\bf h}\|_{s_0 + 2} \Big)\,.\nonumber
\end{align}
The above estimates, together with the estimates \eqref{stima Tn}, \eqref{stima Tn norma bassa}, \eqref{smoothing-u1} and using also \eqref{piccolo ansatz Fn} imply 
\begin{align}
\| R_n \|_{s_0} &\leq_{s_0 + \mathtt b_1 } N_{n + 1}^{2 \mu_1 + 4 - \mathtt b_1} \e \Big( \| {\mathcal F}({\bf u}_n)\|_{s_0 + \mathtt b_1} \gamma^{- 1} + \| {\bf u}_n\|_{s_0 + \mathtt b_1} \Big)\,,  \label{stima Rn Nash-Moser} \\
\| R_n \|_{s_0 + \mathtt b_1} &\leq_{s_0 + \mathtt b_1 } N_{n + 1}^{2 \mu_1 + 4} \e \Big( \| {\mathcal F}({\bf u}_n)\|_{s_0 + \mathtt b_1} \gamma^{- 1} + \| {\bf u}_n\|_{s_0 + \mathtt b_1} \Big)\,.  \label{stima Rn Nash-Moser norma alta}
\end{align}

\noindent
{\bf Estimates of ${\bf u}_{n + 1}$.} By \eqref{soluzioni approssimate} and by the estimates \eqref{H n+1 alta} one gets
\begin{align}
\| {\bf u}_{n + 1}\|_{s_0 + \mathtt b_1} & \leq_{s_0 + \mathtt b_1} N_{n + 1}^{2 \mu_1} \Big( \| {\bf u}_n\|_{s_0 + \mathtt b_1} + \| {\mathcal F}({\bf u}_n)\|_{s_0 + \mathtt b_1} \gamma^{- 1} \Big)\,. \label{stima u n+1 alta}
\end{align}

\noindent
Finally, by \eqref{relazione algebrica induttiva}, \eqref{Qn norma alta},  
 \eqref{Qn norma bassa}, \eqref{stima Rn Nash-Moser}, \eqref{stima Rn Nash-Moser norma alta}, \eqref{stima u n+1 alta}, \eqref{smoothing-u1}, $\e \gamma^{- 1} \leq 1$ and recalling the definition \eqref{definizione Bn Nash Moser}, we deduce the estimates \eqref{F(U n+1) norma bassa}, \eqref{stima induttiva Bn Nash Moser}. 
 \end{proof}
 \noindent
The estimates \eqref{F(U n+1) norma bassa}, \eqref{stima induttiva Bn Nash Moser}, together with \eqref{costanti nash moser}, \eqref{costanti nash moser 1}, $({\mathcal P}2)_n, ({\mathcal P}3)_n$, \eqref{nash moser smallness condition},  taking $\delta_1$ small enough and $N_0$ large enough, imply $({\mathcal P}2)_{n + 1}$, $({\mathcal P}3)_{n + 1}$.

\noindent
The estimate \eqref{hn} at the step $n + 1$ follows since 
\begin{align*}
\| {\bf h}_{n + 1}\|_{s_0 + \mu_1} & \stackrel{\eqref{smoothing-u1}}{\leq} N_{n + 1}^{\mu_1} \| {\bf h}_{n + 1}\|_{s_0} \stackrel{\eqref{H n+1 bassa}}{\leq} C(s_0) N_{n + 1}^{2 \mu_1} \gamma^{- 1}\|{\mathcal F}({\bf u}_n) \|_{s_0} \\
&  \stackrel{({\mathcal P}2)_n}{\leq} C(s_0) C_* N_{n + 1}^{2 \mu_1} N_n^{- \kappa} \e \gamma^{- 1}
\end{align*}
which implies the claimed estimate, by \eqref{costanti nash moser 1} and taking $C_*' = C(s_0) C_*$. 

\noindent
The estimate \eqref{ansatz induttivi nell'iterazione} at the step $n + 1$ follows since 
$$
\| {\bf u}_{n + 1}\|_{s_0 + \mu_1} \leq \sum_{k = 0}^{n + 1} \| {\bf h}_{k}\|_{s_0 + \mu_1} \stackrel{\eqref{hn}}{\leq} C_*' \e \gamma^{- 1} \sum_{k \geq 0} N_k^{- \mathtt a_1} \leq C' \e \gamma^{- 1} \stackrel{}{\leq} 1
$$
by taking $\delta$ in \eqref{nash moser smallness condition} small enough. Then $({\mathcal P}1)_{n + 1}$ follows and the proof is concluded.
\end{proof}

\bigskip

\noindent
\section{Measure estimates}\label{sezione stime in misura}
 In this Section we estimate the measure of the set 
 \begin{equation}\label{insieme di Cantor cal G infinito} 
{\mathcal G}_\infty := \cap_{n \geq 0} {\mathcal G}_n\,.
\end{equation}
where the sets $\ldots \subseteq {\mathcal G}_{n + 1} \subseteq {\mathcal G}_n \subseteq \ldots \subseteq {\mathcal G}_0$ are given in Theorem \ref{iterazione-non-lineare}-$({\mathcal P}1)_n$.
 First, let us define the constants  
\begin{equation}\label{valori finali tau tau*}
\tau^* := \nu + 2\,, \qquad \tau := 2 \tau^*+ \nu + 1\,,
\end{equation}
\begin{equation}\label{definizione gamma* tau*}
\gamma^* := 5 \gamma\,, \qquad \gamma_n^* := \gamma^* (1 + 2^{- n})\,, \qquad \forall n \geq 0\,
\end{equation}
and recall also that the constants
\begin{equation}\label{gamma n stime misura}
\gamma_n = \gamma(1 + 2^{- n})\,, \qquad \forall n \geq 0\,,
\end{equation}
are given in Theorem \ref{iterazione-non-lineare}-$({\mathcal P}1)_n$.
We prove the following 
\begin{theorem}\label{stima finale parametri cattivi}
One has 
$$
|\Omega \setminus {\mathcal G}_\infty| \lessdot \gamma\,.
$$
\end{theorem}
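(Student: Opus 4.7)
The plan is to slice $\Omega \setminus {\mathcal G}_\infty$ into the complement of the initial Diophantine set plus the layers $({\mathcal G}_n \setminus {\mathcal G}_{n+1})$, and to reduce each layer to a controlled union of sublevel sets of scalar Lipschitz functions built from the eigenvalues of the self-adjoint blocks ${\bf D}_j^\infty$. Concretely, I would write
$$
\Omega \setminus {\mathcal G}_\infty = (\Omega \setminus \Omega_{\gamma,\tau}) \cup \bigcup_{n \geq 0} ({\mathcal G}_n \setminus {\mathcal G}_{n+1}),
$$
and dispose of the first piece by the classical bound $|\Omega \setminus \Omega_{\gamma,\tau}| \lessdot \gamma$, which holds because $\tau \geq 3\nu + 5 > \nu$ by \eqref{valori finali tau tau*}. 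The remaining task is therefore to show that the ``layer sum'' is $O(\gamma)$.

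For fixed $n$, set ${\bf u} = {\bf u}_n$, $\gamma' = \gamma_n$, and let $\lambda_k^j(\omega)$, $k=1,2$, denote the ordered eigenvalues of the $2\times 2$ self-adjoint block ${\bf D}_j^\infty(\omega;{\bf u})$. Lemmas \ref{properties operators matrices} and \ref{risultato astratto operatori autoaggiunti}-(iii) identify $\|{\bf A}_\infty^\pm(\ell,j,j')^{-1}\|$ and $\|{\bf B}_\infty(\ell,j)^{-1}\|$ with the reciprocals of $\min_{k,k'}|\omega\cdot\ell \pm \lambda_k^j \mp \lambda_{k'}^{j'}|$ and $\min_k|\omega\cdot\ell + \lambda_k^j|$, so that ${\mathcal G}_n \setminus {\mathcal G}_{n+1}$ is covered by resonance slabs
$$
R^\pm_{\ell,j,j',k,k'} := \bigl\{\omega \in {\mathcal G}_n : |\omega\cdot\ell \pm \lambda_k^j(\omega) \mp \lambda_{k'}^{j'}(\omega)| < 2\gamma'\langle j \mp j'\rangle/\langle\ell\rangle^\tau\bigr\}
$$
and the analogously defined $Q_{\ell,j,k}$. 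From \eqref{stime m} and \eqref{autovcon} one reads off $|m|^{\lip} \lessdot \e\gamma^{-1}$ and $\|\widehat{\bf D}_j^\infty\|^{\lip} \lessdot \e\gamma^{-1} j^{-1}$. Hence, along the direction $\hat\ell = \ell/|\ell|$, the function $\phi^\pm(\omega) := \omega\cdot\ell \pm \lambda_k^j \mp \lambda_{k'}^{j'}$ has Lipschitz derivative of size at least $|\ell|/2$, provided $\e\gamma^{-1}|j \mp j'| \leq c|\ell|$. A one-dimensional R\"ussmann-type sublevel set estimate combined with Fubini then yields $|R^\pm_{\ell,j,j',k,k'}| \lessdot \gamma\langle j \mp j'\rangle/(|\ell|\langle\ell\rangle^\tau)$ and $|Q_{\ell,j,k}| \lessdot \gamma j/(|\ell|\langle\ell\rangle^\tau)$. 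Outside a ``resonance zone'' -- once $|j \mp j'|$, resp.\ $j$, exceeds $|\ell|$ by a fixed factor -- the leading term $|m(j \mp j')|$, resp.\ $|mj|$, dominates $|\omega\cdot\ell|$, so the required inverse bound is automatic and the slab is empty.

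The final summation is the delicate step. For the ${\bf A}^+$ and ${\bf B}$ conditions, only indices with $j+j' \lesssim |\ell|$, resp.\ $j \lesssim |\ell|$, contribute, and the total is bounded by $\gamma \sum_{\ell \neq 0} |\ell|^{\alpha}\langle\ell\rangle^{-\tau}$ with $\alpha \leq 2$, which converges for the large $\tau$ fixed in \eqref{valori finali tau tau*}. For the ${\bf A}^-$ condition, however, a naive sum over $j$ at fixed $k = j - j'$ diverges, and this is the manifestation of the double-multiplicity obstruction -- the main obstacle of the argument. I would resolve it by the universality observation that, for large $j$, the leading part $\omega\cdot\ell + m(\omega)k$ of $\phi^-$ is independent of $j$; the $O(\e/j)$ remainder then only thickens a single scalar slab by $O(\e/J)$ when one takes the union over $j \geq J$, so that $\bigcup_{j \geq J} R^-_{\ell,j,j-k,\cdot,\cdot}$ is absorbed into one set of measure $\lessdot \gamma\langle k\rangle/(|\ell|\langle\ell\rangle^\tau)$, while the finitely many small-$j$ terms are summed separately. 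Summing over $|k| \leq c|\ell|$ and $\ell \neq 0$ gives $|{\mathcal G}_n \setminus {\mathcal G}_{n+1}| \lessdot \gamma\, 2^{-n}$, where the geometric $2^{-n}$ comes from comparing the Melnikov thresholds at consecutive stages: by \eqref{lambdaestesi} and \eqref{Delta12 rj} one only has to exclude the additional slabs produced by the small variation $\widetilde{\bf D}_j^n - \widetilde{\bf D}_j^{n-1}$, which is $O(N_{n-1}^{-\mathtt a}\e j^{-1})$. Summing the geometric series in $n$ delivers the claimed $|\Omega \setminus {\mathcal G}_\infty| \lessdot \gamma$, and the universality step is exactly what required that the KAM scheme preserve the $1$-smoothing asymptotic $\|\widehat{\bf D}_j^\infty\| \lessdot \e j^{-1}$.
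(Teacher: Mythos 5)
Your decomposition into layers and your first-piece estimate are exactly the paper's (equations \eqref{splitting Omega - G infty 1}--\eqref{pupella 0}). Your identification of the resonance slabs through the eigenvalues $\lambda_k^{(j)}$ of the self-adjoint blocks and the Lipschitz-transversality estimate is also the paper's mechanism (Lemmas \ref{properties operators matrices}, \ref{risultato astratto operatori autoaggiunti} and Lemma \ref{stima in misura insiemi risonanti}). The real difference — and the place where your argument breaks — is how you handle the unbounded $j$-range for the $\mathbf{A}^-$ sets.

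The paper does not use your ``universality'' trick; instead it prepares the estimate by intersecting with an auxiliary \emph{first-order Melnikov set} $\Omega_{\gamma^*,\tau^*}^{(I)}({\bf u}_n)$ with a strictly \emph{smaller} exponent $\tau^* = \nu+2$ (see \eqref{prime di Melnikov}, \eqref{cal An (1)}--\eqref{cal An (2)}). On that set, Lemma \ref{limitazioni indici risonanti}-(ii) forces $j,j' \lessdot \langle\ell\rangle^{\tau^*}$ whenever $R^-_{\ell jj'}\neq\emptyset$: one writes $\mathbf{A}^-_\infty = (\omega\cdot\ell + m(j-j'))\mathbf{I}_{j,j'} + \Delta_\infty(j,j')$ with $\|\Delta_\infty\| \lessdot \e/\min\{j,j'\}$ and absorbs $\Delta_\infty$ by Neumann series when $\langle j-j'\rangle\min\{j,j'\} \geq \langle\ell\rangle^{\tau^*}$. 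Then there are only $O(|\ell|^{2\tau^*})$ admissible pairs, each contributing $\lessdot\gamma/\langle\ell\rangle^\tau$, and since $\tau = 2\tau^*+\nu+1$ the sum converges. The complementary piece $\mathcal{A}_n^{(2)}$, where the first Melnikov condition fails, is estimated separately.

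In your proposal, by contrast, you absorb the large-$j$ tail into a single thickened slab. To get the stated measure $\lessdot\gamma\langle k\rangle/(|\ell|\langle\ell\rangle^\tau)$ for $\bigcup_{j\geq J}R^-_{\ell,j,j-k}$, you need the drift $O(\e/J)$ of the slab centers to be dominated by the width $\gamma\langle k\rangle/\langle\ell\rangle^\tau$, which forces $J\gtrdot\e\langle\ell\rangle^\tau/(\gamma\langle k\rangle)$. With that $J$, the ``finitely many small-$j$ terms'' are in fact $J$ terms each of measure $\lessdot\gamma\langle k\rangle/(|\ell|\langle\ell\rangle^\tau)$, totalling $\lessdot\e/|\ell|$ at fixed $k$, hence $\lessdot\e$ after summing $|k|\lessdot|\ell|$ and \emph{divergent} after summing over $\ell\in\Z^\nu$. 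So the argument as stated does not close. It can be repaired — balance the two contributions by taking $J\sim\sqrt{\e\langle\ell\rangle^\tau/(\gamma\langle k\rangle)}$ instead, which makes both pieces of size $\lessdot\sqrt{\gamma\e\langle k\rangle/\langle\ell\rangle^\tau}/|\ell|$ and the $\ell$-sum convergent for the paper's $\tau$, with a final total $\lessdot\sqrt{\gamma\e}\lessdot\gamma$ — but this optimization is missing from your proposal and is a genuinely different calculation from the one you wrote. You also omit the inclusion $Q_{\ell j}({\bf u}_n)\subseteq Q_{\ell j}({\bf u}_{n-1})$, $R^\pm_{\ell jj'}({\bf u}_n)\subseteq R^\pm_{\ell jj'}({\bf u}_{n-1})$ for $|\ell|\leq N_{n-1}$ (Lemma \ref{risonanti-1}, Corollary \ref{excisione parametri io}), which is what makes the $n$-sum convergent; the decay rate you can get this way is actually $N_{n-1}^{-1}$, which is far better than the $2^{-n}$ you quote.
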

we write 
\begin{align}
\Omega \setminus {\mathcal G}_\infty & = (\Omega \setminus {\mathcal G}_0) \bigcup_{n \geq 0} ({\mathcal G}_n \setminus {\mathcal G}_{n + 1})\,. \label{splitting Omega - G infty 1}
\end{align}
Since ${\mathcal G}_0 = \Omega_{\gamma, \tau}$, see \eqref{G-n+1}, it follows by standard volume estimates 
\begin{equation}\label{pupella 0}
\Omega \setminus {\mathcal G}_0  = O(\gamma)\,.
\end{equation}

\noindent
For any $n \geq 0$, we define the set 
\begin{align}\label{prime di Melnikov}
 \Omega_{\gamma^*, \tau^*}^{(I)}({\bf u}_n) & :=  \big\{ \omega \in {\mathcal G}_n  :   |\omega \cdot \ell + m({ u}_n) j  | \geq \frac{\gamma_n^*  \langle j \rangle}{\langle \ell \rangle^{\tau^*}}\,,\\
 &  \quad \forall (\ell, j ) \in (\Z^\nu \times \N_0) \setminus \{(0, 0) \} \big\} \nonumber
\end{align} 
where we recall that the constant $m({ u}_n) = m(\omega, { u}_n(\omega))$ satisfies the estimates \eqref{stime m} (recall also that $\N_0 = \{ 0 \} \cup \N = \{0,1,2, \ldots \}$). 
For all $n \geq 0$, we make the splitting
\begin{equation}\label{pappa pappa}
{\mathcal G}_n \setminus {\mathcal G}_{n + 1} = {\mathcal A}_n^{(1)} \cup {\mathcal A}_n^{(2)}\,,
\end{equation}
where 
\begin{align}
& {\mathcal A}_n^{(1)}  := ({\mathcal G}_n \setminus {\mathcal G}_{n + 1}) \cap \Omega_{\gamma^*, \tau^*}^{(I)}({\bf u}_n)\,, \label{cal An (1)} \\
& \,  {\mathcal A}_n^{(2)}  :=  ({\mathcal G}_n \setminus {\mathcal G}_{n + 1})  \cap ({\mathcal G}_n \setminus \Omega_{\gamma^*, \tau^*}^{(I)}({\bf u}_n))\,. \label{cal An (2)}  
\end{align} 

\noindent
{\bf Estimate of ${\mathcal A}_n^{(1)}$.}

\bigskip

\noindent
By \eqref{cal An (1)}, using the inductive definition of the sets ${\mathcal G}_n$ given in \eqref{G-n+1} and recalling \eqref{prime e seconde di Melnikov}, \eqref{Omegainfty}, one has that for all $n \geq 0$,
\begin{align}
& {\mathcal A}_n^{(1)} =   \bigcup_{\begin{subarray}{c}
(\ell, j) \in \Z^\nu \times \N 
\end{subarray}} Q_{\ell j}({\bf u}_n) \bigcup \bigcup_{\begin{subarray}{c}
\ell \in \Z^\nu \\
j, j' \in \N \\
 (\ell, j, j') \neq (0, j, j)
\end{subarray}} R_{\ell j j'}^{-}({\bf u}_n) \bigcup \bigcup_{\begin{subarray}{c}
\ell \in \Z^\nu \\
j, j' \in \N
\end{subarray}}R_{\ell j j'}^{+}({\bf u}_n) \label{espansione risonanti}
\end{align}
where 
\begin{align}
\label{risonanti prime Melnikov} Q_{\ell j}({\bf u}_n) & := \Big\{ \omega \in {\mathcal G}_n \bigcap \Omega_{\gamma^*, \tau^*}^{(I)}({\bf u}_n) : \text{the \, operator}\, {\bf B}_\infty(\ell, j ; \omega , { u}_n(\omega)) \nonumber\\
&\text{is\,\, not\,\,invertible\,\,or} \,\text{\,\,it \,\,is \,\,invertible\,\,and}\, \nonumber\\
&  \| { \bf B}_\infty(\ell, j ; \omega , { u}_n(\omega))^{- 1}\| > \frac{\langle \ell \rangle^\tau}{2 \gamma_n j}  \Big\}\,,
\end{align}
\begin{align}
R_{\ell j j'}^-({\bf u}_n) & := \Big\{ \omega \in {\mathcal G}_n  \bigcap \Omega_{\gamma^*, \tau^*}^{(I)}({\bf u}_n) :  \text{the \, operator}\, {\bf A}_\infty^-(\ell, j, j' ; \omega , { u}_n(\omega)) \nonumber\\
& \,\, \text{is\,\, not\,\,invertible\,\,or} \,\, \text{\,\,it \,\,is \,\,invertible\,\,and}\, \nonumber\\
& \| { \bf A}_\infty^-(\ell, j, j' ; \omega , { u}_n(\omega))^{- 1}\|_{{\rm Op}(j, j')} > \frac{\langle \ell \rangle^\tau}{2\gamma_n \langle j - j' \rangle}  \Big\}\,, \label{risonanti seconde di melnikov differenza}
\end{align}
\begin{align}
R_{\ell j j'}^+({\bf u}_n) & := \Big\{ \omega \in{\mathcal G}_n  \bigcap \Omega_{\gamma^*, \tau^*}^{(I)}({\bf u}_n) :  \text{the \, operator}\, {\bf A}_\infty^+(\ell, j, j' ; \omega , { u}_n(\omega)) \nonumber\\
&  \text{is\,\, not\,\,invertible\,\,or} \,\, \text{\,\,it \,\,is \,\,invertible\,\,and}\, \nonumber\\
&  \| { \bf A}_\infty^+(\ell, j, j' ; \omega , { u}_n(\omega))^{- 1}\|_{{\rm Op}(j, j')} > \frac{\langle \ell \rangle^\tau}{2\gamma_n \langle j + j' \rangle}  \Big\}\,, \label{risonanti seconde di melnikov somma}
\end{align}
where we recall that by \eqref{bf A infinito - (ell,alpha,beta)}, \eqref{bf A infinito + (ell,alpha,beta)}, \eqref{matrice prime di Melnikov}
\begin{equation}\label{matrice prime Melnikov stime in misura}
{ \bf B}_\infty(\ell, j ; \omega , { u}_n(\omega)) := \omega \cdot \ell {\bf I}_j + {\bf D}_j^\infty(\omega, { u}_n(\omega))\,, \quad \ell \in \Z^\nu\,, \quad j \in \N\,,
\end{equation}
\begin{align}\label{definizione L infinito - seconde Melnikov stime misura}
{\bf A}_\infty^{-}(\ell, j, j'; \omega, { u}_n(\omega)) & :=  \omega \cdot \ell \,\,{\bf I}_{j,  j'} + M_L({\bf D}_j^\infty(\omega, { u}_n(\omega))) \\
& \quad  - M_R({\bf D}_{j'}^\infty(\omega, { u}_n(\omega)))\,, \nonumber
\end{align}
for all $\ell \in \Z^\nu$, $j, j' \in \N$, $(\ell, j, j' ) \neq (0, j, j)$ and 
\begin{align}\label{definizione L infinito + seconde Melnikov stime misura}
{\bf A}_\infty^{+}(\ell, j, j'; \omega, {u}_n(\omega)) & :=  \omega \cdot \ell \,\,{\bf I}_{j,  j'} + M_L({\bf D}_j^\infty(\omega, {u}_n(\omega)))  \\
& \qquad + M_R(\overline{{\bf D}_{j'}^\infty(\omega, { u}_n(\omega))})\, \nonumber
\end{align}
for $\ell \in \Z^\nu$, $j, j'  \in \N$.

\noindent
First we need to establish several auxiliary Lemmas. 
\begin{lemma}
For all $n \geq 1$, 
\begin{equation}\label{marco}
\sup_{j \in \N} \Big\|  \widehat{\bf D}_j^\infty({ u}_n) - \widehat{\bf D}_j^\infty({ u}_{n - 1}) \Big\|
\lessdot \e  N_{n - 1}^{-\mathtt a} \, , \quad \forall \omega \in {\mathcal G}_n \, , 
\end{equation}
where $\widehat{\bf D}_j^\infty({ u}_n) = \widehat{\bf D}_j^\infty(\omega,{ u}_n(\omega))$ is given in \eqref{definizione bf D infinito} and $ \mathtt a $ is defined in \eqref{definizione alpha}.
\end{lemma}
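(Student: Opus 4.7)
The plan is to combine the telescoping structure of $\widehat{\bf D}_j^\infty$ with a careful truncation, using the tail estimate \eqref{autovcon} for the high-frequency part of the scheme and the stepwise Lipschitz-in-$u$ estimate \eqref{deltarj12} for the low-frequency part. Throughout I will rely on the Nash--Moser inductive bound $\|{\bf h}_n\|_{s_0+\mu_1}^{\Lipg}\lessdot \e\gamma^{-1}N_n^{-\mathtt a_1}$ from $({\mathcal P}1)_n$, together with the fact that $\mu_1\geq \bar\sigma+\mathtt b$, so that $\|{\bf h}_n\|_{s_0+\bar\sigma+\mathtt b}^{\sup}\lessdot \e\gamma^{-1} N_n^{-\mathtt a_1}$.

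I would fix the truncation level $M:=n$ and split
\[
\widehat{\bf D}_j^\infty({u}_n)-\widehat{\bf D}_j^\infty({u}_{n-1})
=\bigl[\widehat{\bf D}_j^{M}({u}_n)-\widehat{\bf D}_j^{M}({u}_{n-1})\bigr]
+\bigl[(\widehat{\bf D}_j^\infty-\widehat{\bf D}_j^{M})({u}_n)-(\widehat{\bf D}_j^\infty-\widehat{\bf D}_j^{M})({u}_{n-1})\bigr].
\]
For the second (tail) bracket, \eqref{autovcon} applied at $u=u_n$ and $u=u_{n-1}$ gives
$\|(\widehat{\bf D}_j^\infty-\widehat{\bf D}_j^{M})(u)\|\lessdot \e N_{M-1}^{-\mathtt a}j^{-1}\leq \e N_{n-1}^{-\mathtt a}$,
so the tail contributes at most $\e N_{n-1}^{-\mathtt a}$ to the desired bound. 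For the first bracket I will use the telescoping $\widehat{\bf D}_j^M=\sum_{\nu=1}^M(\widehat{\bf D}_j^\nu-\widehat{\bf D}_j^{\nu-1})$ (with $\widehat{\bf D}_j^0=0$) to write it as $\sum_{\nu=1}^M\bigl[(\widehat{\bf D}_j^\nu-\widehat{\bf D}_j^{\nu-1})(u_n)-(\widehat{\bf D}_j^\nu-\widehat{\bf D}_j^{\nu-1})(u_{n-1})\bigr]$, and bound each summand by \eqref{deltarj12} and then \eqref{derivate-R-nu}, yielding
\[
\bigl\|\widehat{\bf D}_j^{M}({u}_n)-\widehat{\bf D}_j^{M}({u}_{n-1})\bigr\|
\;\leq\;\sum_{\nu=1}^M K_0\,N_{\nu-2}^{-\mathtt a}\,\e\,\|{\bf h}_n\|_{s_0+\bar\sigma+\mathtt b}
\;\lessdot\;\e\,\|{\bf h}_n\|_{s_0+\bar\sigma+\mathtt b},
\]
the last step using that $\sum_{\nu\geq 1}N_{\nu-2}^{-\mathtt a}$ converges. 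Plugging in $\|{\bf h}_n\|_{s_0+\bar\sigma+\mathtt b}\lessdot \e\gamma^{-1}N_n^{-\mathtt a_1}$ and using $\e\gamma^{-1}\leq 1$ together with the inequality $\mathtt a_1\chi\geq \mathtt a$ (which follows from the definitions \eqref{costanti nash moser}--\eqref{costanti nash moser 1} once $\mu_1=\mu_1(\tau,\nu)$ is taken large enough relative to $\tau$) gives $\e\|{\bf h}_n\|_{s_0+\bar\sigma+\mathtt b}\lessdot \e N_n^{-\mathtt a_1}\lessdot \e N_{n-1}^{-\mathtt a}$, which combined with the tail bound completes the estimate.

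The main obstacle is the legitimacy of invoking \eqref{deltarj12} and \eqref{derivate-R-nu}, which require $\omega$ to lie in an intersection $\Omega_\nu^{\gamma_1}({\bf u}_n)\cap\Omega_\nu^{\gamma_2}({\bf u}_{n-1})$ with $\gamma_i\in[\gamma/2,2\gamma]$, whereas only $\omega\in{\mathcal G}_n=\Omega_\infty^{2\gamma_{n-1}}({\bf u}_{n-1})$ is given. By the inclusion \eqref{cantorinclu} (applied at $\gamma=\gamma_{n-1}$), membership in $\Omega_\nu^{\gamma_{n-1}}({\bf u}_{n-1})$ is automatic. To transfer to a Cantor set of ${\bf u}_n$ I will invoke property $({\bf S4})_\nu$ with $\rho:=\gamma/2$; the hypothesis \eqref{legno} then becomes
\[
2\e K_1 N_{\nu-1}^\tau\|{\bf h}_n\|_{s_0+\bar\sigma+\mathtt b}^{\sup}\leq\gamma,
\]
which by $({\mathcal P}1)_n$ reduces to $\e^2\gamma^{-2}N_{\nu-1}^\tau N_n^{-\mathtt a_1}\lessdot 1$. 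For $\nu\leq M=n$ one has $N_{\nu-1}^\tau\leq N_{n-1}^\tau$, and the inequality $\tau\leq\mathtt a_1\chi$ (which again is a choice of $\mu_1$ large enough depending on $\tau$, see \eqref{costanti nash moser}--\eqref{costanti nash moser 1}) together with the smallness \eqref{nash moser smallness condition} guarantees the condition. This places $\omega$ in $\Omega_\nu^{\gamma_{n-1}-\gamma/2}({\bf u}_n)$ with $\gamma_{n-1}-\gamma/2\in[\gamma/2,2\gamma]$, validating the application of \eqref{deltarj12} and \eqref{derivate-R-nu} for every $\nu=1,\dots,n$, so the strategy closes.
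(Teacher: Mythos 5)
Your proof is correct and takes essentially the same route as the paper: truncate at level $n$, bound the tail by \eqref{autovcon}, bound the truncated difference by the Lipschitz-in-$u$ properties from $({\bf S3})_\nu$, and justify applicability on ${\mathcal G}_n$ via $({\bf S4})_\nu$ and \eqref{cantorinclu}. The only (harmless) difference is that you re-derive the aggregate Lipschitz bound $\|\widehat{\bf D}_j^n(u_n)-\widehat{\bf D}_j^n(u_{n-1})\|\lessdot\e\|u_n-u_{n-1}\|_{s_0+\bar\sigma+\mathtt b}$ by telescoping through \eqref{deltarj12} and \eqref{derivate-R-nu}, whereas the paper cites \eqref{Delta12 rj} directly (which was itself obtained by exactly this telescoping inside $({\bf S3})_\nu$); likewise, you check the Cantor-set inclusion for each $\nu\le n$ separately, while a single application of $({\bf S4})_n$ suffices because $(\Omega_\nu^\gamma)_\nu$ is nested. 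Both of these are valid, merely more verbose.
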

\begin{proof}
We first apply Theorem \ref{thm:abstract linear reducibility}-${\bf (S4)_{\nu}}$ 
with $ \nu = n  $, $ \g = \g_{n-1} $, $ \g - \rho = \g_n $, and $ {\bf u}_{1} $, $ {\bf u}_2 $, replaced, respectively,   
by $ {\bf u}_{n-1} $, $ {\bf u}_n $, 
 in order to conclude that
\be\label{primoste}
\Omega_{n}^{\g_{n-1}} ( {\bf u}_{n-1}) \subseteq \Omega_{n}^{\g_n} ( {\bf u}_n ) \, . 
\ee
The smallness condition in \eqref{legno} is satisfied because 
$ \overline \sigma + \mathtt b <  \mu_1$ (see \eqref{costante finale mu 1 linearizzato}) and so
\begin{align*}
&  K_1 N_{n - 1}^{\tau} \e \Vert { u}_n - { u}_{n - 1} \Vert_{s_0 + \overline \sigma + \mathtt b} \leq 
 K_1 N_{n - 1}^{\tau} \e \Vert { u}_n - { u}_{n - 1} \Vert_{ s_0 + \mu_1} \\
 & \leq  K_1 N_{n - 1}^{\tau} \e \Vert { \bf u}_n - { \bf u}_{n - 1} \Vert_{ s_0 + \mu_1} \stackrel{\eqref{hn}}{\leq} 
C_*^{'} K_1 \e^2 \gamma^{- 1} N_{n - 1}^{\tau} N_{n}^{- \mathtt a_1}   
  \\
  & \leq \gamma_{n-1} - \gamma_{n} =: \rho = \g 2^{-n } 
\end{align*}
for $\e \gamma^{-1}$ small enough, $N_0$ large enough and using that $ \mathtt a_1  > \t $ (see \eqref{costanti nash moser}, \eqref{costanti nash moser 1}, \eqref{costante finale mu 1 linearizzato}, \eqref{definizione alpha}).   
Then, by the definitions \eqref{G-n+1}, \eqref{prime e seconde di Melnikov}, \eqref{Omegainfty}, we have 
$$
{\mathcal G}_{n} \subseteq {\mathcal G}_{n-1} \cap \Omega_{\infty}^{2 \g_{n-1}} ({\bf u}_{n-1}) 
\stackrel{\eqref{cantorinclu}} \subseteq \bigcap_{\nu \geq 0} \Omega_{\nu}^{\gamma_{n - 1}}({\bf u}_{n - 1}) 
\subset \Omega_{n}^{\g_{n-1}}({\bf u}_{n-1})
\stackrel{\eqref{primoste}} \subseteq \Omega_{n}^{\g_n}({\bf u}_n).
$$
Next, for all $ \omega \in {\mathcal G}_n  \subset \Omega_{n}^{\g_{n-1}}({\bf u}_{n-1}) \cap 
 \Omega_{n}^{\g_n}({\bf u}_n)  $ both the operators $\widehat{\bf D}_j^n({ u}_{n - 1})$ and $\widehat{\bf D}_j^n({ u}_{n}) $ are well defined and applying the estimate \eqref{Delta12 rj} 
 with $ \nu = n $, we deduce that 
\begin{align}
\sup_{j \in \N} \Big\| \widehat{\bf D}_j^n({ u}_{n})  -  \widehat{\bf D}_j^n({ u}_{n - 1})\Big\|&  \stackrel{\eqref{Delta12 rj}} 
\lessdot  \e  \| { u}_{n} - { u}_{n - 1} \|_{s_0 + \overline \sigma + \mathtt b}  \nonumber\\
& \stackrel{\eqref{costante finale mu 1 linearizzato}}{\lessdot} \e \| {\bf u}_{n} - {\bf u}_{n - 1} \|_{s_0 + \mu_1} \stackrel{\eqref{hn}}{\lessdot} \e N_n^{- \mathtt a_1}\, . \label{vicin+1}
\end{align}
Moreover by \eqref{definizione bf D infinito}, \eqref{autovcon} (with $ \nu = n $), for all $j \in \N$, we get 
\begin{align}\label{diffrkn}
 \Big\|\widehat{\bf D}_j^\infty({ u}_{n - 1}) -\widehat{\bf D}_j^n({ u}_{n - 1})  \Big\| + \Big\| \widehat{\bf D}_j^\infty({u}_{n }) -\widehat{\bf D}_j^n({ u}_{n}) \Big\|  \lessdot 
\e  N_{n - 1}^{- \mathtt a} \,.
\end{align}
Therefore, for all $\omega \in {\mathcal G}_{n}$,  $ \forall j \in \N$,   
\begin{align}
\Big\| \widehat{\bf D}_j^\infty({ u}_n) - \widehat{\bf D}_j^\infty({ u}_{n - 1}) \Big\|
& \leq 
\Big\| \widehat{\bf D}_j^n({ u}_{n})  -  \widehat{\bf D}_j^n({ u}_{n - 1})\Big\|
+ \Big\|\widehat{\bf D}_j^\infty({ u}_{n - 1}) -\widehat{\bf D}_j^n({ u}_{n - 1})  \Big\| \nonumber\\
& \qquad + \Big\| \widehat{\bf D}_j^\infty({ u}_{n }) -\widehat{\bf D}_j^n({ u}_{n}) \Big\|  \nonumber\\
& \stackrel{\eqref{vicin+1}, \eqref{diffrkn}} \lessdot
 \e (N_{n - 1}^{- \mathtt a} + N_n^{- \mathtt a_1}) \lessdot \e N_{n - 1}^{- \mathtt a}\,,
  \label{variazione autovalori finali in u}
\end{align}
since by \eqref{costanti nash moser}, \eqref{costanti nash moser 1} one has $\mathtt a_1 > \mu_1$, and by \eqref{costante finale mu 1 linearizzato} $\mu_1 \geq \overline \sigma + \mathtt b \stackrel{\eqref{definizione alpha}}{\geq} \mathtt a$. Then the claimed estimate is proved. 
\end{proof}
\begin{lemma}\label{risonanti-1}
For  $ \e  \g^{- 1}$ small enough,  
for all $n \geq 1$, $|\ell|\leq N_{n - 1}$, 
\begin{equation}\label{inclusione-1}
Q_{\ell j}({\bf u}_n) \subseteq Q_{\ell j}({\bf u}_{n - 1})\,, \qquad R_{\ell j j'}^{\pm}({\bf u}_{n}) \subseteq R_{\ell j j'}^{\pm}({\bf u}_{n - 1}).
\end{equation}
\end{lemma}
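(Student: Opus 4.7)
I would argue by contraposition, using a Neumann series perturbation of the spectral conditions. The two quantitative inputs are \eqref{marco}, giving $\sup_{j \in \N}\|\widehat{\bf D}_j^\infty({\bf u}_n) - \widehat{\bf D}_j^\infty({\bf u}_{n-1})\| \lessdot \e\, N_{n-1}^{-\mathtt a}$ on ${\mathcal G}_n$, and the Lipschitz bound $|m({\bf u}_n) - m({\bf u}_{n-1})| \lessdot \e\,\|{\bf u}_n - {\bf u}_{n-1}\|_{s_0 + 2} \lessdot \e^2 \gamma^{-1} N_n^{-\mathtt a_1}$, obtained from the third inequality in \eqref{stime m} combined with the super-exponential convergence \eqref{hn}. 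The inductive construction \eqref{G-n+1} provides ${\mathcal G}_n \subseteq {\mathcal G}_{n-1}$, transferring the ambient Cantor-set membership to level $n-1$.

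Fix $|\ell| \leq N_{n-1}$ and $\omega \in Q_{\ell j}({\bf u}_n)$, and suppose for contradiction that $\|{\bf B}_\infty(\ell, j;\omega, u_{n-1})^{-1}\| \leq \langle \ell \rangle^\tau/(2\gamma_{n-1} j)$. From \eqref{matrice prime Melnikov stime in misura}, \eqref{mu-j-nu}, and \eqref{definizione bf D infinito} the increment decomposes as
\begin{equation*}
\Delta_n := {\bf B}_\infty(\ell, j;\omega, u_n) - {\bf B}_\infty(\ell, j;\omega, u_{n-1}) = \bigl(m({\bf u}_n) - m({\bf u}_{n-1})\bigr)\, j\, {\bf I}_j + \bigl(\widehat{\bf D}_j^\infty({\bf u}_n) - \widehat{\bf D}_j^\infty({\bf u}_{n-1})\bigr),
\end{equation*}
and the two input estimates together with $|\ell| \leq N_{n-1}$ give
\begin{equation*}
\|{\bf B}_\infty(\ell, j;\omega, u_{n-1})^{-1}\|\,\|\Delta_n\| \lessdot \e\gamma^{-1}\bigl(\e\gamma^{-1} N_{n-1}^\tau N_n^{-\mathtt a_1} + N_{n-1}^{\tau - \mathtt a}\bigr).
\end{equation*}
Since $\mathtt a = 6\tau + 4$ by \eqref{definizione alpha} and $\mathtt a_1 > \tau/\chi$ by \eqref{costanti nash moser}--\eqref{costanti nash moser 1}, the right-hand side is dominated by $\tfrac{1}{4}(\gamma_{n-1} - \gamma_n)/\gamma_{n-1}$ once $\e\gamma^{-1}$ is small enough and $N_0$ large enough. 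A Neumann series inversion then yields $\|{\bf B}_\infty(\ell, j;\omega, u_n)^{-1}\| \leq \langle \ell \rangle^\tau/(2\gamma_n j)$, contradicting $\omega \in Q_{\ell j}({\bf u}_n)$.

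The inclusions for $R^{\pm}_{\ell j j'}({\bf u}_n)$ follow by the same scheme with ${\bf B}_\infty$ replaced by ${\bf A}_\infty^\pm$. The essential new ingredient is the structural identity $M_L\bigl((m({\bf u}_n) - m({\bf u}_{n-1})) j\,{\bf I}_j\bigr) \mp M_R\bigl((m({\bf u}_n) - m({\bf u}_{n-1})) j'\,{\bf I}_{j'}\bigr) = (m({\bf u}_n) - m({\bf u}_{n-1}))(j \mp j')\,{\bf I}_{j, j'}$, which yields $\|{\bf A}_\infty^\pm(u_n) - {\bf A}_\infty^\pm(u_{n-1})\|_{\mathrm{Op}(j, j')} \lessdot |m({\bf u}_n) - m({\bf u}_{n-1})|\langle j \mp j'\rangle + \e N_{n-1}^{-\mathtt a}$; pairing this with the factor $1/\langle j \mp j'\rangle$ in \eqref{risonanti seconde di melnikov differenza}--\eqref{risonanti seconde di melnikov somma} reproduces the same estimate as for $Q_{\ell j}$, and the Neumann argument closes identically. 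The most delicate step is verifying the first Melnikov membership $Q_{\ell j}({\bf u}_n) \subseteq \Omega^{(I)}_{\gamma^*, \tau^*}({\bf u}_{n-1})$ (and analogously for $R^\pm$), which I would handle by perturbing the shift $m({\bf u}_n) j'$ via the Lipschitz bound on $m$ and splitting the universally quantified $(\ell', j')$ into a near-resonant range $j' \lesssim \langle \ell' \rangle$ and a trivial range where the lower bound is automatic from $m \sim 1 \gg \gamma^*$; the super-exponential smallness of $|m({\bf u}_n) - m({\bf u}_{n-1})|$, together with the exponent choice \eqref{valori finali tau tau*} and the constraint $|\ell| \leq N_{n-1}$, ensures the perturbation is absorbed in the slack between $\gamma^*_n$ and $\gamma^*_{n-1}$.
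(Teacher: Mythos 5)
Your core Neumann perturbation argument matches the paper's exactly: the factorization ${\bf B}_\infty(u_n) = {\bf B}_\infty(u_{n-1})\bigl({\bf I}_j + {\bf B}_\infty(u_{n-1})^{-1}\Delta_n\bigr)$, the split of $\Delta_n$ into the $m$-difference piece (absorbing the factor $j$ against the $j^{-1}$ in the $u_{n-1}$ bound) and the $\widehat{\bf D}^\infty$-difference piece controlled by \eqref{marco}, the cutoff $|\ell| \leq N_{n-1}$ to make the product small, and the comparison with $(\gamma_{n-1}-\gamma_n)/\gamma_n$ to recover the $\gamma_n$-bound. That is precisely the paper's computation.

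Where you go astray is the framing, and the last paragraph in particular. You treat the $u_{n-1}$ bound as a hypothesis adopted "for contradiction" and then worry at the end about separately proving $Q_{\ell j}({\bf u}_n) \subseteq \Omega^{(I)}_{\gamma^*, \tau^*}({\bf u}_{n-1})$. Neither is needed. Any $\omega$ in one of these resonant sets lies in ${\mathcal G}_n = {\bf \Omega}_\infty^{2\gamma_{n-1}}({\bf u}_{n-1}) = \Omega_\infty^{2\gamma_{n-1}}({\bf u}_{n-1}) \cap \Lambda_\infty^{2\gamma_{n-1}}({\bf u}_{n-1})$, so the bounds on ${\bf B}_\infty(\ell,j;u_{n-1})^{-1}$ and ${\bf A}_\infty^\pm(\ell,j,j';u_{n-1})^{-1}$ are \emph{automatically} available — they are a consequence of $\omega \in {\mathcal G}_n$, not an assumption you can contradict. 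Your Neumann estimate then shows the same bounds hold at $u_n$ for all $|\ell| \leq N_{n-1}$, which contradicts $\omega \in Q_{\ell j}({\bf u}_n)$ (resp.\ $R^\pm_{\ell jj'}({\bf u}_n)$). The conclusion is therefore that these sets are \emph{empty} in the range $|\ell|\leq N_{n-1}$, and the claimed inclusion is trivial. Once you see this, the final paragraph about the first-Melnikov ambient-set membership evaporates; it is a detour you invented because you were trying to chase $\omega$ into $Q_{\ell j}({\bf u}_{n-1})$ rather than out of $Q_{\ell j}({\bf u}_n)$. The perturbative handling of the $\Omega^{(I)}$ condition you sketch there is also much vaguer than the rest of the argument and would have been the hardest part to make rigorous, so it is fortunate that the lemma's logic makes it unnecessary.
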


\begin{proof}
We prove that $R_{\ell j j'}^{-}({\bf u}_{n}) \subseteq R_{\ell j j'}^{-}({\bf u}_{n - 1})$. The proof of the other inclusions is analogous. 
For all $ j, j' \in \N$, $|\ell| \leq N_{n - 1}$, $(\ell, j, j') \neq (0, j, j)$, $\omega \in \mG_n$, we write 
\begin{align*}
{\bf A}^-_\infty(\ell, j, j'; { u}_n) & = {\bf A}_\infty^-(\ell,j, j'; { u}_{n - 1}) \Big( {\bf I}_{j,  j'} + {\bf A}_\infty^-(\ell, j, j'; { u}_{n - 1})^{- 1} \Delta_\infty(j, j', n) \Big)\,,
\end{align*}
where 
\begin{align}\label{iron maiden 9}
\Delta_\infty(j, j', n) & := M_L \Big({\bf D}_j^\infty( { u}_n) - {\bf D}_j^\infty({ u}_{n - 1})\Big) \\
&  - M_R \Big( {\bf D}_{j'}^\infty( { u}_n) - {\bf D}_{j'}^\infty({ u}_{n - 1}) \Big)\,. \nonumber
\end{align}
Note that 
\begin{align}
& \Delta_\infty(j, j', n)  \stackrel{\eqref{mu-j-nu}, \eqref{definizione bf D infinito}}{=} \big(m({ u}_n) - m({ u}_{n - 1}) \big) (j - j') {\bf I}_{j,  j'} \nonumber\\
& \qquad + M_L \Big(\widehat{\bf D}_j^\infty( { u}_n) - \widehat{\bf D}_j^\infty({ u}_{n - 1})\Big) - M_R \Big( \widehat{\bf D}_{j'}^\infty( { u}_n) - \widehat{\bf D}_{j'}^\infty({ u}_{n - 1}) \Big)\,. \label{principe filiberto 0}
\end{align}
By \eqref{principe filiberto 0}, \eqref{stime m}, \eqref{norma operatoriale ML MR}, \eqref{marco} one gets 
\begin{align}
\| \Delta_\infty(j, j', n) \|_{{\rm Op}(j, j')} & \lessdot  \e \langle j - j' \rangle N_{n - 1}^{- \mathtt a}\,.\label{iron maiden 10}
\end{align}
\noindent
Hence for $|\ell| \leq N_{n - 1}$ we get  
\begin{align*}
\Big\| {\bf A}_\infty^-(\ell, j, j'; {u}_{n - 1})^{- 1} \Delta_\infty(j, j', n) \Big\|_{{\rm Op}(j, j')} & \leq \frac{\langle \ell \rangle^\tau}{2 \gamma_{n - 1} \langle j - j' \rangle} \| \Delta_\infty(j, j', n) \|_{{\rm Op}(j, j')} \\
& \stackrel{\eqref{iron maiden 10}}{\lessdot}  \e \gamma^{- 1}  N_{n - 1}^{\tau - \mathtt a} \leq \frac{1}{2}
\end{align*}
for $\e \gamma^{- 1}$ small enough and since $\mathtt a > \tau$ (see \eqref{definizione alpha}). Then for all $|\ell| \leq N_{n - 1}$, for all $j, j' \in \N$, the operator ${\bf A}_\infty^-(\ell,j, j'; {u}_n)$ is invertible by Neumann series and 
\begin{align*}
\| {\bf A}_\infty^-(\ell, j, j'; { u}_n)^{- 1} \|_{{\rm Op}(j, j')} & \leq  \| {\bf A}_\infty^-(\ell, j, j'; { u}_{n - 1})^{- 1} \|_{{\rm Op}(j, j')} \Big(1 + C \e \gamma^{- 1} N_{n - 1}^{\tau - \mathtt a} \Big) \nonumber\\
& \leq \frac{\langle \ell \rangle^\tau}{2 \gamma_{n - 1}\langle j - j' \rangle} \Big( 1 + C \e \gamma^{- 1} N_{n - 1}^{\tau - \mathtt a} \Big)\,.
\end{align*}
Since by the definition of $\gamma_n$, 
$$
\frac{\gamma_{n - 1} - \gamma_n}{\gamma_n} = \frac{1}{1 + 2^n}\,,
$$
it follows that for $\e \gamma^{- 1}$ sufficiently small  
$$
C \e \gamma^{- 1} N_{n - 1}^{\tau - \mathtt a} \leq \frac{\gamma_{n - 1} - \gamma_n}{\gamma_n}\,.
$$
Hence
$$
\| {\bf A}_\infty^-(\ell, j, j'; {u}_n)^{- 1} \|_{{\rm Op}(j, j')} \leq \frac{\langle \ell \rangle^\tau}{2 \gamma_{n}\langle j - j' \rangle}
$$
which implies the claimed inclusion  $R_{\ell j j'}^{-}({\bf u}_{n}) \subseteq R_{\ell j j'}^{-}({\bf u}_{n - 1})$ in \eqref{inclusione-1}. 
\end{proof}

\begin{corollary}\label{excisione parametri io}
For any $n \geq 1$
\noindent
$(i)$ $Q_{\ell j}({\bf u}_n) = \emptyset$, for all $|\ell| \leq N_{n - 1}$, 

\noindent
$(ii)$ $R_{\ell j j' }^-({\bf u}_n) = \emptyset$, for all $|\ell| \leq N_{n - 1}$, $(\ell,j, j') \neq (0, j, j)$,

\noindent
$(iii)$ $R_{\ell j j'}^+({\bf u}_n) = \emptyset$, for all $|\ell| \leq N_{n - 1}$.

\noindent
Hence for any $n \geq 1$,
\be\label{parametri cattivi}
{\mathcal A}_n^{(1)}  \stackrel{\eqref{espansione risonanti}}  
= \bigcup_{\begin{subarray}{c}
|\ell| > N_{n - 1} \\
j \in \N
\end{subarray}} Q_{\ell j}({\bf u}_n) \bigcup \bigcup_{\begin{subarray}{c}
|\ell| > N_{n - 1}  \\
j, j' \in\N \\
 (\ell, j, j') \neq (0, j, j)
\end{subarray}} R_{\ell j j'}^{-}({\bf u}_n) \bigcup \bigcup_{\begin{subarray}{c}
|\ell| > N_{n - 1}  \\
j, j' \in \N
\end{subarray}}R_{\ell j j'}^{+}({\bf u}_n)\,.   
\ee
\end{corollary}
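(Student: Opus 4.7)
The plan is to combine the monotonicity inclusion of Lemma \ref{risonanti-1} with the inductive definition of the Cantor sets $\mathcal{G}_n$ in \eqref{G-n+1}. The key observation is that $\mathcal{G}_n$ is defined precisely as the set on which the Melnikov bounds at the previous step (with constant $\gamma_{n-1}$ and coefficients evaluated at ${\bf u}_{n-1}$) all hold, so any $\omega \in \mathcal{G}_n$ automatically avoids the resonance sets at level $n-1$.

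More precisely, fix $n \geq 1$. By \eqref{G-n+1} and Theorem \ref{partial invertibility of the linearized operator},
\[
\mathcal{G}_n = {\bf \Omega}_\infty^{2\gamma_{n-1}}({\bf u}_{n-1}) = \Omega_\infty^{2\gamma_{n-1}}({\bf u}_{n-1}) \cap \Lambda_\infty^{2\gamma_{n-1}}({\bf u}_{n-1}),
\]
so for every $\omega \in \mathcal{G}_n$, recalling \eqref{Omegainfty} and \eqref{prime e seconde di Melnikov}, one has simultaneously
\[
\|{\bf B}_\infty(\ell,j;\omega,u_{n-1})^{-1}\| \leq \frac{\langle \ell\rangle^\tau}{2\gamma_{n-1} j}, \quad \|{\bf A}^{\pm}_\infty(\ell,j,j';\omega,u_{n-1})^{-1}\|_{{\rm Op}(j,j')} \leq \frac{\langle \ell\rangle^\tau}{2\gamma_{n-1}\langle j\mp j'\rangle},
\]
for all admissible indices. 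Comparing with the definitions \eqref{risonanti prime Melnikov}--\eqref{risonanti seconde di melnikov somma}, this means
\[
\mathcal{G}_n \cap Q_{\ell j}({\bf u}_{n-1}) = \emptyset, \qquad \mathcal{G}_n \cap R^{\pm}_{\ell j j'}({\bf u}_{n-1}) = \emptyset,
\]
for every choice of $(\ell,j)$, respectively $(\ell,j,j')$.

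Now let $|\ell| \leq N_{n-1}$. By construction every resonance set at level $n$ is contained in $\mathcal{G}_n$, so applying Lemma \ref{risonanti-1} gives
\[
Q_{\ell j}({\bf u}_n) \subseteq \mathcal{G}_n \cap Q_{\ell j}({\bf u}_{n-1}) = \emptyset,
\]
and analogously $R^{\pm}_{\ell j j'}({\bf u}_n) \subseteq \mathcal{G}_n \cap R^{\pm}_{\ell j j'}({\bf u}_{n-1}) = \emptyset$, proving $(i)$, $(ii)$ and $(iii)$. The identity \eqref{parametri cattivi} is then immediate from the general decomposition \eqref{espansione risonanti}, since all the terms with $|\ell| \leq N_{n-1}$ drop out.

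There is no real obstacle: once one notices that the hypothesis $|\ell|\leq N_{n-1}$ in Lemma \ref{risonanti-1} matches exactly the range in which the Neumann-series argument (based on the estimate \eqref{marco} on the variation of the final blocks along the Nash-Moser scheme) applies, the corollary is a short book-keeping exercise. The substantive work—controlling the asymptotics of the $2\times 2$ blocks and excising only high Fourier modes at each step—has already been carried out in Lemma \ref{risonanti-1}.
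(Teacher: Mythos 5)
Your argument is correct and follows exactly the same route as the paper: both proofs combine the monotone inclusions of Lemma \ref{risonanti-1} (valid for $|\ell|\le N_{n-1}$) with the two facts that the level-$n$ resonance sets are by definition subsets of ${\mathcal G}_n$ and that ${\mathcal G}_n={\bf\Omega}_\infty^{2\gamma_{n-1}}({\bf u}_{n-1})$ is disjoint from the level-$(n-1)$ resonance sets. The only difference is that you spell out explicitly, via \eqref{Omegainfty} and \eqref{prime e seconde di Melnikov}, why ${\mathcal G}_n\cap Q_{\ell j}({\bf u}_{n-1})={\mathcal G}_n\cap R^{\pm}_{\ell jj'}({\bf u}_{n-1})=\emptyset$, whereas the paper states this "again by definition."
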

\begin{proof}
By definition, $ R_{\ell j j'}^{\pm}({\bf u}_n), Q_{\ell j}({\bf u}_n) \subset {\mathcal G}_n $ and, by \eqref{inclusione-1},
for all $ |\ell| \leq N_{n - 1} $, we have 
$ R_{\ell j j'}^{\pm} ({\bf u}_n) \subseteq R_{\ell j j'}^{\pm} ({\bf u}_{n-1}) $ and $Q_{\ell j}({\bf u}_n) \subseteq Q_{\ell j}({\bf u}_{n - 1})$.
On the other hand again by definition $ R_{\ell j j'}^\pm({\bf u}_{n-1}) \cap {\mathcal G}_n, Q_{\ell j}({\bf u}_{n - 1}) \cap {\mathcal G}_n = \emptyset $. 
As a consequence, $ \forall |\ell| \leq N_{n - 1} $, 
$ R_{\ell j j'}^{\pm} ({\bf u}_n)\,,\, Q_{\ell j}({\bf u}_n) = \emptyset $. 
\end{proof}
\begin{lemma}\label{limitazioni indici risonanti}
For all $n \geq 0$ the following statements hold: 

\noindent
$(i)$ If $Q_{\ell j }({\bf u}_n) \neq \emptyset$, then $\ell \neq 0$ and $j \lessdot |\ell |$.

\noindent
$(ii)$ If $R_{\ell j j'}^{-}({\bf u}_n) \neq \emptyset$, then $\ell \neq 0$ and  $|j - j'| \lessdot |\ell|$, $j, j' \lessdot | \ell |^{\tau^*}$. 

\noindent
$(iii)$ If $R_{\ell j j'}^{+}({\bf u}_n) \neq \emptyset$, then $\ell \neq 0$ and $j, j' \lessdot  |\ell |$.

\end{lemma}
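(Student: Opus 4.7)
The plan is to reduce each of the three statements to an explicit eigenvalue computation for the self-adjoint $2\times 2$-block operators ${\bf B}_\infty(\ell,j)$ and ${\bf A}_\infty^\pm(\ell,j,j')$, using the asymptotics $\mathbf D_j^\infty = m\,j\,{\bf I}_j+\widehat{\mathbf D}_j^\infty$ with $\|\widehat{\mathbf D}_j^\infty\|\lessdot\e/j$ (see \eqref{mu-j-nu}, \eqref{autovcon}), the spectral identities of Lemma~\ref{properties operators matrices}, and Lemma~\ref{risultato astratto operatori autoaggiunti}(iii), which identifies $\|\cdot^{-1}\|$ with the reciprocal of the smallest absolute eigenvalue. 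Throughout I will exploit two lower bounds in parallel: a \emph{direct} one coming from $m\approx 1$ and the boundedness of $\omega$, and the \emph{first Melnikov} bound $|\omega\cdot\ell+m\,k|\geq \gamma^*_n\langle k\rangle/\langle\ell\rangle^{\tau^*}$ available for $\omega\in\Omega^{(I)}_{\gamma^*,\tau^*}({\bf u}_n)$ and $(\ell,k)\neq(0,0)$, together with the inequalities $\gamma^*\geq 5\gamma$, $\gamma_n\leq 2\gamma$, $\tau>\tau^*$, which make $\gamma^*_n/\langle\ell\rangle^{\tau^*}$ strictly larger than $2\gamma_n/\langle\ell\rangle^\tau$ with a gap of order $\gamma/\langle\ell\rangle^{\tau^*}$.

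For (i), the eigenvalues of ${\bf B}_\infty(\ell,j)$ are $\omega\cdot\ell+m\,j+\lambda$ with $\lambda\in\mathrm{spec}(\widehat{\mathbf D}_j^\infty)$. If $j\geq C|\ell|$ with $C$ large compared with $|m|+|\omega|$, the direct bound gives $|\omega\cdot\ell+m\,j|\geq j/4$, hence $\|{\bf B}_\infty^{-1}\|\leq 8/j$, which contradicts the defining inequality $\|{\bf B}_\infty^{-1}\|>\langle\ell\rangle^\tau/(2\gamma_n j)$ as soon as $16\gamma_n<\langle\ell\rangle^\tau$; this proves $j\lessdot|\ell|$. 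The same direct bound in the case $\ell=0$ yields $\|{\bf B}_\infty(0,j)^{-1}\|\leq 2/j$, incompatible with the threshold $1/(2\gamma_n j)$ for $\gamma$ small, so $\ell\neq 0$.

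For (ii), by Lemma~\ref{properties operators matrices} the eigenvalues of ${\bf A}_\infty^-(\ell,j,j')$ are $\omega\cdot\ell+m(j-j')+\lambda-\mu$ with $\lambda\in\mathrm{spec}(\widehat{\mathbf D}_j^\infty)$, $\mu\in\mathrm{spec}(\widehat{\mathbf D}_{j'}^\infty)$, so $|\lambda-\mu|\lessdot \e/\min(j,j')$. Applying the same direct argument as in (i) with $j-j'$ in place of $j$ gives $|j-j'|\lessdot|\ell|$ and excludes $\ell=0$ (using the hypothesis $(\ell,j,j')\neq(0,j,j)$ to rule out $j=j'$, $\ell=0$). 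For the bound $j,j'\lessdot|\ell|^{\tau^*}$, I use the first Melnikov condition applied to $(\ell,j-j')\neq(0,0)$: this gives the lower bound $|\text{eigenvalue}|\geq \gamma^*_n\langle j-j'\rangle/\langle\ell\rangle^{\tau^*}-C\e/\min(j,j')$, which by the definition of $R^-_{\ell j j'}$ must be strictly less than $2\gamma_n\langle j-j'\rangle/\langle\ell\rangle^\tau$. The gap $\gamma^*_n/\langle\ell\rangle^{\tau^*}-2\gamma_n/\langle\ell\rangle^\tau\geq \gamma/\langle\ell\rangle^{\tau^*}$ then yields $\min(j,j')\lessdot\e\langle\ell\rangle^{\tau^*}/\gamma\lessdot\langle\ell\rangle^{\tau^*}$, and combining with $|j-j'|\lessdot|\ell|\leq\langle\ell\rangle^{\tau^*}$ gives $\max(j,j')\lessdot\langle\ell\rangle^{\tau^*}$.

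For (iii), the analogous computation gives eigenvalues $\omega\cdot\ell+m(j+j')+\lambda+\mu'$. Since $j,j'\in\N$ we have $j+j'\geq 2$, so the direct lower bound $|\omega\cdot\ell+m(j+j')|\geq (j+j')/4$ already applies whenever $j+j'\geq C|\ell|$, producing the strong estimate $j+j'\lessdot|\ell|$ (and hence $j,j'\lessdot|\ell|$) by the same threshold contradiction used in (i); the case $\ell=0$ is excluded in the same way since $\|{\bf A}_\infty^+(0,j,j')^{-1}\|\leq 4/(j+j')$ clashes with $1/(2\gamma_n(j+j'))$ for $\gamma$ small. The main technical point is the bookkeeping in case (ii), Step (c): one must balance the first Melnikov gain on the denominator $\langle\ell\rangle^{\tau^*}$ against the target denominator $\langle\ell\rangle^{\tau}$ and the decay $\e/\min(j,j')$ of the perturbation, and this is where the choice $\tau=2\tau^*+\nu+1$ together with $\gamma^*=5\gamma$ from \eqref{valori finali tau tau*}–\eqref{definizione gamma* tau*} enters crucially.
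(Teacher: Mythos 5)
Your proposal is correct and follows essentially the same route as the paper. The paper proves item (ii) via a Neumann-series perturbation off the scalar part $\mathbf I_\infty(\ell,j,j')=(\omega\cdot\ell+m(j-j'))\,\mathbf I_{j,j'}$, using the first Melnikov bound from $\Omega^{(I)}_{\gamma^*,\tau^*}$ together with the decay $\|\widehat{\mathbf D}_j^\infty\|\lessdot\e/j$; this is exactly your eigenvalue argument (via Lemma~\ref{risultato astratto operatori autoaggiunti}(iii)) phrased in operator-norm language. For items (i), (iii) and for the bound $|j-j'|\lessdot|\ell|$ inside item (ii), the paper only says ``similar arguments''; what you supply — the \emph{direct} lower bound $|\omega\cdot\ell+m k|\gtrsim k$ whenever $k\gg|\ell|$, exploiting $m\approx 1$ and the boundedness of $\Omega$, together with the mismatch between the scales $1/k$ and $\langle\ell\rangle^\tau/(\gamma_n k)$ for $\gamma$ small — is precisely the argument that fills that gap, and in fact for items (i) and (iii) it is the only way to obtain the sharper conclusion $j\lessdot|\ell|$ (resp.\ $j+j'\lessdot|\ell|$), since the Melnikov-condition-based route would give only the weaker bound $\lessdot\langle\ell\rangle^{\tau^*}$. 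So your argument is consistent with, and slightly more explicit than, the paper's own proof. One minor remark: the paper contains a typo (``$\tau<\tau^*$'' should read $\tau>\tau^*$), and you correctly use $\tau>\tau^*$ in the gap estimate $\gamma^*_n/\langle\ell\rangle^{\tau^*}-2\gamma_n/\langle\ell\rangle^\tau\geq\gamma/\langle\ell\rangle^{\tau^*}$.
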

\begin{proof}
We prove item $(ii)$. The proofs of items $(i)$ and $(iii)$ are similar. The statement follows by the following claim: 
\begin{itemize}
\item{\sc Claim} If $\e \gamma^{- 1}$ is small enough and  
\begin{equation}\label{piccolezza pippo misura}
\langle \ell \rangle^{\tau^*} \leq \langle j - j' \rangle  \min\{ j, j' \} 
\end{equation}
then for all $\omega \in {\mathcal G}_n \cap \Omega_{\gamma^*, \tau^*}^{(I)}({\bf u}_n)$ (recall \eqref{prime di Melnikov}), the matrix ${\bf A}_\infty^-(\ell, j, j') = {\bf A}_\infty^-(\ell, j, j'; \omega, {u}_n(\omega))$ is invertible and 
$$
\| {\bf A}_\infty^-(\ell, j, j')^{- 1} \|_{{\rm Op}(j, j')} \leq \frac{\langle \ell \rangle^{\tau}}{2 \gamma_n \langle j - j' \rangle}\,.
$$
\end{itemize}
\noindent{\sc Proof of the claim.} By \eqref{bf A infinito - (ell,alpha,beta)}, \eqref{definizione bf D infinito}, \eqref{mu-j-nu}, \eqref{rappresentazione a blocchi cal D0 (1)}, we can write 
\begin{equation}\label{barbaria}
{\bf A}_\infty^-(\ell, j, j') = {\bf I}_\infty(\ell, j, j') + \Delta_\infty(j, j')\,,
\end{equation}
where 
$$
{\bf I}_\infty(\ell, j, j') :=\Big( \omega \cdot \ell  + m (j - j') \Big) {\bf I}_{j,  j'}\,, \quad \Delta_\infty(j, j') :=  M_L( \widehat{\bf D}_j^\infty) - M_R(\widehat{\bf D}_{j'}^\infty)\,.
$$
Since $\omega \in  \Omega_{\gamma^*, \tau^*}^{(I)}({\bf u}_n)$, for any $(\ell, j, j') \neq (0, j, j)$, the operator ${\bf I}_\infty(\ell, j, j')$ is invertible and its inverse satisfies the bound 
\begin{equation}\label{bound bf I ell alpha beta - 1}
\| {\bf I}_\infty(\ell, j, j')^{- 1} \|_{{\rm Op}(j, j')} \leq \frac{\langle \ell \rangle^{\tau^*}}{ \gamma^*_n \langle j - j' \rangle}\,.
\end{equation}
Moreover the operatorial norm of the operator $\Delta_\infty(j, j')$ satisfies
\begin{align}
\| \Delta_\infty(j, j')\|_{{\rm Op}(j, j')} & \stackrel{\eqref{norma operatoriale ML MR}, \eqref{autovcon}}{\lessdot}  \e \Big( \frac{1}{j} + \frac{1}{j'} \Big) \lessdot \frac{\e}{\min\{ j, j' \}}\,.  \label{stima Delta infty alpha beta}
\end{align}
The estimates \eqref{bound bf I ell alpha beta - 1}, \eqref{stima Delta infty alpha beta} imply that 
$$
\| {\bf I}_\infty(\ell, j, j')^{- 1} \Delta_\infty(j, j')  \|_{{\rm Op}(j, j')} \lessdot \frac{\e \langle \ell \rangle^{\tau^* }}{\gamma^*_n \langle j - j' \rangle \min\{ j, j' \}} \stackrel{}{\leq} \frac12\,,
$$
by \eqref{piccolezza pippo misura} and for $\e \gamma^{- 1}$ small enough.
Hence by \eqref{barbaria}, the matrix ${\bf A}_\infty^-(\ell, j, j')$ is invertible by Neumann series and 
$$
\| {\bf A}_\infty^-(\ell, j, j')^{- 1} \|_{{\rm Op}(j, j')} \leq 2 \| {\bf I}_\infty(\ell,j, j')^{- 1} \|_{{\rm Op}(j, j')} \stackrel{\eqref{bound bf I ell alpha beta - 1}}{\leq} \frac{2 \langle \ell \rangle^{\tau^*}}{ \gamma^*_n \langle j - j' \rangle} {\leq} \frac{\langle \ell \rangle^{\tau}}{ 2 \gamma_n \langle j - j' \rangle}\,,
$$
since by \eqref{definizione gamma* tau*}, \eqref{gamma n stime misura} we have $\tau < \tau^*$ and $\gamma^*_n > 4 \gamma_n$. 

\medskip

\noindent
By the definition \eqref{risonanti seconde di melnikov differenza}, the claim implies that if $\omega \in {\mathcal G}_n \cap \Omega_{\gamma^*, \tau^*}^{(I)}({\bf u}_n)$ and if the condition \eqref{piccolezza pippo misura} holds, then $\omega \notin R_{\ell j j'}^{-}({\bf u}_n)$, hence if $R_{\ell j j'}^{-}({\bf u}_n) \neq \emptyset$, then 
\begin{equation}\label{pippa mentale}
\langle j - j' \rangle  \min\{ j, j' \}  < \langle \ell \rangle^{\tau^*}\,.
\end{equation}
For $\ell = 0$, since $\langle \ell \rangle = {\rm max}\{ |\ell|, 1\} = 1$, the above condition becomes $\langle j - j' \rangle  \min\{ j, j' \}  < 1$ which is violated since $\langle j - j' \rangle  \min\{ j, j' \} = {\rm max}\{ |j - j'|, 1\}\min\{ j, j' \} \geq 1$, therefore $R_{0 j j'}({\bf u}_n) = \emptyset$ for any $j \neq j'$. Finally, by \eqref{pippa mentale}, we may easily deduce that 
$$
j, j' \lessdot \langle \ell \rangle^{\tau^*}\,.
$$
 By similar arguments, it can be proved that if $R_{\ell j j'}^-({\bf u}_n) \neq \emptyset$, then $|j - j'| \lessdot \langle \ell \rangle$ and the proof is concluded.  
 \end{proof}
Combining Corollary \ref{excisione parametri io} and Lemma \ref{limitazioni indici risonanti}, recalling the formulae \eqref{espansione risonanti}, \eqref{parametri cattivi}, we get 
\be\label{parametri cattivi forma finale 0}
{\mathcal A}_0^{(1)}  
= \bigcup_{\begin{subarray}{c}
 \ell \neq 0\\
 j \in \N \\
j \lessdot  |\ell|
\end{subarray}} Q_{\ell j}({\bf u}_0) \bigcup \bigcup_{\begin{subarray}{c}
\ell \neq 0  \\
j, j'  \in \N \\
 (\ell, j, j') \neq (0, j, j) \\
 |j - j'| \lessdot |\ell| \\
 j, j' \lessdot  \langle \ell \rangle^{\tau^*}
\end{subarray}} R_{\ell j j'}^{-}({\bf u}_0) \bigcup \bigcup_{\begin{subarray}{c}
\ell \neq 0  \\
j, j' \in \N\\
j, j' \lessdot |\ell |
\end{subarray}}R_{\ell j j'}^{+}({\bf u}_0)\, , 
\quad    
\ee
 \be\label{parametri cattivi forma finale}
{\mathcal A}_n^{(1)} 
= \bigcup_{\begin{subarray}{c}
|\ell| > N_{n - 1} \\
 j \in \N \\
j \lessdot  \langle \ell \rangle
\end{subarray}} Q_{\ell j}({\bf u}_n) \bigcup \bigcup_{\begin{subarray}{c}
|\ell| > N_{n - 1} \\
\ell \neq 0  \\
j, j' \in \N \\
 (\ell, j, j') \neq (0, j, j) \\
j, j' \lessdot  \langle \ell \rangle^{\tau^*}
\end{subarray}} R_{\ell j j'}^{-}({\bf u}_n) \bigcup \bigcup_{\begin{subarray}{c}
|\ell| > N_{n - 1} \\
\ell \neq 0  \\
j, j' \in \N\\
j, j' \lessdot \langle \ell \rangle
\end{subarray}}R_{\ell j j'}^{+}({\bf u}_n)
\quad    
\ee
$\forall n \geq 1$.
The measure of the resonant sets on the right hand side of the latter identities now are estimated separately:  
\begin{lemma}\label{stima in misura insiemi risonanti}
For $\e \gamma^{-1}$ small enough, if $Q_{\ell j}({\bf u}_n)\,,\,R_{\ell j j'}^\pm({\bf u}_n) \neq \emptyset$, then 
$$
|Q_{\ell j}({\bf u}_n)|\,,\,|R_{\ell j j'}^\pm({\bf u}_n)| \lessdot \gamma  \langle \ell \rangle^{- \tau}\,.
$$
\end{lemma}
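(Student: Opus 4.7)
The idea is to use the self-adjoint block structure together with Lemma 5.11 (on eigenvalues of $M_L(A)\pm M_R(B)$ and of $\eta\,\mathrm{Id}+B$) to identify the spectra of $\mathbf{B}_\infty(\ell,j)$ and $\mathbf{A}^\pm_\infty(\ell,j,j')$ explicitly, and then a standard one-dimensional slicing argument in the direction $\hat\ell:=\ell/|\ell|$ to bound the $\omega$-measure of each resonant sublevel set. All three cases are nonempty only for $\ell\ne 0$ by Lemma 8.5.

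For $Q_{\ell j}(\mathbf u_n)$: write $\mathbf{D}_j^\infty=mj\,\mathbf I_j+\widehat{\mathbf D}_j^\infty$ with $\widehat{\mathbf D}_j^\infty\in\mathcal S(\mathbf E_j)$, so $\mathbf{B}_\infty(\ell,j)=(\omega\cdot\ell+mj)\mathbf I_j+\widehat{\mathbf D}_j^\infty$ is $2\times 2$ self-adjoint. By Lemma 5.11(ii)–(iii) its spectrum is $\{\omega\cdot\ell+mj+\lambda_k(\widehat{\mathbf D}_j^\infty)\}_{k=1,2}$ and $\|\mathbf{B}_\infty(\ell,j)^{-1}\|=(\min_k|\omega\cdot\ell+mj+\lambda_k(\widehat{\mathbf D}_j^\infty)|)^{-1}$. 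Parametrize $\omega=s\hat\ell+v$ with $v\perp\hat\ell$ and denote $f_k(s):=\omega\cdot\ell+mj+\lambda_k(\widehat{\mathbf D}_j^\infty)$. Using Lemma 5.11(i), the estimate $\|\widehat{\mathbf D}_j^\infty\|^{\Lipg}\lessdot \e j^{-1}$ from \eqref{autovcon}, and $|m-1|^\Lipg\lessdot\e$ from \eqref{stime m}, one gets
\[
|f_k(s_1)-f_k(s_2)|\ \ge\ \bigl(|\ell|-C\e\gamma^{-1}(j+j^{-1})\bigr)|s_1-s_2|\ \ge\ \tfrac{|\ell|}{2}|s_1-s_2|,
\]
where the last inequality uses $j\lessdot\langle\ell\rangle$ from Lemma 8.5(i) and the smallness of $\e\gamma^{-1}$. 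Hence the $1$-D measure of $\{s:|f_k(s)|<2\gamma_n j/\langle\ell\rangle^\tau\}$ is $\lessdot \gamma_n j/(|\ell|\langle\ell\rangle^\tau)\lessdot\gamma\langle\ell\rangle^{-\tau}$, and Fubini yields $|Q_{\ell j}(\mathbf u_n)|\lessdot\gamma\langle\ell\rangle^{-\tau}$.

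For $R^-_{\ell j j'}(\mathbf u_n)$: the $4$-dimensional self-adjoint operator $\mathbf A^-_\infty(\ell,j,j')=\omega\cdot\ell\,\mathbf I_{j,j'}+M_L(\mathbf D_j^\infty)-M_R(\mathbf D_{j'}^\infty)$ has, by Lemma 5.11(ii), spectrum $\{\omega\cdot\ell+m(j-j')+\lambda_a(\widehat{\mathbf D}_j^\infty)-\lambda_b(\widehat{\mathbf D}_{j'}^\infty)\}_{a,b=1,2}$, and Lemma 5.11(iii) gives $\|\mathbf A^-_\infty(\ell,j,j')^{-1}\|_{\mathrm{Op}(j,j')}$ as the reciprocal of the minimum modulus of these four numbers. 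Repeating the slicing in the $\hat\ell$ direction, the directional derivative of each eigenvalue is bounded below by $|\ell|-C\e\gamma^{-1}(|j-j'|+j^{-1}+j'^{-1})\ge|\ell|/2$, using $|j-j'|\lessdot\langle\ell\rangle$ from Lemma 8.5(ii) and smallness of $\e\gamma^{-1}$. The bad set in $s$ has measure $\lessdot \gamma_n\langle j-j'\rangle/(|\ell|\langle\ell\rangle^\tau)\lessdot\gamma\langle\ell\rangle^{-\tau}$ (again by $\langle j-j'\rangle\lessdot\langle\ell\rangle$), whence $|R^-_{\ell j j'}(\mathbf u_n)|\lessdot\gamma\langle\ell\rangle^{-\tau}$. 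The case $R^+_{\ell j j'}$ is identical: $\overline{\mathbf D_{j'}^\infty}$ is self-adjoint with the same spectrum as $\mathbf D_{j'}^\infty$, so the eigenvalues of $\mathbf A^+_\infty$ are $\omega\cdot\ell+m(j+j')+\lambda_a(\widehat{\mathbf D}_j^\infty)+\lambda_b(\overline{\widehat{\mathbf D}_{j'}^\infty})$, and $j+j'\lessdot\langle\ell\rangle$ by Lemma 8.5(iii) makes the same one-variable argument run.

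The only delicate point is the Lipschitz bookkeeping for the perturbed eigenvalues, since $\widehat{\mathbf D}_j^\infty(\omega)$ is only Lipschitz in $\omega$: this is handled by Lemma 5.11(i), which transfers the operator-norm Lipschitz estimate \eqref{autovcon} directly to the eigenvalues, so the slicing derivative estimate is an honest finite-difference inequality rather than a pointwise derivative. Modulo this observation, the proof is the standard Diophantine measure count and produces the claimed bound $\gamma\langle\ell\rangle^{-\tau}$ uniformly in $n$.
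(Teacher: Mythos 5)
Your proof is correct and follows essentially the same route as the paper: identify the spectrum of the self-adjoint $2\times 2$ blocks via Lemma \ref{properties operators matrices} and Lemma \ref{risultato astratto operatori autoaggiunti}, reduce each resonant set to a union of scalar-eigenvalue sublevel sets, slice in the $\ell/|\ell|$ direction, and bound the one-dimensional measure via the Lipschitz lower bound on the eigenvalue variation, finishing with Fubini and the index restrictions from Lemma \ref{limitazioni indici risonanti}. The paper only writes out the $R^-_{\ell j j'}$ case explicitly and notes the others are analogous, while you spell out all three, but the decomposition, the key lemmas, and the finite-difference treatment of the merely Lipschitz eigenvalues are identical.
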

 \begin{proof}
We prove the estimate of the set $R_{\ell j j'}^{-}({\bf u}_n)$. The other estimates can be proven arguing similarly. Recall that for all $j \in \N$, the $2 \times 2$ blocks ${\bf D}_j^\infty = m \, j\,{\bf I}_j + \widehat{\bf D}_j^\infty \in {\mathcal S}({\bf E}_j)$, are self-adjoint and Lipschitz continuous with respect to the parameter $\omega$. We set 
 \begin{equation}\label{spettro correzione cappuccio}
 {\rm spec}(\widehat{\bf D}_j^\infty(\omega)) := \big\{ r_1^{(j)}(\omega), r_2^{(j)}(\omega) \big\} \quad \text{with} \quad r_1^{(j)}(\omega)\leq r_2^{(j)}(\omega)\,,
 \end{equation}
  By Lemma \ref{risultato astratto operatori autoaggiunti}-$(i)$ the functions $\omega \mapsto r_k^{(j)}(\omega)$ are Lipschitz with respect to $\omega$, since 
 \begin{align}
 |r_k^{(j)}(\omega_1) - r_k^{(j)}(\omega_2)| &\leq \| \widehat{\bf D}_j^\infty(\omega_1) -  \widehat{\bf D}_j^\infty(\omega_2)  \|  \leq \| \widehat{\bf D}_j^\infty \|^{\rm lip} |\omega_1- \omega_2|  \nonumber\\
 & \stackrel{\eqref{autovcon}}{\lessdot} \e \gamma^{- 1} j^{- 1} |\omega_1- \omega_2|\,. \label{dino zoff}
 \end{align}
 Setting also
 $$
{\rm spec}({\bf D}_j^\infty(\omega)) := \big\{ \lambda_1^{(j)}(\omega), \lambda_2^{(j)}(\omega) \big\} \quad \text{with} \quad \lambda_1^{(j)}(\omega)\leq \lambda_2^{(j)}(\omega)\,,
$$
by Lemma \ref{risultato astratto operatori autoaggiunti}-$(ii)$ we have that 
\begin{equation}\label{espansione asintotica autovalori}
\lambda_k^{(j)}(\omega) =m(\omega) \, j+ r_k^{(j)}(\omega)\,, \qquad  k = 1, 2\,.
\end{equation}
By the definition \eqref{bf A infinito - (ell,alpha,beta)} and by Lemmata \ref{properties operators matrices}, \ref{risultato astratto operatori autoaggiunti}-$(ii)$ the operator ${\bf A}_\infty^{-}(\ell, j, j') : {\mathcal L}({\bf E}_{j'}, {\bf E}_j) \to {\mathcal L}({\bf E}_{j'}, {\bf E}_j) $ is self-adjoint with respect to the scalar product \eqref{prodotto scalare traccia matrici} and 
$$
{\rm spec}\Big({\bf A}_\infty^{-}(\ell, j, j'; \omega)\Big) = \Big\{ \omega \cdot \ell + \lambda_k^{(j)}(\omega) - \lambda_{k'}^{(j')}(\omega)\,, \quad k, k' =1, 2  \Big\}\,.
$$
Therefore, recalling the definition \eqref{risonanti seconde di melnikov differenza} and by Lemma \ref{risultato astratto operatori autoaggiunti}-$(iii)$ we get 
\begin{equation}\label{decomposizione risonante 2 autovalori}
R_{\ell j j'}^{-}({\bf u}_n) \subseteq \bigcup_{k, k' = 1}^{2}  \widetilde R_{\ell j j'}(k, k')\,,
\end{equation}
where 
$$
\widetilde R_{\ell j, j'}(k, k') := \Big\{ \omega  : |\omega \cdot \ell + \lambda_k^{(j)}(\omega) - \lambda_{k'}^{(j')}(\omega)| < \frac{2 \gamma_n \langle j - j'\rangle}{\langle \ell \rangle^\tau}\Big\}\,.
$$
We estimate the measure of the set $\widetilde R_{\ell j j'}(k, k')$ defined above for all $k, k' = 1, 2$.  
Since, by Lemma \ref{limitazioni indici risonanti}-$(ii)$, $\ell \neq 0$, we can write 
$$
\omega = \frac{\ell}{|\ell|} s + v\,, \quad \text{with} \quad v \cdot \ell = 0\,.
$$
and we define 
\begin{equation}\label{phi (s) importante}
\phi(s ) := |\ell| s + \lambda_k^{(j)}(s) - \lambda_{k'}^{(j')}(s)\,, 
\end{equation}
where for any $j \in \N$, for all $k = 1,2$
$$
\lambda_k^{(j)}(s) := \lambda^{(j)}_k\Big( \frac{\ell}{|\ell|} s + v \Big)\,.
$$
According to \eqref{espansione asintotica autovalori}, \eqref{dino zoff}
\begin{equation}\label{genesis 0}
\lambda_k^{(j)}(s) =  m (s)\, j + r_k^{(j)}(s)\,,\qquad |r_k^{(j)}|^{\rm lip} \lessdot \e \gamma^{- 1} j^{- 1}\,.
\end{equation}
One gets 
\begin{align}
|\phi(s_1) - \phi(s_2)| & \geq |\ell||s_1 - s_2| - |m(s_1) - m(s_2) | |j - j'|   \nonumber\\
& \quad - \big(|r_k^{(j)}|^{\rm lip} + |r_{k'}^{(j')}|^{\rm lip} \big) |s_1 - s_2| \nonumber\\
& \stackrel{\eqref{stime m}, \eqref{genesis 0}}{\geq} \Big( |\ell| -  \e \gamma^{- 1}\langle j - j' \rangle \Big) |s_1 - s_2|  \nonumber\\
& \quad \stackrel{Lemma\,\ref{limitazioni indici risonanti}-(ii)}{\geq} \frac{|\ell|}{2} |s_1 - s_2|\, \nonumber
\end{align}
for $\e \gamma^{- 1}$ small enough.
The above estimate implies that  
$$
\Big| \Big\{ s : \frac{\ell}{|\ell|}s + v \in \widetilde R_{\ell j j'}(k, k') \Big\} \Big| \lessdot \frac{4 \gamma \langle j - j'\rangle}{\langle \ell \rangle^{\tau + 1}}
$$
and by Fubini Theorem we get 
$$
|\widetilde R_{\ell j j'}(k, k')| \lessdot \frac{4 \gamma \langle j - j'\rangle}{\langle \ell \rangle^{\tau + 1}}\,.
$$
The claimed estimate follows by recalling \eqref{decomposizione risonante 2 autovalori} and using that by Lemma \ref{limitazioni indici risonanti}-$(ii)$, $|j - j'| \lessdot |\ell|$. 
 \end{proof}

\noindent
\begin{lemma}\label{stima cal A n (1)}
For all $n \geq 0$, we get 
$$
|{\mathcal A}_n^{(1)}| \lessdot \gamma N_{n - 1}^{- 1}\,.
$$
\end{lemma}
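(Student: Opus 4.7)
The plan is to estimate each piece of ${\mathcal A}_n^{(1)}$ in the decompositions \eqref{parametri cattivi forma finale 0}--\eqref{parametri cattivi forma finale} by combining the individual measure bound from Lemma \ref{stima in misura insiemi risonanti} with the index-localization bounds from Lemma \ref{limitazioni indici risonanti}. The two inputs together reduce everything to elementary sums over $\ell \in \Z^\nu$ with $|\ell| > N_{n-1}$ (with the convention $N_{-1}=1$, which handles the case $n=0$ uniformly with $n\geq 1$).

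First I would take the measure bound $|Q_{\ell j}|,\,|R^{\pm}_{\ell j j'}|\lessdot \gamma\langle\ell\rangle^{-\tau}$ out of the union and count, for each $\ell$, how many resonant sets actually contribute. By Lemma \ref{limitazioni indici risonanti}, the number of contributing $j$ in the $Q$-union is $\lessdot \langle\ell\rangle$; for $R^{-}_{\ell j j'}$ the constraints $j,j'\lessdot \langle\ell\rangle^{\tau^*}$ together with $|j-j'|\lessdot \langle\ell\rangle$ give at most $\lessdot \langle\ell\rangle^{\tau^*+1}$ admissible pairs (pick $j\lessdot \langle\ell\rangle^{\tau^*}$ and then $j'$ in a window of width $\langle\ell\rangle$); for $R^{+}_{\ell j j'}$ one has $\lessdot \langle\ell\rangle^2$ admissible pairs. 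The dominant contribution is therefore the $R^{-}$-piece, giving
\begin{equation*}
|{\mathcal A}_n^{(1)}| \;\lessdot\; \gamma \sum_{|\ell|>N_{n-1}} \langle\ell\rangle^{\tau^*+1-\tau}.
\end{equation*}

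Then I would plug in the choice \eqref{valori finali tau tau*}, namely $\tau=2\tau^*+\nu+1$ with $\tau^*=\nu+2$, so that $\tau^*+1-\tau=-\tau^*-\nu$. Converting the $\Z^\nu$-sum to a one-dimensional sum using $\#\{\ell : |\ell|=k\}\lessdot k^{\nu-1}$ yields
\begin{equation*}
\sum_{|\ell|>N_{n-1}} \langle\ell\rangle^{-\tau^*-\nu} \;\lessdot\; \sum_{k>N_{n-1}} k^{-\tau^*-1} \;\lessdot\; N_{n-1}^{-\tau^*} \;\leq\; N_{n-1}^{-1},
\end{equation*}
since $\tau^*\geq 1$. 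The two subdominant sums (from the $Q$ and $R^{+}$ pieces) decay even faster in $N_{n-1}$, so the conclusion $|{\mathcal A}_n^{(1)}|\lessdot \gamma N_{n-1}^{-1}$ follows.

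There is no real obstacle: the heart of the argument was already carried out in Lemmas \ref{limitazioni indici risonanti} and \ref{stima in misura insiemi risonanti}, and the only delicate balance is the relation between $\tau$ and $\tau^*$, which has been fixed precisely in \eqref{valori finali tau tau*} so that $\tau-(\tau^*+1)>\nu$, i.e.\ the series over $\ell$ converges (with polynomial tail decay) even after multiplying by the $R^-$-index factor $\langle\ell\rangle^{\tau^*+1}$. The case $n=0$ requires no special argument: with $N_{-1}=1$ the bound $\gamma N_{-1}^{-1}=\gamma$ is just the convergence of $\sum_{\ell\neq 0}\langle\ell\rangle^{-\tau^*-\nu}$.
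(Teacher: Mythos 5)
Your argument is correct and follows essentially the same route as the paper: decompose $\mathcal A_n^{(1)}$ as in \eqref{parametri cattivi forma finale 0}--\eqref{parametri cattivi forma finale}, apply Lemma \ref{stima in misura insiemi risonanti} to each resonant set, count the admissible indices via Lemma \ref{limitazioni indici risonanti}, and sum over $|\ell|>N_{n-1}$. The one minor difference is that you also exploit the constraint $|j-j'|\lessdot\langle\ell\rangle$, giving $\lessdot\langle\ell\rangle^{\tau^*+1}$ pairs for the $R^-$-sum and hence a sharper intermediate bound $\gamma N_{n-1}^{-\tau^*}$, whereas the paper drops that constraint and bounds the pair count simply by $\langle\ell\rangle^{2\tau^*}$, which with \eqref{valori finali tau tau*} still yields $\gamma N_{n-1}^{-1}$; both arrive at the claimed estimate.
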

\begin{proof}
We prove the estimate for ${\mathcal A}_n^{(1)}$ in \eqref{parametri cattivi forma finale} with $n \geq 1$. The estimate for ${\mathcal A}_0^{(1)}$ in \eqref{parametri cattivi forma finale 0} follows similarly. One has 
\begin{align}
|{\mathcal A}_n^{(1)}| & \lessdot \sum_{\begin{subarray}{c}
|\ell| > N_{n - 1} \\
 j \in \N \\
 j \lessdot  \langle \ell \rangle
\end{subarray}} |Q_{\ell j}({\bf u}_n)| + \sum_{\begin{subarray}{c}
|\ell| > N_{n - 1} \\
 j, j' \in \N \\
 (\ell, j, j') \neq (0, j, j) \\
 |j - j'| \lessdot \langle \ell \rangle \\
 j, j' \lessdot  \langle \ell \rangle^{\tau^*}
\end{subarray}} |R_{\ell j j'}^{-}({\bf u}_n)| + \sum_{\begin{subarray}{c}
|\ell| > N_{n - 1} \\
j, j' \in \N\\
j, j' \lessdot \langle \ell \rangle
\end{subarray}} |R_{\ell j j'}^{+}({\bf u}_n) | \nonumber\\
& \stackrel{Lemma\,\,\ref{stima in misura insiemi risonanti}}{\lessdot}  \gamma \Big( \sum_{\begin{subarray}{c}
|\ell| > N_{n - 1} \\
j \in \N \\
 j \lessdot  \langle \ell \rangle
\end{subarray}} \frac{1}{\langle \ell \rangle^{\tau}}  + \sum_{\begin{subarray}{c}
|\ell| > N_{n - 1} \\
j, j' \in \N \\
 j, j' \lessdot  \langle \ell \rangle^{\tau^*}
\end{subarray}} \frac{1}{ \langle \ell \rangle^{\tau}}+ \sum_{\begin{subarray}{c}
|\ell| > N_{n - 1}  \\
j, j' \in \N\\
j, j'\lessdot \langle \ell \rangle
\end{subarray}} \frac{1}{ \langle\ell \rangle^{ \tau }}  \Big) \nonumber\\ 
& \lessdot \gamma \sum_{\begin{subarray}{c}
|\ell| > N_{n - 1} 
\end{subarray}} \Big(\frac{1}{\langle \ell \rangle^{\tau  - 1 }}  +  \frac{1}{ \langle \ell \rangle^{\tau - 2 \tau^* }}+  \frac{1}{ \langle\ell \rangle^{ \tau - 2 }}  \Big) \stackrel{\eqref{valori finali tau tau*}}{\lessdot} \gamma N_{n - 1}^{- 1}
\end{align}
which is the claimed estimate. 
\end{proof}

\noindent
{\bf Estimate of ${\mathcal A}_n^{(2)}$.} By the same arguments used to prove Lemma \ref{stima cal A n (1)}, one may deduce that the sets ${\mathcal A}_n^{(2)}$ defined in \eqref{cal An (2)} satisfy the lemma below. 
\begin{lemma}\label{lemma stima cal An (2)}
$$
 |{\mathcal A}_n^{(2)}| \lessdot \gamma N_{n- 1}^{- 1}\,, \qquad \forall n \geq 0\,.
$$
\end{lemma}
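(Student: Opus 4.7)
The plan is to follow the template of Lemma \ref{stima cal A n (1)}, with the first-Melnikov sublevel sets
\[
P_{\ell j}({\bf u}_n) \ := \ \bigl\{ \omega \in {\mathcal G}_n : |\omega \cdot \ell + m(u_n(\omega)) j| < \gamma_n^* \langle j \rangle/\langle \ell \rangle^{\tau^*} \bigr\}
\]
playing the role that the second-Melnikov sets $R_{\ell j j'}^\pm({\bf u}_n)$ had there. By \eqref{cal An (2)} and \eqref{prime di Melnikov},
\[
{\mathcal A}_n^{(2)} \ \subseteq \ {\mathcal G}_n \setminus \Omega_{\gamma^*, \tau^*}^{(I)}({\bf u}_n) \ = \ \bigcup_{(\ell, j) \neq (0, 0)} P_{\ell j}({\bf u}_n),
\]
so the argument splits into four steps: (i) an index constraint, (ii) an excision forcing $|\ell| > N_{n-1}$ for $n \geq 1$, (iii) a transversality-based measure estimate on each $P_{\ell j}$, and (iv) summation using $\tau^* = \nu + 2$.

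First I would prove the analog of Lemma \ref{limitazioni indici risonanti}-(i): if $P_{\ell j}({\bf u}_n) \neq \emptyset$ then $\ell \neq 0$ and $\langle j \rangle \lessdot \langle \ell \rangle$. This uses $m(u_n) \geq 1/2$ from \eqref{stime m}, $|\omega|$ bounded, $\gamma_n^* \leq 10\gamma$ small, and the triangle inequality in $\langle j\rangle/2 \leq |m(u_n) j| \leq |\omega \cdot \ell| + \gamma_n^* \langle j\rangle/\langle \ell\rangle^{\tau^*}$. Next, the measure estimate on each slab proceeds as in Lemma \ref{stima in misura insiemi risonanti}: writing $\omega = s\ell/|\ell| + v$ with $v \perp \ell$, the phase $\phi(s) := \omega(s) \cdot \ell + m(\omega(s), u_n(\omega(s))) j$ satisfies $|\phi'(s)| \geq |\ell| - C \e \gamma^{-1} \langle j \rangle \geq |\ell|/2$, thanks to $|m|^{\mathrm{lip}} \lessdot \e \gamma^{-1}$ (from \eqref{stime m}) and $\langle j \rangle \lessdot |\ell|$; by Fubini, $|P_{\ell j}({\bf u}_n)| \lessdot \gamma \langle j\rangle/\langle \ell\rangle^{\tau^* + 1}$. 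Summing over the surviving indices,
\[
|{\mathcal A}_n^{(2)}| \lessdot \sum_{|\ell|>N_{n-1}} \sum_{\langle j\rangle \lessdot \langle \ell\rangle} \frac{\gamma\langle j\rangle}{\langle \ell\rangle^{\tau^* + 1}} \lessdot \gamma \sum_{L > N_{n-1}} L^{\nu - \tau^*} \lessdot \frac{\gamma}{N_{n-1}},
\]
because $\tau^* = \nu + 2$ makes the tail $\sum_{L > N_{n-1}} L^{-2} \lessdot N_{n-1}^{-1}$.

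The hard part will be the excision step (ii), in analogy with Lemma \ref{risonanti-1} and Corollary \ref{excisione parametri io}. Concretely, for $n \geq 1$ and $|\ell| \leq N_{n-1}$ I would establish
\[
P_{\ell j}({\bf u}_n) \ \cap \ ({\mathcal G}_n \setminus {\mathcal G}_{n+1}) \ = \ \emptyset,
\]
so that such $\omega$ cannot lie in ${\mathcal A}_n^{(2)}$. The key inputs are the closeness $|m(u_n) - m(u_{n-1})| \lessdot \e^2\gamma^{-1} N_n^{-\mathtt a_1}$ from \eqref{stime m} and \eqref{hn}, the asymptotics $\lambda_k^{(j)}(u_{n-1}) = m(u_{n-1})j + O(\e/j)$ from \eqref{autovcon}, and the inclusion ${\mathcal G}_n \subseteq \Lambda_\infty^{2\gamma_{n-1}}({\bf u}_{n-1})$. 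The delicacy is that the $(\tau^*, \gamma_n^*)$-condition defining $\Omega_{\gamma^*, \tau^*}^{(I)}({\bf u}_n)$ is strictly stronger (both $\gamma^* = 5\gamma > \gamma$ and $\tau^* < \tau$) than the first-Melnikov condition built into ${\mathcal G}_n$, so the clean Neumann-series inclusion $P_{\ell j}({\bf u}_n) \subseteq P_{\ell j}({\bf u}_{n-1})$ (made possible by $\mathtt a_1 \gg \tau^*$ in \eqref{costanti nash moser 1} and the slackening $\gamma_{n-1}^* - \gamma_n^* = \gamma^* 2^{-n}$) must be combined with the second defining condition $\omega \notin {\mathcal G}_{n+1}$ of ${\mathcal A}_n^{(2)}$ in \eqref{cal An (2)} to locate the violating Fourier index outside the box $\{|\ell| \leq N_{n-1}\}$, exactly as in the companion estimate for ${\mathcal A}_n^{(1)}$.
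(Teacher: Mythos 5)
Steps (i), (iii), (iv) of your plan — the index constraint $j \lessdot |\ell|$, the transversality estimate $|P_{\ell j}({\bf u}_n)| \lessdot \gamma\langle j\rangle/\langle\ell\rangle^{\tau^*+1}$, and the summation that exploits $\tau^* = \nu + 2$ — are correct and track the structure of Lemma~\ref{stima cal A n (1)} faithfully. The gap is precisely in step (ii), and the concluding sentence of your sketch papers over it rather than closing it.

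In the companion estimate for ${\mathcal A}_n^{(1)}$ the excision of $|\ell|\leq N_{n-1}$ has two ingredients: the Neumann--series inclusion $R_{\ell j j'}^\pm({\bf u}_n)\subseteq R_{\ell j j'}^\pm({\bf u}_{n-1})$, and the fact that $R_{\ell j j'}^\pm({\bf u}_{n-1})\cap{\mathcal G}_n=\emptyset$, which holds \emph{by construction} since ${\mathcal G}_n={\bf\Omega}_\infty^{2\gamma_{n-1}}({\bf u}_{n-1})$ is defined to exclude exactly those $\omega$. The analogue you need is $P_{\ell j}({\bf u}_{n-1})\cap{\mathcal G}_n=\emptyset$ for $|\ell|\leq N_{n-1}$, and this is false: ${\mathcal G}_n$ only enforces the block conditions on ${\bf B}_\infty,{\bf A}_\infty^\pm$ for ${\bf u}_{n-1}$ with exponent $\tau$ and constant $2\gamma_{n-1}$, while $\Omega_{\gamma^*,\tau^*}^{(I)}({\bf u}_{n-1})$ asks for the strictly stronger scalar bound with the smaller exponent $\tau^*<\tau$ and the larger constant $\gamma^*_{n-1}=5\gamma(1+2^{-n+1})>2\gamma_{n-1}$. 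Concretely, at $\ell=j=1$ the $\Lambda_\infty^{2\gamma_{n-1}}({\bf u}_{n-1})$-condition forces $|\omega\cdot\ell+mj+r_k^{(j)}|\geq 2\gamma_{n-1}\leq 4\gamma$, while $P_{1 1}({\bf u}_{n-1})$ admits every $\omega$ with $|\omega\cdot\ell+mj|<\gamma^*_{n-1}\geq 5\gamma$; so the slab $|\omega\cdot\ell+mj|\in(4\gamma+O(\e),\,5\gamma)$ lies in $P_{1 1}({\bf u}_{n-1})\cap{\mathcal G}_n$, and the excision breaks. Invoking $\omega\notin{\mathcal G}_{n+1}$ does not help here: that condition hands you a violating Fourier index for the \emph{block} conditions of ${\bf\Omega}_\infty^{2\gamma_n}({\bf u}_n)$ (and indeed that index has $|\ell|>N_{n-1}$ by Corollary~\ref{excisione parametri io}), but says nothing about where the violating $(\ell,j)$ of the scalar condition $\Omega_{\gamma^*,\tau^*}^{(I)}({\bf u}_n)$ sits, which is what controls $|{\mathcal A}_n^{(2)}|$.

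The honest fix is to build the first-Melnikov condition into the Cantor sets: replacing ${\mathcal G}_{n+1}:={\bf\Omega}_\infty^{2\gamma_n}({\bf u}_n)$ by ${\mathcal G}_{n+1}:={\bf\Omega}_\infty^{2\gamma_n}({\bf u}_n)\cap\Omega_{\gamma^*_n,\tau^*}^{(I)}({\bf u}_n)$ makes ${\mathcal G}_n\subseteq\Omega_{\gamma^*_{n-1},\tau^*}^{(I)}({\bf u}_{n-1})$ tautological, after which $P_{\ell j}({\bf u}_{n-1})\cap{\mathcal G}_n=\emptyset$ and your step (ii) goes through by the same Neumann-series inclusion (the smallness of $|m(u_n)-m(u_{n-1})|$ from \eqref{stime m} and \eqref{hn}, and the slack $\gamma^*_{n-1}-\gamma^*_n=\gamma^* 2^{-n}$). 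This restriction only shrinks the Cantor sets and does not affect the Nash--Moser scheme; it is also how the cited references carry first-Melnikov conditions along the iteration. The paper suppresses this proof by an appeal to ``the same arguments'', so your sketch inherits its gap, but as written the excision step is not established.
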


\noindent
{\bf Proof of Theorem \ref{stima finale parametri cattivi}}. The theorem follows by \eqref{splitting Omega - G infty 1}, \eqref{pupella 0}, \eqref{pappa pappa}, by Lemmata \ref{lemma stima cal An (2)}, \ref{stima cal A n (1)}, using that the series $\sum_{n \geq 0} N_{n - 1}^{- 1} < + \infty$, since $N_n = N_0^{\chi^n}$, $N_{- 1} := 1$ with $N_0 > 1$. 

\section{ Proof of the main Theorems concluded}\label{sezione conclusiva dim}
{\sc Proof of Theorem \ref{main theorem kirchoff 1}.}
 Set $\gamma = \e^a$, with $0 < a < 1$. Then $ \e \gamma^{- 1} =  \e^{1 - a} $ and hence the smallness condition \eqref{nash moser smallness condition} is fullfilled by taking $\e$ small enough. By the estimate \eqref{hn} we deduce that the sequence $({\bf u}_n)_{n \geq 1}$ is a Cauchy sequence with respect to the norm $\| \cdot \|_{s_0 + \mu_1}^\Lipg$ on the set ${\mathcal G}_\infty$ defined in \eqref{insieme di Cantor cal G infinito},
 then  it converges to a limit ${\bf u}_\infty$, which satisfies the estimate 
\begin{equation}\label{stime bf u infinito}
\| {\bf u}_\infty \|_{s_0 + \mu_1}^\Lipg  \lessdot \e \gamma^{- 1} \lessdot \e^{1 - a} \stackrel{\e \to 0}{\to}0\,.
\end{equation}
Moreover, by Theorem \ref{iterazione-non-lineare}-$({\mathcal P}2)_n$, we deduce that for all $\omega \in {\mathcal G}_\infty$, ${\mathcal F}(
 {\bf u}_\infty) = 0$ and by Theorem \ref{stima finale parametri cattivi}, since $\gamma = \e^a$, we get 
$$
|\Omega \setminus {\mathcal G}_\infty| \to 0 \qquad \text{as} \qquad \e \to 0\,.
$$

\noindent
{\sc Proof of Theorem \ref{main theorem kirchoff}.} Recalling the splitting \eqref{sistema su media nulla}, \eqref{sistema sul modo 0}, if $(v_0(\vphi), p_0(\vphi)) $ are the solution of the equation \eqref{sistema sul modo 0} found in Lemma \ref{lemma equazione sulle medie nulle}, and by applying Theorem \ref{main theorem kirchoff 1}, we get that for any $\omega \in {\mathcal G}_\infty$ the function $(v_\infty, p_\infty) = (v_0 , p_0) + {\bf u}_\infty$
satisfies $F( v_\infty, p_\infty ) = 0$. Furthermore, choosing $\gamma = \e^a$, with $0 < a < 1/2$, by \eqref{caccoletta media}
$$
\| v_0\|_s \lessdot \e^{1 - 2 a} \stackrel{\e \to 0}{\to} 0\,, \quad \| p_0\|_s \lessdot \e^{1 -  a} \stackrel{\e \to 0}{\to} 0\,,
$$
hence, \eqref{pappoletta} follows by recalling \eqref{stime bf u infinito}. Finally \eqref{condizione media nulla soluzione vphi x} follows since 
$$
\int_{\T^{\nu + 1}} v_\infty(\vphi, x)\, d \vphi\, d x = \int_{\T^\nu} v_0(\vphi)\, d \vphi\,, \quad \int_{\T^{\nu + 1}} p_\infty(\vphi, x)\, d \vphi\, d x = \int_{\T^\nu} p_0(\vphi)\, d \vphi
$$ 
and by Lemma \ref{lemma equazione sulle medie nulle}, $v_0$ and $p_0$ have zero average in $\vphi$, this concludes the proof of Theorem \ref{main theorem kirchoff}.
\subsection{Linear stability}\label{proof linear stability}
In this section we prove Theorem \ref{theorem linear stability}. The linearized equation on a quasi-periodic Sobolev function ${\bf v}(\omega t, x) = (v(\omega t, x), p(\omega t, x))$, with $v, p \in H^S(\T^{\nu + 1}, \R)$ has the form \eqref{equazione linearizzata}. Since the linearized vector field 
$$
L(\omega t) := \begin{pmatrix}
0 & 1 \\
a(\omega t) \partial_{xx} + {\mathcal R}(\omega t) &  0 
\end{pmatrix}
$$
(recall \eqref{coefficienti equazione linearizzata1}, \eqref{coefficienti equazione linearizzata2}) preserves the space of the functions with zero average in $x$, the equation \eqref{equazione linearizzata} can be splitted into the two systems
\begin{equation}\label{papapapa}
\quad \begin{cases}
\partial_t \widehat v_0 = \widehat p_0 \\
\partial_t \widehat p_0 = 0
\end{cases}
\end{equation}
\begin{equation}\label{blablabla}
\begin{cases}
\partial_t \widehat u = \widehat \psi \\
\partial_t \widehat \psi = a(\omega t) \partial_{xx}\widehat u + {\mathcal R}(\omega t)[\widehat u]\,,
\end{cases} 
\end{equation}
where, recalling \eqref{proiettore media}, $\widehat v_0 = \pi_0 \widehat v$, $\widehat p_0 = \pi_0 \widehat p$, $\widehat u = \pi_0^\bot \widehat v$, $\widehat \psi = \pi_0^\bot \widehat p$. By assumption, the initial datum $\widehat p^{(0)}$ has zero average in $x$, hence the solution of the system \eqref{papapapa}  is given by $\widehat v_0(t) = const$, $\widehat p_0(t) = 0$, for all $t \in \R$, implying that the system projected on the zero Fourier mode in $x$ is stable.  It remains to establish the stability for the system projected on the $x$-zero average functions \eqref{blablabla}. 
\noindent
By Lemma \ref{stime Hs x 1}, by \eqref{proprieta coordinate complesse x}, by Lemmata \ref{stime Hs x cal v}, \ref{stime Hs x Phi infty}, using also Lemma \ref{lemma decadimento Kirchoff in x}, there exists $\overline \mu  > 0$ such that for $S > s_0 + \overline \mu$, for any $s_0 \leq s \leq S - \overline \mu$, for any $\omega \in \Omega_\infty^{2 \gamma}$ (see \eqref{Omegainfty}), the linear and continuous maps ${\mathcal T}_1(\omega t) := {\mathcal S}(\omega t)  \circ {\mathcal B}$ and ${\mathcal T}_2(\omega t) := {\mathcal V}(\omega t) \circ \Phi_\infty(\omega t)$ satisfy
$$
{\mathcal T}_1(\omega t) : {\bf H}^{s - \frac12}_0(\T_x) \to H^s_0(\T_x, \R) \times H^{s - 1}_0(\T_x, \R)\,, 
$$
$$
{\mathcal T}_1(\omega t)^{- 1}  : H^s_0(\T_x, \R) \times H^{s - 1}_0(\T_x, \R) \to   {\bf H}^{s - \frac12}_0(\T_x) \,,
$$
$$
{\mathcal T}_2(\omega t)^{\pm 1} : {\bf H}^{s - \frac12}_0(\T_x) \to {\bf H}^{s - \frac12}_0(\T_x)\,.
$$
Setting $A {\bf h} (t, x) = {\bf h}(t + \alpha(\omega t), x)$, $A^{- 1} {\bf h}(\tau + \tilde \alpha(\omega \tau), x) $,
where $\alpha$ and $\tilde \alpha$ are given in Section \ref{sezione diffeo del toro}, by the results of Sections \ref{step 1 riduzione}-\ref{descent method}, by Theorem \ref{teoremadiriducibilita} and using the arguments of Section \ref{formalismo Hamiltoniano}, we get that a curve $\widehat{\bf u}(t) = (\widehat u(t), \widehat \psi(t)) \in H^s_0(\T_x, \R) \times H^{s - 1}_0(\T_x, \R)$ is a solution of the PDE \eqref{blablabla} if and only if 
$$
{\bf h}(t) = \begin{pmatrix}
h(t) \\
\overline h(t)
\end{pmatrix} = {\mathcal T}_2(\omega t)^{- 1} \circ A^{- 1} \circ {\mathcal T}_1(\omega t)^{- 1} \widehat{\bf  u}(t) \in {\bf H}^{s - \frac12}_0(\T_x) 
$$
is a solution of the PDE 
\begin{equation}\label{equazione lineare ridotta}
\begin{cases}
\partial_t h = - \ii{\mathcal D}_\infty^{(1)} h \\
\partial_t \overline h = \ii \overline{\mathcal D}_\infty^{(1)} \overline h
\end{cases}
\end{equation}

where ${\mathcal D}_\infty^{(1)}$ is defined by \eqref{cal D infinito}, \eqref{definizione bf D infinito}\,. Using that ${\mathcal D}_\infty^{(1)}$ is a $2 \times 2$-block diagonal operator, it is straightforward to verify that the commutator $[{\mathcal D}_\infty^{(1)}, |D|^s] = 0$. Furthermore, using the self-adjointness of ${\mathcal D}_\infty^{(1)}$ one sees by a standard energy estimate that  $\partial_t \| h (t, \cdot)\|_{H^s_x}^2 = 0$, implying that 
$$
\| h(t, \cdot)\|_{H^s_x} = const\,, \qquad \forall t \in \R\,,
$$
Arguing as in the proof of Theorem 1.5 in \cite{BBM-Airy} one concludes that $\|\widehat{\bf u}(t, \cdot) \|_{H^s_x \times H^{s - 1}_x} \leq_s \| \widehat{\bf u}(0, \cdot) \|_{H^s_x \times H^{s - 1}_x}$ for all $t \in \R$, which proves the linear stability of \eqref{blablabla} and the proof of Theorem  \ref{theorem linear stability} is concluded. 
\subsection*{Acknowledgment}
The author thanks Michela Procesi for many useful discussions and comments.

\vspace{1.0cm}

\noindent
Riccardo Montalto, 
Institut f\"ur Mathematik, 
Universit\"at Z\"urich, Winterthurerstrasse 190, CH-8057 Z\"urich;\\
${}\qquad$ email: riccardo.montalto@math.uzh.ch


\begin{thebibliography}{1}
\bibitem{Alazard-Baldi} Alazard T., Baldi P., \emph{Gravity capillary standing water waves},  {Arch. Rat. Mech.  Anal}, 217, 3, 741-830,  (2015). 

\bibitem{arosio-spagnolo}  Arosio A., Spagnolo S., \emph{Global solutions of the Cauchy problem for a nonlinear
hyperbolic equation}, in: Nonlinear PDEÕs and their applications, Coll\'ege de France
Seminar, Vol. VI, 1Ð26, H. Brezis, J.L. Lions eds., Research Notes Math. 109,
Pitman, Boston, (1984).

\bibitem{arosio-panizzi}  Arosio A., Panizzi S., \emph{On the well-posedness of the Kirchhoff string}, Trans. Amer.
Math. Soc. 348, no.1, 305-330,  (1996).

\bibitem{Baldi Kirchhoff} Baldi P., \emph{Periodic solutions of forced Kirchhoff equations}, Ann. Scuola Norm. Sup. Pisa, Cl. Sci. (5), Vol. 8, 117-141,  (2009). 

\bibitem{Baldi-Benj-Ono} Baldi P., \emph{Periodic solutions of fully nonlinear autonomous equations of Benjamin-Ono type}, 
Ann. I. H. Poincar\'e (C) Anal. Non Lin\'eaire 30, no.\,1, 33-77, (2013). 

\bibitem{BBM-Airy}
Baldi P., Berti M., Montalto R. \emph{K{AM} for quasi-linear and fully nonlinear forced perturbations of
  {A}iry equation}, Math. Annalen, 359, 1-2, 471-536, (2014).
  
\bibitem{BBM-auto}
Baldi P., Berti M., Montalto R. \emph{K{AM} for autonomous quasi-linear perturbations of {K}d{V}}, to appear on Ann. I. H. Poincar\'e (C) Anal. Non Lin\'eaire, printed online (3 August 2015).


\bibitem{BBM-mKdV}
Baldi P., Berti M., Montalto R., \emph{K{AM} for autonomous quasi-linear perturbations of m{K}d{V}}, to appear on Bollettino Unione matematica italiana, (2015).


\bibitem{Bernstein} Bernstein S.N., \emph{ Sur une classe d' \'equations fonctionelles aux d\'eriv\'ees partielles},
Izv. Akad. Nauk SSSR Ser. Mat. 4, 17-26,  (1940).



\bibitem{BBiP1}
Berti M., Biasco P., Procesi M., {\it KAM theory for the Hamiltonian DNLW}, 
Ann. Sci. \'Ec. Norm. Sup\'er. (4),  
Vol. 46, fascicule 2, 301-373,  (2013).

\bibitem{BBiP2}
Berti M., Biasco P., Procesi M., {\it KAM theory for the reversible derivative wave equation},
Arch. Rational Mech. Anal.,  212, 905-955, (2014).

\bibitem{BB12} Berti M., Bolle P. {\it Sobolev quasi periodic solutions 
of multidimensional wave equations with a multiplicative potential},  
Nonlinearity 25, 2579-2613,  (2012).

\bibitem{BBo10} Berti M., Bolle P.,  {\it Quasi-periodic solutions 
with Sobolev regularity of NLS on $ \T^d $ with a multiplicative potential}, 
Eur. Jour. Math. 15, 229-286, (2013).

\bibitem{BCP} Berti M., Corsi L., Procesi M., 
\emph{An abstract Nash-Moser theorem and quasi-periodic solutions for NLW and NLS 
 on compact Lie groups and homogeneous manifolds}, 
Comm. Math. Phys. 334, no.\,3, 1413-1454, (2015). 

\bibitem{Berti-Montalto} Berti M., Montalto R., \emph{Quasi-periodic standing wave solutions of gravity capillary standing water waves}, preprint arXiv:1602.02411 (2016).

\bibitem{Bo1}  Bourgain J., {\it  Construction of quasi-periodic solutions 
for Hamiltonian perturbations of linear equations and applications 
to nonlinear PDE}, Internat. Math. Res. Notices, no.\,11, (1994).

\bibitem{B3}  Bourgain J., {\it Quasi-periodic solutions of Hamiltonian
perturbations of $2D$ linear Schr\"odinger equations},
Annals of Math. 148, 363-439, (1998).


\bibitem{B2} Bourgain J., {\it Periodic solutions of nonlinear wave
equations},  Harmonic analysis and partial differential equations, 
Chicago Lectures in Math., Univ. Chicago Press, pp.\,69-97, (1999).

\bibitem{B5}  Bourgain J.,  {\it Green's function estimates for lattice Schr\"odinger 
operators and applications}, Annals of Mathematics Studies 158, 
Princeton University Press, Princeton, (2005).

\bibitem{C} Craig W.,  {\it Probl\`emes de petits diviseurs dans
les \'equations aux d\'eriv\'ees partielles},
Panoramas et Synth\`eses, 9, 
Soci\'et\'e Math\'ematique de France, Paris, (2000). 

\bibitem{CW} Craig W., Wayne C. E., {\it Newton's method and periodic solutions
of nonlinear wave equation}, Comm. Pure  Appl. Math. 46, 1409-1498, (1993).

\bibitem{ChierchiaYou} { Chierchia. L., You J.} {\it KAM tori for 1D nonlinear wave equations with periodic
boundary conditions}, Comm. Math. Phys. 211, 497-525, (2000).

\bibitem{dancona-spagnolo}  D'Ancona P., Spagnolo S., \emph{Global solvability for the degenerate Kirchhoff equation
with real analytic data}, Invent. Math. 108, 247-262, (1992).

\bibitem{dickey} R.W. Dickey, \emph{Infinite systems of nonlinear oscillation equations related to the
string}, Proc. Amer. Math. Soc. 23, no.3, 459-468, (1969).

\bibitem{EK1} Eliasson L. H., Kuksin S., {\it On reducibility of Schr\"odinger equations with
quasiperiodic in time potentials}, Comm. Math. Phys. 286, 125-135, (2009).

\bibitem{EK} Eliasson L.H., Kuksin S.,
{\it KAM for non-linear Schr\"odinger equation},  Annals of Math. 172, 371-435, (2010).


\bibitem{Feola} Feola R., {\it KAM for quasi-linear forced hamiltonian NLS}, preprint arXiv:1602.01341 (2016).

\bibitem{Feola-Procesi} Feola R., Procesi M. {\it Quasi-periodic solutions for fully nonlinear forced reversible Schr\"odinger equations}, J. Diff. Eq., 259, no. 7, 3389-3447, (2015).

\bibitem{Ioo-Plo-Tol} Iooss G., Plotnikov P.I., Toland J.F., 
{\it Standing waves on an infinitely deep perfect fluid under gravity}, 
Arch. Ration. Mech. Anal. 177, no.\,3, 367-478, (2005). 

\bibitem{IP09}  Iooss G., Plotnikov P.I.,  
{\it Small divisor problem in the theory of three-dimensional water gravity waves}, 
Mem. Amer. Math. Soc. 200, no.\,940, (2009).


\bibitem{IP11}  Iooss G., Plotnikov P.I.,  
{\it Asymmetrical three-dimensional travelling gravity waves}, 
Arch. Ration. Mech. Anal. 200, no.\,3, 789-880, (2011). 

\bibitem{lions} Lions J. L., \emph{On some questions in boundary value problems of mathematical
physics}, in: Contemporary developments in continuum mechanics and PDEs,
G.M. de la Penha, L.A. Medeiros eds., North-Holland, Amsterdam, (1978).

\bibitem{KaP} Kappeler T., P\"{o}schel J., {\it KAM and KdV}, Springer, (2003).

\bibitem{KM} Klainermann S., Majda A.,
{\it Formation of singularities for wave equations including the
nonlinear vibrating string}, Comm. Pure Appl. Math. 33, 241-263, (1980). 

\bibitem{K2} Kuksin S., {\it A KAM theorem for equations of the Korteweg-de Vries type},
Rev. Math. Math Phys. 10, no.\,3, 1-64, (1998).

\bibitem{KP} Kuksin S., P\"oschel J., {\it
Invariant Cantor manifolds of quasi-periodic oscillations
for a nonlinear Schr\"{o}dinger equation}, Annals of Math. (2) 143, 149-179, (1996).

\bibitem{Lax} Lax P.,
{\it Development of singularities of solutions of nonlinear
hyperbolic partial differential equations}, J. Mathematical Phys. 5, 611-613, (1964).


\bibitem{LY0}  Liu J., Yuan X.,
{\it Spectrum for quantum Duffing oscillator and small-divisor equation with large-variable coefficient}, 
Comm. Pure Appl. Math. 63, 9, 1145-1172, (2010).

\bibitem{LY}  Liu J., Yuan X.,
{\it A KAM Theorem for Hamiltonian Partial Differential
Equations with Unbounded Perturbations}, Comm. Math. Phys. 307, 629-673, (2011).

\bibitem{manfrin}  Manfrin R., \emph{On the global solvability of Kirchhoff equation for non-analytic initial
data}, J. Differential Equations 211, 38-60, (2005).

\bibitem{pozohaev}  Pokhozhaev, S. I., \emph{On a class of quasilinear hyperbolic equations}, Mat. Sbornik 96
, 152-166, (1975) (English transl.: Mat. USSR Sbornik 25, 145-158, (1975)).

\bibitem{PX} Procesi M., Xu X., {\it Quasi-T\"oplitz Functions in KAM Theorem}, 
 SIAM J.Math. Anal. 45, 4, 2148-2181, (2013).

\bibitem{Rabinowitz-tesi-1967} Rabinowitz P.H., 
{\it Periodic solutions of nonlinear hyperbolic partial differential equations}, Part I and II, 
Comm. Pure Appl. Math., Vol. 20, 145-205, (1967)  and 
Vol. 22, 15-39, (1969).

\end{thebibliography}
\end{document}